\definecolor{mediumblue}{rgb}{0.0, 0.0, 0.8}
\colorlet{darkgreen}{green!50!black}
\renewcommand*{\backref}[1]{}
\renewcommand*{\backrefalt}[4]{%
 \ifcase #1 No citations.
 \or [Page #2.]
 \else [Pages #2.]
 \fi%
}
\newtheorem{thm}{Theorem}[section]
\newtheorem{prop}[thm]{Proposition}
\newtheorem{cor}[thm]{Corollary}
\newtheorem{lem}[thm]{Lemma}
\newtheorem{conj}[thm]{Conjecture}
\theoremstyle{definition}
\newtheorem{defn}[thm]{Definition}
\newtheorem{ex}[thm]{Example}
\theoremstyle{remark}
\newtheorem{Rmk}[thm]{Remark}
\newenvironment{red}
{\relax\color{red}}
{\hspace*{.5ex}\relax}
\newcommand{\ber}{\begin{red}}
\newcommand{\er}{\end{red}}
\numberwithin{equation}{section}
\newcommand{\rank}{\operatorname{rank}}
\newcommand{\Frac}{\operatorname{Frac}}
\newcommand{\Z}{\mathbb{Z}}
\newcommand{\Q}{\mathbb{Q}}
\newcommand{\A}{\mathbb{A}}
\newcommand{\F}{\mathbb{F}}
\newcommand{\g}{\mathfrak{g}}
\newcommand\la\lambda
\newcommand\La\Lambda
\newcommand{\Hom}{\mathrm{Hom}}
\newcommand{\Ht}{{\rm ht}}
\newcommand{\wt}{{\rm wt}}
\newcommand{\Span}{{\rm Span}}
\newcommand{\proj}{\text{-}\mathrm{proj}}
\newcommand{\im}{{\rm im}}
\newcommand{\coker}{{\rm coker}}
\newcommand{\rlQ}{\mathsf{Q}}   
\newcommand{\wlP}{\mathsf{P}}   
\newcommand{\weyl}{\mathsf{W}}  
\newcommand{\cmA}{\mathsf{A}}  
\newcommand{\Par}{\mathscr{P}^l}   
\DeclareMathOperator{\cont}{cont} 
\newcommand{\ST}[1]{\mathsf{Std}(#1)}   
\newcommand{\rST}[1]{\mathsf{RowStd}(#1)}   
\newcommand{\rT}[1]{\mathsf{Row}(#1)}   
\newcommand{\res}[1]{\mathrm{res}(#1)}   
\newcommand{\sg}{\mathfrak{S}}   
\newcommand{\belt}{\mathbf{B}}   
\newcommand\gr{\Yfillcolour{black!15!white}} 
\newcommand\gre{\Yfillcolour{black!40!white}} 
\newcommand\wh{\Yfillcolour{white}} 
\newcommand{\ch}{{\rm ch}}   
\newcommand{\bR}{\mathcal{O}}   
\newcommand{\fqH}[1]{R^{\Lambda_{#1}}}   
\newcommand{\Sp}{\mathcal{S}}   
\newcommand{\Pe}{\mathcal{M}}   
\newcommand{\Ga}{\mathcal{G}}   
\newcommand{\Lm}{\mathcal{L}}   
\newcommand{\Gh}{\mathcal{H}}   
\newcommand{\conv}{\circ}   
\newcommand{\gLm}{\mathfrak{l}}   
\newcommand{\rg}{ \mathsf{g}}   
\newcommand{\id}{ \mathrm{id}}   
\newcommand{\shf}{ \mathsf{sh}}   
\newcommand{\gmod}{\text{-}\mathrm{gmod}}
\newcommand{\Mod}{\text{-}\mathrm{Mod}}
\newcommand\ttg{\mathtt{G}}
\renewcommand\geq\geqslant
\renewcommand\leq\leqslant
\renewcommand\ge\geqslant
\renewcommand\le\leqslant
\newcommand{\sym}{\mathfrak S_}
\newcommand{\dom}{\trianglerighteqslant}
\newcommand{\doms}{\vartriangleright}
\newcommand{\domsby}{\vartriangleleft}
\newcommand\tts{\mathtt{S}}
\newcommand\ttt{\mathtt{T}}
\newcommand\tabupto[2]{#1_{\downarrow#2}}
\newcommand\shp[2]{\operatorname{Shp}(\tabupto{#1}{#2})}
\crefname{defn}{Definition}{Definitions}
\crefname{thm}{Theorem}{Theorems}
\crefname{prop}{Proposition}{Propositions}
\crefname{lem}{Lemma}{Lemmas}
\crefname{cor}{Corollary}{Corollaries}
\crefname{conj}{Conjecture}{Conjectures}
\crefname{section}{Section}{Sections}
\crefname{subsection}{Subsection}{Subsections}
\Crefname{defn}{Definition}{Definitions}
\Crefname{thm}{Theorem}{Theorems}
\Crefname{prop}{Proposition}{Propositions}
\Crefname{lem}{Lemma}{Lemmas}
\Crefname{cor}{Corollary}{Corollaries}
\Crefname{conj}{Conjecture}{Conjectures}
\Crefname{section}{Section}{Sections}
\Crefname{subsection}{Subsection}{Subsections}
\DeclareRobustCommand\longtwoheadrightarrow
\DeclareRobustCommand{\longhookrightarrow}{\lhook\joinrel\longrightarrow}
\begin{document}

\title[Specht modules for quiver Hecke algebras]
{Specht modules for quiver Hecke algebras of type $C$}

\author[Susumu Ariki]{Susumu Ariki$^1$}
\thanks{$^1$ S.A. is supported by the JSPS Grant-in-Aid for Scientific Research 15K04782.}
\address{Department of Pure and Applied Mathematics, Graduate School of Information
Science and Technology, Osaka University, Suita, Osaka 565-0871, Japan}
\email{ariki@ist.osaka-u.ac.jp}

\author[Euiyong Park]{Euiyong Park$^2$ }
\thanks{$^2$ E.P. is supported by the National Research Foundation of Korea(NRF) Grant funded by the Korean Government(MSIP)(NRF-2014R1A1A1002178).}
\address{Department of Mathematics, University of Seoul, Seoul 02504, Korea}
\email{epark@uos.ac.kr}

\author[Liron Speyer]{Liron Speyer$^3$}
\thanks{$^3$ L.S. was an International Research Fellow of the Japan Society for the Promotion of Science while this research was conducted.}
\address{Department of Mathematics, University of Virginia, Charlottesville, VA 22904, USA}
\email{l.speyer@virginia.edu}

\subjclass[2010]{05E10, 16G10, 81R10}
\keywords{Categorification, Cyclotomic quiver Hecke algebras, Specht modules}

\begin{abstract}

We construct and investigate Specht modules $\Sp^\la$ for cyclotomic quiver Hecke algebras in type $C^{(1)}_\ell$ and $C_\infty$, 
which are labelled by multipartitions $\la$.
It is shown that in type $C_\infty$, the Specht module $\Sp^\la$ has a homogeneous basis indexed by standard tableaux of shape $\la$, which yields a 
graded character formula and good properties with the exact functors $E_i^\Lambda$ and $F_i^\Lambda$.
For type $C^{(1)}_\ell$, we propose a conjecture.
\end{abstract}

\maketitle

\section*{Introduction}

Representations of Hecke algebras and the symmetric group have been studied for over a century 
and Specht modules play important roles in the representation theory. Nowadays, we realise that on the one hand
the Hecke algebras generalise to \emph{cyclotomic quiver Hecke algebras} (or \emph{Khovanov--Lauda--Rouquier algebras}) 
in the direction of categorification of quantum groups $U_q(\g)$ \cite{KL09, KL11, R08}, and on the other hand that 
the Hecke algebras are cellular algebras and Specht modules are their cell modules.

In the affine type $A_\ell^{(1)}$ case, cellular algebras and Specht modules for cyclotomic quiver Hecke algebras were studied via the isomorphism to cyclotomic Hecke algebras given in \cite{BK09a}. 
It was shown that the cyclotomic quiver Hecke algebras of affine type $A_\ell^{(1)}$ have graded cellular structure \cite{HM10}. 
Graded Specht modules in type $A_\ell^{(1)}$ were constructed and studied using the combinatorics of multipartitions \cite{BKW11, kmr}.
But so far little is known about Specht modules for \emph{cyclotomic} quiver Hecke algebras of \emph{other types}. 
We remark that it was proved in  \cite{KleLou15} that quiver Hecke algebras of finite type are \emph{graded affine cellular algebras}. 
 
The first and second authors \cite{AP16} studied cyclotomic quiver Hecke algebras $\fqH0(n)$ 
for the fundamental weight $\Lambda_0$ in type $C^{(1)}_\ell$, in which a graded dimension formula for $\fqH0(n)$ is given by using the $C$-type Fock space $\mathcal{F}$ 
\cite{KMM93, KimShin04,Pre04}. 
This Fock space $\mathcal{F}$ is constructed by folding the usual $A$-type Fock space, so the dimension formula is described in terms of combinatorics of Young diagrams. 
In affine type $A_\ell^{(1)}$, graded Specht modules are deeply related to the $A$-type Fock space. It was shown in \cite{BK09} that the graded decomposition numbers of graded Specht modules 
can be described in terms of combinatorics of Young diagrams via the Fock space which is the $q$-version of the first author's result \cite{A96}.  
One can expect that cyclotomic quiver Hecke algebras of type $C$ and the Fock space $\mathcal{F}$ of type $C$ exhibit similar properties to those of type $A$ --
thus it is worth considering Specht modules for cyclotomic quiver Hecke algebras of type $C$.

In this paper, we construct \emph{Specht modules} $\Sp^\la$ for cyclotomic quiver Hecke algebras of affine type $C_\ell^{(1)}$ and type $C_\infty$ which are labelled by multipartitions $\la$.
This is inspired by \cite{kmr}.
Let $\cmA$ be the Cartan matrix of type $C_\ell^{(1)}$ or $C_\infty$, and $U_q(\cmA)$ the quantum group associated with $\cmA$. 
We set 
$R(\beta)$ to be the quiver Hecke algebra 
associated with $\cmA$ and denote by $E_i^\Lambda$ and $F_i^\Lambda$ the functors categorifying Chevalley generators $e_i$ and $f_i$ of $U_q(\cmA)$ on the highest weight irreducible module $V_q(\Lambda)$.
Let  $\Par_n$ be the set of $l$-multipartitions of $n$ with a multicharge $\kappa = (\kappa_1, \dots, \kappa_l)\in \Z^l$.
For $\lambda \in \Par_n$, we first construct a \emph{permutation module} $\Pe^\lambda_{\kappa}$ which has a basis indexed by row-strict tableaux of $\la$.
These permutation modules are built from more fundamental building blocks, namely they are convolution products of the one-dimensional $R(\beta)$-modules $\Lm(k;\ell) $ defined by $\eqref{Eq: def of L}$. 
The modules $\Lm(k;\ell) $ take a role as the \emph{segment modules}, which are given in \cite{kmr}, corresponding to \emph{segments} in type $A_\infty$ and $A_n^{(1)}$.
We also define a module $\Pe^\lambda_{\kappa, A}$ for each Garnir node $A \in [\la]$ and construct homomorphisms between $\Pe^\lambda_{\kappa, A}$ and  $\Pe^\lambda_\kappa$, 
which give an interpretation of Garnir elements in terms of quiver Hecke algebras.
We then define a Specht module $\Sp^\la$ using the cokernel of homomorphisms between $\Pe^\la_{\kappa, A}$ and $\Pe^\la_\kappa$, see \cref{spechtdef}.
The Specht module $\Sp^\la$ is spanned by homogeneous elements indexed by standard tableaux of shape $\la$ (\cref{Cor: Specht modules l}).
We prove in \cref{Cor: Specht modules} that, in type $C_\infty$, this spanning set of $\Sp^\la$ is in fact a basis.
Thus we have a graded character formula for $\Sp^\la$ in terms of standard tableaux and a description of $[E_i^{\Lambda} \Sp^\la]$ in terms of $[\Sp^{\la \nearrow b}]$.
Here, $\la \nearrow b$ is the Young diagram obtained from $\la$ by deleting a \emph{removable} node $b$.
We remark that $\Sp^\la$ is not necessarily \emph{simple}, even in the case of level 1 and type $C_\infty$.
We also investigate a connection between Specht modules $\Sp^\la$ and the Fock space $\mathcal{F}$ of type $C$, which provides a description of $[F_i^{\Lambda} \Sp^\la]$ in terms of $[\Sp^{\la \swarrow b}]$, where  $\la \swarrow b$ is the Young diagram obtained from $\la$ by adding an \emph{addable} node $b$, see \cref{Cor: F_i Sp}.
Recently, the third author provided semisimplicity criteria for the cyclotomic quiver Hecke algebras of type $C_\infty$ and $C^{(1)}_n$ using the Specht modules \cite{Sp17}.

The paper is organised as follows.
In Section~\ref{Sec: tableaux}, we review the combinatorics of tableaux and the Fock space of type $C$, and prove lemmas on Garnir nodes. In Section~\ref{Sec: quiver Hecke algebras}, we recall the notion of quiver Hecke algebras, and prove several lemmas on computations of products of $\psi_i$ and convolution products of modules for proving our main theorem.
In Section~\ref{Sec: Specht Sp}, we construct and investigate Specht modules $\Sp^\la$ and provide the main theorems with examples. 
Section~\ref{Sec: proof of the main thm} is devoted to proving \cref{Thm: Specht modules}.
We may carry out the computation in a manner knot theorists do, but we have found an algebraic proof, which is easier to access for representation theorists.
In Section~\ref{Sec: conjecture}, we propose a conjecture for type $C^{(1)}_\ell$.

\vskip 1em

\section{Combinatorics of Tableaux} \label{Sec: tableaux}

\subsection{Lie theory notation}\

Let $\ell\in \{2,3,\dots\}\cup\{\infty\}$ and $I=\Z_{\geq 0}$ if $\ell=\infty$ or $I= \{ 0,1,2, \ldots, \ell \} $ otherwise.

 For $\ell=\infty$, the corresponding Cartan matrix $ \cmA = (a_{ij})_{i,j\in I}$ of type $C_\infty$ is given by
\[
a_{ij}=
\begin{cases}
2 & \text{if } i=j,\\
-2 & \text{if } (i,j)=(1,0),\\
-1 & \text{if } i=j\pm 1 \text{ and } (i,j)\neq (1,0),\\
0 & \text{otherwise}.
\end{cases}
\]

Otherwise, the affine Cartan matrix of type $C^{(1)}_\ell$ is given by
\[
\cmA = (a_{ij})_{i,j\in I} = \begin{pmatrix}
2 & -1 & 0 & \cdots & 0 & 0 & 0\\
-2 & 2 & -1 & \cdots & 0 & 0 & 0\\
0 & -1 & 2 & \cdots & 0 & 0 & 0\\
\vdots & \vdots & \vdots & \ddots & \vdots & \vdots & \vdots\\
0 & 0 & 0 & \cdots & 2 & -1 & 0\\
0 & 0 & 0 & \cdots & -1 & 2 & -2\\
0 & 0 & 0 & \cdots &  0 & -1 & 2
\end{pmatrix}
\]

We adopt standard notation from \cite{kac} for the root datum; in particular 
we have simple roots $\{\alpha_i \mid i\in I \}$ and fundamental weights $\{\Lambda_i \mid i\in I \}$ in the \emph{weight lattice} $\wlP$, 
and simple coroots $\{\alpha_i^\vee \mid i\in I\}$ in the \emph{dual weight lattice} $\wlP^\vee$. 
There is an invariant symmetric bilinear form $( - , - )$ on $\wlP$ satisfying $( \Lambda_i, \alpha_j ) = d_j \delta_{ij}$ and $( \alpha_i, \alpha_j ) = d_i a_{ij}$ 
where $d=(2,1,1,\dots)$ if $\ell=\infty$ or $d = (2,1\dots,1,2)$ if $\ell<\infty$. Let
\[
\wlP^+ = \{\Lambda \in \wlP \mid \langle\alpha_i^\vee,\Lambda\rangle \geq 0 \text{ for all } i\in I\}
\]
be the set of \emph{dominant integral weights}, where $\langle$ , $\rangle$ is the natural pairing.
We denote by $\rlQ:=\bigoplus_{i\in I} \Z \alpha_i$ the \emph{root lattice} and $\rlQ^+ = \bigoplus_{i\in I} \Z_{\geq 0} \alpha_i$ is the \emph{positive cone} of the root lattice. Note that the \emph{null root} in type $C^{(1)}_\ell$ is given by
$\delta = \alpha_0 + 2\alpha_1 +\dots + 2\alpha_{\ell-1} +\alpha_\ell.$

\subsection{The symmetric group and multipartitions} \label{Sec: muptipartitions}\

Denote by $\sym n$ the symmetric group on $n$ letters, with Coxeter generators $s_1,\dots, s_{n-1}$. For a permutation $w \in \sym n$, a \emph{reduced expression} for $w$ is an expression $w = s_{i_1} \dots s_{i_r}$ of minimal length; $r = \ell(w)$ is the \emph{length} of $w$.

We denote by $\sg_{m+n}/ \sg_{m}{\times}\sg_n$ the set of distinguished left coset representatives of $\sg_{m}{\times}\sg_n$ in $\sg_{m+n}$, i.e.~$\ell(w s_i) = \ell(w) + 1$ for $w \in \sg_{m+n}/ \sg_{m}{\times}\sg_n$ and $i \ne m$.

For $a,b \in \Z_{\ge0}$ with $a+b\le n$, we define $w[a,b] \in \sg_n$ by
\[
w[a,b](x) = \left\{
              \begin{array}{ll}
                x+b & \hbox{ if } 1\le x \le a, \\
                x-a & \hbox{ if } a < x \leq a+b,\\
                x & \hbox{ if } a+b < x \leq n.
              \end{array}
            \right.
\]
In two-line notation, $w[a,b]$ is
\[
\bigl(
\begin{smallmatrix}
1 & 2 & \cdots & a & a+1 & a+2 & \cdots & a+b\\
b+1 & b+2 & \cdots & b+a & 1 & 2 & \cdots  & b
\end{smallmatrix}
\bigr).
\]
Throughout the paper, $w\in\sg_n$ permutes letters of a tableau, but permutes places of $\nu = (\nu_1, \ldots, \nu_n) \in I^n$ as $w\nu=(\nu_{w^{-1}(1)},\dots,\nu_{w^{-1}(n)})$. 
In particular,
\[
w[a,b]\nu=(\nu_{a+1},\dots,\nu_{a+b},\nu_1,\dots,\nu_a,\nu_{a+b+1},\dots,\nu_n).
\]

\begin{lem} \label{use of braid rels}
Let $a,b\ge 1$. 
\begin{itemize}
\item[(1)]
$w[a,b]=(s_b\dots s_{a+b-1})\dots(s_1\dots s_a)=(s_b\dots s_1)\dots(s_{a+b-1}\dots s_a)$.
\item[(2)] $s_{b+1}w[2,b]=w[2,b]s_1$.
\end{itemize}
\end{lem}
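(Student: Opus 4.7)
The plan for part (1) is to establish the first form by induction on $b\ge 1$. The base case $b=1$ is immediate, since $w[a,1]$ is the cycle $(1\ 2\ \cdots\ a{+}1)$, which equals $s_1 s_2 \cdots s_a$. For the inductive step, I will verify the factorisation
\[
w[a,b] \;=\; (s_b s_{b+1} \cdots s_{a+b-1}) \cdot w[a,b-1]
\]
by a short case analysis on the input $x$: the factor $w[a,b-1]$ carries $\{1,\dots,a\}$ to $\{b,\dots,a+b-1\}$ (shift by $b-1$), carries $\{a+1,\dots,a+b-1\}$ to $\{1,\dots,b-1\}$ (shift by $-a$), and fixes $a+b$ and everything beyond; the cycle $(s_b s_{b+1} \cdots s_{a+b-1}) = (b\ b{+}1\ \cdots\ a{+}b)$ then shifts the first block up by one, fixes the second block pointwise, and sends $a+b$ to $b$, so the composite realises $w[a,b]$ exactly.

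For the second form, I will use the symmetry $w[a,b]^{-1} = w[b,a]$, which is clear from the two-line description. Applying the already-proved first form to $w[b,a]$ and inverting factor-by-factor yields
\[
w[a,b] \;=\; w[b,a]^{-1} \;=\; (s_1\cdots s_b)^{-1}\cdots(s_a\cdots s_{a+b-1})^{-1} \;=\; (s_b\cdots s_1)\cdots(s_{a+b-1}\cdots s_a).
\]
Each expression uses $ab$ simple generators, which coincides with $\ell(w[a,b]) = ab$ (the number of inversions of $w[a,b]$), so both expressions are automatically reduced, a fact that will be convenient later when the lemma is invoked.

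For part (2), the fastest route is a direct comparison as permutations. The map $w[2,b]$ is specified by $1\mapsto b+1$, $2\mapsto b+2$, and $k\mapsto k-2$ for $3\leq k\leq b+2$, fixing everything else. Left multiplication by $s_{b+1}$ swaps the images of $1$ and $2$ (and does nothing else), whereas right multiplication by $s_1$ first transposes $1$ and $2$ before they enter $w[2,b]$, producing the identical output on every input. Alternatively, substituting the first-form expression $w[2,b] = (s_b s_{b+1})(s_{b-1}s_b)\cdots(s_1 s_2)$ from part~(1) reduces the identity to the braid relation $s_{b+1} s_b s_{b+1} = s_b s_{b+1} s_b$ applied at the leftmost factor, combined with commutations $s_i s_j = s_j s_i$ for $|i-j|\ge 2$ to move the resulting $s_b$ past the remaining groups.

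The only mild obstacle is keeping the index ranges straight in the case analysis of part~(1); beyond that the arguments are routine bookkeeping, with no substantial technical difficulty in either part.
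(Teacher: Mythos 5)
Your argument for part (1) is correct and fills in a proof the paper declines to give: the paper simply asserts ``it is easy to see (1),'' whereas you supply a clean induction on $b$ using the factorisation $w[a,b]=(s_b\cdots s_{a+b-1})\,w[a,b-1]$ and obtain the second form by inverting $w[b,a]$; the observation that both words have length $ab=\ell(w[a,b])$ and are hence reduced is also correct. For part (2), your main argument (a direct two-line comparison of permutations, or equivalently citing $w(2)=w(1)+1$) is valid but goes by a genuinely different route than the paper, which proves (2) by an explicit chain of braid-relation and commutation applications starting from the second reduced word $(s_bs_{b-1}\cdots s_1)(s_{b+1}s_b\cdots s_2)$. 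The paper's remark after \cref{simplemove} makes clear why it bothers: the identity itself is a trivial special case of \cref{simplemove}, and the real content is the \emph{derivation}, which is replayed generator-by-generator in $R(\beta)$ in the proof of \cref{2exchange formula}, where each braid move produces an error term $\epsilon(r,\nu)$. Your permutation argument establishes the equality but not the template needed later. Your sketched alternative via the first reduced word is also not quite right as written: after the single braid move $s_{b+1}s_bs_{b+1}=s_bs_{b+1}s_b$, the surplus $s_b$ does not commute past the next block $(s_{b-1}s_b)$; one needs a further braid relation at each stage, $b$ in total, exactly as in the paper's (and your own inductive) telescoping pattern.
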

\begin{proof}
It is easy to see (1). 
Using the braid relations, we have
\begin{align*}
s_{b+1}w[2,b]&=\underline{s_{b+1}}(\underline{s_{b}}s_{b-1}\dots s_1)(\underline{s_{b+1}}s_b\dots s_2)\\
&=(s_bs_{b+1})\underline{s_{b}}(\underline{s_{b-1}}s_{b-2}\dots s_1)(\underline{s_{b}}s_{b-1}\dots s_2)\\
&=(s_bs_{b+1})(s_{b-1}s_b)\underline{s_{b-1}}(\underline{s_{b-2}}\dots s_1)(\underline{s_{b-1}}\dots s_2)\\
& \qquad \ldots\ldots\ldots\ldots \\
&=(s_bs_{b+1})(s_{b-1}s_b)\dots(s_2s_3)\underline{s_2}(\underline{s_1})(\underline{s_2})\\
&=(s_bs_{b+1})(s_{b-1}s_b)\dots(s_2s_3)(s_1s_2)s_1=w[2,b]s_1,
\end{align*}
which complete the proof of (2). 
Here, the underlines indicate generators to which we can apply the braid relation.
\end{proof}

The following easy lemma will be useful to us later. Note that the equality $s_{b+1}w[2,b]=w[2,b]s_1$ in \cref{use of braid rels}(2) is a special case of this, but the importance of \cref{use of braid rels}(2) lies in the `long-hand derivation' of this equality, which we will utilise later in \cref{2exchange formula}.

\begin{lem}\label{simplemove}
Let $w \in \sym n$ and $1\leq i \leq n-1$. If $w(i+1) = w(i) + 1$, then $s_{w(i)} w = w s_i$.
\end{lem}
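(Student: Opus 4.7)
The plan is to verify the asserted equality of two elements of $\sym n$ by evaluating both sides on each point of $\{1,\dots,n\}$. Set $a = w(i)$, so by hypothesis $w(i+1) = a+1$.

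First, I would handle the two distinguished points $i$ and $i+1$. Evaluating the left-hand side, $(s_a w)(i) = s_a(a) = a+1$ and $(s_a w)(i+1) = s_a(a+1) = a$. Evaluating the right-hand side, $(w s_i)(i) = w(i+1) = a+1$ and $(w s_i)(i+1) = w(i) = a$. So the two permutations agree on $i$ and $i+1$.

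Next, I would handle any other point $j \in \{1,\dots,n\} \setminus \{i, i+1\}$. On such a $j$, $s_i$ acts trivially, so $(w s_i)(j) = w(j)$. For the left-hand side, since $w$ is a bijection and the values $a, a+1$ are already achieved at positions $i, i+1$, we have $w(j) \notin \{a, a+1\}$, whence $s_a$ fixes $w(j)$ and $(s_a w)(j) = w(j)$.

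This exhausts all points, so $s_a w$ and $w s_i$ agree as functions on $\{1,\dots,n\}$, proving the equality. There is no real obstacle here — the only thing to be careful about is using the injectivity of $w$ to conclude that no other position is mapped into $\{a, a+1\}$, which is exactly what allows $s_a$ to act as the identity off positions $i, i+1$.
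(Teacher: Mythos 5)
Your proof is correct and complete. The paper itself gives no proof of this lemma — it is stated without one (prefaced only by ``The following easy lemma will be useful to us later''), so there is no argument to compare against; your pointwise verification is exactly the natural way to establish it. You correctly handle the three cases ($j=i$, $j=i+1$, and $j\notin\{i,i+1\}$), and you correctly identify that injectivity of $w$ is what guarantees $w(j)\notin\{a,a+1\}$ for $j\notin\{i,i+1\}$, so that $s_a$ acts trivially there. One could alternatively argue via the general fact that $w s_i w^{-1}$ is the transposition $(w(i)\,\, w(i+1))$, which equals $s_{w(i)}$ precisely when $w(i+1)=w(i)+1$, but your version is equally short and more self-contained.
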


For a reduced expression $w = s_{i_1} \dots s_{i_r}$ and $k \in \Z_{\ge 0}$ with $i_j < n-k$ for $1\le j\le r$, we set
\begin{align} \label{shift}
\shf_k(w) = s_{i_1+k} \dots s_{i_r+k}.
\end{align}
Note that $\shf_k(w)$ does not depend on the choice of reduced expressions. 
For $a, b, c \in \Z_{\ge0}$, we define the block permutation $\mathrm{S}_2(c,a,b)$ to be $\shf_c(w[a,b])$.

\begin{defn}
For $v, w \in \sym n$, we write $v \succcurlyeq w$ if there is a reduced expression for $v$ which has an expression for $w$ as a subsequence. 
We write $v \succ w$ if $v \succcurlyeq w$ and $v\neq w$. This partial order is called the \emph{Bruhat order}.

The \emph{left order} (sometimes called the weak Bruhat order) is given by $v\geq_L w$ if there is a reduced expression for $v$ which has a reduced expression for $w$ as a suffix -- that is, $v=s_{i_1}\dots s_{i_r}w$ for some $i_1,\dots, i_r$ with $r=\ell(v)-\ell(w)$.
\end{defn}

We fix an integer $l\geq 1$ throughout, which we refer to as the \emph{level}.

\begin{defn}
For $n\geq 0$, a partition of $n$ is a weakly decreasing sequence of non-negative integers $\la = (\la_1, \la_2, \dots)$ such that the sum 
$|\la|=\la_1+\la_2+\cdots$ is equal to $n$. If $\la$ is a partition of $n$ we write $\la\vdash n$. We write $\varnothing$ for the unique partition of 0. Note that we will in general omit trailing zeros for partitions.

An \emph{$l$-multipartition} of $n$ is an $l$-tuple of partitions $\la = (\la^{(1)}, \dots, \la^{(l)})$ such that the total size $\sum_{i=1}^l |\la^{(i)}| = n$. We denote the set of $l$-multipartitions of $n$ by $\Par_n$ and set $\Par:= \cup_{n\geq 0} \Par_n$.

Similarly, a \emph{composition} is a sequence $\mu = (\mu_1, \mu_1, \dots)$ of non-negative integers,  and an \emph{$l$-multicomposition} is an $l$-tuple of compositions.
\end{defn}

If $\la$ and $\mu$ are $l$-multicompositions of $n$, we say that $\la$ \emph{dominates} $\mu$, and write $\la\dom\mu$ if
\[
|\la^{(1)}| + \dots + |\la^{(t-1)}| + \sum_{j=1}^{k} \la^{(t)}_j \geq |\mu^{(1)}| + \dots + |\mu^{(t-1)}| + \sum_{j=1}^{k} \mu^{(t)}_j
\]
for all $1\leq t \leq l$ and $k\geq 0$.

For any $\la\in \Par_n$, we define its \emph{Young diagram} $[\la]$ to be the set
\[
\{(r,c,t) \in \Z_{\geq 1} \times \Z_{\geq 1} \times \{1,\dots,l\} \mid c\leq \la^{(t)}_r\}.
\]

We will depict a Young diagram for a partition using the English convention, and for a multipartition $\la$ as a column vector of Young diagrams for the components $\la^{(1)},\dots,\la^{(l)}$.
If $l = 1$, then we write simply $(r,c)$ for $(r,c,t)$.

\begin{ex}
Let $\la = ((4,3,1,1),\varnothing, (3,2,1)) \in \mathscr{P}^3_{15}$. Then we write
\begin{align*}
[\la] = \ &\yng(4,3,1,1)\\
&\varnothing\\
&\yng(3,2,1).
\end{align*}
\end{ex}

With this convention, we say that for nodes $A=(r,c,t),A'=(r',c',t')\in[\la]$, $A$ is \emph{below}  $A'$ if $t>t'$ or if $t=t'$ with $r>r'$, and $A$ is \emph{above} $A'$ if $A'$ is below $A$.

We define $f_\ell : \Z \rightarrow I$ by $k \mapsto |k|$ if $\ell = \infty$ and, if $\ell \ne \infty$, $f_\ell : \Z / 2\ell\Z \rightarrow I $ by
\begin{align*}
&f_\ell(0 + 2\ell\Z) = 0, \quad f_\ell(\ell+2\ell\Z) = \ell, \\
 &f_\ell(k + 2\ell\Z) = f_\ell(2\ell-k + 2\ell\Z) = k \quad  \text{ for } 1 \le k \le \ell-1.
\end{align*}
Let $p$ be the natural projection $\Z \rightarrow \Z/ 2\ell\Z$ if $\ell \ne \infty$ and $p = \textrm{id} $ if $\ell=\infty$. Then we define
$\pi_\ell=f_\ell \circ p: \Z \rightarrow I$. We denote $\pi_\ell(k)$ by $\overline{k}$, for $k\in \Z$, if there is no confusion.

Now we fix a \emph{multicharge} $\kappa = (\kappa_1, \ldots, \kappa_l) \in \Z^l$ and define $\Lambda\in\wlP^+$ by $\Lambda=\sum_{i=1}^l \Lambda_{\overline{\kappa_i}}.$
Let $\la$ be an $l$-multipartition. Then, to any node $A = (r,c,t) \in [\la]$ we may associate its \emph{residue} by
\[
\res A = \overline{\kappa_t +c - r}.
\]
If $\res A = i$, we call $A$ an $i$-node.
Thus, $l$-multipartitions may be coloured by $I$. We define the \emph{content} of $\la$ to be
\[
\cont(\la) = \sum_{A\in[\la]} \alpha_{\res A}\in \rlQ^+.
\]

\begin{ex}
For $\la = ((4,3,1,1),\varnothing, (3,2,1))$ as above, and $\kappa = (2,0,-1)$, the residues of $[\la]$ are given as follows.
\begin{align*}
&\young(2345,123,0,1)\\
&\varnothing\\
&\young(101,21,3)
\end{align*}
We also have $\cont(\la) = 2\alpha_0 + 5\alpha_1 + 3\alpha_2 + 3\alpha_3 + \alpha_4 + \alpha_5$.
\end{ex}

We say that a node $A$ is \emph{removable} (resp.\ \emph{addable}) if $[\la]\setminus A$ (resp.\ $[\la]\cup A$) is a valid Young diagram for a multipartition of $n-1$ (resp.\ $n+1$).
We write $\la\nearrow A$ (resp.\ $\la\swarrow A$) as shorthand for the multipartition whose Young diagram is $[\la]\setminus A$ (resp.\ $[\la]\cup A$).
For an $i$-node $A\in[\la]$, we set
\begin{align*}
d_i(\la) &= \#\{\text{addable $i$-nodes of $[\la]$} \} - \#\{\text{removable $i$-nodes of $[\la]$}\}, \\
d_A(\la) &= d_i (\#\{\text{addable $i$-nodes of $[\la]$ below } A\} - \#\{\text{removable $i$-nodes of $[\la]$ below } A\}), \\
d^A(\la) &= d_i (\#\{\text{addable $i$-nodes of $[\la]$ above } A\} - \#\{\text{removable $i$-nodes of $[\la]$ above } A\}).
\end{align*}

We define $\mathcal{F}(\kappa)$ to be a $\Q(q)$-vector space  with basis consisting of the coloured $l$-multipartitions. 
Then $\mathcal{F}(\kappa)$ has a $U_q(\mathfrak{g}(\cmA))$-module structure defined by
\begin{align} \label{Eq: def of Fock sp}
q^{h_i} \la = q^{d_i(\la)}\la ,\qquad   e_i \la = \sum_{A} q^{d_A(\la)} \la\nearrow A, \qquad  f_i \la = \sum_{A} q^{-d^A(\la)} \la\swarrow A,
\end{align}
where $A$ runs over all removable $i$-nodes and all addable $i$-nodes respectively.
The above description of $\mathcal{F}(\kappa)$ matches with that of the type A Fock space given in \cite[Section 3.6]{BK09},
which is slightly different from  \cite{ KimShin04, Pre04}.
We call $\mathcal{F}(\kappa)$ the \emph{level $l$ Fock space with multicharge $\kappa$}. 
Note that the weight of a coloured $l$-multipartition $\la$ is $\Lambda-\cont(\la)$, and there is a $U_q(\g(\cmA))$-module isomorphism  
\begin{align*}
\mathcal{F}(\kappa) \simeq \mathcal{F}(\kappa_1)\otimes\dots\otimes\mathcal{F}(\kappa_l).
\end{align*} 
Here, the $U_q(\g(\cmA))$-module structure of the tensor product comes from the comultiplication of $U_q(\g(\cmA))$ given by, for $i\in I$, 
\begin{align*}
\Delta: K_i \longmapsto K_i \otimes K_i, \quad e_i \longmapsto e_i \otimes K_i + 1 \otimes e_i, \quad f_i \longmapsto f_i \otimes 1 + K_i^{-1} \otimes f_i,  
\end{align*}
where $K_i = q^{\frac{(\alpha_i, \alpha_i)}{2}h_i}$. (\protect{cf.~\cite[Section 3.1]{BK09}})

Let $\Lambda = \sum_{t=1}^l \Lambda_{ \overline{\kappa_t}} $ and $V_q(\Lambda)$ the irreducible highest weight $U_q(\g(\cmA))$-module with highest weight $\Lambda$. 
As $\varnothing$ is a highest weight vector of $\mathcal{F}(\kappa)$ with highest weight $\Lambda $, 
we have a canonical $U_q(\g(\cmA))$-module epimorphism
\begin{align} \label{Eq: pk}
p_\kappa : \mathcal{F}(\kappa) \longtwoheadrightarrow V_q(\Lambda), \qquad \varnothing \longmapsto v_\Lambda,
\end{align}
where $v_\Lambda$ is a highest weight vector of $V_q(\Lambda)$.

\subsection{Tableaux} \

We will mostly adopt the notation of \cite{BKW11,kmr} for tableaux.

Let $\la \in \Par_n$. A $\la$-tableau is a bijection $\ttt :[\la] \rightarrow \{1,\dots,n\}$. We depict $\ttt$ by filling each node $(r,c,t)\in [\la]$ with $\ttt(r,c,t)$. We say that a tableau $\ttt$ is \emph{row-strict} if the entries increase along the rows of each component of $\ttt$, and \emph{column-strict} if the entries increase down the columns of each component of $\ttt$. If $\ttt$ is both row- and column-strict, we call it \emph{standard}. We denote the set of standard tableaux by $\ST\la$, the set of row-strict tableaux by $\rST\la$ and the set of row-strict tableaux which are not standard by $\rT\la$ = $\rST\la \setminus \ST\la$. Note that the symmetric group $\sym n$ acts naturally on the left on the set of tableaux.

For each $\la$-tableau $\ttt$, we have the associated residue sequence
\[
\res\ttt = (  \res { \ttt^{-1}(1) } ,\   \res { \ttt^{-1}(2)} , \dots,  \res { \ttt^{-1}(n)} ).
\]

Let $\ttt^\la$ be the \emph{initial tableau}, which is the distinguished tableau 
where we fill the nodes with $1,\dots, n$ first along successive rows in $\la^{(1)}$, then $\la^{(2)}$, and so on.
Then for each $\la$-tableau, $\ttt$, we may define the permutation $w^\ttt \in \sym n$ by $w^\ttt \ttt^\la = \ttt$ and the length $\ell(\ttt)\in\Z_{\ge0}$ by $\ell(\ttt)=\ell(w^\ttt)$.
We simply write $T \ge_L T'$ when $w^T \ge_L w^{T'}$ for tableaux $T$ and $T'$.

\begin{ex}
Continuing our previous example,

\begin{align*}
\ttt^\la = \ &\young(1234,567,8,9)\\
&\varnothing\\
&\mkern-4mu \young(<10><11><12>,<13><14>,<15>).
\end{align*}
\end{ex}

We define the \emph{dominance order} on $\la$-tableaux by setting $\ttt \dom \tts$ if and only if $w^\ttt \preccurlyeq w^\tts$. The matchup of terminology and notation with the dominance order on partitions is justified by \cref{shpdom}. Note in particular that $\ttt^\la \dom \ttt$ for all $\la$-tableaux $\ttt$.

First, we introduce one more concept. Let $\ttt$ be a $\la$-tableau and $0\leq m\leq n$. We denote by $\tabupto\ttt m$ the set of nodes of $[\la]$ whose entries are less than or equal to $m$. If $\ttt\in \ST\la$, then $\tabupto\ttt m$ is a tableau for some multipartition, which we call $\shp\ttt m$. If $\ttt\in \rT\la$, then $\tabupto\ttt m$ is a tableau 
for some multicomposition, which we also call $\shp\ttt m$.

\begin{lem}{\cite[Theorem 3.8]{Mathas}}\label{shpdom}
Suppose $\la\in \Par_n$ and $\ttt,\tts\in \rST\la$. Then $\ttt \dom \tts$ if and only if $\shp\ttt m \dom \shp\tts m$ for all $1\leq m\leq n$.
\end{lem}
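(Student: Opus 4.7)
The plan is to recast both sides of the claimed equivalence as inequalities on counts attached to the permutations $w^\ttt, w^\tts \in \sym n$, and then invoke the classical tableau criterion for the Bruhat order.

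First I would use the reading order on $[\la]$ determined by $\ttt^\la$ to identify the set of nodes with $\{1, \ldots, n\}$. Under this identification one checks that the set of nodes occupied by entries $\le m$ in $\ttt$ is exactly $(w^\ttt)^{-1}(\{1, \ldots, m\})$. Setting $N(t, k) := |\la^{(1)}| + \cdots + |\la^{(t-1)}| + \la^{(t)}_1 + \cdots + \la^{(t)}_k$ and $f^\ttt(N, m) := |\{p \le N : w^\ttt(p) \le m\}|$ (and analogously $f^\tts$), the definition of dominance of multicompositions unpacks to
\[
\shp\ttt m \dom \shp\tts m \iff f^\ttt(N(t,k), m) \ge f^\tts(N(t,k), m) \text{ for all } t, k.
\]
Meanwhile the tableau criterion for the Bruhat order on $\sym n$ states that
\[
w^\ttt \preccurlyeq w^\tts \iff f^\ttt(N, m) \ge f^\tts(N, m) \text{ for all } 1 \le N, m \le n.
\]
Comparing the two right-hand sides, the forward implication $\ttt \dom \tts \Rightarrow \shp\ttt m \dom \shp\tts m$ for all $m$ is immediate.

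For the converse, I would extend the inequality $f^\ttt(N, m) \ge f^\tts(N, m)$ from $N \in \{N(t, k)\}$ to every $1 \le N \le n$. If $N = N(t, k) + c$ with $0 < c < \la^{(t)}_{k+1}$, then row-strictness of $\ttt$ lets one write $f^\ttt(N, m) = f^\ttt(N(t, k), m) + \min(c, a^{(t)}_{k+1})$, where $a^{(t)}_{k+1}$ denotes the number of entries $\le m$ in row $k+1$ of the $t$-th component of $\ttt$; the analogous formula holds for $\tts$ with some $b^{(t)}_{k+1}$. A short case analysis, splitting on how $c$ compares to $a^{(t)}_{k+1}$ and $b^{(t)}_{k+1}$ and invoking shape dominance at both $(t, k)$ and $(t, k+1)$, then yields the desired inequality.

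The main obstacle is the sub-case $a^{(t)}_{k+1} < c \le b^{(t)}_{k+1}$, where shape dominance at $(t, k)$ alone is insufficient; the trick is to use shape dominance at $(t, k+1)$, which provides $f^\ttt(N(t, k), m) + a^{(t)}_{k+1} \ge f^\tts(N(t, k), m) + b^{(t)}_{k+1}$, and then absorb the remainder via $b^{(t)}_{k+1} \ge c$. Once this sub-case is dispatched, the tableau criterion finishes the proof of the converse.
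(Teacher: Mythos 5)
The paper does not prove \cref{shpdom}; it simply cites \cite[Theorem 3.8]{Mathas}. Your proposal, by contrast, gives a complete self-contained argument, so the relevant question is whether it is correct — and it is. You correctly translate both sides into inequalities on the counting function $f^\ttt(N,m) = |\{p\le N : w^\ttt(p)\le m\}|$: the Ehresmann/tableau criterion says $w^\ttt\preccurlyeq w^\tts$ iff $f^\ttt(N,m)\ge f^\tts(N,m)$ for all $N,m$, while shape dominance of $\shp\ttt m$ over $\shp\tts m$ is exactly the same inequality restricted to $N\in\{N(t,k)\}$. The forward direction is then immediate, and the converse reduces to interpolating between consecutive row-break values $N(t,k)$ and $N(t,k+1)$. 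Here the key observation — that row-strictness forces $f^\ttt(N(t,k)+c,m)=f^\ttt(N(t,k),m)+\min(c,a^{(t)}_{k+1})$ because the entries $\le m$ in a row occupy an initial segment — is correct, and your case split is sound; in particular, in the sub-case $a^{(t)}_{k+1}<c\le b^{(t)}_{k+1}$, using the inequality at $(t,k+1)$ to get $f^\ttt+a\ge f^\tts+b\ge f^\tts+c$ is exactly the right move. This is essentially the argument one finds in Mathas (and in the type-$A$ graded Specht module literature), so your route is the standard one rather than a genuinely novel alternative — it just isn't spelled out in this paper.
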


For any $\la\in\Par_n$ and $\ttt\in\ST\la$ we define the \emph{degree} $\deg\ttt$ of $\ttt$ as follows. If $n=0$ then $\ttt$ is the unique $\varnothing$-tableau and we set $\deg\ttt:=0$. Otherwise, let $A=\ttt^{-1}(n)\in[\la]$ and suppose $A$ is an $i$-node. We set inductively
\begin{align} \label{Eq: def of deg}
\deg\ttt := \deg\tabupto\ttt{n-1} + d_A(\la).
\end{align}

\begin{ex}
Let $\ell=\infty$, $\kappa = (2,-1)$ and $\la = ((2,2,1),(3,2))$. Then the residue pattern of $\la$ is
\begin{align*}
&\young(23,12,0)\\
&\young(101,21)
\end{align*}
and if $\ttt$ is the tableau
\begin{align*}
&\young(14,25,6)\\
&\young(37<10>,89)
\end{align*}
we have
\[
\deg\ttt = 0 +1+0+0+1 +2 +0+0+0 -1 = 3.
\]
The nodes contributing to the degree are those containing the entries 2 (a 1-node), 5 (a 2-node), 6 (a 0-node) and 10 (a 1-node).

\end{ex}

\subsection{Garnir tableaux}\

\begin{defn}
Let $\la\in \Par_n$ and $A=(r,c,t)\in[\la]$. We call $A$ a \emph{Garnir node} if $(r+1,c,t)\in[\la]$. For a Garnir node $A\in[\la]$, the \emph{Garnir belt} $\belt^A$ is the set of nodes
\[
\{(r,a,t)\in[\la] \mid c \leq a\leq \la^{(t)}_r\} \cup \{(r+1,a,t)\in[\la] \mid 1 \leq a\leq c\}.
\]
\end{defn}

Finally, for a Garnir node $A\in[\la]$, the \emph{Garnir tableau} $\ttg^A$ is the $\la$-tableau which agrees with the initial tableau $\ttt^\la$ outside of $\belt^A$ and has the entries $u,u+1,\dots,v$ from the bottom left to the top right of $\belt^A$, where $u=\ttt^\la(r,c,t)$ and $v=\ttt^\la(r+1,c,t)$.
Then
\begin{align} \label{exp of GA}
w^{\ttg^A} = S_2(a, \la^{(t)}_r-c+1,c),
\end{align}
where $a = \sum_{i=1}^{t-1} |\lambda^{(i)}| + \sum_{j=1}^{r-1} \lambda^{(t)}_j + c-1$.
Note that $S_2(a, \la^{(t)}_r-c+1,c)$ is $321$-avoiding so that $w^{\ttg^A}$ is fully commutative. See \cite[Lemma 2.1]{bjs93} for example.

\begin{ex}
Let $\la = ((4,3,1,1),\varnothing, (3,2,1))$ and $A=(1,3,1)$. Then the Garnir tableau $\ttg^A$, with the Garnir belt $\belt^A$ shaded, is as follows.

\begin{align*}
\ttg^A = \ &\young(12!\gr67,345,!\wh8,9)\\
&\varnothing\\
&\mkern-4mu \young(<10><11><12>,<13><14>,<15>)
\end{align*}
\end{ex}

The following lemma is an easy generalisation of \cite[Lemma 3.14]{Mathas} but we include a proof for the reader's convenience. 
This lemma and \cref{lem: garnir} will be used in the proof of \cref{Thm: Specht modules} in \cref{Sec: proof of the main thm}.

\begin{lem}\label{straighten}
Let $\lambda \in \Par_n$ and  $\ttt \in \rT\la$. If $A=(r,c,t)\in [\la]$ with $\ttt(r,c,t) > \ttt(r+1,c,t)$, 
then there is an element $w \in \sg_n$ such that $\ttt = w \ttg^A$ and $\ell(\ttt) = \ell(w) + \ell(\ttg^A)$. That is, $w^\ttt \geq_L w^{\ttg^A}$. 
Conversely, if $\ttt = w \ttg^A$ with $\ell(\ttt) = \ell(w) + \ell(\ttg^A)$ then $\ttt \in \rT\la$.
\end{lem}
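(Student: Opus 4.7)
The plan is to reduce both directions to an inversion-preservation check on the block permutation $w^{\ttg^A} = \shf_a(w[p,q])$ from \cref{exp of GA}, where $p := \la^{(t)}_r - c + 1$ and $q := c$. Unpacking the two-line description of $w[a,b]$, one sees that $w^{\ttg^A}$ acts by $a + k_1 \mapsto a + c + k_1$ for $1 \le k_1 \le p$ and $a + p + k_2 \mapsto a + k_2$ for $1 \le k_2 \le q$, fixing everything else, and that its $pq = \ell(w^{\ttg^A})$ inversions are exactly the pairs $(a + k_1,\, a + p + k_2)$. Since $\ttt^\la$ places $a + k_1$ at $(r, c + k_1 - 1, t)$ and $a + p + k_2$ at $(r + 1, k_2, t)$, the relation $w^\ttt\,\ttt^\la = \ttt$ translates into the identities $w^\ttt(a + k_1) = \ttt(r, c + k_1 - 1, t)$ and $w^\ttt(a + p + k_2) = \ttt(r + 1, k_2, t)$.

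For the forward direction, I take $w \in \sg_n$ to be the unique permutation with $\ttt = w\,\ttg^A$, so that $w^\ttt = w\, w^{\ttg^A}$. The equality $\ell(\ttt) = \ell(w) + \ell(\ttg^A)$ is equivalent, by the standard inversion criterion for left weak order, to requiring that every inversion of $w^{\ttg^A}$ remains an inversion of $w^\ttt$. The translation above turns this into the family of strict inequalities $\ttt(r, c + k_1 - 1, t) > \ttt(r + 1, k_2, t)$ for $1 \le k_1 \le p$ and $1 \le k_2 \le q$. Row-strictness of $\ttt$ gives $\ttt(r, c + k_1 - 1, t) \ge \ttt(r, c, t)$ (strict when $k_1 > 1$) and $\ttt(r + 1, k_2, t) \le \ttt(r + 1, c, t)$ (strict when $k_2 < c$); combined with the hypothesised violation $\ttt(r, c, t) > \ttt(r + 1, c, t)$ at the extremal pair $(1, c)$, strictness holds in all $pq$ cases.

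For the converse, the same equivalence read backwards forces these inequalities on $\ttt = w\,\ttg^A$, and taking $(k_1, k_2) = (1, c)$ immediately yields the column violation $\ttt(r, c, t) > \ttt(r + 1, c, t)$, so $\ttt$ is not standard. The remaining task is row-strictness of $\ttt$, which is the main obstacle: the inversion criterion controls the $pq$ inversions of $w^{\ttg^A}$ but not the within-row non-inversions that govern row-strictness, so the proof must go beyond the abstract weak-order criterion and exploit the explicit combinatorial structure of $\ttg^A$---for instance, by tracking a length-additive reduced expression for $w$ step by step against the consecutive-integer filling of $\belt^A$ and the coincidence of $\ttg^A$ with $\ttt^\la$ outside the belt.
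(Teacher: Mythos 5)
Your forward direction is correct and takes a genuinely different route from the paper. The paper argues by reverse induction on the dominance order, constructing at each step a single-box swap $s_i$ that increases dominance while preserving both row-strictness and the column violation at $A$; you instead invoke the inversion-set characterisation of the left weak order, compute the inversion set of $\shf_a(w[p,q])$ explicitly, and reduce the claim to the $pq$ inequalities $\ttt(r,c+k_1-1,t) > \ttt(r+1,k_2,t)$, which follow from row-strictness and the hypothesised violation by a single chain of comparisons. This is a clean, direct replacement for the paper's case analysis; the paper's route has the advantage of being self-contained and of exhibiting the concrete transposition chain, a mechanism reused in \cref{lem: garnir} and in the induction of \cref{lem: closedness}, but your argument is shorter and more transparent for this statement.

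The converse, however, is not proved, and the gap you flag is not merely unfinished bookkeeping: the weak-order hypothesis cannot, by itself, deliver row-strictness. The inclusion of inversion sets $\mathrm{Inv}(w^{\ttg^A}) \subseteq \mathrm{Inv}(w^\ttt)$ forces certain inversions of $w^\ttt$ to be \emph{present}, whereas row-strictness requires the within-row consecutive pairs to be \emph{absent} from $\mathrm{Inv}(w^\ttt)$, and nothing in the hypothesis bounds the inversion set of $w^\ttt$ from above. Concretely, take $\la=(2,1)$ and $A=(1,1)$, so that $w^{\ttg^A}=s_1s_2$; then $w=s_2$ gives $\ell(s_2s_1s_2)=3=\ell(w)+\ell(\ttg^A)$, yet $s_2\ttg^A$ has first row $3,2$ and is not row-strict, so $s_2\ttg^A\notin\rT\la$. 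Thus, read with $w$ arbitrary, the converse is false as stated; the substantive and true part is that a \emph{row-strict} tableau $\ttt$ with $w^\ttt\ge_L w^{\ttg^A}$ cannot be standard, which your inversion computation does prove and which is also what the paper's appeal to \cite[Lemma 1.5]{DJ86} establishes. The cleanest repair for your proposal (and arguably for the statement itself) is to carry $\ttt\in\rST\la$ as a hypothesis in the converse; otherwise you would need a separate argument constraining the admissible $w$, which the inversion criterion alone will not provide.
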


\begin{proof}
Let $u:=\ttt^\la(r,c,t)$, $v:=\ttt^\la(r+1,c,t)$, $a:= \ttt(r,c,t)$ and $b:=\ttt(r+1,c,t)$. 
If $\ttt=\ttg^A$, the result is clear. So we suppose that $\ttt\ne \ttg^A$, and we will choose a basic transposition $s_i$ such that $s_i\ttt \in \rT\la$ and $s_i\ttt \doms \ttt$, from which the result follows by (reverse) induction on the dominance order $\doms$.

If $\ttt$ coincides with $\ttt^\la$ outside of $\belt^A$, there is a gap in the reading word of $\ttt$ in either the first or the second row of $\belt^A$ -- otherwise $u, u+1,\dots, v$ are split into two sets of consecutive numbers and as $\ttt(r,c,t)>\ttt(r+1,c,t)$ the only way to fill in the numbers is $\ttt=\ttg^A$. Thus, we may choose $i+1$ in the first row and $i$ in the second row for some $i$ with $(i,i+1)\ne(b,a)$, so that $s_i\ttt(r,c,t) > s_i\ttt(r+1,c,t)$.

Otherwise, we may choose $s_i$ so that $s_i\ttt(r,c,t) > s_i\ttt(r+1,c,t)$ as follows. First suppose that the reading word of $\ttt$ begins $1,2,\dots,m,m'$ for some $m'>m+1$ and $m<u$. Then setting $i=m'-1$ suffices.
Next suppose that the reading word of $\ttt$ ends $m',m,m+1,\dots,n$ for some $v<m'<m-1$. Then, setting $i=m'$ suffices.

For the converse statement, we argue by induction on $\ell(\ttt)$.
Suppose $\ttt = w \ttg^A$ with $\ell(\ttt) = \ell(w) + \ell(\ttg^A)$ and $s_i \ttt \domsby \ttt$. Then if $s_i\ttt$ is standard, so is $\ttt$, by \cite[Lemma 1.5]{DJ86}.
But this contradicts the induction hypothesis.
\end{proof}

\begin{lem} \label{lem: garnir}
Let $\lambda \in \Par_n$ and $\ttt\in \rT\la$.
\begin{enumerate}
\item If $\ttt(r,c,t)= \ttt(r+1,c,t)+1$, then
there is an element $w \in \sg_n$ such that, for $A:=(r,c,t) \in [\la]$,
\begin{enumerate}
\item[(i)]
$\ttt = w \ttg^A$,

\item[(ii)] $s_p w = w s_q$, where $p = \ttt(r+1,c,t)$ and $q = \ttg^A (r+1,c,t)$.
\end{enumerate}

\item If $\ttt(r,c+1,t)= \ttt(r,c,t)+1$, then
there is a Garnir node $A \in [\la]$ and $w \in \sg_n$  such that
\begin{enumerate}
\item[(i)] $ \ttg^A(r,c+1,t) = \ttg^A(r,c,t)+1$,
\item[(ii)] $\ttt = w \ttg^{A}$,
\item[(iii)] $s_p w = w s_q$, where $p = \ttt(r,c,t)$ and $q = \ttg^{A} (r,c,t)$.
\end{enumerate}

\end{enumerate}
\end{lem}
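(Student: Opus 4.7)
The strategy for both parts is to reduce the verification of the conjugation relation $s_p w = w s_q$ to an application of \cref{simplemove}, once the appropriate Garnir node $A$ and permutation $w$ have been identified.

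For Part (1), the hypothesis $\ttt(r,c,t)=\ttt(r+1,c,t)+1$ implies in particular that $\ttt(r,c,t)>\ttt(r+1,c,t)$, so applying \cref{straighten} with $A=(r,c,t)$ immediately yields $w\in\sg_n$ with $\ttt=w\,\ttg^A$ and $\ell(\ttt)=\ell(w)+\ell(\ttg^A)$, which is (i). For (ii), I use the explicit Garnir filling: the bottom row of $\belt^A$ (consisting of $c$ cells in row $r+1$) is filled with the entries $u, u+1,\dots,u+c-1$, and then the top row receives $u+c,\dots,v$, where $u=\ttt^\la(r,c,t)$ and $v=\ttt^\la(r+1,c,t)$. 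Hence $q=\ttg^A(r+1,c,t)=u+c-1$ and $\ttg^A(r,c,t)=q+1$. It follows that $w(q)=\ttt(r+1,c,t)=p$ and $w(q+1)=\ttt(r,c,t)=p+1$, so \cref{simplemove} with $i=q$ gives $s_p w = w s_q$.

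For Part (2), the key observation is that once a Garnir node $A$ satisfying (i) has been located, both (ii) and (iii) follow automatically: (ii) holds by defining $w$ via $w(\ttg^A(x))=\ttt(x)$ for all $x\in[\la]$ (which is well-defined since $\ttg^A$ is a bijection), and then (iii) follows from $w(q)=\ttt(r,c,t)=p$ together with $w(q+1)=\ttt(r,c+1,t)=p+1$ (using the hypothesis and (i)) via \cref{simplemove}. The problem therefore reduces to producing a Garnir node $A$ with $\ttg^A(r,c+1,t)=\ttg^A(r,c,t)+1$. My preferred choice is $A=(r,c,t)$, which is a Garnir node exactly when $(r+1,c,t)\in[\la]$; then both cells lie in the top row of $\belt^A$ and receive consecutive entries under the Garnir filling, as in Part (1). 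If $(r+1,c,t)\notin[\la]$, I select a Garnir node $A$ whose Garnir belt avoids the pair $\{(r,c,t),(r,c+1,t)\}$, so that these cells retain their consecutive initial values $\ttt^\la(r,c,t)$ and $\ttt^\la(r,c,t)+1$ in $\ttg^A$.

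The main obstacle will be verifying the existence of a suitable Garnir node in Part (2) when the natural choice $A=(r,c,t)$ is unavailable. This requires a short combinatorial case analysis: candidates include $A=(r-1,c+1,t)$ (when $\la^{(t)}_{r-1}\ge c+1$), or a Garnir node lying in a different component or in a row of component $t$ sufficiently separated from row $r$. Since $\ttt\in\rT\la$ forces at least one column violation, $[\la]$ always admits a Garnir node, and the flexibility in choosing one guarantees that this existence check does not present serious difficulty.
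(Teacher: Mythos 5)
Part (1) is correct and follows essentially the paper's argument (apply \cref{straighten}, then \cref{simplemove}). The trouble is in Part (2), where you have two genuine gaps.

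First, your shortcut of defining $w$ purely via the bijection $w(\ttg^A(x))=\ttt(x)$ gives $\ttt=w\ttg^A$ but does \emph{not} give the length additivity $\ell(\ttt)=\ell(w)+\ell(\ttg^A)$. That condition is not written into the lemma statement, but it is exactly what the paper establishes (via \cref{straighten}, which requires choosing $A=(r',c',t')$ so that $\ttt(r',c',t')>\ttt(r'+1,c',t')$), and it is essential downstream: in the proof of \cref{lem: closedness}, the pair $(B,u)$ supplied by \cref{lem: garnir}(2) is fed into \cref{lem: closedness}(1), whose hypotheses demand precisely $\ell(\ttt)=\ell(u)+\ell(\ttg^B)$. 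So your $w$, while satisfying (ii)--(iii) as literally stated, is not the object the lemma must produce. The node $A$ therefore cannot be chosen arbitrarily subject only to $\ttg^A(r,c+1,t)=\ttg^A(r,c,t)+1$; it must also be a node at which $\ttt$ has a column violation.

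Second, your fallback criterion for choosing $A$ when $(r+1,c,t)\notin[\la]$ --- pick a Garnir node whose belt avoids $\{(r,c,t),(r,c+1,t)\}$ --- is both too strong and sometimes impossible. Take $\la=(2,2)$, $\ttt$ with first row $(1,4)$ and second row $(2,3)$, and $(r,c)=(2,1)$. Both Garnir belts $\belt^{(1,1)}$ and $\belt^{(1,2)}$ contain $(2,1)$, so no belt avoids the pair; yet $A=(1,2)$ works, since $\ttg^{(1,2)}(2,2)=\ttg^{(1,2)}(2,1)+1$ and $\ttt(1,2)=4>3=\ttt(2,2)$. The paper's key observation is sharper: for a Garnir node $A=(r',c',t')$, the identity $\ttg^A(x,y+1,z)=\ttg^A(x,y,z)+1$ fails only at the two ``corner'' cells $(x,y,z)=(r',c'-1,t')$ and $(x,y,z)=(r'+1,c',t')$, so what you need is for $(r,c,t)$ to differ from these, not for the belt to miss $(r,c,t)$ entirely. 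The existence of such an $A$ with a column violation is then settled by the paper's three-case argument (choose $A=(r,c,t)$, or $A=(r-1,c+1,t)$, or any violating $A$ in the remaining case, where $(r-1,c,t)$ and $(r,c+1,t)$ are automatically excluded because they have no column violations). Your proposal acknowledges that an existence check remains, but the appeal to ``flexibility'' is not a proof and the proposed criterion would actively steer you wrong.
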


\begin{proof}
\begin{enumerate}
\item Part (i) follows from \cref{straighten}. For part (ii), note that $\ttg^A(r,c,t) = q+1$ by definition, so that
\begin{align*}
1 &= \ttt(r,c,t) - \ttt(r+1,c,t) = w(\ttg^A(r,c,t)) - w(\ttg^A(r+1,c,t)) \\
  &= w(q+1) - w(q).
\end{align*}
It follows from \cref{simplemove} that $ws_q=s_{w(q)}w=s_pw$.
\item We begin by choosing a node $A=(r',c',t') \in [\la]$ such that $\ttt(r',c',t')>\ttt(r'+1,c',t')$ and $\ttg^A(r,c+1,t) = \ttg^A(r,c,t) + 1$ as follows.

If $\ttt(r,c+1,t)>\ttt(r+1,c+1,t)$ then we know that $\ttt(r,c,t)>\ttt(r+1,c,t)$, so we may choose $A=(r,c,t)$. 

Next, suppose $\ttt(r-1,c,t)>\ttt(r,c,t)$. Then we have $\ttt(r-1,c+1,t)>\ttt(r,c+1,t)$, and we may choose $A= (r-1,c+1,t)$.

Otherwise, if $\ttt(r,c+1,t)<\ttt(r+1,c+1,t)$ and $\ttt(r-1,c,t)<\ttt(r,c,t)$, then, as $\ttt \in \rT\la$, there is some node 
$A=(r',c',t')$ such that $\ttt(r',c',t') > \ttt(r'+1,c',t')$ and $(r',c',t') \neq (r-1,c,t),(r,c+1,t)$. Since 
$\ttg^A(x,y+1,z) = \ttg^A(x,y,z)+1$ holds unless $(x,y,z)=(r',c'-1,t'), (r'+1,c',t')$, $(r,c,t)\ne(r',c'-1,t'), (r'+1,c',t')$
implies $ \ttg^A(r,c+1,t) = \ttg^A(r,c,t)+1$. Hence (i) is proved.

Now, by \cref{straighten} and the fact that $\ttt(r',c',t') > \ttt(r'+1,c',t')$, there is some $w\in \sg_n$ such that 
$\ttt = w\ttg^A$ and $\ell(\ttt) = \ell(w) + \ell(\ttg^A)$. We have proved (ii). Moreover,
\begin{align*}
1 &= \ttt(r,c+1,t) - \ttt(r,c,t) = w(\ttg^A(r,c+1,t)) - w(\ttg^A(r,c,t)) \\
  &= w(q+1) - w(q).
\end{align*}
Thus, (iii) follows from \cref{simplemove}.\qedhere
\end{enumerate}
\end{proof}

\begin{lem}\label{garjoin}
Let $A$ and $B$ be distinct Garnir nodes of $\la \in \Par_n$.
Then there is a unique tableau $\ttg^{A, B} \in \rT\la$ such that
\begin{enumerate}
\item $\ttg^{A,B} \geq_L \ttg^A $ and $\ttg^{A,B} \geq_L \ttg^B$,
\item $\ttt \geq_L \ttg^{A,B}$ for any $\ttt \in \rST\la$ with $\ttt\geq_L \ttg^A$ and $\ttt\geq_L \ttg^B$.
\end{enumerate}
\end{lem}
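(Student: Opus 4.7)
The plan is to construct $\ttg^{A,B}$ directly and then verify both defining properties. The construction splits according to the relative position of the Garnir belts $\belt^A$ and $\belt^B$.

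When $\belt^A \cap \belt^B = \varnothing$---which happens if $A$ and $B$ lie in different components, in non-adjacent rows of the same component, or in adjacent rows of the same component but with columns sufficiently separated---the permutations $w^{\ttg^A}$ and $w^{\ttg^B}$ act on disjoint intervals of letters, so they commute and their lengths add. We set $\ttg^{A,B} := w^{\ttg^A} w^{\ttg^B}\, \ttt^\la$, which is visibly row-strict and non-standard; the length additivity yields $\ttg^{A,B} \geq_L \ttg^A$ and $\ttg^{A,B} \geq_L \ttg^B$.

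In the overlapping case, $A$ and $B$ must share a component $t$ and lie in adjacent or identical rows. Up to relabelling there are two subcases: $A=(r,c_A,t)$, $B=(r,c_B,t)$ with $c_A<c_B$; or $A=(r,c_A,t)$, $B=(r+1,c_B,t)$ with $c_B\leq c_A$. In each subcase $\belt^A\cup\belt^B$ is a connected staircase region spanning at most three consecutive rows, and I define $\ttg^{A,B}$ to agree with $\ttt^\la$ off the union and to fill the union with the relevant block of consecutive integers in the unique row-strict pattern reading from the bottom-left cell to the top-right cell, exactly as in the original Garnir construction. Inspection then shows $\ttg^{A,B}\in\rT\la$. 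To see $\ttg^{A,B}\geq_L\ttg^A$ here, I would use \cref{exp of GA} together with \cref{use of braid rels}(1) to factor $w^{\ttg^{A,B}}$ as a product of simple reflections having $w^{\ttg^A}$ as a suffix with length additivity: the extra factor is supported on $\belt^B\setminus\belt^A$ and is commuted past the block decomposition of $w^{\ttg^A}$ by braid and commutation moves analogous to those in the proof of \cref{use of braid rels}(2). The argument for $B$ is symmetric.

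For the minimality clause, let $\ttt\in\rST\la$ with $\ttt\geq_L\ttg^A$ and $\ttt\geq_L\ttg^B$. Then $\ttt\in\rT\la$ by the converse part of \cref{straighten}, and iterating the bumping construction used in that proof with the descents at both $A$ and $B$ produces $u\in\sg_n$ such that $w^\ttt=u\, w^{\ttg^{A,B}}$ with $\ell(w^\ttt)=\ell(u)+\ell(w^{\ttg^{A,B}})$, hence $\ttt\geq_L\ttg^{A,B}$. Uniqueness of the minimal upper bound is then automatic. I expect the main obstacle to be the explicit factorisation $w^{\ttg^{A,B}} = v\, w^{\ttg^A}$ with length additivity in the overlap subcases, since it requires a careful braid manipulation of the block permutations $\mathrm{S}_2(\cdot,\cdot,\cdot)$ analogous to, but somewhat more elaborate than, the derivation in \cref{use of braid rels}(2); the full commutativity of Garnir permutations should keep the casework tractable.
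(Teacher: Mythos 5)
The paper proves this lemma in one line, by citing the lattice structure of $\rST\la$ under the left weak order (Björner--Wachs) and taking $\ttg^{A,B}:=\ttg^A\vee\ttg^B$; the explicit formula for the join is given separately in \cref{gengar} and checked against this abstract definition in \cref{construction of G^{A,B}}. You take the opposite route: give an explicit formula for $\ttg^{A,B}$ and verify both clauses directly. That strategy could work in principle, but the formula you propose in the overlapping cases does not produce the join, so clause~(2) fails for your candidate.

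Concretely, in the same-row subcase $A=(r,c,t)$, $B=(r,c',t)$ with $c<c'$, you fill $\belt^A\cup\belt^B$ bottom-left to top-right ``exactly as in the original Garnir construction,'' i.e.\ all of row $r+1$ (columns $1,\dots,c'$) and then all of row $r$ (columns $c,\dots,\la^{(t)}_r$). Run this in the paper's own example $\la=((1),(10,9,6,2))$, $A=(2,3,2)$, $B=(2,6,2)$: your filling $\ttt'$ has row~$3$ entries $14,\dots,19$ and row~$2$ entries $12,13,20,\dots,26$, a permutation of $\{14,\dots,26\}$ with $7\times 6=42$ inversions. The paper's $\ttg^{A,B}$ from \cref{gengar}(2) instead fills the two pieces $\belt^{A,B}(1)$ and $\belt^{A,B}(2)$ \emph{separately}, giving row~$3$ entries $14,15,16,20,21,22$ and row~$2$ entries $12,13,17,18,19,23,\dots,26$, with only $33$ inversions, and one checks (as in \cref{construction of G^{A,B}}) that it is $\geq_L\ttg^A$ and $\geq_L\ttg^B$. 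If your $\ttt'$ satisfied clause~(2), the paper's $\ttg^{A,B}$ would then have to be $\geq_L\ttt'$, forcing $33\geq 42$, a contradiction; so $\ttt'$ is an upper bound but not the minimal one. A similar discrepancy occurs in the adjacent-row subcase, where the paper's generalised belt $\belt^{A,B}$ is not even $\belt^A\cup\belt^B$ and the cells outside the belt within the three affected rows must themselves be rearranged. Beyond the formula error, neither verification is actually carried out: the length-additive factorisation $w^{\ttg^{A,B}}=v\,w^{\ttg^A}$ is only asserted to follow from braid moves ``analogous to \cref{use of braid rels}(2),'' and minimality is asserted to follow by ``iterating the bumping construction'' of \cref{straighten}. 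These are exactly the technical points a constructive proof must settle, and it is precisely to avoid them that the paper establishes the join abstractly in this lemma before exhibiting and verifying its explicit formula.
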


\begin{proof}
It is known that $\rST\la$ is a lattice with respect to the left order. See for example \cite[Theorem 7.1]{bw88} (with some slight modification to generalise to $\rST\la$). Thus $\ttg^{A,B} = \ttg^A \vee \ttg^B$.
\end{proof}

We redefine $\ttg^{A, B}$ in \cref{gengar} below in a more concrete manner and show in \cref{construction of G^{A,B}} that it coincides with $\ttg^{A, B}$ in \cref{garjoin}.


\begin{defn}\label{gengar}
Suppose $A,B \in [\la]$ are distinct Garnir nodes. We define the sets $\belt^A(2)$ and $\belt^B(1)$ to be the second row of $\belt^A$ and the first row of $\belt^B$, respectively.

We define the \emph{generalised Garnir belt} $\belt^{A,B}$ of $[\la]$ to be the following set of nodes.

\begin{enumerate}
\item If $\belt^A \cap \belt^B =\emptyset$, then $\belt^{A,B}:= \belt^A \cup \belt^B$.
\item If $A=(r,c,t)$ and $B=(r,c',t)$ for some $c'> c$, then
\begin{align*}
\belt^{A,B}:&= \{(r,a,t) \mid a\geq c\} \cup \{(r+1,a,t) \mid a\leq c'\}\\
&= \belt^A\cup\belt^B\\
&= (\belt^A\setminus\belt^B(1)) \cup (\belt^B\setminus\belt^A(2)).
\end{align*}
In this case, we set $\belt^{A,B}(1) = \belt^A\setminus\belt^B(1)$ and $\belt^{A,B}(2) = \belt^B\setminus\belt^A(2)$.
\item If $A=(r,c,t)$ and $B=(r-1,c',t)$ for some $c' \geq c$, then
\[
\belt^{A,B}:= \{(r-1,a,t) \mid a\geq c'\} \cup \{(r,a,t) \mid c\leq a\leq c'\} \cup \{(r+1,a,t) \mid a\leq c\}.
\]
\end{enumerate}

Finally, we define the \emph{generalised Garnir tableau} in the first two cases above to be the $\la$-tableau $\ttg^{A,B}$ which agrees with $\ttt^\la$ outside of $\belt^{A,B}$ and has the entries of $\belt^{A,B}$ as follows:
\begin{enumerate}
\item If $\belt^A \cap \belt^B =\emptyset$, then we fill each of $\belt^A$ and $\belt^B$ as in $\ttg^A$ and $\ttg^B$, respectively.
\item If $A=(r,c,t)$ and $B=(r,c',t)$ for some $c'\geq c$, then we first fill the entries of $\belt^{A,B}(1)$, from bottom left to top right, and then we fill the entries of $\belt^{A,B}(2)$, from bottom left to top right.
\end{enumerate}
In the third case above, $\ttg^{A,B}$ is defined as follows.
\begin{enumerate}\setcounter{enumi}{2}
\item If $A=(r,c,t)$ and $B=(r-1,c',t)$ for some $c' \geq c$, $\ttg^{A,B}$ is defined to be the $\la$-tableau which agrees with $\ttt^\la$ outside of the three rows of $[\la]$ which contain elements of $\belt^{A,B}$, and we fill the entries of these three rows first in order along rows above $\belt^{A,B}$, then from bottom left to top right in $\belt^{A,B}$, and finally in order along rows below $\belt^{A,B}$.
\end{enumerate}
\end{defn}

\begin{ex}
Let $\la = ((1),(10,9,6,2))$ and $A=(2,3,2)\in[\la]$. Then we have the following tableaux $\ttg^{A,B}$ in cases corresponding to \cref{gengar}, where we have shaded the generalised Garnir belts $\belt^{A,B}$ in each case.
\begin{enumerate}
\item Let $B=(1,1,2)$. Then

\begin{align*}
\ttg^{A,B} = \ &\young(1)\\
&\mkern-4mu \young(!\gr3456789<10><11><12>,2!\wh<13>!\gre<17><18><19><20><21><22><23>,<14><15><16>!\wh<24><25><26>,<27><28>)
\end{align*}

\item Let $B=(2,6,2)$. Then

\begin{align*}
\ttg^{A,B} = \ &\young(1)\\
&\mkern-4mu \young(23456789<10><11>,<12><13>!\gr<17><18><19>!\gre<23><24><25><26>,!\gr<14><15><16>!\gre<20><21><22>,!\wh<27><28>)
\end{align*}

\item Let $B=(1,6,2)$. Then
\begin{align*}
\ttg^{A,B} = \ &\young(1)\\
&\mkern-4mu \young(23456!\gr<16><17><18><19><20>,!\wh78!\gr<12><13><14><15>!\wh<21><22><23>,!\gr9<10><11>!\wh<24><25><26>,!\wh<27><28>)
\end{align*}
\end{enumerate}

\end{ex}

\begin{lem} \label{construction of G^{A,B}}
The construction of $\ttg^{A,B}$ in \cref{gengar} satisfies $\ttg^{A,B} = \ttg^A \vee \ttg^B$ and thus coincides with the tableau $\ttg^{A,B}$ defined in \cref{garjoin}.
\end{lem}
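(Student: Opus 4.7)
The plan is to verify that the tableau $\ttg^{A,B}$ constructed in \cref{gengar} satisfies both defining properties of the join $\ttg^A \vee \ttg^B$ in the lattice $(\rST\la, \geq_L)$ from \cref{garjoin}; by uniqueness of joins, this identifies the two tableaux.

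For property~(1), namely $\ttg^{A,B} \geq_L \ttg^A$ and $\ttg^{A,B} \geq_L \ttg^B$, I would work case by case through \cref{gengar}. In each case it is transparent from the filling rule that $\ttg^{A,B}$ is row-strict and that the descent $\ttg^{A,B}(r,c,t) > \ttg^{A,B}(r+1,c,t)$ at $A=(r,c,t)$ holds, together with the analogous descent at the node immediately below $B$. An application of \cref{straighten} then yields both $\ttg^{A,B} \geq_L \ttg^A$ and $\ttg^{A,B} \geq_L \ttg^B$.

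For the minimality property~(2), I use the inversion-set characterisation of the left weak order on $\sym n$: $v \geq_L w$ if and only if $\mathrm{Inv}(v) \supseteq \mathrm{Inv}(w)$, where $\mathrm{Inv}(u) = \{(i,j) : i < j,\ u(i) > u(j)\}$. Both $w^{\ttg^A}$ and $w^{\ttg^B}$ are $321$-avoiding fully commutative permutations which shuffle two consecutive intervals of entries, so their inversion sets are rectangular product intervals that can be written down explicitly in terms of the belt data. The key combinatorial claim is that, in each of the three cases of \cref{gengar}, $\mathrm{Inv}(w^{\ttg^{A,B}})$ coincides with the biclosure of $\mathrm{Inv}(w^{\ttg^A}) \cup \mathrm{Inv}(w^{\ttg^B})$ in the lattice of biclosed subsets of positive roots, which is the inversion set corresponding to the join in the weak order. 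In cases~(1) and~(2) this union is already biclosed so no extra inversions appear; in case~(3) the biclosure picks up exactly the chained inversions arising from pairing an inversion of $w^{\ttg^A}$ with one of $w^{\ttg^B}$ through the shared row~$r$, and these are precisely the additional inversions produced by the three-row filling in \cref{gengar}. Minimality then follows at once: if $\ttt \in \rST\la$ has $\ttt \geq_L \ttg^A$ and $\ttt \geq_L \ttg^B$, then $\mathrm{Inv}(w^\ttt)$ is itself biclosed and contains $\mathrm{Inv}(w^{\ttg^A}) \cup \mathrm{Inv}(w^{\ttg^B})$, hence contains its biclosure $\mathrm{Inv}(w^{\ttg^{A,B}})$, so $\ttt \geq_L \ttg^{A,B}$.

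The main obstacle will be the combinatorial bookkeeping in case~(3): I would need to verify that the prescribed filling order (entries above $\belt^{A,B}$ in reading order, then bottom-left-to-top-right inside $\belt^{A,B}$, then entries below) produces precisely the chained inversions required by biclosure and no spurious ones. This reduces to a careful inventory of inversions parametrised by the column indices $c, c'$ and the three row lengths $\la^{(t)}_{r-1}, \la^{(t)}_r, \la^{(t)}_{r+1}$, which I would organise by decomposing the three-row belt into blocks analogous to $\belt^{A,B}(1)$ and $\belt^{A,B}(2)$ from case~(2).
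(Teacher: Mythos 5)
Your proof is sound but takes a genuinely different route from the paper's. For property~(1) the two arguments coincide (both read off the descents at and below $A$ and $B$ and apply \cref{straighten}). For property~(2), the paper shows that $\ttg^{A,B}$ is a \emph{minimal} upper bound: for every simple transposition $s$ with $\ttg^{A,B} >_L s\ttg^{A,B}$, it checks from the explicit filling that $s\ttg^{A,B}$ fails to lie above one of $\ttg^A$, $\ttg^B$; the lattice structure of \cref{garjoin} then forces $\ttg^{A,B} = \ttg^A \vee \ttg^B$. You instead compute the join directly, invoking the fact that in a finite Coxeter group the inversion set of the weak-order join is the biclosure of the union of inversion sets, and claiming $\mathrm{Inv}(w^{\ttg^{A,B}})$ equals that biclosure. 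This imports heavier machinery, but it makes the minimality step automatic once the biclosure claim is established, and it implicitly identifies the join in $\sym n$ with the join in $\rST\la$ (since your candidate is row-strict) -- a point you should spell out, as it means you do not strictly need to quote the lattice structure of \cref{garjoin}. In both approaches the substantive work is the case-by-case combinatorics of the filling rule in \cref{gengar}: the paper checks each left descent, you check that the prescribed filling produces exactly the chained inversions forced by biclosure; you correctly flag case~(3) as the bottleneck, and that part would need to be written out before the argument is complete.
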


\begin{proof}
It is easy to see that $\ttg^{A,B} \geq_L \ttg^A$ and $\ttg^{A,B} \geq_L \ttg^B$, so that we have $\ttg^{A,B} \geq_L \ttg^A\vee \ttg^B$. If the inequality were strict, 
then there exists a basic transposition $s$ such that
\[
\ttg^{A,B} >_L s \ttg^{A,B} \geq_L \ttg^A\vee \ttg^B.
\]
However, the explicit construction of $\ttg^{A,B}$ shows that either $s \ttg^{A,B} \not\geq_L \ttg^A$ or $s \ttg^{A,B} \not\geq_L \ttg^B$ occurs 
for any basic transposition $s$ with $\ttg^{A,B} >_L s \ttg^{A,B}$. Hence, we must have equality.
\end{proof}

\vskip 1em

\begin{lem} \label{lem: wA wB}
Let $A = (r,c,t)$ and $B = (r',c',t')$ be Garnir nodes of $\la \in \Par_n$.
\begin{enumerate}
\item If $\belt^A \cap \belt^B = \emptyset$ then $w^{\ttg^{A, B}}$ is fully commutative.
\item If $r = r'$ and $t = t'$ then $w^{\ttg^{A, B}}$ is fully commutative.
\item If $\ttg^{A,B} = w^A \ttg^A = w^B \ttg^B$, then $w^A$ and $w^B$ are fully commutative.
\end{enumerate}
\end{lem}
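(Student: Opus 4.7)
The plan is to establish all three parts by a direct analysis of the underlying permutations, using the principle (proved via inversion counting) that any Coxeter factor $u$ or $v$ of a fully commutative (equivalently, $321$-avoiding) element $w=uv$ in a length-additive factorisation is itself fully commutative.

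For (1), the disjointness $\belt^A\cap\belt^B=\emptyset$, together with a short case analysis on the relative positions of the belts (different components; non-adjacent rows of the same component; or two adjacent rows with the belts column-separated in their shared row), shows that the entries occupied by $\belt^A$ and $\belt^B$ in $\ttt^\la$ form two disjoint intervals of integers, with one entirely less than the other. Thus $w^{\ttg^A}$ and $w^{\ttg^B}$ have disjoint supports that are intervals, one below the other, and by \cref{gengar}(1) we have $w^{\ttg^{A,B}}=w^{\ttg^A}w^{\ttg^B}$. Each factor is $321$-avoiding by \eqref{exp of GA} and the remark following it, and under the interval-separation condition a putative $321$-pattern in the product is ruled out by examining how its three positions distribute between the two supports and the fixed complement: in every such distribution the forced value ordering contradicts the descending pattern.

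For (2), let $A=(r,\alpha,t)$ and $B=(r,\beta,t)$ with $\alpha<\beta$, and set $p=\la^{(t)}_r$, $q=\la^{(t)}_{r+1}$. Using \cref{gengar}(2), $w^{\ttg^{A,B}}$ fixes everything outside rows $r$ and $r+1$ of component $t$; after suppressing the fixed part and normalising, its one-line notation decomposes as a concatenation of six increasing blocks, coming from the two portions of rows $r$ and $r+1$ outside the belt and the four portions of those rows inside $\belt^{A,B}(1)$ and $\belt^{A,B}(2)$. A $321$-pattern would select one value from each of three distinct blocks, forming a descending triple at ascending positions; a computation of the pairwise orderings of the six value-ranges shows that no such triple exists. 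The decisive inequality is that the top-row portion of $\belt^{A,B}(1)$ carries values strictly less than those of the bottom-row portion of $\belt^{A,B}(2)$, which eliminates the dangerous middle term connecting the two filling phases.

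For (3), \cref{garjoin} gives $\ttg^{A,B}\geq_L\ttg^A$, so $w^{\ttg^{A,B}}=w^A\cdot w^{\ttg^A}$ with $\ell(w^{\ttg^{A,B}})=\ell(w^A)+\ell(w^{\ttg^A})$, and similarly $w^{\ttg^{A,B}}=w^B\cdot w^{\ttg^B}$ for $B$. By the factor-inheritance principle above, it suffices to show that $w^{\ttg^{A,B}}$ is fully commutative in every case of \cref{gengar}. Cases (1) and (2) are handled above; in case (3), where $B$ sits immediately above $A$, $w^{\ttg^{A,B}}$ permutes only entries in three consecutive rows of component $t$, and its one-line notation again breaks into increasing blocks whose value-ranges can be compared pairwise by an argument structurally identical to that of part (2). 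The principal technical obstacle is the bookkeeping in part (2) and in the analogous three-row analysis needed for case (3) of \cref{gengar}: one must correctly identify the increasing blocks of the one-line notation, determine their value-ranges, and verify that no triple of blocks admits a descending value-triple at ascending positions, which is a finite but easily miscounted check.
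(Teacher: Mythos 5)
Your factor-inheritance principle is correct: if $w=uv$ with $\ell(w)=\ell(u)+\ell(v)$ and $w$ is $321$-avoiding, then so are $u$ and $v$ (an easy induction via inversion analysis, as you say). Your treatment of parts (1) and (2) is also essentially in line with the paper, which likewise verifies $321$-avoidance of $w^{\ttg^{A,B}}$ directly from the explicit two-line notation in cases (1) and (2) of \cref{gengar}.

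However, your plan for part (3) has a genuine gap. You propose to show that $w^{\ttg^{A,B}}$ is fully commutative in \emph{all three} cases of \cref{gengar}, including the overlapping-three-row case, and then invoke factor-inheritance. But $w^{\ttg^{A,B}}$ is \emph{not} $321$-avoiding in case (3). Take $\la=(3,2,1)$, $A=(2,1)$, $B=(1,2)$. Then
\[
\ttt^\la = \begin{pmatrix}1&2&3\\4&5\\6\end{pmatrix},\qquad
\ttg^{A,B} = \begin{pmatrix}1&5&6\\3&4\\2\end{pmatrix},
\]
so $w^{\ttg^{A,B}} = 1\,5\,6\,3\,4\,2$ in one-line notation, which contains the $321$-pattern $5>4>2$ at positions $2<5<6$. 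Thus the middle step of your argument ("its one-line notation again breaks into increasing blocks whose value-ranges can be compared pairwise by an argument structurally identical to that of part (2)") would have to fail — the comparison does \emph{not} come out clean in case (3), precisely because the belt wraps around three rows and the reading order of the belt is out of sync with the row-reading order outside it. In this example one checks $w^A = 1\,5\,6\,2\,3\,4 = \shf_1(w[2,3])$, which is $321$-avoiding even though $w^{\ttg^{A,B}}$ is not; so factor-inheritance cannot be used in the direction you want. The paper's proof sidesteps this by computing $w^A$ and $w^B$ directly as shifts of block permutations $w[a,b]$ (namely $w[\la^{(t)}_{r-1}-c'+1,c+c']$ and $w[\la^{(t)}_{r-1}+\la^{(t)}_r-c-c'+2,c]$ up to shift), each of which is manifestly $321$-avoiding. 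To repair your proof you would need to replace the case-(3) portion of your part-(3) argument with a direct identification of $w^A$ and $w^B$ (or some other argument that does not pass through full commutativity of $w^{\ttg^{A,B}}$).
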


\begin{proof}
Take $w^A, w^B \in \weyl$ such that  $\ttg^{A,B} = w^A \ttg^A = w^B \ttg^B$.
We consider the three cases (1), (2) and (3) in \cref{gengar}. 
\begin{itemize}
\item
In the first case, $\ttg^{A,B}=w^{\ttg^A}\ttg^B=w^{\ttg^B}\ttg^A$ and it is clear that each of
$w^{\ttg^A}$ and $w^{\ttg^B}$ are of the form $S_2(c,a,b)=\shf_cw[a,b]$ for some $a,b,c$. Further,  $w^{\ttg^{A, B}}$ has a unique descent pattern 
of $2143$. Thus, $w^A=w^{\ttg^B}$, $w^B=w^{\ttg^A}$ and $w^{\ttg^{A, B}}$ are $321$-avoiding.
This implies that (1) holds, and (3) holds when $\belt^A \cap \belt^B = \emptyset$.
\item
In the second case, $w^A$ is a shift of $w[\la^{(t)}_r-c'+1,c'-c]$ and $w^B$ is a shift of $w[c'-c,c]$. Thus $w^A$ and $w^B$ are $321$-avoiding. 
Further, the two-line notation for $w^{\ttg^{A, B}}$ is
\[
\bigl(
\begin{smallmatrix}
1 & 2 & \cdots & c'-c & c'-c+1 & \cdots & \la^{(t)}_r & \la^{(t)}_r+1 & \cdots & \la^{(t)}_r+c & \la^{(t)}_r+c+1 & \cdots & \la^{(t)}_r+c'\\
c+1 & c+2 & \cdots & c' & 2c'-c+1 & \cdots & \la^{(t)}_r+c'-c+1 & 1 & \cdots  & c & c'+1 & \cdots & 2c'-c
\end{smallmatrix}
\bigr)
\]
up to shift. Hence, $w^{\ttg^{A, B}}$ is $321$-avoiding, which yields that (2) holds, and (3) holds when $r=r'$ and $t=t'$.
\item
In the third case, $w^A$ and $w^B$ are $w[\la^{(t)}_{r-1}-c'+1,c+c']$ and $w[\la^{(t)}_{r-1}+\la^{(t)}_r-c-c'+2,c]$ up to shift, respectively. 
Thus, they are also $321$-avoiding, which completes the proof of $(3)$.
\end{itemize}
\end{proof}

\vskip 1em

\section{Quiver Hecke algebras} \label{Sec: quiver Hecke algebras}

\subsection{Affine and cyclotomic quiver Hecke algebras}\

In this subsection, $\cmA$ is an arbitrary symmetrisable Cartan matrix. 

Let $\bR$ be a unital commutative ring and we fix a system of polynomials $Q_{i,j}(u,v) \in \bR[u,v]$ for $i,j \in I$ of the form
\[
Q_{i,j}(u,v) =
\begin{cases}
\sum_{p(\alpha_i, \alpha_i) + q(\alpha_j, \alpha_j) + 2(\alpha_i, \alpha_j)=0} t_{i,j;p,q} u^p v^q & \text{if } i\neq j,\\
0 & \text{if } i=j,
\end{cases}
\]
where $t_{i,j;p,q} \in \bR$ are such that $t_{i,j;-a_{ij},0} \in \bR^\times$ and $Q_{i,j}(u,v) = Q_{j,i}(v,u)$.

For $\nu \in I^n$ and $\nu' \in I^{n'}$, we denote the concatenation of $\nu$ and $\nu'$ by $\nu * \nu'\in I^{n+n'}$. Here, we understand that
$I^0 :=\{ \emptyset \}$ and $\emptyset * \nu = \nu * \emptyset = \nu$.

\begin{defn} \label{defn - cyclotomic}
The \emph{cyclotomic quiver Hecke algebra} $R^\Lambda(n)$ associated with polynomials $(Q_{i,j}(u,v))_{i,j\in I}$ and $\Lambda\in\wlP^+$ is the $\Z$-graded unital $\bR$-algebra generated by
\[
\{e(\nu) \mid \nu =(\nu_1, \ldots, \nu_n) \in I^n\} \cup \{x_1,\dots, x_n\} \cup \{\psi_1,\dots,\psi_{n-1}\}
\]
subject to the following relations.
{\allowdisplaybreaks
\begin{align*}
e(\nu)e(\nu')&=\delta_{\nu,\nu'} e(\nu); \\
\sum_{\nu \in I^n} e(\nu)&=1;\\
x_re(\nu)&=e(\nu)x_r;\\
\psi_r e(\nu) &= e(s_r \nu) \psi_r;\\
x_rx_s&=x_sx_r;\\
\psi_rx_s&=\mathrlap{x_s\psi_r}\hphantom{\smash{\begin{cases}
\frac{Q_{\nu_r,\nu_{r+1}}(x_r,x_{r+1}) - Q_{\nu_r,\nu_{r+1}}(x_{r+2},x_{r+1})}{x_r - x_{r+2}} e(\nu)\end{cases}}}\kern\nulldelimiterspace
\text{if } s\neq r,r+1;\\
\psi_r\psi_s&=\mathrlap{\psi_s\psi_r}\hphantom{\smash{\begin{cases}
\frac{Q_{\nu_r,\nu_{r+1}}(x_r,x_{r+1}) - Q_{\nu_r,\nu_{r+1}}(x_{r+2},x_{r+1})}{x_r - x_{r+2}} e(\nu)\end{cases}}}\kern\nulldelimiterspace\text{if } |r-s|>1;\\
x_r \psi_r e(\nu) &=(\psi_r x_{r+1} - \delta_{\nu_r,\nu_{r+1}})e(\nu);\\
x_{r+1} \psi_r e(\nu) &=(\psi_r x_r + \delta_{\nu_r,\nu_{r+1}})e(\nu);\\
\psi_r^2 e(\nu)&=Q_{\nu_r,\nu_{r+1}}(x_r,x_{r+1})e(\nu);\\
(\psi_{r+1}\psi_{r}\psi_{r+1} - \psi_{r}\psi_{r+1}\psi_{r})e(\nu)&=\begin{cases}
\frac{Q_{\nu_r,\nu_{r+1}}(x_r,x_{r+1}) - Q_{\nu_r,\nu_{r+1}}(x_{r+2},x_{r+1})}{x_r - x_{r+2}} e(\nu) & \text{if } \nu_r=\nu_{r+2},\\
0 & \text{otherwise};\end{cases}
\end{align*}
}for all admissible $r,s,\nu,\nu'$, and $x_1^{\langle \alpha^\vee_{\nu_1},\Lambda \rangle} e(\nu)=0$ for $\nu \in I^n$.
\end{defn}

The algebra $R^\Lambda(n)$ is given a $\Z$-grading by setting
\[
\deg(e(\nu))=0, \qquad \deg(x_r e(\nu)) = (\alpha_{\nu_r},\alpha_{\nu_r}), \qquad \deg(\psi_s e(\nu)) = -( \alpha_{\nu_s}, \alpha_{\nu_{s+1}})
\]
for all admissible $r,s$ and $\nu$.

For $\beta \in \rlQ^+$ with $ \Ht(\beta) = n$, we set
\[
I^\beta = \{\nu \in I^n \mid \alpha_{\nu_1} + \dots + \alpha_{\nu_n} = \beta\}.
\]
Then $e(\beta):=\sum_{\nu\in I^\beta} e(\nu)$ is a central idempotent. We define $R^\Lambda(\beta):=R^\Lambda(n)e(\beta)$, which is also an $\bR$-algebra. 
It is clear that $R^\Lambda(\beta)$ may be defined by the same set of relations if we replace $I^n$ with $I^\beta$. 
We have the following decomposition of $R^\Lambda(n)$ into a direct sum of $\bR$-algebras. 
\[
R^\Lambda(n)=\bigoplus_{\substack{\beta\in \rlQ^+\\ \Ht(\beta) = n}} R^\Lambda(\beta).
\]
When we drop the relation $x_1^{\langle \alpha^\vee_{\nu_1},\Lambda \rangle} e(\nu)=0$ for $\nu \in I^\beta$, we obtain 
the \emph{quiver Hecke algebra} $R(\beta)$.

For each element $w \in \sg_n$, we fix a preferred reduced expression $w = s_{i_1} \dots s_{i_t}$ and define
\[
\psi_w := \psi_{i_1} \dots \psi_{i_t} \in R(\beta).
\]
Note that $\psi_w$ depends on the choice of reduced expressions of $w$ unless $w$ is fully commutative. The following comes from the defining relations.

\begin{prop}[\protect{\cite[Proof of Proposition 2.5]{BKW11}}] \label{psi_w}
For two reduced expressions $s_{i_1} \dots s_{i_t} = s_{j_1} \dots s_{j_t}$ for an element $w \in \sym{n}$,
$(\psi_{i_1} \dots \psi_{i_t} - \psi_{j_1} \dots \psi_{j_t}) e(\nu)$
can be written as a linear combination of elements of the form $\psi_u f(x) e(\nu)$, where $u \prec w$ with $\ell(u)\leq \ell(w)-3$, and $f(x)$ is a polynomial in the generators $x_1, \dots, x_n$.
\end{prop}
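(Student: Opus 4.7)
The plan is to apply Matsumoto's theorem together with the defining relations of $R(\beta)$. Recall that any two reduced expressions for $w \in \sym n$ are connected by a sequence of elementary braid moves: either a \emph{commutation move} $s_r s_s = s_s s_r$ with $|r-s|>1$, or a \emph{length-three braid move} $s_r s_{r+1} s_r = s_{r+1} s_r s_{r+1}$. The relation $\psi_r \psi_s = \psi_s \psi_r$ for $|r-s|>1$ is one of the defining relations of $R(\beta)$, so commutation moves produce no error term. Accordingly, I would induct on the number of length-three braid moves required to pass from $s_{i_1}\dots s_{i_t}$ to $s_{j_1}\dots s_{j_t}$, which reduces the problem to the case where the two reduced expressions differ by exactly one such move.

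In this reduced case, factor the two products with a common prefix and suffix, writing
\[
\psi_{i_1}\dots\psi_{i_t} = \psi_a\,\psi_{r+1}\psi_r\psi_{r+1}\,\psi_b, \qquad \psi_{j_1}\dots\psi_{j_t} = \psi_a\,\psi_r\psi_{r+1}\psi_r\,\psi_b,
\]
for some $r$, where $\psi_a$ and $\psi_b$ denote the unchanged prefix and suffix products of $\psi$-generators. Since $\psi_b e(\nu)$ factors through a uniquely determined idempotent $e(\nu'')$, the defining braid relation in \cref{defn - cyclotomic} yields
\[
(\psi_{i_1}\dots\psi_{i_t} - \psi_{j_1}\dots\psi_{j_t})\,e(\nu) = \psi_a\,P(x_r, x_{r+1}, x_{r+2})\,\psi_b\,e(\nu),
\]
where the polynomial $P$ is zero unless $\nu''_r = \nu''_{r+2}$ and otherwise is the symmetric-difference expression appearing in the last relation of \cref{defn - cyclotomic}.

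I would next commute $P$ rightward past $\psi_b$ using $\psi_j x_s = x_s \psi_j$ for $s \ne j, j+1$, together with $x_j\psi_j = \psi_j x_{j+1} - \delta$ and $x_{j+1}\psi_j = \psi_j x_j + \delta$ (where $\delta$ is the appropriate Kronecker delta). This produces a principal contribution of the form $\psi_a\psi_b\,P'(x)\,e(\nu)$ plus correction terms in which at least one $\psi_j$-factor of $\psi_b$ has been erased by a delta. The principal product $\psi_a\psi_b$ is a product of generators along the word obtained from a reduced expression for $w$ by deleting three letters; hence it corresponds to an element $u \prec w$ in the Bruhat order with $\ell(u) = \ell(w)-3$, and the correction terms correspond to strictly shorter elements.

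The main obstacle is the bookkeeping required to convert all terms into the preferred normal form $\psi_{u'}f(x)e(\nu)$ demanded by the statement, since $\psi_a\psi_b$ need not be our chosen representative for $\psi_u$, and the correction terms arise from many possible ways of cancelling deltas. I would handle this by a double induction on the length of the $\psi$-product and on the Bruhat order, using the fact that every defining relation of $R(\beta)$ other than $\psi_r\psi_s=\psi_s\psi_r$ ($|r-s|>1$) either preserves the length of the $\psi$-factor or strictly decreases it, as is visible from inspection of \cref{defn - cyclotomic}. Iterating through all length-three braid moves connecting the two original reduced expressions then yields the desired conclusion.
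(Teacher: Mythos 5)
Your proposal is correct and follows essentially the same route as the proof cited from \cite[Proposition 2.5]{BKW11}: reduce via Matsumoto's theorem to a single length-three braid move, replace the braid defect by the polynomial $P$ from the defining relation, and push $P$ rightward past $\psi_b$, tracking the $\psi$-count to see that all resulting terms have $\psi$-length at most $\ell(w)-3$ and correspond to subwords of a reduced expression for $w$, hence elements $u\prec w$. The brief over-claim that $\psi_a\psi_b$ "corresponds to an element $u$ with $\ell(u)=\ell(w)-3$" (the word $a\cdot b$ need not be reduced) is correctly flagged and handled by the double induction you describe in the final paragraph.
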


\begin{thm}[\cite{KL09, KL11, R08}] \label{basis thm}
Let $\beta \in \rlQ^+$ with $\Ht(\beta)=n$. Then
the set
\[\{
\psi_w x_1^{t_1} \dots x_n^{t_n} e(\nu) \mid w \in \sg_n, t_1, \dots, t_n \in \Z_{\ge 0}, \nu \in I^\beta
\}\]
is an $\bR$-basis of $R(\beta)$.
\end{thm}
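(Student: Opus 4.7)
The plan is to establish spanning and linear independence separately, which is the standard two-step approach for PBW-type theorems.

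For spanning, I would show that any word in the generators $e(\nu), x_i, \psi_j$ can be rewritten as a linear combination of elements of the claimed form. Using $\psi_r e(\nu) = e(s_r \nu)\psi_r$ and $x_r e(\nu) = e(\nu)x_r$, any element is a linear combination of monomials of the shape $\psi_{i_1}\cdots\psi_{i_k}\cdot p(x_1,\dots,x_n)\cdot e(\nu)$ modulo relations. The commutation $\psi_r x_s = x_s \psi_r$ for $s \ne r, r+1$ together with the two mixed relations $x_r \psi_r e(\nu) = (\psi_r x_{r+1} - \delta_{\nu_r, \nu_{r+1}})e(\nu)$ and $x_{r+1}\psi_r e(\nu) = (\psi_r x_r + \delta_{\nu_r,\nu_{r+1}})e(\nu)$ allow us to push all $x$'s to the right, generating error terms with strictly fewer $\psi$-generators. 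Inducting on the number of $\psi$'s reduces us to pure $\psi$-words. Among these, the quadratic relation $\psi_r^2 e(\nu) = Q_{\nu_r,\nu_{r+1}}(x_r,x_{r+1})e(\nu)$ eliminates any repeated factor (at the cost of a polynomial in $x$'s with no $\psi$'s at all), and \cref{psi_w} lets us identify $\psi$-words for the same Weyl group element up to lower-length corrections. A nested induction on $(\#\psi, \ell(w))$ then terminates, yielding spanning.

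For linear independence, I would construct a faithful polynomial representation of $R(\beta)$, following \cite{KL09, R08}. Take $V = \bigoplus_{\nu \in I^\beta} \bR[y_1,\dots,y_n] e(\nu)$, let $e(\mu)$ act as projection, let $x_r$ act by multiplication by $y_r$, and let $\psi_r$ act on the $\nu$-summand by a divided-difference-type operator sending it into the $s_r \nu$-summand. The formulas are tuned (essentially Demazure operators when $\nu_r = \nu_{r+1}$, and multiplication by $Q_{\nu_r,\nu_{r+1}}(y_r,y_{r+1})$ times a divided difference otherwise) so that $\psi_r^2$ returns multiplication by $Q_{\nu_r,\nu_{r+1}}(y_r,y_{r+1})$. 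Once $V$ is confirmed to be an $R(\beta)$-module, I would evaluate an arbitrary linear combination $\sum c_{w,t,\nu}\, \psi_w x_1^{t_1}\cdots x_n^{t_n} e(\nu)$ on the distinguished vectors $1\cdot e(\nu')$: the $\nu'$-component of the output lies in the $w\nu'$-summand, and the leading $y$-monomial contributed by the maximal-length $w$ is $y_{w(1)}^{t_1}\cdots y_{w(n)}^{t_n}$ up to a unit scalar. These leading monomials are distinct across different triples $(w,t,\nu)$, forcing every $c_{w,t,\nu}$ to vanish.

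The main obstacle is verifying the mixed braid relation for $(\psi_{r+1}\psi_r\psi_{r+1} - \psi_r\psi_{r+1}\psi_r)e(\nu)$ under the polynomial action, as this is the only relation whose verification is not immediate from the definition. Both sides must be expanded as rational functions in $y_r, y_{r+1}, y_{r+2}$, and one must invoke both the symmetry $Q_{i,j}(u,v) = Q_{j,i}(v,u)$ and the divisibility $(x_r - x_{r+2}) \mid (Q_{\nu_r,\nu_{r+1}}(x_r,x_{r+1}) - Q_{\nu_r,\nu_{r+1}}(x_{r+2},x_{r+1}))$ implicit in the polynomial form of $Q$ to see that the difference acts correctly, according to whether $\nu_r = \nu_{r+2}$ or not. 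This calculation is the technical heart of the proof and is precisely where the polynomial hypothesis on $Q_{i,j}$ plays its essential role.
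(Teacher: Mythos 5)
The paper states this theorem as a citation to \cite{KL09, KL11, R08} and gives no proof of its own, so there is nothing internal to compare your argument against; I will evaluate it against the standard proof in those references, which you are evidently reconstructing.

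Your spanning argument is correct and is the standard one: push $x$'s past $\psi$'s to the right using the mixed relations, absorbing error terms with strictly fewer $\psi$-generators, use the quadratic relation together with \cref{psi_w} to reduce non-reduced $\psi$-words, and induct. The linear-independence strategy via the polynomial representation is also the right idea, but two pieces of your description would not actually work as written. First, when $\nu_r \neq \nu_{r+1}$ you say $\psi_r$ acts as ``multiplication by $Q_{\nu_r,\nu_{r+1}}(y_r,y_{r+1})$ times a divided difference.'' A divided difference is used only when $\nu_r = \nu_{r+1}$; for $\nu_r \neq \nu_{r+1}$ the operator is a \emph{twisted permutation} $f \mapsto (\text{poly})\cdot s_r f$, and crucially the polynomial factor $Q$ must appear on only one of the two summands $e(\nu)$, $e(s_r\nu)$. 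This forces one to break the symmetry by choosing a total order on $I$ (say $\psi_r$ acts by plain $s_r$ when $\nu_r \prec \nu_{r+1}$ and by $Q_{\nu_r,\nu_{r+1}}(y_{r+1},y_r)\, s_r$ when $\nu_r \succ \nu_{r+1}$); otherwise the quadratic relation would yield $Q^2$ or $1$ instead of $Q$. Second, your ``leading monomial'' claim --- that $\psi_w x^t e(\nu)$ applied to $1\cdot e(\nu)$ has leading term $y_{w(1)}^{t_1}\cdots y_{w(n)}^{t_n}$ up to a unit --- fails whenever $\nu$ has repeated entries: the Demazure operators $\partial_r$ lower degree, so the top-degree contribution can vanish or mix with that of shorter $w'$ landing in the same summand $e(w'\nu) = e(w\nu)$. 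The genuine argument filters by $\ell(w)$ and, on the blocks of equal residues, reduces to the classical faithfulness of the nilHecke algebra on polynomials; this step cannot be replaced by a naive monomial comparison. The rest of your outline, including the observation that verifying the braid relation under the polynomial action is the technical heart and uses both the symmetry of $Q_{i,j}$ and the divisibility $(x_r-x_{r+2}) \mid (Q_{\nu_r,\nu_{r+1}}(x_r,x_{r+1}) - Q_{\nu_r,\nu_{r+1}}(x_{r+2},x_{r+1}))$, is accurate.
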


\begin{prop}\label{prop: Z-free}
Suppose that $Q_{ij}(u,v)$ have integral coefficients. 
We denote the cyclotomic quiver Hecke algebra defined over $\Z$ by $R_\Z^{\Lambda}(n)$. Then $R_\Z^{\Lambda}(n)$ is free of finite rank over $\Z$. Further, 
$R^{\Lambda}(n) \simeq R_\Z^{\Lambda}(n)\otimes_\Z\bR$ as $\bR$-algebras.
\end{prop}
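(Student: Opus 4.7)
The plan is to handle the base change isomorphism via a universal-property argument, and to prove freeness by combining a finite spanning set of $R_\Z^\Lambda(n)$ with a dimension comparison across base fields.

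For the isomorphism $R^\Lambda(n) \simeq R_\Z^\Lambda(n) \otimes_\Z \bR$: since $Q_{ij}(u,v)$ has integer coefficients by hypothesis, every relation in \cref{defn - cyclotomic} has coefficients in $\Z$. Thus $R_\Z^\Lambda(n) \otimes_\Z \bR$ is an $\bR$-algebra that satisfies the universal property defining $R^\Lambda(n)$, so the canonical surjection $R_\Z^\Lambda(n) \otimes_\Z \bR \twoheadrightarrow R^\Lambda(n)$ is an isomorphism. Note that this argument is independent of freeness.

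For freeness, I would first exhibit a finite $\Z$-spanning set of $R_\Z^\Lambda(n)$. By \cref{basis thm} applied over $\Z$, the affine algebra $R_\Z(n)$ is free with PBW-type basis $\{\psi_w x_1^{t_1}\cdots x_n^{t_n}e(\nu)\}$, so $R_\Z^\Lambda(n)$ is spanned by the images of these elements. A theorem of Kang--Kashiwara shows that each $x_i$ is nilpotent in $R_\Z^\Lambda(n)$ with a nilpotency bound depending only on $\Lambda$ and $n$: the cyclotomic relation makes $x_1$ nilpotent on every $e(\nu)$, and commuting past the $\psi_r$'s via the relations of \cref{defn - cyclotomic} propagates this to all $x_i$. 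This bounds the exponents $t_i$ and yields a finite spanning set $\mathcal B$. To upgrade $\mathcal B$ to a basis, I would invoke Kang--Kashiwara's categorification theorem, which asserts that for any field $k$ the algebra $R_k^\Lambda(n)$ has a finite dimension $d = d(\Lambda,n)$ that is independent of $k$. Since $R_\Z^\Lambda(n)$ is a finitely generated $\Z$-module, the structure theorem over the PID $\Z$ gives $R_\Z^\Lambda(n) \cong \Z^r \oplus T$ with $T$ torsion; tensoring with $\Q$ and using the base change isomorphism give $r = d$, while tensoring with $\F_p$ for each prime $p$ gives $r + \dim_{\F_p}(T\otimes_\Z \F_p) = d$, forcing $T = 0$ and hence $R_\Z^\Lambda(n) \cong \Z^d$.

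The main obstacle will be the integral nilpotency step: establishing the field-independent nilpotency bound on the $x_i$'s in $R_\Z^\Lambda(n)$ is the key technical input and underlies Kang--Kashiwara's analysis of cyclotomic KLR algebras. Once that input is in hand, the base change argument is formal and the upgrade from a spanning set to a basis via the structure theorem over $\Z$ is routine.
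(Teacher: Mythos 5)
Your base-change argument is essentially the one in the paper: both rely on the fact that the integral relations map to the $\bR$ relations and conversely, yielding mutually inverse maps. You phrase this via the universal property of a quotient by relations; the paper constructs two surjections directly. These are the same argument in slightly different clothing, and both are correct.

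Your freeness argument, however, is genuinely different from the paper's. The paper proceeds by induction on $n$: it quotes Kang--Kashiwara's result that $R^\Lambda(n)$ is a finitely generated projective $R^\Lambda(n-1)$-module, concludes by induction that $R^\Lambda(n)$ is a finitely generated projective $\bR$-module for any commutative ring $\bR$, and then specialises to $\bR=\Z$ where finitely generated projective implies free. Your route instead establishes finite generation over $\Z$ via nilpotency of the $x_i$ (also a Kang--Kashiwara ingredient, but a different one), writes $R_\Z^\Lambda(n)\cong\Z^r\oplus T$ by the structure theorem, and kills the torsion $T$ by comparing $\dim_{\F_p}$ and $\dim_\Q$ across primes using the field-independence of $\dim_k R_k^\Lambda(n)$ coming from the categorification theorem (\cref{Thm: categorification thm}). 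A few remarks on the trade-offs: the paper's proof is shorter and quotes only one black box; yours quotes two (nilpotency and the categorification theorem) but in exchange gives a more elementary reduction once those are in hand. You are right that the base-change isomorphism must be established before the freeness argument and is independent of it; this is the crucial ordering in your proof and you identified it correctly. One small imprecision worth flagging: the categorification theorem as stated identifies $[R_k^\Lambda\proj]$ with $V_\A(\Lambda)$; the field-independence of $\dim_k R_k^\Lambda(n)$ is a well-known corollary (via the Shapovalov pairing realised by $\dim_q e(\nu)R^\Lambda e(\nu')$), not literally the statement of the theorem, so it deserves a sentence of justification. With that caveat, your proof is correct.
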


\begin{proof}
We prove by induction on $n$ that  $R^{\Lambda}(n)$ is a projective $\bR$-module. It is clear that  $R^{\Lambda}(0)=\bR$ is a projective $\bR$-module. Suppose that $R^{\Lambda}(n-1)$ is a projective $\bR$-module. By \cite[Thm 4.5]{KK11} , $R^{\Lambda}(n)$ is a projective $R^{\Lambda}(n-1)$-module. Thus, the induction hypothesis implies that $R^{\Lambda}(n)$ is a projective $\bR$-module. 
Applying the argument to $\bR=\Z$ and noting that $\Z$ is a principal ideal domain, we deduce that $R_\Z^{\Lambda}(n)$ is a free $\Z$-module of finite rank. 

As the defining relations of $R_\Z^{\Lambda}(n)$ hold in $R^{\Lambda}(n)$, the $\Z$-algebra homomorphism 
\[
R_\Z^{\Lambda}(n)\longrightarrow R^{\Lambda}(n)
\]
given by mapping the generators $\psi_i, x_j, e(\nu)$ to the corresponding generators is well-defined. Hence we have a surjective 
$\bR$-algebra homomorphism 
\[
R_\Z^{\Lambda}(n)\otimes_\Z\bR \longrightarrow R^{\Lambda}(n).
\]
On the other hand, as the defining relations of $R^{\Lambda}(n)$ hold in $R_\Z^{\Lambda}(n)\otimes_\Z\bR$, we have a surjective $\bR$-algebra homomorphism 
\[
R^{\Lambda}(n) \longrightarrow R_\Z^{\Lambda}(n)\otimes_\Z\bR.
\]
Thus, $R^{\Lambda}(n) \simeq R_\Z^{\Lambda}(n)\otimes_\Z\bR$. 
\end{proof}

Note that our choices $\eqref{Qij for affine C}$ and $\eqref{Qij for C_infty}$ of $Q_{ij}(u,v)$ being integral coefficients allow us to define the cyclotomic Hecke algebra over $\Z$.

\subsection{The $C_\infty$ case}

In this subsection, we carry out some computations in type $C_\infty$.
We choose the following system of polynomials $Q_{i,j}(u,v)$ as our preferred choice: 
if the Cartan matrix $\cmA$ is of type $C^{(1)}_\ell$ then, for $i<j$,
\begin{align} \label{Qij for affine C}
Q_{i,j}(u,v) = \begin{cases}
u+v^2 & \text{if } (i,j)=(0,1),\\
u+v & \text{if }  i\neq 0, j=i+1, j\neq \ell,\\
u^2+v & \text{if } (i,j)=(\ell-1,\ell),\\
1 & \text{otherwise,}
\end{cases}
\end{align}
and if the Cartan matrix $\cmA$ is of type $C_\infty$ then,  for $i<j$,
\begin{align} \label{Qij for C_infty}
Q_{i,j}(u,v) = \begin{cases}
u+v^2 & \text{if } (i,j)=(0,1),\\
u+v & \text{if } i\neq 0, j=i+1,\\
1 & \text{otherwise.}
\end{cases}
\end{align}
Note that if we assume that $\bR$ is a field and that any element of $\bR$ has a square root, then other choices of the polynomials $Q_{i,j}(u,v)$ yield isomorphic algebras 
\cite[Lemma 3.2]{AIP14}. 
Further we have the following graded dimension formulas.
For $\nu \in I^n$, let
\[
K_q(\lambda, \nu) := \; \sum_{\mathclap{\substack{\ttt \in \ST{\la}\\ \res{\ttt} = \nu}}} \; q^{\deg(\ttt)}, \qquad K_q(\lambda) := \; \sum_{\mathclap{\ttt \in \ST{\la}}} \; q^{\deg(\ttt)}.
\]

\begin{thm} \label{Thm: dimension formula}
For $\nu, \nu' \in I^\beta$, we have
\begin{align*}
\rank_q e(\nu) \fqH{}(\beta) e(\nu') &= \; \sum_{\mathclap{\substack{\lambda \in \Par_n\\ \wt(\lambda) = \Lambda - \beta}}}  \; K_q(\lambda, \nu) K_q(\lambda, \nu') ,\\
\rank_q \fqH{}(\beta) &= \; \sum_{\mathclap{\substack{\la \in \Par_n\\ \wt(\lambda) = \Lambda - \beta}}} \; K_q(\lambda)^2, \\
\rank_q \fqH{}(n) &= \; \sum_{\la \in \Par_n} \; K_q(\lambda)^2,
\end{align*}
where $\rank_q M:= \sum_{k\in \Z} \rank_\bR(M_k)q^k$ for a free graded $\bR$-module $M = \bigoplus_{k \in \Z} M_k$.
\end{thm}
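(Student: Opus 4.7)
The plan is to derive all three formulas from the Kang--Kashiwara categorification theorem together with a character computation on the type $C$ Fock space $\mathcal{F}(\kappa)$. The second and third formulas follow from the first by summing over $\nu, \nu' \in I^\beta$ and over $\beta \in \rlQ^+$ with $\Ht(\beta) = n$, so the task reduces to proving the first identity.

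The first step is to invoke the Kang--Kashiwara theorem: the category $\bigoplus_\beta R^\Lambda(\beta)\text{-pmod}$ categorifies $V_q(\Lambda)$, and the graded $\Hom$-pairing between projectives corresponds to the Shapovalov form $(-,-)_\Lambda$ on $V_q(\Lambda)$ normalized by $(v_\Lambda, v_\Lambda)_\Lambda = 1$. Writing $f_\nu := f_{\nu_n} \cdots f_{\nu_1}$ and $P(\nu) := R^\Lambda(\beta) e(\nu)$, this should yield
\[
\rank_q e(\nu) R^\Lambda(\beta) e(\nu') \;=\; \rank_q \Hom(P(\nu'), P(\nu)) \;=\; (f_\nu v_\Lambda, f_{\nu'} v_\Lambda)_\Lambda.
\]
I would then compute the right-hand side using the surjection $p_\kappa : \mathcal{F}(\kappa) \twoheadrightarrow V_q(\Lambda)$ from \eqref{Eq: pk} and the explicit action \eqref{Eq: def of Fock sp}. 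An induction on the length of $\nu$, tracking the factor $q^{-d^A(\la)}$ contributed at each addable node and comparing with the inductive definition \eqref{Eq: def of deg} of $\deg(\ttt)$, should identify
\[
f_\nu \varnothing \;=\; \sum_{\la:\, \wt(\la) = \Lambda - \beta} K_q(\la, \nu)\, \la \qquad \text{in } \mathcal{F}(\kappa),
\]
since each application of $f_i$ records exactly one addable $i$-node per summand, so iterating lists precisely the standard tableaux of each possible shape with residue sequence $\nu$. Taking the inner product of $f_\nu \varnothing$ with $f_{\nu'} \varnothing$ under the natural bilinear form on $\mathcal{F}(\kappa)$ making the multipartition basis orthonormal then delivers $\sum_\la K_q(\la, \nu) K_q(\la, \nu')$.

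The main obstacle is verifying that the Fock space pairing descends correctly to the Shapovalov form on $V_q(\Lambda)$ through $p_\kappa$, i.e.\ that orthogonality of distinct multipartitions survives the quotient by $\ker p_\kappa$ and that the normalization is the right one. In type $A$ this is a standard computation (cf.\ \cite[\S 3]{BK09}), but in type $C$ the non-simply-laced data -- notably $a_{10} = -2$ and the normalization $d = (2,1,1,\dots)$ entering the comultiplication -- means that one must check carefully that no spurious factors arise in the iterated action of $f_i$ on $\varnothing$ beyond the expected $q^{-\deg(\ttt)}$. This is essentially the extension of the argument in \cite{AP16}, which handles the case $\Lambda = \Lambda_0$ in type $C^{(1)}_\ell$, to arbitrary $\Lambda$ in type $C_\infty$, and it is the technical heart of the proof.
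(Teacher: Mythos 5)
Your strategy---reduce to the first identity, invoke the Kang--Kashiwara categorification to rewrite $\rank_q e(\nu) \fqH{}(\beta) e(\nu')$ as a Shapovalov pairing of weight vectors, then compute in the type $C$ Fock space $\mathcal{F}(\kappa)$---is essentially the route the paper takes. Its proof consists of a reduction to the case that $\bR$ is a field followed by a deferral to \cite{AP16}, which carries out exactly this Fock-space computation for $\Lambda = \Lambda_0$ in type $C^{(1)}_\ell$; the only modification noted is the use of the tensor Fock space $\mathcal{F}(\kappa) \simeq \mathcal{F}(\kappa_1)\otimes\cdots\otimes\mathcal{F}(\kappa_l)$ with $V_q(\Lambda)$ realised as the submodule generated by $\varnothing$.

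Two points need attention before your outline would stand as a proof. First, the paper opens by invoking \cref{prop: Z-free} to reduce to the case of a field; over a general integral domain, $\rank_q$ is not directly accessible by a character argument, and you omit this step entirely. Second, your identity $f_\nu\varnothing = \sum_\la K_q(\la,\nu)\,\la$ conceals the real content. The Fock space action \eqref{Eq: def of Fock sp} attaches the exponent $-d^A(\la)$---counting addable and removable nodes \emph{above} $A$---to each application of $f_i$, whereas $\deg\ttt$ in \eqref{Eq: def of deg} accumulates $d_A(\la)$, the below-$A$ count, which is the exponent of the \emph{$e_i$}-action, not $f_i$. Reconciling the two uses $d_i(\la) = \langle\alpha_i^\vee,\Lambda-\beta\rangle$ together with the grading shift $q^{(1-\langle\alpha_i^\vee,\Lambda-\beta\rangle)(\alpha_i,\alpha_i)/2}$ that appears in \eqref{Eq: categorification}; see the bookkeeping in \cref{Cor: F_i Sp} for how these combine. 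There is also a sign to resolve between $q^{-\deg(\ttt)}$ (which you expect) and $q^{+\deg(\ttt)}$ (which appears in $K_q$), which reflects whether one reads the pairing from the $e$-side or the $f$-side. You correctly flag this degree-matching as the technical heart, but as written the step is presented as immediate when it is precisely where the non-simply-laced data enters.
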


\begin{proof} 
By virtue of \cref{prop: Z-free}, it suffices to prove the result when $\bR$ is a field.
The irreducible highest weight $U_q(\mathfrak{g}(\cmA))$-module with highest weight $\sum_{i=1}^l \Lambda_{\overline{\kappa_i}}\in\wlP^+$ is realised as the submodule 
$U_q(\mathfrak{g}(\cmA))\varnothing\subseteq \mathcal{F}(\kappa)$. 
Thus, the proof is entirely similar to \cite{AP16}. The only difference is that we use the tensor product Fock space $\mathcal{F}(\kappa)$. 
\end{proof}

Now we assume that the Cartan matrix $\cmA$ is of type $C_\infty$ and prepare some technical results. 
We consider fully commutative elements $S_2(c,a,b)=\shf_cw[a,b]$. Then $\psi_w$ for $w=S_2(c,a,b)$ does not depend on 
the choice of a preferred reduced expression. We denote it by $\Psi_2(c,a,b)$. If $c=0$ we denote it by $\Psi[a,b]$ instead. We have
\[
\deg(\Psi[a,b]e(\nu))=-\left( \sum_{k=1}^a \alpha_{\nu_k},\; \sum_{k=a+1}^{a+b} \alpha_{\nu_k} \right).
\]

For $1\le x\le y$, we define
\[
\psi\uparrow^y_x=\psi_x\psi_{x+1}\dots\psi_y \;\;\text{and}\;\; \psi\downarrow^y_x=\psi_y\psi_{y-1}\dots\psi_x.
\]
Then \cref{use of braid rels}(1) implies
\[
\Psi_2(c,a,b)=\psi\downarrow^{c+b}_{c+1}\dots\psi\downarrow^{c+a+b-1}_{c+a}=\psi\uparrow^{c+a+b-1}_{c+b}\dots\psi\uparrow^{c+a}_{c+1}.
\]
In particular, $\psi$-generators that appear in $\Psi_2(c,a,b)$ are $\psi_{c+1},\dots,\psi_{c+a+b-1}$. We also have the following formulae.
\begin{align*}
\Psi_2(c,a,b) &= \Psi_2(c,1,b)\Psi_2(c+1,1,b)\dots\Psi_2(c+a-1,1,b)\\
              &= \Psi_2(c+b-1,a,1)\dots\Psi_2(c+1,a,1)\Psi_2(c,a,1).\\[5pt]
\Psi_2(c,a,b) & = \Psi_2(c,x,b)\Psi_2(c+x,a-x,b) \;\; \text{for $0\le x\le a$},\\
              & = \Psi_2(c+y,a,b-y)\Psi_2(c,a,y) \;\; \text{for $0\le y\le b$}.
\end{align*}

\begin{Rmk}
The algebra $R(\beta)$ admits an anti-involution which fixes the generators. Then it sends 
$\Psi_2(c,a,b)$ to $\Psi_2(c,b,a)$ because
\[
\Psi_2(c,a,b)=\psi\downarrow^{c+b}_{c+1}\dots\psi\downarrow^{c+a+b-1}_{c+a} \mapsto
\psi\uparrow^{c+a+b-1}_{c+a}\dots\psi\uparrow^{c+b}_{c+1}=\Psi_2(c,b,a).
\]
\end{Rmk}

\begin{defn}
For $a_1,\dots,a_t\in \Z_{\ge0}$, we define a block transposition $S_i(a_1,\dots,a_t)$ by
\[
S_i(a_1,\dots,a_t)=S_2(\sum_{k=1}^{i-1} a_k, a_i, a_{i+1}).
\]
Then it is fully commutative and we may define $\Psi_i(a_1,\dots,a_t)$ by
\[
\Psi_i(a_1,\dots,a_t)=\Psi_2(\sum_{k=1}^{i-1} a_k, a_i, a_{i+1}).
\]
More generally, we define block permutations $S_{i_1}\dots S_{i_p}(a_1,\dots,a_t)$ by
\[
S_{i_1}\dots S_{i_p}(a_1,\dots,a_t) = S_{i_1}\dots S_{i_{p-1}}(a_{s_{i_p}(1)},\dots,a_{s_{i_p}(t)})S_{i_p}(a_1,\dots,a_t),
\]
and the corresponding $\Psi_{i_1}\dots \Psi_{i_p}(a_1,\dots,a_t)$ by
\[
\Psi_{i_1}\dots \Psi_{i_p}(a_1,\dots,a_t) = \Psi_{i_1}\dots \Psi_{i_{p-1}}(a_{s_{i_p}(1)},\dots,a_{s_{i_p}(t)})\Psi_{i_p}(a_1,\dots,a_t).
\]
\end{defn}

Observing that $s_{i_1},\dots,s_{i_p}$ permute places, the following is clear.

\begin{lem} \label{two line notation for block permutation}
Let $w=s_{i_1}\dots s_{i_p}\in \sg_t$ and $a_1,\dots,a_t\in \Z_{\ge0}$. If we define
\begin{align*}
A_{w^{-1}(1)}&=\{1,2,\dots,a_{w^{-1}(1)}\},\\
A_{w^{-1}(2)}&=\{ a_{w^{-1}(1)}+1,\dots, a_{w^{-1}(1)}+a_{w^{-1}(2)}\},\\
\vdots\qquad & \qquad\qquad\qquad \vdots \\
A_{w^{-1}(t)}&=\{ a_{w^{-1}(1)}+\dots+a_{w^{-1}(t-1)}+1,\dots, a_{w^{-1}(1)}+\dots+a_{w^{-1}(t)}\},
\end{align*}
then the two-line notation of $S_{i_1}\dots S_{i_p}(a_1,\dots,a_t)$ is given as follows.
\[
\begin{pmatrix}
A_{w^{-1}(1)} & \cdots &  A_{w^{-1}(t)}\\
A_1  & \cdots  & A_t
\end{pmatrix}
\]
\end{lem}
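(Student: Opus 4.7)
The plan is to induct on $p$, after reformulating the claim in the following equivalent form. Let $B_k := \{a_1 + \dots + a_{k-1} + 1, \dots, a_1 + \dots + a_k\}$ denote the $k$-th \emph{natural} block of size $a_k$. Then the lemma is equivalent to saying that $\sigma := S_{i_1}\cdots S_{i_p}(a_1,\dots,a_t)$ maps each $B_k$ bijectively and order-preservingly onto $A_k$. This equivalence is a routine unpacking of the notation: reading the top row of the claimed two-line notation yields $1,2,\dots,n$ in natural order, while the positions on the bottom occupied by $A_k$ are precisely positions $a_1+\dots+a_{k-1}+1$ through $a_1+\dots+a_k$, which are the positions of $B_k$ on top.

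The base case $p=1$ follows directly from the definition $S_{i_1}(a_1,\dots,a_t) = S_2(\sum_{k<i_1}a_k, a_{i_1}, a_{i_1+1})$, which is precisely the permutation that sends $B_{i_1}$ order-preservingly onto $A_{i_1}$ (and similarly $B_{i_1+1}\mapsto A_{i_1+1}$) while fixing all other natural blocks. For the inductive step, write $w = w' s_{i_p}$ with $w' = s_{i_1}\cdots s_{i_{p-1}}$ and set $a'_k := a_{s_{i_p}(k)}$. The recursion reads
\[
\sigma = \sigma' \cdot \tau, \qquad \sigma' := S_{i_1}\cdots S_{i_{p-1}}(a'_1,\dots,a'_t), \quad \tau := S_{i_p}(a_1,\dots,a_t).
\]
The key algebraic identity is $w'^{-1}(k) = s_{i_p}(w^{-1}(k))$, which yields $a'_{w'^{-1}(k)} = a_{w^{-1}(k)}$; consequently, the intervals $A'_j$ produced by applying the induction hypothesis to $\sigma'$ satisfy $A'_{w'^{-1}(k)} = A_{w^{-1}(k)}$ and $A'_k = A_{s_{i_p}(k)}$.

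To conclude, observe that $\tau$ sends $B_k$ order-preservingly onto $B'_{s_{i_p}(k)}$, where $B'_k$ is the natural block of size $a'_k$: for $k \notin \{i_p, i_p+1\}$ this is the identity, and for $k \in \{i_p, i_p+1\}$ it is a direct unpacking of the definition of $S_2$. Applying the induction hypothesis to $\sigma'$ gives $\sigma'(B'_j) = A'_j = A_{s_{i_p}(j)}$ order-preservingly. Composing, $\sigma(B_k) = \sigma'(B'_{s_{i_p}(k)}) = A_{s_{i_p}(s_{i_p}(k))} = A_k$, as required. The only real obstacle is disentangling two indexing conventions -- the rearrangement of block sizes via $s_{i_p}$ on the sequence $(a_k)$ and the permutation of block positions on the top row via $w^{-1}$ -- which is reconciled by the identity $w'^{-1} = s_{i_p}\circ w^{-1}$; everything else is a mechanical unwinding of definitions.
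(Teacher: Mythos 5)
Your proof is correct, and it makes fully rigorous what the paper dismisses with a single line ("Observing that $s_{i_1},\dots,s_{i_p}$ permute places, the following is clear"). The reformulation -- that $\sigma$ carries the $k$-th natural block $B_k$ order-preservingly onto $A_k$ -- is exactly the right way to read the two-line notation, since the blocks $A_{w^{-1}(1)},\dots,A_{w^{-1}(t)}$ partition $\{1,\dots,n\}$ into consecutive intervals and $A_k$ sits on the bottom row in positions $B_k$. The inductive step then tracks precisely the recursion built into the paper's definition $S_{i_1}\dots S_{i_p}(\underline{a}) = S_{i_1}\dots S_{i_{p-1}}(a_{s_{i_p}(1)},\dots,a_{s_{i_p}(t)})\,S_{i_p}(\underline{a})$: you correctly verify $\tau(B_k)=B'_{s_{i_p}(k)}$, invoke the hypothesis for $\sigma'$ with reindexed block sizes, and reconcile the two indexings via $w'^{-1} = s_{i_p}\circ w^{-1}$ to get $A'_j = A_{s_{i_p}(j)}$, which closes the loop. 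This is the same underlying idea as the paper's observation (the $S_i(\underline{a})$ realise the Coxeter generators of $\sg_t$ acting on blocks of sizes $a_1,\dots,a_t$), just carried out in detail rather than asserted.
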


\begin{cor}\label{cor: reduced S_i}
Suppose that each $S_i$ is given by the reduced expressions in \cref{use of braid rels}(1). Then $S_{i_1}\dots S_{i_p} (a_1,\dots, a_t)$ is a reduced expression if and only if $S_{i_1}\dots S_{i_p} (1,\dots, 1)$ is.
\end{cor}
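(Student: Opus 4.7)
The plan is to compare two quantities: the length $\ell(\sigma)$ of the permutation $\sigma := S_{i_1}\cdots S_{i_p}(a_1,\ldots,a_t)$, and the total number of $\psi$-generators in the concatenated expression coming from the reduced expressions in \cref{use of braid rels}(1). The concatenated expression is reduced precisely when these counts agree; both will turn out to be governed by the reducedness of the underlying word $w := s_{i_1}\cdots s_{i_p} \in \sg_t$, independent of the block sizes (so long as they are positive).

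For $\ell(\sigma)$, I would use \cref{two line notation for block permutation}: since $\sigma$ is order-preserving within each original block and sends block $c$ bijectively onto $A_c$, an inversion of $\sigma$ comes precisely from a pair $(x,y)$ with $x$ in block $c$, $y$ in block $d$, $c<d$, for which $A_c$ lies to the right of $A_d$ in the bottom row of the two-line notation --- equivalently, $w(c) > w(d)$. Hence
\[
\ell(\sigma) = \sum_{(c,d)\in \mathrm{Inv}(w)} a_c a_d,
\]
where $\mathrm{Inv}(w) = \{(c,d) \mid c < d,\ w(c) > w(d)\}$. For the $\psi$-length: by \cref{use of braid rels}(1), each step applying $S_{i_j}$ to its current block sizes contributes exactly the product of the two swapped sizes to the total count of $\psi$-generators; summing and regrouping by pairs of block labels gives
\[
\psi\text{-length} = \sum_{c<d} m_{c,d}\, a_c a_d,
\]
where $m_{c,d}$ is the number of steps at which the blocks labelled $c$ and $d$ are adjacent and swapped.

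The key point is the parity observation: $m_{c,d}$ equals the number of times the relative order of blocks $c$ and $d$ is flipped during the swap sequence, so it has the same parity as $\llbracket (c,d)\in \mathrm{Inv}(w)\rrbracket$, and in particular $m_{c,d} \ge \llbracket (c,d)\in \mathrm{Inv}(w)\rrbracket$. Summing over pairs gives $p \ge \ell(w)$ with equality iff $p = \ell(w)$, i.e.\ iff $s_{i_1}\cdots s_{i_p}$ is reduced in $\sg_t$; weighting by $a_c a_d$ (all positive in the cases we care about) yields $\psi$-length $= \ell(\sigma)$ under the same condition. Specialising to $(a_1,\ldots,a_t) = (1,\ldots,1)$ gives exactly the same criterion $p = \ell(w)$, so both conditions in the corollary are equivalent. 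The main subtlety is the bookkeeping of how block sizes evolve through the swap sequence and confirming the parity assertion; thereafter the argument reduces to a direct inequality-counting comparison of the two sums.
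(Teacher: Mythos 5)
Your argument is correct, and it is the natural elaboration of what the paper leaves implicit: the corollary is placed right after \cref{two line notation for block permutation} and is meant to follow from length-counting via that two-line notation, exactly as you do. Your two key identities, $\ell(\sigma)=\sum_{(c,d)\in\mathrm{Inv}(w)}a_ca_d$ and $\psi$-length $=\sum_{c<d}m_{c,d}\,a_ca_d$, together with the parity observation $m_{c,d}\geq\llbracket(c,d)\in\mathrm{Inv}(w)\rrbracket$, give the result cleanly. You correctly flag (in parentheses) that the backward implication needs all block sizes $a_i>0$: if some $a_i=0$ the corollary as literally stated can fail, e.g.\ $S_1S_1(1,0)$ is trivially reduced while $S_1S_1(1,1)=s_1s_1$ is not. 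Fortunately, the direction the paper actually uses in \cref{lem: reduced exp of length 10} (reducedness of the $(1,\dots,1)$ expression implies reducedness in general) holds unconditionally by your inequality, so the application is sound.
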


The two-line notation may be used to represent $\Psi_{i_1}\dots\Psi_{i_p}(a_1,\dots,a_t)e(\nu)$ by diagrams.

\begin{ex}
Let $\nu^1\in I^a$, $\nu^2\in I^b$, $\nu^3\in I^c$, for $a,b,c\ge1$, and $\nu=\nu^1*\nu^2*\nu^3$.  
Then $\Psi[a+b,c]e(\nu)$ is represented by
\[
\begin{xy}
(0,10) *{\nu^3}="A", (10,10) *{\nu^1}="B", (20,10) *{\nu^2}="C",
(0,0) *{\nu^1}="A'", (10,0) *{\nu^2}="B'", (20,0) *{\nu^3}="C'",

\ar@{-} "A";"C'"
\ar@{-} "B";"A'"
\ar@{-} "C";"B'"
\end{xy}
\]
and it follows that $\Psi[a+b,c]e(\nu)=\Psi_1\Psi_2(a,b,c)e(\nu)$.

\end{ex}

\begin{cor}
Let $\underline{a}=(a_1,\dots,a_t)$. If $j\ne i\pm1$ then $\Psi_i\Psi_j(\underline{a})=\Psi_j\Psi_i(\underline{a})$. 
\end{cor}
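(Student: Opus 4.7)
My plan is to unwind the definition of $\Psi_{i_1}\Psi_{i_2}(\underline{a})$ and show that in the case $j\ne i\pm 1$ the two products $\Psi_i\Psi_j(\underline{a})$ and $\Psi_j\Psi_i(\underline{a})$ both collapse to $\Psi_i(\underline{a})\Psi_j(\underline{a})$, after which the identity follows from the far-commutation of $\psi$-generators.

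Without loss of generality I would assume $j\ge i+2$ (the case $j\le i-2$ is symmetric). By definition,
\[
\Psi_i\Psi_j(a_1,\dots,a_t) = \Psi_i(a_{s_j(1)},\dots,a_{s_j(t)})\,\Psi_j(a_1,\dots,a_t),
\]
and since $s_j$ only swaps the $j$-th and $(j+1)$-th entries while $i,i+1<j$, the tuple $(a_{s_j(1)},\dots,a_{s_j(t)})$ agrees with $(a_1,\dots,a_t)$ in its first $i+1$ entries. Because $\Psi_i(b_1,\dots,b_t)=\Psi_2(\sum_{k<i}b_k,b_i,b_{i+1})$ depends only on those first $i+1$ entries, this gives $\Psi_i(a_{s_j(1)},\dots,a_{s_j(t)})=\Psi_i(\underline{a})$. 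Similarly, $s_i$ permutes only entries $i,i+1$, both of which lie inside the summation $\sum_{k<j}$ when computing $\Psi_j$, and does not touch entries $j,j+1$; hence $\Psi_j(a_{s_i(1)},\dots,a_{s_i(t)})=\Psi_j(\underline{a})$. Thus
\[
\Psi_i\Psi_j(\underline{a}) = \Psi_i(\underline{a})\Psi_j(\underline{a}), \qquad \Psi_j\Psi_i(\underline{a}) = \Psi_j(\underline{a})\Psi_i(\underline{a}).
\]

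It then remains to verify that $\Psi_i(\underline{a})$ and $\Psi_j(\underline{a})$ commute. Setting $c_i=\sum_{k=1}^{i-1}a_k$ and $c_j=\sum_{k=1}^{j-1}a_k$, the discussion preceding the corollary shows that the $\psi$-generators appearing in $\Psi_i(\underline{a})=\Psi_2(c_i,a_i,a_{i+1})$ have indices in $\{c_i+1,\dots,c_i+a_i+a_{i+1}-1\}$, and likewise for $\Psi_j(\underline{a})$ the indices lie in $\{c_j+1,\dots,c_j+a_j+a_{j+1}-1\}$. Since $j\ge i+2$, we have $c_j\ge c_i+a_i+a_{i+1}$, so the two index sets differ by at least $2$. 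Therefore every $\psi$-generator appearing in $\Psi_i(\underline{a})$ commutes with every $\psi$-generator appearing in $\Psi_j(\underline{a})$ by the defining relation $\psi_r\psi_s=\psi_s\psi_r$ for $|r-s|>1$, which gives $\Psi_i(\underline{a})\Psi_j(\underline{a})=\Psi_j(\underline{a})\Psi_i(\underline{a})$ and completes the proof.

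There is essentially no obstacle here — the statement is a direct bookkeeping corollary of the commutation relations in $R(\beta)$ together with the definition of the block notation. The only point requiring any care is checking that the reindexing by $s_i$ or $s_j$ in the definition of the composite block symbol leaves the relevant $\Psi_i$ or $\Psi_j$ unchanged, which is immediate from the disjointness of $\{i,i+1\}$ and $\{j,j+1\}$.
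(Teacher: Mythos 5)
Your proof is correct. The paper gives no explicit argument for this corollary; you have simply made explicit what the text leaves implicit, by combining the defining recursion for $\Psi_i\Psi_j(\underline{a})$ (checking that the reindexing by $s_j$ or $s_i$ leaves the other block factor untouched) with the observation stated immediately before the corollary that the $\psi$-generators appearing in $\Psi_2(c,a,b)$ are $\psi_{c+1},\dots,\psi_{c+a+b-1}$, so that the two factors far-commute. This is exactly the intended reasoning.
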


\begin{lem} \label{square of block transposition}
Suppose that the Cartan matrix $\cmA$ is of type $C_\infty$.
Let $\nu=(\nu_1,\nu_2, \dots, \nu_n) \in I^n$ and $a,b \in \Z_{>0}$ with $a < n$ and $a+b \le n$.
\begin{enumerate}
\item
If $|\nu_i-\nu_{a+1}|\ge2$, for $1\le i\le a$, then
$\Psi[1,a]\Psi[a,1]e(\nu)=e(\nu)$. 
\item
If $|\nu_1-\nu_i|\ge2$, for $2\le i\le a+1$, then
$\Psi[a,1]\Psi[1,a]e(\nu)=e(\nu)$.
\item
If $\nu_1 = \nu_{a+1} \ne \nu_2,\dots,\nu_{a}$, then $x_{a+1}\Psi[1,a]e(\nu) = (\Psi[1,a]x_1 + \Psi[1,a-1])e(\nu)$.
\item
If 
$\nu_1 = \nu_{a+1} \ne \nu_2,\dots,\nu_{a}$, then $x_1\Psi[a,1]e(\nu) = (\Psi[a,1]x_{a+1} - \Psi_2(1,a-1,1))e(\nu)$.
\item
If $|\nu_i-\nu_j|\ge2$, for $1\le i\le a$, $a+1\le j\le a+b$, then $\Psi[b,a]\Psi[a,b]e(\nu)=e(\nu)$.
\end{enumerate}
\end{lem}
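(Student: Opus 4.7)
I would prove the five assertions in sequence, with (3), (4), and (5) building on (1) and (2). For (1), the factorisations $\Psi[a,1]=\psi_1\psi_2\cdots\psi_a$ and $\Psi[1,a]=\psi_a\psi_{a-1}\cdots\psi_1$ recorded just above the statement give
\[
\Psi[1,a]\Psi[a,1]\,e(\nu)=\psi_a\cdots\psi_2\cdot\psi_1^2\cdot\psi_2\cdots\psi_a\,e(\nu).
\]
Sliding $\psi_2\cdots\psi_a$ across $e(\nu)$ changes the idempotent to $e(s_2\cdots s_a\nu)=e(\nu_1,\nu_{a+1},\nu_2,\dots,\nu_a,\nu_{a+2},\dots)$, so the quadratic relation forces $\psi_1^2$ to act as $Q_{\nu_1,\nu_{a+1}}(x_1,x_2)$, which equals $1$ by the hypothesis $|\nu_1-\nu_{a+1}|\ge 2$ and the formulae \eqref{Qij for C_infty}. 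Peeling off the outer $\psi$'s one at a time, at the $k$th stage $\psi_k^2$ acts on $e(s_{k+1}\cdots s_a\nu)$, whose positions $k,k+1$ carry $(\nu_k,\nu_{a+1})$, so it again reduces to $1$ by hypothesis. Hence the product collapses to $e(\nu)$. Part (2) is the mirror image, with successive residue pairs $(\nu_1,\nu_{k+1})$ for $k=a,a-1,\dots,1$.

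For (3) and (4) I would commute $x_{a+1}$ (respectively $x_1$) through the product of $\psi$'s using the twisted commutation relations
\[
x_{r+1}\psi_r\,e(\mu)=\bigl(\psi_r x_r+\delta_{\mu_r,\mu_{r+1}}\bigr)e(\mu),\qquad x_r\psi_r\,e(\mu)=\bigl(\psi_r x_{r+1}-\delta_{\mu_r,\mu_{r+1}}\bigr)e(\mu),
\]
tracking the intermediate idempotent at each step. For (3), the first commutation of $x_{a+1}$ past $\psi_a$ acts on $e(s_{a-1}\cdots s_1\nu)=e(\nu_2,\dots,\nu_a,\nu_1,\nu_{a+1},\dots)$, whose positions $a,a+1$ carry the equal residues $\nu_1=\nu_{a+1}$, so the $\delta$ fires as $+1$ and produces the extra term $\Psi[1,a-1]\,e(\nu)$. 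Every subsequent commutation of $x_j$ past $\psi_{j-1}$ (for $j=a,a-1,\dots,2$) compares residues $\nu_1$ and $\nu_{j-1}$ in the intermediate idempotent, and these differ by hypothesis, so no further corrections appear and $x_a$ emerges as $x_1$. Part (4) is entirely analogous: the outermost relation $x_1\psi_1$ sees positions $1,2$ of $s_2\cdots s_a\nu$, namely $(\nu_1,\nu_{a+1})=(\nu_1,\nu_1)$, yielding the term $-\Psi_2(1,a-1,1)\,e(\nu)$, and each subsequent push compares $\nu_1$ with $\nu_k$ for $2\le k\le a$, so produces no correction.

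For (5) I would induct on $b$, with base case $b=1$ being part (1). The splitting identities listed before the lemma give
\[
\Psi[a,b]=\shf_{b-1}(\Psi[a,1])\cdot\Psi[a,b-1],\qquad \Psi[b,a]=\Psi[b-1,a]\cdot\shf_{b-1}(\Psi[1,a]),
\]
so that
\[
\Psi[b,a]\Psi[a,b]\,e(\nu)=\Psi[b-1,a]\cdot\shf_{b-1}\!\bigl(\Psi[1,a]\Psi[a,1]\bigr)\cdot\Psi[a,b-1]\,e(\nu).
\]
The idempotent after $\Psi[a,b-1]e(\nu)$ has positions $b,\dots,a+b-1$ carrying $\nu_1,\dots,\nu_a$ and position $a+b$ carrying $\nu_{a+b}$; by hypothesis every pair $(\nu_i,\nu_{a+b})$ with $1\le i\le a$ differs by at least $2$. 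The shifted analogue of the argument for part (1) therefore collapses the middle factor to the identity on this idempotent, leaving $\Psi[b-1,a]\Psi[a,b-1]\,e(\nu)$, and the inductive hypothesis (applied to $b-1$ with the same $a$ and the smaller residue condition $|\nu_i-\nu_j|\ge 2$ for $1\le i\le a$, $a+1\le j\le a+b-1$) identifies this with $e(\nu)$.

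The principal bookkeeping obstacle lies in parts (3) and (4): one has to verify that among the $a$ successive commutations the Kronecker delta fires exactly once, at the right moment, to produce precisely the advertised correction, and that no additional delta-corrections appear. Once the sequence of intermediate idempotents is written down, the rest of the argument is mechanical.
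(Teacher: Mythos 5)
Your argument is essentially the paper's, and it is correct. Parts (1)--(4) match the paper's computation exactly: decompose the $\Psi$'s as products of individual $\psi_r$'s, collapse the inner square via the quadratic relation, and track the intermediate idempotent at each step. One small indexing slip in your write-up of (3): after the first commutation fires the $\delta$, the subsequent push of $x_j$ past $\psi_{j-1}$ (for $j=a,a-1,\dots,2$) compares positions $j-1,j$ of $s_{j-2}\cdots s_1\nu$, which carry $(\nu_1,\nu_j)$, not $(\nu_1,\nu_{j-1})$ as you wrote -- with your formula the final step $j=2$ would compare $\nu_1$ with itself and spuriously produce another correction. (Your phrasing in (4), ``$\nu_1$ with $\nu_k$ for $2\le k\le a$,'' is the correct version.) For (5), the paper splits off one factor at a time from the $a$-side, writing $\Psi[b,a]\Psi[a,b]$ with the inner product $\Psi[b,1]\Psi[1,b]$ and iterating part (2); you instead split off from the $b$-side and iterate part (1). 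These are mirror images of the same induction and both work, with the shifted idempotent computation $w[a,b-1]\nu$ doing the bookkeeping as you describe.
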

\begin{proof}
(1) As $\Psi[1,a]=\psi_a\dots\psi_1$ and $\Psi[a,1]=\psi_1\dots\psi_a$, we have 
\begin{align*}
\Psi[1,a]\Psi[a,1]e(\nu)&=(\psi_a\dots\psi_2)\psi_1^2e(\mu^1)(\psi_2\dots\psi_a)\\
&=(\psi_a\dots\psi_2)(\psi_2\dots\psi_a)e(\nu),
\end{align*}
where $\mu^1=(\nu_1,\nu_{a+1},\nu_2,\dots,\nu_a, \nu_{a+2}, \dots, \nu_n)$, then
\begin{align*}
\Psi[1,a]\Psi[a,1]e(\nu)&=(\psi_a\dots\psi_3)\psi_2^2e(\mu^2)(\psi_3\dots\psi_a)\\
&=(\psi_a\dots\psi_3)(\psi_3\dots\psi_a)e(\nu),
\end{align*}
where $\mu^2=(\nu_1,\nu_2,\nu_{a+1},\nu_3,\dots,\nu_a, \nu_{a+2},  \dots, \nu_n)$, and so on.
We continue the computation in this way and we reach $\Psi[1,a]\Psi[a,1]e(\nu)=e(\nu)$.

\noindent
(2) The proof is similar to (1) and left to the reader.

\noindent
(3) By the assumption, we have
\begin{align*}
x_{a+1}\Psi[1,a]e(\nu)&=x_{a+1}\psi_a\dots\psi_1e(\nu)= (\psi_a x_a \psi_{a-1}\dots\psi_2\psi_1+ \psi_{a-1}\dots\psi_1) e(\nu)\\
&=(\Psi[1,a]x_1 + \Psi[1,a-1])e(\nu).
\end{align*}

\noindent
(4) By a similar computation to (3), we have
\begin{align*}
x_1\Psi[a,1]e(\nu)&=x_1\psi_1\dots\psi_ae(\nu) = (\psi_1x_2 \psi_2 \dots \psi_a - \psi_2 \dots \psi_a) e(\nu)\\
&=(\Psi[a,1]x_{a+1} - \Psi_2(1,a-1,1))e(\nu).
\end{align*}

\noindent
(5) We recall the formulas
\begin{align*}
\Psi[b,a]&= \Psi_2(a-1,b,1)\dots \Psi_2(1,b,1)\Psi[b,1],\\
\Psi[a,b]&= \Psi[1,b]\Psi_2(1,1,b)\dots \Psi_2(a-1,1,b).
\end{align*}
Then repeated use of (2) proves the result.
\end{proof}

\begin{lem} \label{failure of braid rels}
Suppose that the Cartan matrix $\cmA$ is of type $C_\infty$. Then 
\[
\psi_{r+1}\psi_r\psi_{r+1}e(\nu)=\psi_r\psi_{r+1}\psi_re(\nu)+ \epsilon(r,\nu)e(\nu)
\]
where $\epsilon(r,\nu)$ is given as follows.
\[
\epsilon(r,\nu)=
\begin{cases}
x_r+x_{r+2} \quad &\text{if}\; (\nu_r,\nu_{r+1},\nu_{r+2})=(1,0,1),\\
1 &\text{if}\; (\nu_r,\nu_{r+1},\nu_{r+2})=(i,i\pm1,i)\;\text{and}\; \nu_{r+1}\ne0,\\
0 &\text{otherwise.}\end{cases}
\]
\end{lem}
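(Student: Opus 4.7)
The plan is to read this off directly from the last defining relation of $R(\beta)$ in \cref{defn - cyclotomic}, evaluated with the specific polynomials $Q_{i,j}(u,v)$ for type $C_\infty$ chosen in \eqref{Qij for C_infty}. No new machinery is needed beyond the definition.

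First I would record that, by \cref{defn - cyclotomic},
\[
(\psi_{r+1}\psi_r\psi_{r+1} - \psi_r\psi_{r+1}\psi_r)e(\nu)
=\begin{cases}
\dfrac{Q_{\nu_r,\nu_{r+1}}(x_r,x_{r+1}) - Q_{\nu_r,\nu_{r+1}}(x_{r+2},x_{r+1})}{x_r - x_{r+2}}\,e(\nu) & \text{if } \nu_r=\nu_{r+2},\\[4pt]
0 & \text{otherwise.}
\end{cases}
\]
Thus $\epsilon(r,\nu)=0$ is immediate when $\nu_r\ne\nu_{r+2}$, accounting for part of the ``otherwise'' branch of the lemma. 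For the remainder, write $i=\nu_r=\nu_{r+2}$ and $j=\nu_{r+1}$, and split into cases according to the form of $Q_{i,j}$.

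The main step is then a short case analysis using \eqref{Qij for C_infty}:
\begin{itemize}
\item If $i=j$, then $Q_{i,i}=0$, giving $\epsilon(r,\nu)=0$.
\item If $(i,j)=(1,0)$, then $Q_{1,0}(u,v)=Q_{0,1}(v,u)=v+u^2$, so the quotient equals $(x_r^2-x_{r+2}^2)/(x_r-x_{r+2})=x_r+x_{r+2}$, reproducing the first branch.
\item If $(i,j)=(0,1)$, then $Q_{0,1}(u,v)=u+v^2$, giving quotient $1$; here $j=1\ne0$, so this lies in the second branch.
\item If $i\ne 0$, $j=i\pm1$, and $(i,j)\ne(1,0)$ (so $j\ne0$ as well), then $Q_{i,j}(u,v)=u+v$ (symmetrising via $Q_{j,i}(v,u)=v+u$ when $j=i-1$), so again the quotient equals $1$, matching the second branch.
\item If $|i-j|\ge2$, then $Q_{i,j}=1$, so the quotient is $0$, completing the ``otherwise'' branch.
\end{itemize}

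These cases exhaust all $(i,j)$ and reproduce the piecewise formula for $\epsilon(r,\nu)$ exactly. There is no serious obstacle; the only point requiring attention is the bookkeeping that cases $(0,1)$ and $(i,i\pm1)$ with $i\ne 0$ are precisely those producing the scalar $1$, and together they coincide with the condition ``$(\nu_r,\nu_{r+1},\nu_{r+2})=(i,i\pm1,i)$ with $\nu_{r+1}\ne 0$'' in the statement.
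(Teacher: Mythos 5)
Your proof is correct and is exactly the direct computation the paper leaves implicit (the lemma is stated without proof, as a straightforward evaluation of the last defining relation in \cref{defn - cyclotomic} against the specific choice \eqref{Qij for C_infty}). Your case analysis, including the symmetrisation $Q_{j,i}(u,v)=Q_{i,j}(v,u)$ to handle $j=i-1$ and the observation that $(0,1,0)$ falls into the second branch because $\nu_{r+1}=1\ne 0$, matches the statement exactly.
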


\begin{lem}\label{2exchange formula}
Let $\nu=(\mu_1\mu_2\nu_1\dots\nu_b)\in I^{b+2}$ for $b\ge1$. Then
\[
\psi_{b+1}\Psi[2,b]e(\nu)=\Psi[2,b]\psi_1e(\nu)+\sum_{k=1}^b \Psi_2(k,2,b-k)c_k\Psi[2,k-1]e(\nu),
\]
where $c_k$ is given by
\[
c_k=\begin{cases}
x_k+x_{k+2} \quad &\text{if}\; (\mu_1,\mu_2,\nu_k)=(1,0,1),\\
1 &\text{if}\; (\mu_1,\mu_2,\nu_k)=(i,i\pm1,i)\;\text{and}\; \mu_2\ne0,\\
0 &\text{otherwise.}\end{cases}
\]
\end{lem}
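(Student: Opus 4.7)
The plan is to proceed by induction on $b$. The base case $b = 1$ follows directly from \cref{failure of braid rels}: we have $\psi_2 \Psi[2,1]e(\nu) = \psi_2\psi_1\psi_2 e(\nu) = \psi_1\psi_2\psi_1 e(\nu) + \epsilon(1, \nu)e(\nu) = \Psi[2,1]\psi_1 e(\nu) + c_1 e(\nu)$, matching the claim since $\Psi_2(1, 2, 0)$ and $\Psi[2, 0]$ are both the identity, and $\epsilon(1,\nu)=c_1$ by direct inspection.

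For the inductive step, the key observation is the factorisation $\Psi[2, b] = \psi_b\psi_{b+1}\Psi[2, b-1]$, obtained from $\Psi[2, b] = (\psi_b\cdots\psi_1)(\psi_{b+1}\cdots\psi_2)$ by commuting the central $\psi_{b+1}$ past $\psi_{b-1},\dots,\psi_1$. Thus $\psi_{b+1}\Psi[2, b] = \psi_{b+1}\psi_b\psi_{b+1}\Psi[2, b-1]$. Passing $e(\nu)$ through $\Psi[2, b-1]$, the residue sequence seen by $\psi_{b+1}\psi_b\psi_{b+1}$ is $\eta' = w[2, b-1]\nu = (\nu_1, \dots, \nu_{b-1}, \mu_1, \mu_2, \nu_b)$; in particular positions $b, b+1, b+2$ of $\eta'$ carry $(\mu_1, \mu_2, \nu_b)$. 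An application of \cref{failure of braid rels} with $r = b$ therefore produces exactly the error term $c_b$, giving
\[
\psi_{b+1}\Psi[2, b]e(\nu) = \psi_b\psi_{b+1}\psi_b\Psi[2, b-1]e(\nu) + c_b\Psi[2, b-1]e(\nu),
\]
in which the second summand is the $k = b$ term of the desired formula (since $\Psi_2(b, 2, 0) = 1$).

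It remains to identify the first summand with $\Psi[2, b]\psi_1 e(\nu)$ plus the terms for $k = 1, \dots, b-1$. I apply the inductive hypothesis to $\psi_b\Psi[2, b-1]e(\nu)$; this is legitimate because $\psi_b$ and $\Psi[2, b-1]$ act trivially on position $b+2$, so the $(b-1)$-case identity transfers verbatim with the tail $\nu_b$ carried along as a passive idempotent. Left-multiplying by $\psi_b\psi_{b+1}$, the main term becomes $\psi_b\psi_{b+1}\Psi[2, b-1]\psi_1 e(\nu) = \Psi[2, b]\psi_1 e(\nu)$, while each error summand is rewritten using the factorisation $\Psi_2(k, 2, b-k) = \psi_b\psi_{b+1}\Psi_2(k, 2, b-1-k)$ -- a direct instance of $\Psi_2(c, a, b') = \Psi_2(c+y, a, b'-y)\Psi_2(c, a, y)$ with $y = b-k-1$ -- into the required $\Psi_2(k, 2, b-k)c_k\Psi[2, k-1]e(\nu)$. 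Combining both contributions yields the full sum over $k = 1, \dots, b$.

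The main obstacle is the residue-sequence bookkeeping when invoking \cref{failure of braid rels}: one must verify that at the moment of the braid move the three relevant positions carry exactly $(\mu_1, \mu_2, \nu_b)$, so that the resulting error is $c_b$ rather than some other $c_k$. The inductive framing localises this verification to a single such invocation per step, which is conceptually cleaner than directly unfolding the $b$-fold braid derivation from the proof of \cref{use of braid rels}(2) and tracking all $b$ error terms simultaneously.
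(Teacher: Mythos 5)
Your proof is correct and essentially the same as the paper's: you unroll exactly the same chain of braid moves from \cref{use of braid rels}(2), peeling off $\psi_b\psi_{b+1}$ and picking up the $c_b$ error term via \cref{failure of braid rels}, but you package the telescoping as an induction on $b$ rather than writing out the full expansion. The bookkeeping (the identity $\Psi_2(k,2,b-k)=\psi_b\psi_{b+1}\Psi_2(k,2,b-1-k)$, the residue sequence $w[2,b-1]\nu$ placing $(\mu_1,\mu_2,\nu_b)$ in positions $b,b+1,b+2$, and the embedding of the length-$(b+1)$ identity with $\nu_b$ as a passive tail) is all correct.
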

\begin{proof}
We follow \cref{use of braid rels}(2). Then, 
\begin{align*}
\psi_{b+1}\Psi[2,b]e(\nu)&=\underline{\psi_{b+1}}(\underline{\psi_{b}}\psi_{b-1}\dots \psi_1)(\underline{\psi_{b+1}}\psi_b\dots \psi_2)e(\nu)\\
&=\psi_{b+1}\psi_{b}\psi_{b+1}e(\nu_1\dots\nu_{b-1}\mu_1\mu_2\nu_b)(\psi_{b-1}\dots \psi_1)(\psi_b\dots \psi_2)\\
&=(\psi_b\psi_{b+1})\underline{\psi_{b}}(\underline{\psi_{b-1}}\psi_{b-2}\dots \psi_1)(\underline{\psi_{b}}\psi_{b-1}\dots \psi_2)e(\nu)\\
& \qquad + c_b(\psi_{b-1}\psi_{b-2}\dots \psi_1)(\psi_{b}\psi_{b-1}\dots \psi_2)e(\nu)\\
&=\quad \ldots\ldots\ldots\ldots \\
&=\Psi[2,b]\psi_1e(\nu)+\sum_{k=1}^b \Psi_2(k,2,b-k)c_k\Psi[2,k-1]e(\nu),
\end{align*}
where the error terms are computed by using \cref{failure of braid rels}.
Here, the underlines indicate generators to which we can apply the braid-type relation.
\end{proof}

\begin{lem} \label{exchange formula}
Let $a,b\ge1$ and $1\le i\le a-1$. Then
\begin{multline*}
\psi_{i+b}\Psi[a,b]e(\nu)-\Psi[a,b]\psi_ie(\nu)\\
=\sum_{k=1}^b \Psi[i-1,b]\Psi_2(i-1+k,2,b-k)c_k\Psi_2(i-1,2,k-1)\Psi_2(i+1,a-i-1,b)e(\nu)\\
=\sum_{k=1}^b \Psi_2\Psi_1\Psi_3(i-1,k,2,b-k)c_k\Psi_2\Psi_4\Psi_3(i-1,2,a-i-1,k-1,b-k+1)e(\nu),
\end{multline*}
where $c_k=x_{k+i-1}+x_{k+i+1}$ if $(\nu_i,\nu_{i+1},\nu_{a+k})=(1,0,1)$, $c_k=1$ if 
$(\nu_i,\nu_{i+1},\nu_{a+k})=(j,j\pm1,j)$ for some $j\ge0$ such that $\nu_{i+1}\ne0$, and $c_k=0$ otherwise.
\end{lem}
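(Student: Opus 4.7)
The strategy is to reduce the general case to $a=2$, which is \cref{2exchange formula}, by isolating a copy of $\Psi_2(i-1,2,b)$ inside $\Psi[a,b]$. The key factorization is
\[
\Psi[a,b] = \Psi[i-1,b]\cdot\Psi_2(i-1,2,b)\cdot\Psi_2(i+1,a-i-1,b),
\]
which follows from two applications of the splitting identity $\Psi_2(c,a,b)=\Psi_2(c,x,b)\Psi_2(c+x,a-x,b)$ given just before the Remark (first with $x=i-1$, then with $x=2$); the length check $(i-1)b+2b+(a-i-1)b=ab$ confirms that this expression is reduced. Since the $\psi$-generators appearing in $\Psi[i-1,b]$ have indices at most $i+b-2$, the generator $\psi_{i+b}$ commutes with $\Psi[i-1,b]$; analogously, $\psi_i$ commutes with $\Psi_2(i+1,a-i-1,b)$, whose generators have indices at least $i+2$. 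These commutations let me rewrite
\[
\psi_{i+b}\Psi[a,b]e(\nu) = \Psi[i-1,b]\cdot \psi_{i+b}\Psi_2(i-1,2,b)\cdot e(\mu)\cdot\Psi_2(i+1,a-i-1,b),
\]
after pushing $e(\nu)$ past $\Psi_2(i+1,a-i-1,b)$, where $\mu := \shf_{i+1}(w[a-i-1,b])\nu$.

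A direct two-line computation via \cref{two line notation for block permutation} gives $\mu_i=\nu_i$, $\mu_{i+1}=\nu_{i+1}$, and $\mu_{i+k+1}=\nu_{a+k}$ for $1\le k\le b$. Applying \cref{2exchange formula} to $\psi_{i+b}\Psi_2(i-1,2,b)e(\mu)$ --- which is simply $\psi_{b+1}\Psi[2,b]$ shifted up by $i-1$ --- produces the main term $\Psi_2(i-1,2,b)\psi_ie(\mu)$ together with the correction sum $\sum_{k=1}^b\Psi_2(i-1+k,2,b-k)\,c_k\,\Psi_2(i-1,2,k-1)e(\mu)$. The coefficients $c_k$ are read off from $(\mu_i,\mu_{i+1},\mu_{i+k+1})=(\nu_i,\nu_{i+1},\nu_{a+k})$, and the $x$-indices shift by $i-1$ to give exactly the $c_k$ in the statement. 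The main term reassembles, using the reverse commutations, to $\Psi[i-1,b]\Psi_2(i-1,2,b)\Psi_2(i+1,a-i-1,b)\psi_ie(\nu) = \Psi[a,b]\psi_ie(\nu)$, while moving $e(\mu)$ back past $\Psi_2(i+1,a-i-1,b)$ in the error terms yields the first claimed equality.

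The second equality is a purely notational translation. Unpacking the recursive definition of the compound block permutations $\Psi_{i_1}\cdots\Psi_{i_p}(a_1,\dots,a_t)$ and applying the splitting identities $\Psi_2(c,a,b)=\Psi_2(c,x,b)\Psi_2(c+x,a-x,b)$ and $\Psi_2(c,a,b)=\Psi_2(c+y,a,b-y)\Psi_2(c,a,y)$, one verifies
\[
\Psi[i-1,b]\Psi_2(i-1+k,2,b-k) = \Psi_2\Psi_1\Psi_3(i-1,k,2,b-k)
\]
and
\[
\Psi_2(i-1,2,k-1)\Psi_2(i+1,a-i-1,b) = \Psi_2\Psi_4\Psi_3(i-1,2,a-i-1,k-1,b-k+1).
\]
The main obstacle I anticipate is the index bookkeeping: correctly identifying $\mu$, matching the shifted indices in the $x$-terms of $c_k$, and unpacking the compound block-permutation notation for the second equality. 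The algebraic content beyond \cref{2exchange formula} is routine.
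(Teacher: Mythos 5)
Your proposal is correct and takes essentially the same route as the paper: factor $\Psi[a,b]=\Psi[i-1,b]\Psi_2(i-1,2,b)\Psi_2(i+1,a-i-1,b)$ using the observation that $\Psi_2(c,a,b)$ only involves $\psi_{c+1},\dots,\psi_{c+a+b-1}$, push $e(\nu)$ through $\Psi_2(i+1,a-i-1,b)$ to get $e(\mu)$, and apply a shifted copy of \cref{2exchange formula} to the middle factor. Your verification of $\mu$ and the index-shifted $c_k$ matches the paper, and the second displayed equality is, as you say, a routine unwinding of the block-permutation notation via the splitting identities.
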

\begin{proof}
As the $\psi$-generators that appear in $\Psi_2(c,a,b)$ are $\psi_{c+1},\dots,\psi_{c+a+b-1}$, we have
\begin{align*}
\psi_{i+b}\Psi[a,b]e(\nu)&=\Psi[i-1,b]\psi_{i+b}\Psi_2(i-1,2,b)\Psi_2(i+1,a-i-1,b)e(\nu)\\
&=\Psi[i-1,b]\psi_{i+b}\Psi_2(i-1,2,b)e(\mu)\Psi_2(i+1,a-i-1,b)
\end{align*}
where $\mu=(\nu_1\dots\nu_{i+1}\nu_{a+1}\dots\nu_{a+b}\nu_{i+2}\dots\nu_a\nu_{a+b+1}\dots\nu_n)$. We apply \cref{2exchange formula} to substitute
\begin{align*}
\psi_{i+b}\Psi_2(i-1,2,b)e(\mu)&=\Psi_2(i-1,2,b)\psi_ie(\mu)\\
&\quad\quad +\sum_{k=1}^b \Psi_2(i-1+k,2,b-k)c_k\Psi_2(i-1,2,k-1)e(\mu).
\end{align*}
Then we have the desired formula.
\end{proof}

We may also compute $(\psi_{b+i_1}\dots\psi_{b+i_r}\Psi[a,b]-\Psi[a,b]\psi_{i_1}\dots\psi_{i_r})e(\nu)$
by applying Lemma \ref{exchange formula} to $(\psi_{b+i_k}\Psi[a,b]-\Psi[a,b]\psi_{i_k})e(s_{i_{k+1}}\dots s_{i_r}\nu)$ in the expression
\[
\sum_{k=1}^r \psi_{b+i_1}\dots\psi_{b+i_{k-1}}(\psi_{b+i_k}\Psi[a,b]-\Psi[a,b]\psi_{i_k})e(s_{i_{k+1}}\dots s_{i_r}\nu)\psi_{i_{k+1}}\dots \psi_{i_r}.
\]
In particular, we obtain the following. 

\begin{lem} \label{commutation formula1}
Let $a,b,c\ge1$ and $1\le m\le b$. Then,
\begin{multline*}
\Psi_2(c+m-1,a,1)\Psi[a+b,c]e(\nu)-\Psi[a+b,c]\Psi_2(m-1,a,1)e(\nu)\\
= \sum_{s=1}^a\sum_{t=1}^c \Psi_2(c+m-1,s-1,1)\Psi[m+s-2,c]\Psi_2(m+s+t-2,2,c-t)\qquad\qquad \\
\times c_{st}\Psi_2(m+s-2,2,t-1)\Psi_2(m+s,a+b-m-s,c)\Psi_2(m+s-1,a-s,1)e(\nu),
\end{multline*}
where $c_{st}=x_{m+s+t-2}+x_{m+s+t}$ if $(\nu_{m+s-1},\nu_{m+a},\nu_{a+b+t})=(1,0,1)$, 
$c_{st}=1$ if $  (\nu_{m+s-1}, \nu_{m+a}, $ $\nu_{a+b+t}) = (j,j\pm1,j)$ for some $j$ such that $\nu_{m+a}\ne0$, $c_{st}=0$ otherwise.
\end{lem}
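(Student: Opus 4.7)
The plan is to iterate \cref{exchange formula} along the factorisations into single generators, as suggested by the remark preceding the statement. First, decompose
\[
\Psi_2(c+m-1,a,1) = \psi_{c+m}\psi_{c+m+1}\cdots\psi_{c+m+a-1}, \qquad \Psi_2(m-1,a,1) = \psi_{m}\psi_{m+1}\cdots\psi_{m+a-1},
\]
and expand the difference via the standard telescoping identity
\begin{align*}
&\Psi_2(c+m-1,a,1)\Psi[a+b,c] - \Psi[a+b,c]\Psi_2(m-1,a,1)\\
&\quad = \sum_{s=1}^{a} \psi_{c+m}\cdots\psi_{c+m+s-2}\bigl(\psi_{c+m+s-1}\Psi[a+b,c] - \Psi[a+b,c]\psi_{m+s-1}\bigr)\psi_{m+s}\cdots\psi_{m+a-1}.
\end{align*}

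To each inner difference I would apply \cref{exchange formula} under the substitutions $a\mapsto a+b$, $b\mapsto c$, $i\mapsto m+s-1$. The required bound $1\le i\le a+b-1$ is ensured by the assumptions $1\le m\le b$ and $1\le s\le a$. This produces, at the $s$-th step, a sum over $t\in\{1,\dots,c\}$ of the block factor $\Psi[m+s-2,c]\Psi_2(m+s+t-2,2,c-t)\,c_t\,\Psi_2(m+s-2,2,t-1)\Psi_2(m+s,a+b-m-s,c)$. Substituting back and recognising
\[
\psi_{c+m}\cdots\psi_{c+m+s-2} = \Psi_2(c+m-1,s-1,1), \qquad \psi_{m+s}\cdots\psi_{m+a-1} = \Psi_2(m+s-1,a-s,1)
\]
as block transpositions, the resulting expression is exactly the right-hand side of the claim.

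The one delicate point is tracking the residue vector entering \cref{exchange formula} at each step. Because the trailing block $\psi_{m+s}\cdots\psi_{m+a-1}$ acts on $e(\nu)$ first, the exchange formula is applied at the residue $\mu = s_{m+s}s_{m+s+1}\cdots s_{m+a-1}\nu$. A direct computation shows that $\mu$ is obtained from $\nu$ by moving $\nu_{m+a}$ into position $m+s$ and shifting $\nu_{m+s},\dots,\nu_{m+a-1}$ one step to the right, so that in particular $\mu_{m+s-1}=\nu_{m+s-1}$, $\mu_{m+s}=\nu_{m+a}$, and $\mu_{a+b+t}=\nu_{a+b+t}$. Thus the coefficient $c_t$ output by \cref{exchange formula} at the $s$-th step depends precisely on the triple $(\nu_{m+s-1},\nu_{m+a},\nu_{a+b+t})$, which matches the prescription for $c_{st}$ in the statement.

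The main (and essentially only) obstacle is this residue bookkeeping: once one verifies carefully that the permutation induced by the trailing factor on $\nu$ sends the relevant entry to $\nu_{m+a}$ (rather than $\nu_{m+s}$), the identity follows mechanically from the telescoping sum and the previously established exchange formula.
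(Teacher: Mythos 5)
Your proof is correct and follows essentially the same approach as the paper's: decompose $\Psi_2(c+m-1,a,1)$ into single $\psi$-generators, telescope the commutator, and apply \cref{exchange formula} with $i=m+s-1$ at each step, tracking the residue vector $\mu^s = s_{m+s}\cdots s_{m+a-1}\nu$ so that the triple of residues entering the coefficient formula becomes $(\nu_{m+s-1},\nu_{m+a},\nu_{a+b+t})$. The residue bookkeeping you highlight (that $\mu^s_{m+s}=\nu_{m+a}$) is indeed the only nontrivial point, and the paper handles it identically by writing $\mu^s=(\nu_1\dots\nu_{m+s-1}\,\nu_{m+a}\,\nu_{m+s}\dots\widehat{\nu_{m+a}}\dots\nu_n)$.
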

\begin{proof}
As $\Psi_2(c+m-1,a,1)=\psi_{c+m}\dots\psi_{c+m+a-1}$, the left-hand side is
\[
\sum_{s=1}^a \psi\uparrow_{c+m}^{c+m+s-2}\bigl(\psi_{c+m+s-1}\Psi[a+b,c]-\Psi[a+b,c]\psi_{m+s-1}\bigr)e(\mu^s)\psi\uparrow_{m+s}^{m+a-1},
\]
where $\mu^s=s_{m+s}\dots s_{m+a-1}\nu=(\nu_1\dots\nu_{m+s-1}\nu_{m+a}\nu_{m+s}\dots\widehat{\nu_{m+a}}\dots\nu_n)$. Thus, we apply Lemma \ref{exchange formula} with $i=m+s-1$.
\end{proof}

\begin{Rmk} \label{commutation formula2}
We have prepared Lemma \ref{commutation formula1} for computing  
\begin{multline*}
\Psi_2\Psi_1\Psi_2(a,b,c)e(\nu)-\Psi_1\Psi_2\Psi_1(a,b,c)e(\nu)\\
=\Psi_2(c,a,b)\Psi_1(a,c,b)\Psi_2(a,b,c)e(\nu)-\Psi_1(b,c,a)\Psi_2(b,a,c)\Psi_1(a,b,c)e(\nu)
\end{multline*}
in later sections. If $a=0$ or $b=0$ or $c=0$ then it is zero. Thus we assume $a,b,c\ge1$. First we observe that
\begin{align*}
\Psi_1(a,c,b)\Psi_2(a,b,c)&=\Psi_2(0,a,c)\Psi_2(0+a,(a+b)-a,c)=\Psi[a+b,c],\\
\Psi_1(b,c,a)\Psi_2(b,a,c)&=\Psi_2(0,b,c)\Psi_2(0+b,(a+b)-b,c)=\Psi[a+b,c].
\end{align*}
Hence we compute $\Psi_2(c,a,b)\Psi[a+b,c]e(\nu)-\Psi[a+b,c]\Psi_1(a,b,c)e(\nu)$, which is equal to
\[
\sum_{m=1}^b \Psi_2(c+m,a,b-m)\bigl(\Psi_2(c+m-1,a,1)\Psi[a+b,c]-\Psi[a+b,c]\Psi_2(m-1,a,1)\bigr)\Psi[a,m-1]e(\nu)
\]
since $\Psi_2(c,a,b)=\Psi_2(c+b-1,a,1)\dots\Psi_2(c,a,1)$. Then
\[
\Psi[a,m-1]e(\nu)=e(w[a,m-1]\nu)\Psi[a,m-1],
\]
where $w[a,m-1]\nu=(\nu_{a+1}\dots\nu_{a+m-1}\nu_1\dots\nu_a\nu_{a+m}\dots\nu_n)$. To compute
\[
(\Psi_2(c+m-1,a,1)\Psi[a+b,c]-\Psi[a+b,c]\Psi_2(m-1,a,1))e(w[a,m-1]\nu)
\]
using Lemma \ref{commutation formula1}, we check whether
$(\nu_s,\nu_{a+m},\nu_{a+b+t})$ is $(1,0,1)$ or $(j,j+1,j)$ for $j\ge0$, or $(j,j-1,j)$ for $j\ge2$, for any $1\leq s\leq a$ and $1\leq t\leq c$. 
\end{Rmk}

\subsection{Module categories} \label{Sec: module categories} \

In the subsequent \cref{Sec: module categories,Sec: convolutions,Sec: dual space},
we keep the assumption that $\cmA$ is an arbitrary symmetrisable Cartan matrix but assume that $\bR$ is a field. 

We denote by $R(\beta)\proj$ and $R(\beta)\gmod$ the full subcategories in the category $R(\beta)\Mod$ of graded $R(\beta)$-modules
which consist of finitely generated projective graded $R(\beta)$-modules or finite dimensional graded
$R(\beta)$-modules, respectively. We set
\[
R\proj := \bigoplus_{\beta \in \rlQ^+} R(\beta)\proj \quad \text{and}\quad R\gmod := \bigoplus_{\beta \in \rlQ^+} R(\beta)\gmod.
\]

Similarly, $R^\Lambda(\beta)\proj$ and $R^\Lambda(\beta)\gmod$ are the full subcategories in the category  $R^\Lambda(\beta)\Mod$ of graded $R^\Lambda(\beta)$-modules
which consist of finitely generated projective graded $R^\Lambda(\beta)$-modules or finite dimensional graded
$R^\Lambda(\beta)$-modules, respectively. We set
\[
R^\Lambda\proj := \bigoplus_{\beta \in \rlQ^+} R^\Lambda(\beta)\proj \quad \text{and}\quad R^\Lambda\gmod := \bigoplus_{\beta \in \rlQ^+} R^\Lambda(\beta)\gmod.
\]

Let us denote by $q$ the {\em grading shift functor}, i.e.~$(qM)_k = M_{k-1}$ for a graded module $M = \bigoplus_{k \in \Z} M_k $.
For $M \in R(\beta)\gmod$, the {\em $q$-character} of $M$ is defined by
\[
\ch_q(M) := \sum_{\nu \in I^\beta} \dim_q (e(\nu)M) \nu,
\]
where $ \dim_q V := \sum_{k\in \Z} \dim(V_k)q^{k} $ for a graded vector space $V = \bigoplus_{k\in \Z} V_k $.

For graded $R(\beta)$-modules $M$ and $N $, we denote by $\Hom_{R(\beta)}(M,N)$ the space of degree preserving module homomorphisms. 
If $f \in \Hom_{R(\beta)}(q^{k}M, N)$, we set $ \deg(f) := k.$
Then we define the following graded vector space:
\[
\mathrm{HOM}_{R(\beta)}( M,N ) := \bigoplus_{k \in \Z} \Hom_{R(\beta)}(q^{k}M, N).
\]
We write $\Hom( M, N )$ and $\mathrm{HOM}( M,N )$ if there is no confusion. For $\beta, \beta' \in \rlQ^+$, we set
\[
e(\beta, \beta') := \; \sum_{\mathclap{\nu \in I^\beta, \nu' \in I^{\beta'}}} \; e(\nu * \nu').
\]

\begin{defn}
Let $M$ be a graded $R(\beta)$-module, $N$ a graded $R(\beta')$-module. Then the \emph{convolution product} $M\conv N$ is the graded $R(\beta+\beta')$-module defined by
\[
M \conv N := R(\beta+\beta') e(\beta, \beta') \otimes_{R(\beta) \otimes R(\beta')} (M \otimes N).
\]
\end{defn}

Let $\A = \Z[q,q^{-1}]$. We denote by $[R\proj]$ and $[R\gmod]$ the Grothendieck groups of $R\proj$ and $R\gmod$ respectively. 
The convolution product makes $[R\proj]$ and $[R\gmod]$ into $\A$-algebras, and we have the following theorem. 

\begin{thm} [{\cite{KL09, KL11, R08}}]
There exist isomorphisms of $\A$-algebras
\[[R\proj] \simeq  U_\A^-(\g) \quad \text{and} \quad [R\gmod] \simeq  U_\A^-(\g)^\vee.\]
\end{thm}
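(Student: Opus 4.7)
The plan is to follow the categorification strategy originating with \cite{KL09, KL11, R08}. First I would construct an $\A$-algebra homomorphism $\gamma: U_\A^-(\g) \to [R\proj]$ sending the Chevalley generator $f_i$ to $[R(\alpha_i)]$, where the multiplication on $[R\proj]$ comes from the convolution product. Both sides carry a natural $\rlQ^+$-grading and $\gamma$ respects this.

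For well-definedness one must verify that the images satisfy the quantum Serre relations. The key observation is that $R(n\alpha_i)$ is isomorphic to the nilHecke algebra of rank $n$, whose unique graded indecomposable projective $P_i^{(n)}$ has graded endomorphism ring with Poincar\'e polynomial $[n]!_q$; thus $[R(\alpha_i)]^n = [n]!_q [P_i^{(n)}]$, so divided powers $f_i^{(n)}$ lift to genuine classes of projective modules. The Serre relations themselves reduce to a direct computation in $R(m\alpha_i + n\alpha_j)$ with $i \ne j$ and $m + n = 1 - a_{ij}$: using \cref{basis thm} together with the defining relations (and in particular the explicit form of $Q_{i,j}(u,v)$), one filters the relevant iterated convolution by subquotients indexed by shuffles of $i$'s and $j$'s, and shows that the alternating sum telescopes to zero in the Grothendieck group.

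Surjectivity of $\gamma$ is immediate because each $R(\beta) e(\nu)$ appears as a summand of an iterated convolution of the $R(\alpha_{\nu_j})$, and these lie in the image.

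The main obstacle is injectivity, which I would handle by introducing a non-degenerate pairing
\[
\langle -,-\rangle: [R\proj] \times [R\gmod] \longrightarrow \A, \qquad \langle [P], [M] \rangle := \dim_q \mathrm{HOM}_{R(\beta)}(P,M).
\]
This descends to the Grothendieck groups because indecomposable projectives and simples form dual bases by Krull--Schmidt. One then verifies that $\gamma$ intertwines this pairing with the Kashiwara/Shapovalov form on $U_\A^-(\g)$; since the latter is non-degenerate, $\gamma$ must be injective. The second isomorphism $[R\gmod] \simeq U_\A^-(\g)^\vee$ follows by duality, once one checks that restriction along the inclusions $R(\beta) \otimes R(\beta') \hookrightarrow R(\beta+\beta')$ defining convolution corresponds under $\langle -,-\rangle$ to the coproduct on $U_\A^-(\g)^\vee$ coming from the twisted bialgebra structure.
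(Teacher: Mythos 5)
The paper does not prove this theorem; it is quoted with a citation to \cite{KL09, KL11, R08}, where it originates. Your sketch faithfully reproduces the architecture of Khovanov--Lauda's argument: send $f_i$ to $[R(\alpha_i)]$, use the nilHecke summand of $R(n\alpha_i)$ to realize divided powers as honest indecomposable projectives, handle the Serre relations, and match the $\mathrm{HOM}$-pairing on $[R\proj]\times[R\gmod]$ with Lusztig's bilinear form. That is the right strategy, and the second isomorphism does indeed come from identifying restriction with the twisted coproduct.

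There is, however, one step you dismiss that is genuinely nontrivial: you claim surjectivity is ``immediate'' because each $R(\beta)e(\nu)$ is built from the $R(\alpha_{\nu_j})$ by convolution. That does place every $[R(\beta)e(\nu)]$ in the image of $\gamma$, but $[R(\beta)\proj]$ is the free $\A$-module on the \emph{indecomposable} graded projectives (up to shift), and it is not automatic that the $[R(\beta)e(\nu)]$ $\A$-span that lattice rather than a proper sublattice --- the transition matrix to indecomposables has entries involving graded multiplicities whose invertibility over $\A$ must be argued. In \cite{KL09,KL11} surjectivity is not established in isolation: it is proved together with injectivity via the perfect pairing, the injectivity of the $q$-character map $\ch_q\colon [R\gmod]\hookrightarrow \bigoplus_\nu \A\cdot\nu$, and a rank comparison. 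A related imprecision is the phrase ``$\gamma$ intertwines this pairing with the Shapovalov form'': since the pairing lives on $[R\proj]\times[R\gmod]$, invoking non-degeneracy requires a compatible map on the $[R\gmod]$ side as well, i.e.\ the two isomorphisms in the statement are established simultaneously, not sequentially. These are fixable, but as written the surjectivity claim is a gap.
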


Now we explain the cyclotomic categorification theorem proved by Kang and Kashiwara.
For this, we introduce the induction and restriction functors $F_i^\Lambda$ and $E_i^\Lambda$, for $i\in I$, as follows.
\begin{itemize}
\item
The induction functors $F_i^\Lambda:R^\Lambda(\beta)\Mod \to R^\Lambda(\beta+\alpha_i)\Mod$ are defined by
\[
F_i^\Lambda=R^\Lambda(\beta+\alpha_i)e(\beta,\alpha_i)\otimes_{R^\Lambda(\beta)}\;-.
\]
\item
The restriction functors $E_i^\Lambda:R^\Lambda(\beta)\Mod \to R^\Lambda(\beta-\alpha_i)\Mod$ are defined by
\[
E_i^\Lambda=e(\beta-\alpha_i,\alpha_i)R^\Lambda(\beta)\otimes_{R^\Lambda(\beta)}\;-.
\]
\end{itemize}

The following theorem is proved by showing that $F_i^\Lambda$ and $E_i^\Lambda$ are biadjoint functors. 
The action of the Chevalley generators on the left-hand side of each of the isomorphisms in the theorem is given by the linear operators induced by the functors: for $\beta\in\rlQ^+$,
\begin{equation} \label{Eq: categorification}
\begin{aligned}
\xymatrix{
R^\Lambda(\beta)\proj  \ar@<0.3em>[rrrr]^{F_i^{\Lambda}} & & &  &  \ar@<0.3em>[llll]^{q^{(1-\langle \alpha_i^\vee, \Lambda-\beta \rangle)(\alpha_i, \alpha_i)/2}E_i^\Lambda}  R^\Lambda(\beta+\alpha_i)\proj   \\R^\Lambda(\beta)\gmod \ar@<0.3em>[rrrr]^{q^{(1-\langle \alpha_i^\vee, \Lambda-\beta \rangle)(\alpha_i, \alpha_i)/2}F_i^{\Lambda}} & & &  &  \ar@<0.3em>[llll]^{E_i^\Lambda}  R^\Lambda(\beta+\alpha_i)\gmod 
}
\end{aligned}
\end{equation}

\begin{thm}[\protect{\cite[Theorem 6.2]{KK11}}] \label{Thm: categorification thm}
For $\Lambda \in \wlP^+$, there exist $U_\A(\g)$-module isomorphisms
\[
[R^\Lambda\proj] \simeq V_\A(\Lambda), \quad [R^\Lambda\gmod] \simeq V_\A(\Lambda)^\vee.
\]
\end{thm}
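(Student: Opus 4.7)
The plan is to follow the Kang--Kashiwara strategy, which proceeds in four main stages, of which only the first is genuinely hard.

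\textbf{Step 1: biadjointness.} The central analytic input is that, for each $i\in I$, the induction functor $F_i^\Lambda$ and the restriction functor $E_i^\Lambda$ are biadjoint up to a specified grading shift. Left adjointness of $F_i^\Lambda$ to $E_i^\Lambda$ is automatic since $F_i^\Lambda$ is given by tensoring with a bimodule, so the nontrivial content is right adjointness. One constructs explicit unit and counit natural transformations and verifies the zig-zag identities by a careful bimodule analysis of $R^\Lambda(\beta+\alpha_i)e(\beta,\alpha_i)$ as a right $R^\Lambda(\beta)$-module, exploiting the cyclotomic relation $x_1^{\langle\alpha_{\nu_1}^\vee,\Lambda\rangle}e(\nu)=0$ to compute the relevant kernel and cokernel. \emph{This step is the main obstacle}; all subsequent arguments are essentially formal consequences.

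\textbf{Step 2: categorified $\mathfrak{sl}_2$-relations.} Biadjointness immediately yields that $E_i^\Lambda$ and $F_i^\Lambda$ are exact and preserve projectivity. A more refined analysis of the adjunction units produces, for $M\in R^\Lambda(\beta)\gmod$ and $m=\langle\alpha_i^\vee,\Lambda-\beta\rangle$, short exact sequences
\[
0 \to q_i^{-2}F_i^\Lambda E_i^\Lambda M \to E_i^\Lambda F_i^\Lambda M \to \bigoplus_{k=0}^{m-1} q_i^{2k} M \to 0 \qquad (m\ge 0),
\]
and a dual sequence for $m<0$. Passing to Grothendieck groups, these realise the quantum $\mathfrak{sl}_2$-relation $[e_i,f_i]=(K_i-K_i^{-1})/(q_i-q_i^{-1})$. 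Together with the Serre relations inherited via the projection $[R\proj]\simeq U_\A^-(\g)\twoheadrightarrow [R^\Lambda\proj]$ (for the $f_i$-side) and its dual for the $e_i$-side, this endows $[R^\Lambda\proj]$ and $[R^\Lambda\gmod]$ with the structure of integrable $U_\A(\g)$-modules.

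\textbf{Step 3: highest weight and surjectivity.} The trivial one-dimensional module $\bR=R^\Lambda(0)$ has weight $\Lambda$, is annihilated by every $E_i^\Lambda$, and satisfies $(F_i^\Lambda)^{\langle\alpha_i^\vee,\Lambda\rangle+1}\bR=0$ directly from the cyclotomic relation. Hence by the universal property of the irreducible highest weight module, there is a surjective $U_\A(\g)$-homomorphism $V_\A(\Lambda)\twoheadrightarrow [R^\Lambda\proj]$ sending $v_\Lambda$ to $[\bR]$.

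\textbf{Step 4: injectivity and duality.} To verify injectivity, one compares graded ranks weight-by-weight: the weight-$(\Lambda-\beta)$ component of $[R^\Lambda\proj]$ has graded rank equal to the Shapovalov pairing on $V_q(\Lambda)_{\Lambda-\beta}$, and this matches the known dimension of $V_\A(\Lambda)_{\Lambda-\beta}$ (cf.\ the style of argument in \cref{Thm: dimension formula}). The dual statement $[R^\Lambda\gmod]\simeq V_\A(\Lambda)^\vee$ then follows formally via the pairing between $R^\Lambda\proj$ and $R^\Lambda\gmod$ given by $(P,M)\mapsto\dim_q\Hom_{R^\Lambda}(P,M)$, under which the bases of projective indecomposables and simples are dual.
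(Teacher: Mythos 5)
This is an external theorem: the paper offers no proof, only the citation to Kang--Kashiwara \cite[Theorem 6.2]{KK11} and the one-sentence remark that it rests on biadjointness of $E_i^\Lambda$ and $F_i^\Lambda$. So the relevant comparison is with the Kang--Kashiwara argument itself, not with anything in the present paper.

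Your Steps 1--3 are a fair sketch of that argument: biadjointness is indeed the hard input; the categorified $\mathfrak{sl}_2$ short exact sequences give the $[e_i,f_i]$ relations on Grothendieck groups; and the trivial module at weight $\Lambda$ together with the cyclotomic relation supplies a highest weight vector on which $f_i$ acts nilpotently, yielding a surjection $V_\A(\Lambda)\twoheadrightarrow[R^\Lambda\proj]$.

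Step 4 is where you depart from \cite{KK11} and where there is a genuine gap. You propose to prove injectivity by matching graded ranks against a Shapovalov computation for $V_q(\Lambda)$. But the identity ``$\rank [R^\Lambda\proj]_{\Lambda-\beta}$ equals $\dim V_q(\Lambda)_{\Lambda-\beta}$'' (equivalently, that $\rank_q e(\nu)R^\Lambda(\beta)e(\nu')$ is given by the Shapovalov form) is a \emph{consequence} of the categorification theorem --- it is exactly what underlies dimension formulas such as \cref{Thm: dimension formula} --- not an independently available input. The rank of $[R^\Lambda\proj]_{\Lambda-\beta}$ is the number of indecomposable projectives of that weight, and a priori this could be anything; you would need a basis theorem or cellular structure (available in type $A$, but not in general) to compute it without the theorem. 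The argument actually used in \cite{KK11} avoids this: tensoring the Step 3 surjection $V_\A(\Lambda)\to[R^\Lambda\proj]$ with $\Q(q)$ gives a surjection of $U_q(\g)$-modules from the \emph{irreducible} module $V_q(\Lambda)$ onto a nonzero module, hence an isomorphism; descent to $\A$ is then automatic because both sides are free $\A$-modules, so the kernel is torsion inside a torsion-free module. The duality with $[R^\Lambda\gmod]$ then follows from the perfect pairing between projectives and simples, as you say.

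One smaller point: in Step 2 the clause ``Serre relations inherited via the projection $[R\proj]\simeq U_\A^-\twoheadrightarrow[R^\Lambda\proj]$ ... and its dual for the $e_i$-side'' is too quick. The $f_i$-Serre relations do come for free from the quotient of $U_\A^-$, but the $e_i$ are new operators defined via $E_i^\Lambda$, and their Serre relations require separate verification (or must be deduced from integrability plus the $\mathfrak{sl}_2$-relations); there is no symmetric ``dual projection'' to invoke.
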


\subsection{Convolution product for cyclotomic quiver Hecke algebras} \label{Sec: convolutions} \

The aim of this subsection is to prove the following.

\begin{prop} \label{conv with cyclotomic}
If $M \in R^\Lambda(\beta)\gmod$ for $\Lambda\in\wlP^+$ and $N \in R^{\Lambda'}(\beta')\gmod$ for $\Lambda'\in\wlP^+$, then
$M \conv N \in R^{\Lambda + \Lambda'}(\beta + \beta')\gmod.$
\end{prop}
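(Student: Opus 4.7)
The task is to verify the cyclotomic relations defining $R^{\Lambda+\Lambda'}(\beta+\beta')$ on the $R(\beta+\beta')$-module $M\conv N$. Writing $A=\langle\alpha_i^\vee,\Lambda\rangle$, $B=\langle\alpha_i^\vee,\Lambda'\rangle$, $n=\Ht(\beta)$ and $n'=\Ht(\beta')$, I must prove $x_1^{A+B}e(\nu)\cdot(M\conv N)=0$ for every $\nu\in I^{\beta+\beta'}$ with $\nu_1=i$.

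By the basis theorem (\cref{basis thm}), $R(\beta+\beta')e(\beta,\beta')$ is a free right $R(\beta)\otimes R(\beta')$-module on the shuffle basis $\{\psi_w\,e(\beta,\beta'): w\in\sg_{n+n'}/\sg_n\times\sg_{n'}\}$, whence
\[
M\conv N \;\cong\; \bigoplus_w \psi_w\otimes(M\otimes N)
\]
as graded $\bR$-vector spaces. It therefore suffices to prove $x_1^{A+B}\psi_w\otimes(m\otimes n)=0$ for every distinguished shuffle $w$ and every $m\otimes n\in e(\mu_1)M\otimes e(\mu_2)N$ such that $\bigl(w(\mu_1*\mu_2)\bigr)_1=i$. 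Order-preservation within each block forces $w^{-1}(1)\in\{1,n+1\}$, giving two cases.

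If $w^{-1}(1)=1$ then $(\mu_1)_1=i$ and $w$ admits a reduced expression in $s_2,\dots,s_{n+n'-1}$; hence $\psi_w$ commutes with $x_1$, so $x_1^{A+B}\psi_w\otimes(m\otimes n)=\psi_w\otimes(x_1^{A+B}m\otimes n)=0$ by the cyclotomic relation $x_1^A e(\mu_1)m=0$ on $M$. If instead $w^{-1}(1)=n+1$, so $(\mu_2)_1=i$, I factor $w=w'v$ with $v=s_1 s_2\cdots s_n$ (the cycle sending $n+1\mapsto 1$) and $w'(1)=1$; a direct inversion count gives $\ell(w)=\ell(w')+n$. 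Since $\psi_{w'}$ commutes with $x_1$, the problem reduces to showing $x_1^{A+B}\psi_v\otimes(m\otimes n)=0$.

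I dispatch this last reduction by applying $x_1$ to the surviving main term $\psi_v\otimes(m\otimes x_1^{j-1}n)$ at each step $j=1,\dots,A+B$, using $x_1\psi_\ell=\psi_\ell x_1$ for $\ell\ge 2$ and $x_1\psi_1 e(\tau) = (\psi_1 x_2 - \delta_{\tau_1,\tau_2})e(\tau)$; each step produces the new main term $\psi_v\otimes(m\otimes x_1^{j}n)$ plus error terms $-\delta_{(\mu_1)_k,\,i}\,\psi_{u_k}\otimes(m\otimes x_1^{j-1}n)$, where $u_k = s_1\cdots\widehat{s_k}\cdots s_n$. Each $u_k$ decomposes as $u_k = u_k^{\mathrm{dist}}\sigma_k$ with $u_k^{\mathrm{dist}}=s_{k+1}\cdots s_n$ a distinguished shuffle fixing $1$ and $\sigma_k=s_1\cdots s_{k-1}\in\sg_n$, so each error equals $-\delta_{(\mu_1)_k,\,i}\,\psi_{u_k^{\mathrm{dist}}}\otimes(\psi_{\sigma_k}m\otimes x_1^{j-1}n)$; because $u_k^{\mathrm{dist}}$ fixes $1$, the remaining $A+B-j$ copies of $x_1$ commute past $\psi_{u_k^{\mathrm{dist}}}$ and land on $\psi_{\sigma_k}m\in e(\sigma_k\mu_1)M$, whose left residue begins with $(\mu_1)_k=i$. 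After all $A+B$ steps the final main term $\psi_v\otimes(m\otimes x_1^{A+B}n)$ vanishes by the cyclotomic relation on $N$, while each error contributes $\psi_{u_k^{\mathrm{dist}}}\otimes(x_1^{A+B-j}\psi_{\sigma_k}m\otimes x_1^{j-1}n)$, which vanishes because for $j\le B$ the cyclotomic relation on $M$ forces $x_1^{A+B-j}\psi_{\sigma_k}m = 0$ (as $A+B-j\ge A$), and for $j\ge B+1$ the cyclotomic relation on $N$ forces $x_1^{j-1}n = 0$. The principal technical obstacles are the length-additive factorisation $w=w'v$ (verified by an inversion count) and the careful tracking of residues and indices through the iterated commutation.
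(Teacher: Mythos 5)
Your proof is correct and takes essentially the same approach as the paper: reduce to the shuffle basis of $M\conv N$, split into the two cases $w^{-1}(1)\in\{1,n+1\}$, and in the second case factor $w=w'v$ with $v=s_1\cdots s_n$ and iteratively commute $x_1$ past $\psi_v$, controlling the error terms by moving their $x_1$'s onto the $M$-factor. The only cosmetic difference is that the paper stops the iteration after $B=\langle\alpha_i^\vee,\Lambda'\rangle$ steps (at which point the main term already vanishes), so your separate treatment of error terms at steps $j\ge B+1$ is redundant, though harmless.
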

\begin{proof}
Let $m = \Ht(\beta)$ and $n = \Ht(\beta')$. We may assume that $M$ and $N$ are non-zero modules and $m, n>0$, so that we may take non-zero elements $a\in e(\nu_1\dots\nu_m)M$ and $b\in e(\nu_{m+1}\dots\nu_{m+n})N$, 
for some $\nu = (\nu_1, \nu_2, \dots, \nu_{m+n}) \in I^{m+n}$. As $e(\nu_1\dots\nu_m)\ne0$ and $e(\nu_{m+1}\dots\nu_{m+n})\ne0$, 
the defining relations
\[
x_1^{\langle \alpha^\vee_{\nu_1},\; \Lambda \rangle}e(\nu_1\dots\nu_m)=0 \ \text{ and }\ x_1^{\langle \alpha^\vee_{\nu_{m+1}},\; \Lambda' \rangle}e(\nu_{m+1}\dots\nu_{m+n})=0
\]
imply $\langle \alpha^\vee_{\nu_1}, \Lambda \rangle > 0$ and $\langle \alpha^\vee_{\nu_{m+1}}, \Lambda' \rangle > 0$. We take $w \in \sg_{m+n}/\sg_{m} {\times} \sg_{n}$. Note that
\[
w(1)<\cdots<w(m) \;\; \text{and}\;\; w(m+1)<\cdots<w(m+n),
\]
so that $w$ is fully commutative and
\begin{align*}
&w^{-1}(1) = 1 \text{ or } m+1, \\
&e(\nu_{w^{-1}(1)}, \nu_{w^{-1}(2)}, \dots, \nu_{w^{-1}(m+n)}) \psi_w (a\otimes b) = \psi_w (a\otimes b).
\end{align*}
To prove the assertion, it suffices to show that 
\[
x_1^{l + l'} \psi_w (a \otimes b) = 0,
\]
where $l = \langle \alpha^\vee_{\nu_{ w^{-1}(1)}}, \Lambda \rangle$ and $l' = \langle \alpha^\vee_{\nu_{ w^{-1}(1)}}, \Lambda' \rangle$.

If $w^{-1}(1) = 1$, then $x_1 \psi_w =  \psi_w x_1$.  Thus, we have
\[
x_1^{l + l'} \psi_w (a \otimes b) = \psi_w x_1^{l + l'} (a \otimes b) = \psi_w ( ( x_1^{l + l'} a) \otimes b) = 0.
\]

Suppose that $w^{-1}(1) = m+1$. We set $u = w s_{m} s_{m-1} \dots s_{1}$, whose two-line notation is
\[
\bigl(
\begin{smallmatrix}
1 & 2 & \cdots & m+1 & m+2 & \cdots & m+n \\
1 & w(1) & \cdots & w(m) & w(m+2) & \cdots & w(m+n)
\end{smallmatrix}
\bigr),
\]
so that $u(1) = 1$ and $ \ell(u s_{1} s_{2} \dots s_{m}) = \ell(u) + \ell(s_{1} s_{2} \dots s_{m}) = \ell(u) + m$.
As $w$ is fully commutative, $\psi_w = \psi_u \psi_1 \psi_2 \dots \psi_m $.
It follows from
\begin{align*}
x_1 \psi_1 \psi_2 \dots \psi_m e(\nu) =  \psi_1 \psi_2 \dots \psi_m x_{m+1} e(\nu)
- \sum_{t=1}^m \delta_{\nu_{m+1}, \nu_t}  \psi_1 \dots \psi_{t-1} \psi_{t+1} \dots \psi_m e(\nu)
\end{align*}
and $s_1\dots s_{t-1}(\nu_1 \dots \nu_m)=(\nu_t \nu_1 \dots \widehat{\nu_t} \dots \nu_m)$ that
\begin{align*}
x_1^{l + l'} \psi_w (a \otimes b) &= x_1^{l + l'-1} \psi_u x_1 \psi_1 \psi_2 \dots \psi_m  (a \otimes b)  \\
&=
x_1^{l + l'-1} \psi_w x_{m+1} (a \otimes b) \\
& \quad - \sum_{t=1}^m  x_1^{l + l'-1} \delta_{\nu_{m+1}, \nu_t} \psi_u \psi_1 \dots \psi_{t-1} \psi_{t+1} \dots \psi_m (a\otimes b) \\
&=
x_1^{l + l'-1} \psi_w x_{m+1} (a \otimes b) \\
& \quad  - \sum_{t=1}^m   \delta_{\nu_{m+1}, \nu_t} \psi_u \psi_{t+1} \dots \psi_m ( (x_1^{l + l'-1} \psi_1 \dots \psi_{t-1} a )  \otimes b) \\
&=
x_1^{l + l'-1} \psi_w x_{m+1} (a \otimes b).
\end{align*}
Continuing this process, we have
\begin{align*}
x_1^{l + l'} \psi_w (a \otimes b) = x_1^{l} \psi_w x_{m+1}^{l'} (a \otimes b)
 = x_1^{l} \psi_w  (a \otimes (x_{1}^{l'} b) ) = 0,
\end{align*}
which completes the proof.
\end{proof}

\subsection{Dual space for the convolution product} \label{Sec: dual space} \

Let $\tau: R(\beta) \rightarrow R(\beta)$ be the graded anti-involution which is the identity on generators.
For $M \in \text{$R(\beta)\gmod$}$, we define $ M^{\circledast} := \mathrm{HOM}_{\bR} ( M, \bR ) $ to be the dual of $M$ whose
$R(\beta)$-action is given by $(xf)(v) = f( \tau(x) v )$ for $x \in R(\beta)$, $f\in M^{\circledast}$ and $v \in M$.

We take self-dual simple modules $M \in \text{$R(\beta)\gmod$}$ and $N \in \text{$R(\gamma)\gmod$}$ with $m = \Ht(\beta)$ and $n = \Ht(\gamma)$.
Let $\mathfrak{b}_M$ and $\mathfrak{b}_N $ be bases of $M$ and $N$ over $\bR$ respectively. Then
\[
\mathfrak{b}_{M\circ N} = \{ \psi_w\otimes x \otimes y \mid w \in \sg_{m+n}/ \sg_{m} {\times} \sg_{n}, \ x \in \mathfrak b_M, y\in \mathfrak b_N \}
\]
is a basis of $M \circ N$. We define
\[
\mathfrak{b}_{M\circ N}^{\circledast} = \{ \xi_w^{x,y} \mid w \in \sg_{m+n}/ \sg_{m} {\times} \sg_{n}, \ x \in \mathfrak b_M, y\in \mathfrak b_N  \} \subseteq (M\circ N)^{\circledast}
\]
to be the dual basis of $\mathfrak{b}_{M\circ N}$, i.e.~$\xi_w^{x,y} (\psi_{w'}\otimes x' \otimes y') = \delta_{(w,x,y),(w',x',y')} $.
It is known that there is an $R(\beta+\gamma)$-module isomorphism
\begin{align*}
 N \conv M \buildrel \sim \over \longrightarrow q^{-(\beta, \gamma)} (M\conv N)^{\circledast}
\end{align*}
which sends $ 1 \otimes y\otimes x $ to $\xi_{w[m,n]}^{x,y}$ for $y\in N$ and $x\in M$. See \cite[Theorem 2.2(2)]{lv11}.

\begin{lem}\label{lemma for dual}
The isomorphism $N \conv M \simeq q^{-(\beta, \gamma)}(M\conv N)^{\circledast}$ sends $\psi_{w}\otimes y \otimes x\in N\conv M$ to
\begin{align*}
\xi_{w^{-1}w[m,n]}^{x,y} + \sum_{\mathclap{\substack{w' \succ w^{-1}w[m,n]\\ x' \in M, y'\in N}}} \; a_{w', x', y'} \xi_{w'}^{x',y'} \in q^{-(\beta, \gamma)}(M\conv N)^{\circledast}
\end{align*}
for some $a_{w', x', y'} \in \bR$, and $\xi_{w^{-1}w[m,n]}^{x,y}\in q^{-(\beta, \gamma)}(M\conv N)^{\circledast}$ to
\begin{align} \label{dual}
\psi_{w}\otimes{y \otimes x} + \sum_{\mathclap{\substack{w' \prec w,\\ x' \in M, y'\in N}}} \; b_{w', x', y'} \psi_{w'}\otimes{y' \otimes x'}\in N \conv M
\end{align}
for some $b_{w', x', y'} \in \bR$.
\end{lem}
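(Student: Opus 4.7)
The plan is to compute the image of a basis element $\psi_w\otimes y\otimes x\in N\conv M$ under the isomorphism $\phi\colon N\conv M\to q^{-(\beta,\gamma)}(M\conv N)^\circledast$ directly, using the $R(\beta+\gamma)$-linearity of $\phi$ together with the defining property $\phi(1\otimes y\otimes x)=\xi_{w[m,n]}^{x,y}$; the second formula will then follow from the first by inverting a triangular matrix.

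By $R(\beta+\gamma)$-linearity, $\phi(\psi_w\otimes y\otimes x)=\psi_w\cdot\xi_{w[m,n]}^{x,y}$. To identify this functional, I evaluate it on an arbitrary shuffle basis vector $\psi_v\otimes x'\otimes y'$ of $M\conv N$; by the twisted module structure on $(M\conv N)^\circledast$, this pairing equals $\xi_{w[m,n]}^{x,y}\bigl(\tau(\psi_w)\cdot(\psi_v\otimes x'\otimes y')\bigr)$, where $\tau$ is the identity-on-generators anti-involution. By \cref{psi_w}, $\tau(\psi_w)$ agrees with $\psi_{w^{-1}}$ modulo strictly lower Bruhat terms coming from the reduced-expression ambiguity. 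Hence the task reduces to extracting the coefficient of $\psi_{w[m,n]}\otimes x\otimes y$ in the expansion of $\psi_{w^{-1}}\psi_v\otimes x'\otimes y'$ in the shuffle basis.

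The core of the argument is a Bruhat-triangular analysis of $\psi_{w^{-1}}\psi_v$ in $R(\beta+\gamma)$: iterating the quadratic and braid-type relations (and again invoking \cref{psi_w} to control reduced-expression choices), one obtains
\[\psi_{w^{-1}}\psi_v=\psi_{w^{-1}v}+\sum_{u\prec w^{-1}v}f_u(x)\,\psi_u\]
whenever $\ell(w^{-1}v)=\ell(w^{-1})+\ell(v)$, and strictly lower Bruhat output otherwise. For each such $u$ I write $u=u_0\cdot s$ with $u_0$ its shuffle representative and $s\in\sg_m\times\sg_n$, so that $\psi_s$ acts diagonally on the tensor factors through $R(\beta)\otimes R(\gamma)$; the coefficient of $\psi_{w[m,n]}\otimes x\otimes y$ is non-zero precisely when $u_0=w[m,n]$ and $\psi_s$ carries $x'\otimes y'$ to $x\otimes y$. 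The leading contribution comes from the top Bruhat term $\psi_{w^{-1}v}$ with $v=w^{-1}w[m,n]$, yielding the coefficient $1$ in front of $\xi_{w^{-1}w[m,n]}^{x,y}$; every other non-zero coefficient necessarily arises from a shuffle index $w'$ that is strictly larger in Bruhat order than $w^{-1}w[m,n]$, giving the first formula.

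For the second formula, the first expresses $\phi$ in the two shuffle bases as a triangular matrix with ones on the diagonal, under the Bruhat-order-reversing bijection $w\leftrightarrow w^{-1}w[m,n]$ between the shuffle sets indexing $N\conv M$ and $M\conv N$. Inverting a triangular matrix preserves triangularity, and after translating the ordering back across this bijection one obtains the stated expansion of $\xi_{w^{-1}w[m,n]}^{x,y}$ with leading term $\psi_w\otimes y\otimes x$ and corrections $\psi_{w'}\otimes y'\otimes x'$ with $w'\prec w$. The main technical obstacle lies in accurately tracking the $\sg_m\times\sg_n$-part and the polynomial coefficients $f_u(x)$ when projecting a general $\sg_{m+n}$-indexed $\psi_u$ onto the shuffle basis, since these interact non-trivially with the tensor factors $x', y'$; a careful induction on $\ell(w)$, controlling error terms via \cref{psi_w}, should keep each contribution in its correct Bruhat bucket.
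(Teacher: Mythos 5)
Your strategy matches the paper's one-line proof: reduce by $R(\beta+\gamma)$-linearity to $\psi_w\cdot\xi_{w[m,n]}^{x,y}$, pair against the shuffle basis of $M\conv N$ via the anti-involution $\tau$, run a Bruhat-triangular analysis of $\tau(\psi_w)\psi_v=\psi_{w^{-1}}\psi_v$, and deduce the second formula by inverting a unitriangular matrix. This is the right route.

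However, the decisive step is miscomputed. You correctly note that the top Bruhat term of $\psi_{w^{-1}}\psi_v$ is $\psi_{w^{-1}v}$ and that pairing against $\xi_{w[m,n]}^{x,y}$ extracts the coefficient of $\psi_{w[m,n]}\otimes x\otimes y$; for the top term to contribute one therefore needs $w^{-1}v=w[m,n]$, hence $v=w\,w[m,n]$, whereas you plug in $v=w^{-1}w[m,n]$, which gives $w^{-1}v=w^{-2}w[m,n]\neq w[m,n]$ in general. This is not a cosmetic slip: $w^{-1}w[m,n]$ is typically not a minimal coset representative of $\sg_{m+n}/\sg_m\times\sg_n$ at all (take $m=2$, $n=1$, $w=s_2s_1\in\sg_3/\sg_1\times\sg_2$; then $w^{-1}w[2,1]=s_1s_2s_1s_2=s_2s_1\notin\sg_3/\sg_2\times\sg_1$), so $\xi_{w^{-1}w[m,n]}^{x,y}$ need not even be a dual basis vector. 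The index $w\,w[m,n]$, by contrast, is always a minimal representative of $\sg_{m+n}/\sg_m\times\sg_n$ with $\ell(w\,w[m,n])=mn-\ell(w)$, and it is precisely the index that makes the paper's immediate application of \eqref{dual} consistent: there one needs the $\xi$-index $1$ to correspond to $w=w[\ell_2,\ell_1]=w[m,n]^{-1}$, which forces $w\,w[m,n]=1$ rather than $w^{-1}w[m,n]=1$. You should carry the corrected index through both displays, and also actually establish, rather than merely assert, that all other nonvanishing pairings occur at shuffle indices strictly above $w\,w[m,n]$ in Bruhat order; the length inequality $\ell(v)\geq mn-\ell(w)$ is immediate, but the Bruhat statement requires a genuine argument about how the lower terms of $\psi_{w^{-1}}\psi_v$ decompose against the shuffle basis.
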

\begin{proof}
The first assertion is clear because $\psi_w\xi_{w[m,n]}^{x,y}$ has the desired form. The second assertion follows from the first.
\end{proof}

\vskip 1em

\section{Specht modules in affine and infinite type C} \label{Sec: Specht Sp}

In this section, we introduce Specht modules for cyclotomic quiver Hecke algebras in type $C^{(1)}_\ell$ or $C_\infty$ 
and provide a basis theorem for Specht modules in type $C_\infty$.
From now until \cref{defn: Specht over ring}, we assume that $\bR$ is a field.

\subsection{The modules $\Lm(k;\ell)$}\

For $k \in \Z$ and $\ell \in \Z_{>0}$, let
\[
\beta_{(k;\ell)} = \sum_{t=k}^{k+\ell-1} \alpha_{\overline{t}} \quad \text{and} \quad \nu_{(k;\ell)} = ( \overline{k}, \overline{k+1}, \dots, \overline{k+\ell-1}) \in I^{\beta_{(k ; \ell)}}.
\]
Then $\Lm(k;\ell)=\bR\gLm_{(k;\ell)}$ is the one-dimensional graded $R(\beta_{(k;\ell)})$-module defined by $\deg(\gLm_{(k;\ell)})=0$ and
\begin{align} \label{Eq: def of L}
x_i \gLm_{(k;\ell)}= \psi_j \gLm_{(k;\ell)}= 0,  \quad e(\nu) \gLm_{(k;\ell)}= \delta_{\nu, \nu_{(k;\ell)}  }\gLm_{(k;\ell)}
\end{align}
for $1 \le i \le \ell$, $1 \le j \le \ell-1$, $\nu \in I^{\beta_{(k;\ell)}}$.
If there is no confusion, we write $\gLm$ for $\gLm_{(k;\ell)}$ and 
we sometimes write $\Lm( \overline{k}, \overline{k+1}, \ldots, \overline{k+\ell-1} )$ instead of $\Lm(k;\ell)$.

Let $k \in \Z$ and $\ell_1, \ell_2 \in \Z_{\ge0}$. As
\[
\Lm(k;\ell_1) \otimes \Lm(k+\ell_1 ; \ell_2) \simeq   e( \beta_{(k;\ell_1)}, \beta_{(k+\ell_1 ; \ell_2)} ) \Lm(k;\ell_1+\ell_2)
\]
as an $R(\beta_{(k;\ell_1)}) \otimes R(\beta_{(k + \ell_1; \ell_2)}) $-module by construction, we have
\[
\Hom_{R(\beta_{(k;\ell_1+\ell_2)})}(\Lm(k;\ell_1) \conv \Lm(k+\ell_1 ; \ell_2), \Lm(k;\ell_1+\ell_2))\ne0
\]
by Frobenius reciprocity so that there exists a surjective $R(\beta_{(k;\ell_1+\ell_2)})$-module homomorphism
\begin{align} \label{eq: projection-p}
p: \Lm(k;\ell_1)  \conv \Lm(k+\ell_1 ; \ell_2) \longrightarrow \Lm(k; \ell_1+ \ell_2)
\end{align}
sending  $\gLm \otimes \gLm $ to $ \gLm $.
Taking the dual of $p$, we have the graded monomorphism
\[
\iota: \Lm(k; \ell_1+ \ell_2) \longhookrightarrow   q^{( \beta_{(k; \ell_1)},\ \beta_{(k+ \ell_1; \ell_2) } ) } \Lm(k+\ell_1 ; \ell_2) \conv \Lm(k;\ell_1).
\]
Then, noting that $p(\psi_w\otimes \gLm \otimes \gLm)=\psi_w\gLm$ implies $\iota(\gLm)=\xi_1^{\gLm,\gLm}$, $\eqref{dual}$ from Lemma \ref{lemma for dual} shows that
\[
\iota(\gLm_{(k; \ell_1+\ell_2)}) = \psi_{w[\ell_2, \ell_1]} (\gLm \otimes \gLm) + \; \sum_{\mathclap{\substack{w\in \sg_{\ell_1 + \ell_2}\\ w\prec w[\ell_2, \ell_1]}}} \; a_{w} \psi_w  (\gLm \otimes \gLm) \quad \text{for some }a_{w} \in \bR,
\]
with $\psi_w (\gLm \otimes \gLm) \in  e(\nu_{(k; \ell_1 + \ell_2)} ) \Lm(k+\ell_1 ; \ell_2) \conv \Lm(k;\ell_1)$ and 
$\deg(\psi_w (\gLm \otimes \gLm))=0$ whenever $a_w\neq 0$. 
Here, $\deg(\gLm\otimes\gLm)=( \beta_{(k; \ell_1)}, \beta_{(k+ \ell_1; \ell_2) } )$ because of the shift. 
The following lemma is easy to see by construction.

\begin{lem} \label{homo-r}
Define
\begin{align*}
r := \iota \circ p :  \Lm(k;\ell_1)  \conv \Lm(k+\ell_1 ; \ell_2) \rightarrow q^{( \beta_{(k; \ell_1)},\ \beta_{(k+ \ell_1; \ell_2) } ) } \Lm(k+\ell_1 ; \ell_2) \conv \Lm(k;\ell_1).
\end{align*}
\begin{enumerate}
\item Let $\gLm_1 = \gLm_{(k; \ell_1)}$ and $\gLm_2 = \gLm_{(k+ \ell_1; \ell_2)} $. Then
\[
r( \gLm_1 \otimes \gLm_2 ) = \psi_{w[\ell_2, \ell_1]} (\gLm_2 \otimes \gLm_1)
+ \sum_{\mathclap{\substack{w\in \sg_{\ell_1 + \ell_2}\\w\prec w[\ell_2, \ell_1]}}} a_{w} \psi_w  (\gLm_2 \otimes \gLm_1)
\quad \text{for some $ a_{w} \in \bR$},
\]
with $\psi_w (\gLm_2 \otimes \gLm_1) \in  e(\nu_{(k; \ell_1 + \ell_2)} ) \Lm(k+\ell_1 ; \ell_2) \conv \Lm(k;\ell_1)$ and $\deg(\psi_w (\gLm_2 \otimes \gLm_1))=0$ whenever $a_w\neq 0$.

\item $\im (r)$ is isomorphic to $ \Lm(k; \ell_1+ \ell_2) $.
\end{enumerate}
\end{lem}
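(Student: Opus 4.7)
The plan is to observe that both assertions follow immediately from the constructions of $p$ and $\iota$ already established in the paragraphs preceding the lemma, so the proof amounts to tracking definitions.

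For part (1), I would start from $r(\gLm_1 \otimes \gLm_2) = \iota(p(\gLm_1 \otimes \gLm_2))$. By the defining property of $p$ in \eqref{eq: projection-p}, we have $p(\gLm_1 \otimes \gLm_2) = \gLm_{(k;\ell_1+\ell_2)}$. The expression for $\iota(\gLm_{(k;\ell_1+\ell_2)})$ was computed just above the lemma statement using the dualisation in \cref{lemma for dual}: it equals
\[
\psi_{w[\ell_2,\ell_1]}(\gLm \otimes \gLm) + \sum_{w \prec w[\ell_2,\ell_1]} a_w\, \psi_w (\gLm \otimes \gLm),
\]
where the summands with $a_w \neq 0$ lie in $e(\nu_{(k;\ell_1+\ell_2)})\Lm(k+\ell_1;\ell_2)\conv\Lm(k;\ell_1)$ and have degree zero (once the grading shift $q^{(\beta_{(k;\ell_1)},\beta_{(k+\ell_1;\ell_2)})}$ is taken into account, matching $\deg(\gLm\otimes\gLm)$). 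Relabelling $\gLm = \gLm_1$ on the left tensor factor and $\gLm = \gLm_2$ on the right yields exactly the displayed formula in (1).

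For part (2), I would use the standard fact that $\im(\iota \circ p) = \iota(\im(p))$, together with the surjectivity of $p$ onto $\Lm(k;\ell_1+\ell_2)$ established by Frobenius reciprocity, and the fact that $\iota$ is a (graded) monomorphism. Combining these, $\im(r) = \iota(\Lm(k;\ell_1+\ell_2)) \simeq \Lm(k;\ell_1+\ell_2)$, which is the claim.

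I do not expect any obstacle here: both statements are essentially tautological once the maps $p$ and $\iota$ have been introduced, and the only computation to verify is that the degree bookkeeping in part (1) is consistent, which is built into \cref{lemma for dual}.
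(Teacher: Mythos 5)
Your proposal is correct and matches the paper's intent: the paper gives no explicit proof, stating only that the lemma ``is easy to see by construction,'' and your argument is precisely the tracking of definitions ($p(\gLm_1\otimes\gLm_2)=\gLm$ from \eqref{eq: projection-p}, the expansion of $\iota(\gLm)$ computed via \cref{lemma for dual} in the paragraph preceding the lemma, and $\im(\iota\circ p)=\iota(\Lm(k;\ell_1+\ell_2))\simeq\Lm(k;\ell_1+\ell_2)$ by surjectivity of $p$ and injectivity of $\iota$) that the authors had in mind.
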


\begin{cor} \label{Cor: homo-r for C_infty}
If the Cartan matrix is of type $C_\infty$, then
\[
r( \gLm_1 \otimes \gLm_2 ) = \psi_{w[\ell_2, \ell_1]} (\gLm_2 \otimes \gLm_1).
\]
\end{cor}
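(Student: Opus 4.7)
By \cref{homo-r} we have
\[
r(\gLm_1\otimes\gLm_2)=\psi_{w[\ell_2,\ell_1]}(\gLm_2\otimes\gLm_1)+\sum_{w\prec w[\ell_2,\ell_1]}a_w\psi_w(\gLm_2\otimes\gLm_1),
\]
where each $w$ in the sum is a minimal coset representative for $\sg_{\ell_1+\ell_2}/(\sg_{\ell_2}\times\sg_{\ell_1})$ such that $\psi_w(\gLm_2\otimes\gLm_1)$ lies in the $e(\nu_{(k;\ell_1+\ell_2)})$-component of $\Lm(k+\ell_1;\ell_2)\conv\Lm(k;\ell_1)$. Setting $\sigma:=\nu_{(k+\ell_1;\ell_2)}*\nu_{(k;\ell_1)}$ and $\tau:=\nu_{(k;\ell_1+\ell_2)}$, this idempotent condition reads $w\sigma=\tau$. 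The plan is to show that in type $C_\infty$ the only minimal coset representative satisfying $w\sigma=\tau$ is $w[\ell_2,\ell_1]$ itself, so the sum is empty.

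Write $v_s:=k+s-1$, so that $\tau_s=|v_s|$ with the $v_s$ strictly increasing in $s$; two distinct positions $s<s'$ share a residue precisely when $v_s=-v_{s'}$, i.e.\ $s+s'=2-2k$. If $k\geq 0$ or $k\leq -\ell_1-\ell_2$ then all residues are distinct and $w$ is forced, so suppose we are in the overlap regime. I split by whether $k+\ell_1\geq 1$ or $k+\ell_1\leq 0$; these cases are symmetric under exchanging the roles of the two blocks of $\sigma$, so I concentrate on $k+\ell_1\geq 1$. The first block of $\sigma$ then has strictly increasing positive residues $k+\ell_1,\dots,k+\ell_1+\ell_2-1$, so a valid $w$ produces an increasing sequence $s_i:=w(i)$ for $1\leq i\leq\ell_2$ with $|v_{s_i}|=k+\ell_1+i-1$. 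Each $s_i$ is then either the natural value $\ell_1+i$ or the reflected value $2-2k-\ell_1-i$ (when the latter is a valid position). Since the reflected candidates strictly decrease while the natural values strictly increase in $i$, at most one index $i_0$ may take the reflection, and the monotonicity constraint $s_{i_0-1}<s_{i_0}$ combined with $k\geq 1-\ell_1$ pins any such deviation to $i_0=1$, giving $s_1=p:=1-2k-\ell_1$.

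The heart of the argument, and what I expect to be the main obstacle, is ruling out this single possible reflection. In it, the complement $T:=\{1,\dots,\ell_1+\ell_2\}\setminus S$ equals $\{1,\dots,\ell_1+1\}\setminus\{p\}$, whose $s$-th smallest element is $t_s=s$ for $s<p$ and $t_s=s+1$ for $p\leq s\leq\ell_1$. Since the second block of $\sigma$ coincides with $\tau_1,\dots,\tau_{\ell_1}$, the condition $w\sigma=\tau$ demands $\tau_{t_s}=\tau_s$ for every $s=1,\dots,\ell_1$; at $s=p$ this becomes $\tau_{p+1}=\tau_p$, i.e.\ $|v_p+1|=|v_p|$, which has no integer solution since consecutive integers never share an absolute value. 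The reflection is therefore incompatible with the second-block residue condition, $w[\ell_2,\ell_1]$ is the unique minimal coset representative with $w\sigma=\tau$, and all $a_w$ vanish, giving the claimed formula.
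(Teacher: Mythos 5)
Your argument is correct and is essentially the paper's own proof: the paper likewise reduces the claim to the fact that the $e(\nu_{(k;\ell_1+\ell_2)})$-word space of $\Lm(k+\ell_1;\ell_2)\conv\Lm(k;\ell_1)$ is spanned by $\psi_{w[\ell_2,\ell_1]}(\gLm_2\otimes\gLm_1)$ alone (there the uniqueness of the shuffle is left as "straightforward to check by examining residues"), and then concludes via \cref{homo-r}. You simply carry out that residue/shuffle check in detail; the only point worth tightening is that the symmetry disposing of the case $k+\ell_1\le 0$ is most cleanly realised by reversing all three words, which exchanges the two blocks and replaces $k$ by $1-k-\ell_1-\ell_2$, turning that case into the one you treated.
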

\begin{proof}
In type $C_\infty$, we know by examining residues that $e(\nu)\Lm(k+\ell_1 ; \ell_2) \conv \Lm(k;\ell_1) \ne 0$ if and only if $\nu$ is a shuffle of $\nu_{(k+\ell_1; \ell_2)}$ and $\nu_{(k; \ell_1)}$.
Thus it is straightforward to check that
\[
e(\nu_{(k; \ell_1 + \ell_2)}) \Lm(k+\ell_1 ; \ell_2) \conv \Lm(k;\ell_1)
= \Span_\bR\{ \psi_{w[\ell_2, \ell_1]} (\gLm_2 \otimes \gLm_1) \},
\]
which completes the proof by \cref{homo-r}.
\end{proof}

\begin{Rmk}
It is easy to show that $\Lm(k; \ell)$ admits an affinization for any $k$ and $\ell$. If $\cmA$ is of type $C_\infty$, then $\Lm(k; \ell)$ is real and $r$ in Lemma \ref{homo-r} is the $R$-matrix \cite{KP15}. Note that, if $\cmA$ is affine, $\Lm(k; \ell)$ is not real in general.
\end{Rmk}

\begin{prop} \label{homo-g}
Let $k\in \Z$ and $a, b,c \in \Z_{\ge0}$ with $b  \ge c > 0 $. Then, there is a non-zero $R(\beta_{(k;a)}+\beta_{(k-1;a+b+1)}+\beta_{(k+a;c-1)})$-module homomorphism
\begin{align*}
\rg : \Lm(k; a) \conv \Lm( k-1; a+b+1) & \conv \Lm( k+a; c-1) \\
& \longrightarrow
q^{ ( \beta_{( k-1;a+1)}, \beta_{( k+a;b)}) } \Lm(k;a+b) \conv \Lm(k-1; a+c)
\end{align*}
such that
\begin{align*}
\rg(\gLm \otimes \gLm \otimes \gLm ) =  \Psi_2(a, b, a+1)(\gLm \otimes \gLm)
+  \; \; \sum_{\mathclap{w \prec S_2(a, b, a+1) }} \; \; a_{w} \psi_w  (\gLm \otimes \gLm)
\quad \text{ for some $ a_{w} \in \bR$}.
\end{align*}
If the Cartan matrix $\cmA$ is of type $C_\infty$ then $a_w=0$ for all $w \prec S_2(a, b, a+1)$.
\end{prop}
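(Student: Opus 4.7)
The plan is to construct $\rg$ as the composition of two natural maps: first the injection arising from \cref{homo-r} applied to the middle tensor factor, then the surjection obtained by fusing adjacent pairs via the projection $p$ from \eqref{eq: projection-p}.

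Concretely, applying \cref{homo-r} with the parameters $(\ell_1, \ell_2) = (a+1, b)$ and base $k-1$ produces the graded injection
\[
\iota: \Lm(k-1; a+b+1) \hookrightarrow q^{(\beta_{(k-1;a+1)}, \beta_{(k+a;b)})} \Lm(k+a; b) \conv \Lm(k-1; a+1).
\]
Convolving on both sides with $\Lm(k;a)$ and $\Lm(k+a;c-1)$ turns this into a graded injection of four-fold convolutions, and then applying $p$ separately to the adjacent pairs $\Lm(k;a) \conv \Lm(k+a;b) \twoheadrightarrow \Lm(k;a+b)$ and $\Lm(k-1;a+1) \conv \Lm(k+a;c-1) \twoheadrightarrow \Lm(k-1;a+c)$ produces a surjection onto the target $q^{(\beta_{(k-1;a+1)}, \beta_{(k+a;b)})} \Lm(k;a+b) \conv \Lm(k-1;a+c)$, which matches the grading shift in the statement. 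I define $\rg$ to be this composition.

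To verify the formula, I would track the generator $\gLm \otimes \gLm \otimes \gLm$ through the composition. By \cref{homo-r}, the middle generator is sent by $\iota$ to $\psi_{w[b,a+1]}(\gLm \otimes \gLm)$ plus a sum indexed by $w \prec w[b,a+1]$. When this is induced through the convolution with $\Lm(k;a)$ on the left, the indices of all $\psi$-generators shift by $a$, so the leading term becomes $\psi_{\shf_a w[b,a+1]} = \psi_{S_2(a,b,a+1)} = \Psi_2(a,b,a+1)$ acting on the four-fold tensor of generators. Then $p \conv p$ sends $\gLm \otimes \gLm \otimes \gLm \otimes \gLm \mapsto \gLm \otimes \gLm$ in $\Lm(k;a+b) \conv \Lm(k-1;a+c)$, yielding the stated expression for $\rg(\gLm \otimes \gLm \otimes \gLm)$. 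For the final assertion in type $C_\infty$, \cref{Cor: homo-r for C_infty} guarantees that $\iota$ has no lower-order terms, so neither does $\rg(\gLm \otimes \gLm \otimes \gLm)$.

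Finally, I would confirm that $\rg$ is non-zero by observing that $S_2(a,b,a+1)$ is a minimal left coset representative in $\sg_{2a+b+c}/(\sg_{a+b} \times \sg_{a+c})$: its two-line notation is increasing on both $\{1,\dots,a+b\}$ and $\{a+b+1,\dots,2a+b+c\}$. Hence $\Psi_2(a,b,a+1)(\gLm \otimes \gLm)$ is a non-zero basis element of $\Lm(k;a+b) \conv \Lm(k-1;a+c)$. The main technical care in this argument lies in the bookkeeping of how $\psi$-generators get relabeled under the induction implicit in the convolution, and in confirming that the induced lower-order terms respect the Bruhat bound $w \prec S_2(a,b,a+1)$; but once set up, both follow routinely from the explicit formula in \cref{homo-r} and the fact that the shift operation $\shf_a$ is an order-isomorphism onto its image.
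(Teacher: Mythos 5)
Your construction of $\rg$ is essentially the paper's own proof: the paper obtains the same composition by starting from the four-factor module $\Lm(k;a)\conv\Lm(k-1;a+1)\conv\Lm(k+a;b)\conv\Lm(k+a;c-1)$, applying $\id\conv r\conv\id$ with $r=\iota\circ p$, restricting to the image of the first map (which \cref{homo-r}(2) identifies with your domain), and then applying $p\conv p$ — so the only cosmetic difference is that you apply $\iota$ directly rather than factoring through $r$. Your tracking of the generator, the index shift $\shf_a$, the Bruhat bound, and the appeal to \cref{Cor: homo-r for C_infty} in type $C_\infty$ all match the paper's argument; your closing observation that $S_2(a,b,a+1)$ is a distinguished left coset representative of $\sg_{a+b}\times\sg_{a+c}$, hence that $\Psi_2(a,b,a+1)(\gLm\otimes\gLm)$ is a basis element and $\rg$ is non-zero, makes explicit a point the paper passes over quickly.
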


\begin{proof}
Combining Lemma \ref{homo-r} with the surjectivity of $p$, we have a non-zero homomorphism
\begin{align*}
 &
\Lm(k; a) \conv \Lm(k-1; a+1) \conv \Lm(k+a; b) \conv   \Lm(k+a; c-1) \\
& \quad  \buildrel \id \conv r \conv \id \over \longrightarrow
 q^{ (\beta_{( k-1;a+1)}, \beta_{( k+a;b)} ) } \Lm(k; a) \conv \Lm(k+a; b) \conv \Lm(k-1; a+1)  \conv  \Lm(k+a; c-1) \\
& \quad \buildrel p \conv p \over  \longrightarrow
q^{ (\beta_{( k-1;a+1)}, \beta_{( k+a;b)} ) } \Lm(k; a+b)  \conv \Lm(k-1; a+c).
\end{align*}
Lemma \ref{homo-r} (2) tells us that the image of the first homomorphism is isomorphic to
\[
\Lm(k; a) \conv \Lm(k-1; a+b+1) \conv   \Lm(k+a; c-1),
\]
which is generated by
\begin{multline*}
\gLm \otimes \left(\Psi[b, a+1](\gLm \otimes \gLm)+\sum_{\mathclap{w \prec w[b, a+1] }} a_{w} \psi_w  (\gLm \otimes \gLm)\right)\otimes \gLm\\
= \Psi_2(a, b, a+1)(\gLm \otimes \gLm \otimes \gLm \otimes \gLm)
+  \sum_{\mathclap{w \prec S_2(a, b, a+1) }} \; \; a_{w} \psi_w  (\gLm \otimes \gLm \otimes \gLm \otimes \gLm)
\end{multline*}
by \cref{homo-r} (1). Thus it gives a non-zero homomorphism
\begin{align*}
\rg : \Lm(k; a) \conv \Lm( k-1; a+b+1) & \conv \Lm( k+a; c-1) \\
 & \longrightarrow
q^{ (\beta_{( k-1;a+1)}, \beta_{( k+a;b)} ) } \Lm(k;a+b) \conv \Lm(k-1; a+c)
\end{align*}
such that $\rg(\gLm \otimes \gLm \otimes \gLm)$ has the desired form.
\end{proof}

\subsection{The modules $\Sp^\lambda$} \label{Sec: Specht}  \

Let $ \lambda =(\lambda_1, \lambda_2, \ldots, \lambda_t) \in \mathscr{P}^1_n$ with a charge $\kappa \in \Z$.
Note that the level of $\lambda$ is 1. Let $\beta:=\cont(\la)$ and define
\begin{align*}
\Pe^{\la}_\kappa := \Lm(\kappa ; \lambda_1) \conv \Lm(\kappa-1 ; \lambda_2) \conv \cdots \conv \Lm(\kappa - t+1 ; \lambda_t) \in R(\beta)\gmod.
\end{align*}

For a Garnir node $A = (r,c) \in [\lambda]$, let
\begin{align*}
p_{\kappa, A}^{\la} &:=  ( \beta_{ (\kappa - r; c )}, \beta_{( \kappa-r+c; \la_r - c+1 )}  ) , \\
\Pe_{\kappa,A}^\la &:= \Pe^{\la_{<r}}_\kappa
\conv \Lm(\kappa - r +1; c-1) \conv \Lm(\kappa - r ; \lambda_r + 1)\conv \Lm(\kappa - r +c; \lambda_{r+1}- c)
\circ \Pe^{ \lambda_{> r+1}}_{\kappa-r-1},
\end{align*}
where $\lambda_{< r} = (\lambda_1, \ldots , \lambda_{r-1}) $ and
$\lambda_{> r+1} = (\lambda_{r+2}, \ldots , \lambda_{t}) $.
We denote by $m^\lambda_\kappa$ (resp.\ $m^\lambda_{\kappa,A}$) the distinguished generator $\gLm \otimes \dots \otimes \gLm$ of $\Pe^\lambda_{\kappa}$. (resp.\ $\Pe^\lambda_{\kappa,A}$.)
By Proposition \ref{homo-g}, we have the non-zero homomorphism
\begin{align*}
q^{-p_{\kappa,A}^\lambda}
\Lm(\kappa - r +1; c-1) \conv \Lm(\kappa - r ; \lambda_r + 1) & \conv \Lm(\kappa - r +c; \lambda_{r+1}- c) \\
& \qquad  \longrightarrow
\Lm(\kappa - r +1; \lambda_r) \conv \Lm(\kappa - r ; \lambda_{r + 1}).
\end{align*}
which gives the induced homomorphism
\[
\Gh^\lambda_{\kappa, A} : q^{-p_{\kappa,A}^\lambda} \Pe_{\kappa,A}^\lambda \longrightarrow  \Pe^\lambda_\kappa.
\]

\begin{defn}
Let $\la \in \mathscr{P}^1_n$ with a charge $\kappa \in \Z$. Then we define, for a Garnir node $A$,
\[g^{\lambda}_{\kappa,A}  =   \Gh^{\lambda}_{\kappa,A} (m^\lambda_{\kappa, A}).
\]
\end{defn}

By Proposition \ref{homo-g} and $\eqref{exp of GA}$, we have
\begin{align} \label{g_A and Garnir elt}
g^{\lambda}_{\kappa,A} - \psi_{w^{\ttg^A}} m^\lambda_\kappa = \sum_{u \prec w^{\ttg^A}} a_u \psi_u m^\lambda_\kappa \quad \text{for some }a_u \in \bR.
\end{align}

\begin{Rmk}\label{garnirformremark}
If the Cartan matrix $\cmA$ is of type $C_\infty$, then $g^{\lambda}_{\kappa,A} = \psi_{w^{\ttg^A}} m^\lambda_\kappa$. 
This is reminiscent of the Garnir element defined in \cite{kmr} for type $A_\infty$.
\end{Rmk}

\begin{lem} \label{lem: base}
We have
\begin{itemize}
\item[(i)]
$x_i g^{\lambda}_{\kappa,A} = 0$, for $1\le i \le n$,
\item[(ii)]
$\psi_j g^{\lambda}_{\kappa,A} = 0$ unless $s_j \ttg^A \in \rT{\la}$.
\end{itemize}
\end{lem}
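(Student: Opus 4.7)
The plan is to use that $g^{\lambda}_{\kappa,A} = \Gh^{\lambda}_{\kappa,A}(m^{\lambda}_{\kappa,A})$ is the image of the distinguished generator $m^{\lambda}_{\kappa,A}$ of $\Pe^\lambda_{\kappa,A}$ under an $R(\beta)$-module homomorphism, so that for any $y \in R(\beta)$, $y\,g^{\lambda}_{\kappa,A} = \Gh^{\lambda}_{\kappa,A}(y\,m^{\lambda}_{\kappa,A})$. Hence it suffices to prove $x_i\,m^{\lambda}_{\kappa,A} = 0$ for every $i$, and $\psi_j\,m^{\lambda}_{\kappa,A} = 0$ whenever $s_j\ttg^A \notin \rT\la$.

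For (i), the generator $m^{\lambda}_{\kappa,A}$ is the tensor product of the one-dimensional generators $\gLm$ of the $\Lm(k;\ell)$-factors of $\Pe^\lambda_{\kappa,A}$, and each $\gLm$ is killed by every $x$-generator by \eqref{Eq: def of L}; in the convolution product any $x_i$ acts within a single factor, so $x_i\,m^\lambda_{\kappa,A}=0$ immediately.

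For (ii), the same defining relation \eqref{Eq: def of L} gives $\psi_j\,m^{\lambda}_{\kappa,A} = 0$ whenever both of $j$ and $j+1$ are strictly interior to a single $\Lm$-factor of $\Pe^\lambda_{\kappa,A}$ (the factors being $\Lm(\kappa-k+1;\la_k)$ for $k<r$ or $k>r+1$, $\Lm(\kappa-r+1;c-1)$, the belt factor $\Lm(\kappa-r;\la_r+1)$, and $\Lm(\kappa-r+c;\la_{r+1}-c)$). The remaining task is thus purely combinatorial: to show that $s_j\ttg^A \notin \rT\la = \rST\la \setminus \ST\la$ forces $j$ to be such an interior position. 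I would split into the sub-cases $s_j\ttg^A \notin \rST\la$ (not row-strict) and $s_j\ttg^A \in \ST\la$ (standard). In the first sub-case, $j$ and $j+1$ are adjacent in a common row of $\ttg^A$ with $j$ to the left; a row-by-row inspection, treating separately the rows of $\la$ outside rows $r$ and $r+1$, the two partial rows on either side of the belt, and the two belt rows themselves, places these positions strictly inside one of the factors listed above. For the second sub-case, since $\ttg^A$ agrees with the standard tableau $\ttt^\la$ outside the belt, any column violation in $\ttg^A$ must involve the belt; a direct check shows that the only column violation in $\ttg^A$ occurs at column $c$, between $\ttg^A(r,c)=u+c$ and $\ttg^A(r+1,c)=u+c-1$ where $u=\ttt^\la(r,c)$. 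Consequently $s_j$ can standardise $\ttg^A$ only when $j=u+c-1$, and one checks that this value lies strictly inside the belt factor $\Lm(\kappa-r;\la_r+1)$.

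The main obstacle, which is more a matter of care than of depth, is the bookkeeping in the second sub-case: to show that the column-$c$ violation is the sole column violation of $\ttg^A$, one must compare every column of rows $r$ and $r+1$ against the neighbouring rows, making use of the Garnir-node inequalities $c \le \la_{r+1} \le \la_r$. Once this is in hand, the assertion of (ii) follows from the observations above.
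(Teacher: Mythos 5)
Your proof is correct and follows essentially the same approach as the paper: both exploit that $\Gh^{\lambda}_{\kappa,A}$ is an $R(\beta)$-module homomorphism, reduce to showing $x_i\,m^\lambda_{\kappa,A}=0$ and $\psi_j\,m^\lambda_{\kappa,A}=0$ for $j$ not at a boundary of the $\Lm$-factors, and then match those boundary positions with the $j$ for which $s_j\ttg^A\in\rT\la$. The paper phrases the combinatorial step by directly listing the $j$ with $s_j\ttg^A\in\rT\la$ (namely $l_1,\dots,l_{r-1},\,l_{r-1}+c-1,\,l_r+c,\,l_{r+1},\dots,l_t$), whereas you characterize the complement via the two sub-cases $s_j\ttg^A\notin\rST\la$ and $s_j\ttg^A\in\ST\la$ — the same content.
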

\begin{proof}
For $1\le i \le n$, we have
\[
x_i g^{\lambda}_{\kappa,A} = x_i\Gh^{\lambda}_{\kappa,A} (m^\lambda_{\kappa, A}) = \Gh^{\lambda}_{\kappa,A} (x_i m^\lambda_{\kappa, A}) = 0.
\]

Let $A = (r,c) \in \la$ and $l_p = \sum_{k=1}^p \la_p$. Considering the definition of $\ttg^A$, we know that
\[
\text{$s_j \ttg^A \in \rT{\la}$ if and only if $j = l_1, \dots, l_{r-1}, l_{r-1}+c-1, l_{r}+c, l_{r+1}, \dots, l_t$.}
\]
Thus, by the construction of $m^\lambda_{\kappa, A}$,
\[
\psi_j g^{\lambda}_{\kappa,A} = \psi_j\Gh^{\lambda}_{\kappa,A} (m^\lambda_{\kappa, A}) = \Gh^{\lambda}_{\kappa,A} (\psi_j m^\lambda_{\kappa, A}) = 0
\]
unless $s_j \ttg^A \in \rT{\la}$.
\end{proof}

We define $\Gh^\la_\kappa: \oplus_{A} q^{-p_{\kappa,A}^\lambda} \Pe^\lambda_{\kappa,A} \rightarrow \Pe^\la_\kappa$ 
as the sum of $\Gh^\lambda_{\kappa,A}$ over Garnir nodes $A$ of $\lambda$ and set
\[
\Ga^{\la}_\kappa =  \im \  \Gh^{\la}_\kappa \subset \Pe^\la_\kappa, \quad \Sp^{\la}_\kappa = q^{\deg(\ttt^\lambda)} \coker\ \Gh^\la_\kappa.
\]
If there is no possibility of confusion, we will drop the subscript $\kappa$ from our notation, i.e.~we will simply write $\Pe^{\la}$, $m^\la$, $g^{\la}_A$, $\Sp^{\la}$, etc.

\begin{defn}\label{spechtdef}
For $\la = (\la^{(1)}, \dots, \la^{(l)} ) \in \Par_n$ and $\kappa = (\kappa_1, \dots, \kappa_l) \in \Z^l$,
we define
\begin{align*}
\Pe^\la = \Pe^\la_\kappa &:= \Pe^{\la^{(1)}}_{\kappa_1} \circ \cdots \circ \Pe^{\la^{(l)}}_{\kappa_l}. \\
\Sp^\la = \Sp^\la_\kappa &:=
q^{\deg(\ttt^\la)} \coker\ \Gh^{\la^{(1)}}_{\kappa_1} \conv \cdots \conv \coker\ \Gh^{\la^{(l)}}_{\kappa_l}.
\end{align*}
We write $\Sp^{\la}(\bR)$ when we need to emphasise the field.
\end{defn}

\begin{Rmk}
By Theorem \ref{basis thm}, the set
\begin{align} \label{basis of M}
\{
\psi_{w^\ttt} m^\lambda \mid \ttt \in \rST{\lambda}
\}
\end{align}
is an $\bR$-basis of $\Pe^\lambda$. 
\end{Rmk}

Note that $\deg \psi_{w^\ttt}m^\la = \deg \psi_{w^\ttt} e(\res \ttt)$ by definition. Then we have the following result. 

\begin{prop}
If $\ttt \in \ST\la$, then $\deg \psi_{w^\ttt} m^\la = \deg\ttt - \deg \ttt^\la$.
\end{prop}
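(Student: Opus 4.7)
Our starting point is the observation that $m^\la$ is a tensor product of the degree-zero generators $\gLm$ and sits in the weight space $e(\res\ttt^\la)\Pe^\la$, so
\[
\deg(\psi_{w^\ttt}m^\la)=\deg\bigl(\psi_{w^\ttt}e(\res\ttt^\la)\bigr),
\]
a quantity depending only on $w^\ttt$ (any two reduced expressions for $w^\ttt$ yield homogeneous products of the same degree, cf.~\cref{psi_w}). I plan to proceed by induction on $\ell(w^\ttt)$. The base case $\ell(w^\ttt)=0$ is immediate since then $\ttt=\ttt^\la$ and both sides of the claimed equality vanish.

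For the inductive step, I will pick any left descent $s_j$ of $w^\ttt$, i.e.~some $j$ with $(w^\ttt)^{-1}(j+1)<(w^\ttt)^{-1}(j)$, which exists since $w^\ttt\neq 1$. Writing $A:=\ttt^{-1}(j)$ and $B:=\ttt^{-1}(j+1)$, this descent condition translates to $B$ preceding $A$ in the reading order of $\ttt^\la$. A direct check exploiting that $\ttt$ is standard then rules out $A$ and $B$ lying in the same row or column of $\ttt$, from which it follows that $\ttu:=s_j\ttt$ is again standard, with $\ell(w^\ttu)=\ell(w^\ttt)-1$, $\ttu^{-1}(j)=B$, and $\ttu^{-1}(j+1)=A$. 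Appending $s_j$ to a reduced expression for $w^\ttu$, the multiplicativity of $\deg$ along reduced products and the defining relation $\deg\psi_je(\mu)=-(\alpha_{\mu_j},\alpha_{\mu_{j+1}})$, combined with the induction hypothesis for $\ttu$, give
\[
\deg(\psi_{w^\ttt}m^\la)=-(\alpha_{\res A},\alpha_{\res B})+\deg\ttu-\deg\ttt^\la.
\]
Hence the proposition will follow from the purely combinatorial identity $\deg\ttt-\deg\ttu=-(\alpha_{\res A},\alpha_{\res B})$.

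To establish this identity, I would use that $\shp\ttt k=\shp\ttu k$ for all $k\neq j$, with $\shp\ttt j=\shp\ttu{j-1}\cup\{A\}$ and $\shp\ttu j=\shp\ttu{j-1}\cup\{B\}$, so that the recursive formula \eqref{Eq: def of deg} collapses $\deg\ttt-\deg\ttu$ to
\[
\bigl(d_A(\shp\ttt j)-d_A(\shp\ttt{j+1})\bigr)+\bigl(d_B(\shp\ttt{j+1})-d_B(\shp\ttu j)\bigr).
\]
Each bracketed difference records how the signed count of addable/removable $\res A$-nodes (respectively $\res B$-nodes) strictly below $A$ (respectively $B$) shifts upon insertion of $B$ (respectively $A$). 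The main obstacle, and the only delicate point of the proof, will be verifying this identity by a case analysis on $(\res A,\res B)$ and on whether $A$ sits above or below $B$. Since the Dynkin diagrams of $C_\infty$ and $C^{(1)}_\ell$ are a ray and a cycle, the brackets vanish unless $\res A$ and $\res B$ coincide or are adjacent, and in the remaining cases one must carefully track the asymmetric Cartan entries $a_{0,1},a_{1,0}$ (and $a_{\ell-1,\ell},a_{\ell,\ell-1}$ in the affine case) arising from the long simple roots of type $C$, in order to match the bracket sum with $-(\alpha_{\res A},\alpha_{\res B})=-d_{\res A}\,a_{\res A,\res B}$.
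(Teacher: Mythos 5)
Your overall strategy matches the paper's: induct on $\ell(w^\ttt)$ via a single Coxeter generator, use $\deg\psi_je(\mu)=-(\alpha_{\mu_j},\alpha_{\mu_{j+1}})$ and multiplicativity of degree along a reduced product to reduce to the combinatorial identity $\deg\ttt-\deg\ttu=-(\alpha_{\res A},\alpha_{\res B})$, and then check this identity by a residue case analysis. (The paper phrases the step slightly differently, going \emph{up} in length from $\ttt$ to $\tts=s_r\ttt$ and normalising $r=n-1$ by truncating, but it is the same argument.)

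Two points of substance. First, your proposed case split on ``whether $A$ sits above or below $B$'' cannot occur: you have already observed that $A$ and $B$ are in different rows of $\ttt$, and the descent condition $(w^\ttt)^{-1}(j)>(w^\ttt)^{-1}(j+1)$ says $B$ precedes $A$ in the reading order of $\ttt^\la$, so $B$ lies in a strictly earlier row (or earlier component). Thus $B$ is \emph{always} strictly above $A$. This is not merely a simplification of bookkeeping — it kills one of your two brackets. Since adding or removing $B$ changes the addable/removable status of nodes only in rows $\leq r_B+1$, none of which lie strictly below $A$, one has $d_A(\shp\ttt j)=d_A(\shp\ttt{j+1})$. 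The paper uses exactly this observation ($d_A(\la)=d_A(\la\nearrow B)$) to reduce the target identity to the single term $d_B(\shp\ttt{j+1})-d_B(\shp\ttu j)$, i.e.\ the change in $d_B$ when $A$ is inserted. You should incorporate this; otherwise your case analysis would be twice as long and half of it would be vacuous.

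Second, and more importantly, the case analysis on residues is not a footnote — it is the actual content of the proof, and it is where the type-$C$ Cartan data (the values $4,2$ for $(\alpha_i,\alpha_i)$, and $-2,-1,0$ for $(\alpha_i,\alpha_j)$ with $i\neq j$, depending on adjacency and whether $0$ or $\ell$ is involved) must be matched against changes in addable/removable node counts. The paper works through five cases (equal residues; $\res A=0,\res B=1$; $\res A=1,\res B=0$; adjacent non-extremal residues; non-adjacent) with sub-cases inside some of them. Your proposal only identifies that this must be done; as written, the proof is incomplete until that analysis is carried out.
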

\begin{proof}
We closely follow the proof of \cite[Proposition 3.13]{BKW11}. 
If $\ttt = \ttt^\la$, then we have 
\[
\deg \psi_{w^{\ttt^\la}} m^\la = \deg m^\la  =0 = \deg\ttt^\la - \deg \ttt^\la.
\]
Thus, it suffices to prove our statement in the case that $\tts,\ttt \in \ST\la$ are such that $\ell(\tts) = \ell(\ttt) +1$ and $\tts = s_r \ttt$. Let $\res\ttt = (\nu_1,\nu_2,\dots,\nu_n)$. We may assume that $r=n-1$. We want to show that $\deg \ttt - \deg \tts = (\alpha_{\nu_{n-1}},\alpha_{\nu_{n}})$. Let $A = \ttt^{-1} (n)$ and $B = \ttt^{-1} (n-1)$. By assumption, $B$ is above $A$ in $[\la]$. Now,
\begin{align*}
\deg \ttt &= d_A(\la) + d_B(\la \nearrow A) + \deg(\tabupto\ttt{n-2}),\\
\deg \tts &= d_B(\la) + d_A(\la \nearrow B) + \deg(\tabupto\tts{n-2}).
\end{align*}
Note that $\tabupto\ttt{n-2} = \tabupto\tts{n-2}$, and since $B$ is above $A$, $d_A (\la) = d_A (\la \nearrow B)$. So we must show that $d_B(\la \nearrow A) - d_B(\la) = (\alpha_{\nu_{n-1}},\alpha_{\nu_{n}})$.

If $\res A = \res B = i$, then removing $A$ leads to the disappearance of a removable $i$-node and the appearance of a new addable $i$-node below $B$, so that $d_B(\la \nearrow A) - d_B(\la) = 4$ if $i=0$, or $2$ otherwise.

If $\res A = 0$ and $\res B = 1$, removing $A$ leaves either one fewer addable $1$-node and one extra removable $1$-node,
or two extra removable $1$-nodes, or two fewer addable $1$-nodes, so that $d_B(\la \nearrow A) - d_B(\la) = -2$.

If $\res A = 1$ and $\res B = 0$, removing $A$ leaves either one extra removable $0$-node or one fewer addable $0$-node, so that $d_B(\la \nearrow A) - d_B(\la) = -2$.

If $\res A = \ell - 1$ and $\res B = \ell$ or $\res A = \ell$ and $\res B = \ell - 1$  in type $C^{(1)}_\ell$, 
similar arguments show that $d_B(\la \nearrow A) - d_B(\la) = -2$.

If $\res A = i \pm 1$ and $\res B = i$, with neither residue equal to $0$ or $\ell$, then removing $A$ leaves either one extra removable $i$-node or one fewer addable $i$-node, so $d_B(\la \nearrow A) - d_B(\la) = -1$.

In all other cases, removing $A$ does not change the degree, so $d_B(\la \nearrow A) - d_B(\la) = 0$.
\end{proof}

We denote by $\overline{m}^{\la}$ the image of $m^\la$ under the projection $q^{\deg(\ttt^\la)}\Pe^\la \rightarrow \Sp^\la$.

\begin{defn}\label{defn: Specht over ring}
Let $\bR$ be an integral domain. Then for $\la \in \Par_n$ and $\kappa \in \Z^l$, we define $\Sp^\la_\kappa(\bR)$ over $\bR$ to be the lattice 
$R_{\bR}(\cont(\la)) \overline{m}^{\la}$
generated by $\overline{m}^{\la}$ in $\Sp^\la_\kappa(\F)$, where $\F = \Frac(\bR)$ and $R_{\bR}(\cont(\la))$ is the quiver Hecke algebra over $\bR$.
\end{defn}

From now on, let $\bR$ denote an arbitrary integral domain.

\begin{thm} \label{Thm for Specht}
Let $\la \in \mathscr{P}^1_n$ with a charge $\kappa \in \Z$, and let $\beta = \cont(\la)$.
\begin{enumerate}
\item $\Sp^{\la}$ is generated by $\{ \psi_{w^\ttt}\overline{m}^{\la} \mid \ttt \in \ST\la \}$ as an $\bR$-module.
\item $\Sp^{\la}$ is a graded $\fqH{\overline{\kappa}} (\beta)$-module.
\end{enumerate}
\end{thm}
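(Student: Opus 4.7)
The plan is to prove (1) by descending induction on $\ell(\ttt)$ along row-strict tableaux, using the basis of $\Pe^\la$ from \cref{basis thm} together with the Garnir-element expression \eqref{g_A and Garnir elt}, and then to deduce (2) from (1) by checking the cyclotomic relation on the spanning set provided by standard tableaux.

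For (1), since $\Pe^\la$ is spanned by $\{\psi_{w^\ttt}m^\la \mid \ttt \in \rST\la\}$, it suffices to show that for each non-standard $\ttt \in \rT\la$, $\overline{\psi_{w^\ttt} m^\la} \in \Sp^\la$ is a linear combination of $\overline{\psi_{w^\tts} m^\la}$ for row-strict $\tts$ with $\ell(\tts) < \ell(\ttt)$. Indeed, \cref{straighten} supplies a Garnir node $A$ and $w \in \sg_n$ with $\ttt = w\ttg^A$ and $\ell(\ttt) = \ell(w) + \ell(\ttg^A)$, and multiplying \eqref{g_A and Garnir elt} on the left by $\psi_w$ gives
\[
\psi_w g^\la_A = \psi_w\psi_{w^{\ttg^A}}m^\la + \sum_{u \prec w^{\ttg^A}} a_u \psi_w \psi_u m^\la.
\]
Since $\psi_w g^\la_A$ lies in $\Ga^\la$, its image in $\Sp^\la$ vanishes. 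Using the length additivity, \cref{psi_w}, and the fact that $x_i m^\la = 0$ (so the polynomial factors in the correction terms collapse to constants), we get $\psi_w\psi_{w^{\ttg^A}}m^\la = \psi_{w^\ttt}m^\la + (\text{terms }\psi_v m^\la \text{ with } \ell(v) < \ell(\ttt))$; likewise each $\psi_w\psi_u m^\la$ with $u \prec w^{\ttg^A}$ is a combination of $\psi_v m^\la$ with $\ell(v) \leq \ell(w) + \ell(u) < \ell(\ttt)$. Any such $\psi_v m^\la$ can be rewritten in the basis of $\Pe^\la$: write $v = u\sigma$ with $u$ a distinguished coset representative in $\sg_n / (\sg_{\la_1}\times\cdots\times\sg_{\la_t})$ and $\sigma$ in the parabolic subgroup; by \cref{psi_w}, $x_i m^\la = 0$, and the vanishing $\psi_j \gLm = 0$ within each segment $\Lm(\kappa-r+1;\la_r)$ (giving $\psi_\sigma m^\la = \delta_{\sigma,1}m^\la$ modulo shorter terms), we reduce to $\psi_{w^\tts} m^\la$ for row-strict $\tts$ of strictly smaller length. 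Induction on $\ell(\ttt)$, with base case $\ttt = \ttt^\la \in \ST\la$, completes (1).

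For (2), the grading descends from $\Pe^\la$ through the degree-preserving quotient (with the shift $q^{\deg(\ttt^\la)}$), so we only need the cyclotomic relation $x_1^{\langle \alpha^\vee_{\nu_1}, \Lambda_{\overline\kappa}\rangle} e(\nu) \cdot \Sp^\la = 0$ for all $\nu \in I^\beta$. By (1), it is enough to check this on each $\psi_{w^\ttt}\overline m^\la$ for $\ttt \in \ST\la$. For standard $\ttt$ (in level one) we have $\ttt^{-1}(1) = (1,1)$ of residue $\overline\kappa$, whence $e(\nu)\psi_{w^\ttt}\overline m^\la = \delta_{\nu, \res\ttt}\psi_{w^\ttt}\overline m^\la$ vanishes unless $\nu_1 = \overline\kappa$. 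If $\nu_1 \ne \overline\kappa$ then $\langle \alpha^\vee_{\nu_1}, \Lambda_{\overline\kappa}\rangle = 0$, so the relation is trivially satisfied by the vanishing of the idempotent. If $\nu_1 = \overline\kappa$ then $\langle \alpha^\vee_{\overline\kappa}, \Lambda_{\overline\kappa}\rangle = 1$; since $w^\ttt(1) = 1$, the permutation $w^\ttt$ admits a reduced expression using only $s_2,\dots,s_{n-1}$, so $x_1$ commutes past $\psi_{w^\ttt}$, and $x_1 \psi_{w^\ttt}\overline m^\la = \psi_{w^\ttt} x_1 \overline m^\la = 0$.

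The main obstacle is the combinatorial bookkeeping in (1): the correction terms in \eqref{g_A and Garnir elt}, the reduced-expression ambiguity tracked by \cref{psi_w}, and the need to rewrite arbitrary $\psi_v m^\la$ in the basis indexed by row-strict tableaux all must be controlled so that lengths strictly decrease at every step. The annihilation properties $x_i m^\la = 0$ and $\psi_j \gLm = 0$ within each segment module are the crucial ingredients that force all introduced lower terms to remain of strictly smaller length and hence allow the induction to close.
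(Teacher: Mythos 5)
Your proposal is correct and takes essentially the same approach as the paper: both use the basis $\{\psi_{w^\ttt}m^\la \mid \ttt\in\rST\la\}$ of $\Pe^\la$, \cref{straighten} to produce $\ttt=w\ttg^A$ with length additivity, equation \eqref{g_A and Garnir elt} together with \cref{psi_w} and the annihilation of $m^\la$ by the $x_i$ and the parabolic $\psi_j$ to reduce $\psi_{w^\ttt}\overline{m}^\la$ to strictly shorter row-strict tableaux modulo $\Ga^\la$, and an induction on tableau length with base case $\ttt^\la$; part (2) is proved identically by observing $w^\ttt(1)=1$ and $(\res\ttt)_1=\overline\kappa$ for standard $\ttt$. (Your opening phrase "descending induction" is a slight mislabel for what is in fact ordinary increasing/strong induction on $\ell(\ttt)$, but the argument you actually run is the same as the paper's.)
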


\begin{proof}
(1) For $\ell=0,1,\dots$, we define
\begin{align*}
A_\ell &:= \{ \psi_{w^\ttt}\overline{m}^{\la} \mid \ttt \in \rST\la, \ \ell(\ttt) \le \ell \} \subseteq \Sp^{\la}, \\
B_\ell &:= \{  \psi_{w^\ttt}\overline{m}^{\la} \mid \ttt \in \ST\la, \ \ell(\ttt) \le \ell \} \subseteq A_\ell .
\end{align*}
Then $\eqref{basis of M}$ implies that $\Sp^{\la}$ is generated by $\bigcup_{\ell \ge 0} A_\ell$ as an $\bR$-module, so it suffices to show that
\[
\Span_\bR A_\ell = \Span_\bR B_\ell
\]
for all $\ell \ge 0$ by induction on $\ell$. If $\ell = 0$, there is nothing to prove. 
Suppose that $\ell >0$ and take $\ttt \in \rST\la$ with $\ell = \ell(\ttt)$. We will show that
$\psi_{w^\ttt}\overline{m}^{\lambda} \in \Span_\bR B_\ell$. Since it is trivial when $\ttt \in \ST\la$,
we assume that $\ttt \in \rT\la$ and prove  $\psi_{w^\ttt} \overline{m}^\lambda \in \Span_\bR B_{\ell-1}$. We set
\begin{align*}
\Pe^\lambda_{\ell-1} &:= \Span_\bR \{  \psi_{w^\ttt} m^{\lambda} \mid \ttt \in \rST\la, \ \ell(\ttt) \le \ell-1  \} \subseteq \Pe^\la,\\
\Sp^\lambda_{\ell-1} &:= \Span_\bR A_{\ell-1} \subseteq \Sp^\la.
\end{align*}
By Lemma \ref{straighten}, there are a Garnir node $A \in [\lambda]$ and an element $w \in \sg_n$ such that
$\ttt = w\ttg^A$ and $\ell(\ttt) = \ell(w) + \ell(\ttg^A)$.
It follows from Proposition \ref{psi_w} and $\eqref{basis of M}$ that
\[
\psi_{w^\ttt} m^\lambda - \psi_{w} \psi_{w^{\ttg^A}} m^\lambda \equiv 0 \pmod{\Pe^\lambda_{\ell-1}}.
\]
By $\eqref{g_A and Garnir elt}$, we have
\[
\psi_{w} \psi_{w^{\ttg^A}} m^\lambda - \psi_{w} g_A^\lambda \equiv 0 \pmod{\Pe^\lambda_{\ell-1}},
\]
which implies that
\[
\psi_{w^\ttt} \overline{m}^\lambda \equiv 0 \pmod{\Sp^\lambda_{\ell-1}},
\]
proving $\psi_{w^\ttt} \overline{m}^\lambda \in \Span_\bR B_{\ell-1} $ by the induction hypothesis $\Span_\bR A_{\ell-1} = \Span_\bR B_{\ell-1}$.

(2) It follows from (1) that it suffices to prove $x_1^{\langle \alpha_{\nu_1}^\vee, \Lambda_{\overline{\kappa}} \rangle}e(\nu)\psi_{w^\ttt}m^{\la} = 0$, for 
$\ttt \in \ST\la$. But if $\ttt \in \ST\la$ then $w^\ttt(1)=1$, so that $\psi_{w^\ttt}$ is a product of $\psi_2, \dots, \psi_{n-1}$ and $x_1\psi_{w^\ttt}=\psi_{w^\ttt}x_1$ holds. 
Then, since $ e(\nu) \psi_{w^\ttt} m^{\lambda} \ne 0 $ implies $ \nu_1 = \overline{\kappa}$, 
\[
x_1^{\langle \alpha_{\nu_1}^\vee, \Lambda_{\overline{\kappa}} \rangle
}e(\nu)\psi_{w^\ttt}m^{\la} = x_1e(\nu) \psi_{w^\ttt} m^{\lambda}  = e(\nu)\psi_{w^\ttt}x_1 m^{\lambda} = 0.\qedhere
\]
\end{proof}

\begin{cor} \label{Cor: Specht modules l}
Let $l \in \Z_{>0}$, $\la \in \Par_n$, $\kappa = (\kappa_1, \dots, \kappa_l) \in \Z^{l}$, and let $\beta = \cont(\la)$.
\begin{enumerate}
\item $\Sp^{\la}$ is generated by $\{ \psi_{w^\ttt} \overline{m}^{ \la } \mid \ttt \in \ST\la \}$ as an $\bR$-module.
\item Let $\Lambda = \Lambda_{\overline{\kappa_1}}+ \dots + \Lambda_{\overline{\kappa_l}}$. Then $\Sp^{\la}$ is a graded $R^{\Lambda}(\beta)$-module.
\end{enumerate}
\end{cor}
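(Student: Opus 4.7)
My plan is to bootstrap from the level-$1$ statement \cref{Thm for Specht} using the convolution description of $\Sp^\la$ in \cref{spechtdef}, namely
\[
\Sp^\la_\kappa \simeq q^{c}\, \Sp^{\la^{(1)}}_{\kappa_1} \conv \cdots \conv \Sp^{\la^{(l)}}_{\kappa_l}
\]
for an appropriate shift $c$, under which $\overline{m}^\la$ corresponds to $\overline{m}^{\la^{(1)}} \otimes \cdots \otimes \overline{m}^{\la^{(l)}}$. Part (2) is then immediate: by \cref{Thm for Specht}(2) each factor $\Sp^{\la^{(t)}}_{\kappa_t}$ already carries an $R^{\Lambda_{\overline{\kappa_t}}}(\cont(\la^{(t)}))$-module structure, and iterating \cref{conv with cyclotomic} shows that the convolution is in fact an $R^{\sum_t \Lambda_{\overline{\kappa_t}}}(\sum_t \cont(\la^{(t)})) = R^{\Lambda}(\beta)$-module.

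For part (1), I would combine \cref{Thm for Specht}(1) with the standard convolution spanning theorem (a direct consequence of \cref{basis thm}): for spanning sets $\mathfrak{b}_t \subseteq M_t$ of $R(\beta_t)$-modules, the convolution $M_1 \conv \cdots \conv M_l$ is $\bR$-spanned by
\[
\{\psi_w (x_1 \otimes \cdots \otimes x_l) \mid w \in \sg_n/(\sg_{n_1}{\times}\cdots{\times}\sg_{n_l}),\ x_t \in \mathfrak{b}_t\},
\]
where $n_t = \Ht(\beta_t)$. Feeding in $\mathfrak{b}_t = \{\psi_{w^{\ttt_t}}\overline{m}^{\la^{(t)}} \mid \ttt_t \in \ST{\la^{(t)}}\}$ from \cref{Thm for Specht}(1) produces a spanning set of $\Sp^\la$ indexed by tuples $(w,\ttt_1,\dots,\ttt_l)$. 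There is a classical bijection between such tuples and $\ST\la$: one sends $\ttt \in \ST\la$ to the pair consisting of the tableau $\ttt_t$ on $\la^{(t)}$ obtained by restricting $\ttt$ to the $t$-th component and relabelling its entries $1,\dots,n_t$ in increasing order, and the distinguished coset representative $w$ encoding which original entries of $\ttt$ lie in each component. Under this bijection, $w^\ttt = w \cdot (w^{\ttt_1}{\times}\cdots{\times}w^{\ttt_l})$ is a reduced product, so that for an appropriate choice of reduced expression the corresponding spanning element equals $\psi_{w^\ttt}\overline{m}^\la$.

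The main technical point, which I expect to be the chief obstacle, is verifying that $\psi_{w^\ttt}\overline{m}^\la$ is insensitive to the choice of reduced expression modulo the proposed spanning set. By \cref{psi_w}, two reduced expressions for $w^\ttt$ yield elements differing by an $\bR$-linear combination of terms $\psi_u f(x_1,\dots,x_n)\overline{m}^\la$ with $u \prec w^\ttt$. These terms are controlled by two observations: first, $x_r \overline{m}^\la = 0$ for all $r$, which is inherited from $x_r m^{\la^{(t)}} = 0$ via $\eqref{Eq: def of L}$ together with the convolution construction of $\Pe^\la$; and second, the remaining polynomial-free terms $\psi_u \overline{m}^\la$ with $u \prec w^\ttt$ are absorbed into the standard-tableau spanning set via Bruhat induction on $\ell(w^\ttt)$, using \cref{straighten} to straighten any resulting row-strict non-standard tableau through the Garnir relations in precisely the manner of the proof of \cref{Thm for Specht}(1).
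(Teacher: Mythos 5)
Your proof is correct and follows essentially the same route as the paper, which simply cites \cref{Thm for Specht} and \cref{conv with cyclotomic}: part (2) by iterating \cref{conv with cyclotomic}, and part (1) by combining the level-$1$ spanning set from \cref{Thm for Specht}(1) with the basis of $R(\beta)e(\beta_1,\dots,\beta_l)$ over $R(\beta_1)\otimes\cdots\otimes R(\beta_l)$ given by the distinguished coset representatives, then identifying tuples $(w,\ttt_1,\dots,\ttt_l)$ with $\ST\la$. The paper leaves the reduced-expression bookkeeping to the reader; your handling of it — $x_r\overline{m}^\la=0$ to kill the polynomial factors, then Bruhat induction with Garnir straightening via \cref{straighten} exactly as in the proof of \cref{Thm for Specht}(1) — is the right way to fill that gap.
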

\begin{proof}
This follows from Theorem \ref{Thm for Specht} and Proposition \ref{conv with cyclotomic}.
\end{proof}

\begin{defn}
Let $l \in \Z_{>0}$, $\kappa = (\kappa_1, \dots, \kappa_l) \in \Z^{l}$ and $\Lambda = \Lambda_{\overline{\kappa_1}}+ \dots + \Lambda_{\overline{\kappa_l}}$. 
Then we call the graded $R^{\Lambda}(\beta)$-modules $\Sp^{\la}$, for $\la \in \Par_n$, \emph{Specht modules}.
\end{defn}

\begin{Rmk}
One can easily construct a `column version' of the Specht modules by the same argument. (\protect{cf.~\cite[Section 7]{kmr}}).
\end{Rmk}

\begin{ex}
Let $\kappa \in \Z$ and $\la = (n), \la' = (1^n) \in \mathscr{P}^1_n$. It is straightforward to prove that 
\[
\Sp^\la_{\kappa} \simeq \Lm( \kappa ;n ) \simeq \Sp^{\la'}_{-\kappa} .
\]
In particular, $\Sp^\la_0 \simeq  \Sp^{\la'}_0$.
\end{ex}

\begin{ex}
Suppose that $\cmA$ is of type $C_\infty$ or $C_\ell^{(1)}$ with $\ell > 2$. Let $\kappa=-1$ and $\lambda = (4),  \mu=(3,1) \in \mathscr{P}^1_4 $. As $\la$ has no Garnir nodes, we have 
\[
\Sp^\la = \Lm(1012).
\]
Since $\mu$ has only one Garnir node $(1,1)$, we have $p_{(1,1)}^{\mu} = ( \alpha_2, \alpha_1 + \alpha_0 + \alpha_1) = -2 $ and 
\[
\Gh^\mu : q^{2} \Lm(2101) \longrightarrow  \Pe^\mu := \Lm(101)\circ \Lm(2), \qquad \gLm \mapsto \psi_1\psi_2\psi_3  m^\mu. 
\] 
Thus, we have $ \Ga^{\la} \simeq  q^{2} \Lm(2101)$ and 
\[
\ch_q\Sp^\mu = (1012) + q(1021) + q(1201).
\]
The epimorphism $ \Lm(101) \circ \Lm(2) \twoheadrightarrow \Lm(1012)$ gives the epimorphism 
\[
\Sp^\mu \longtwoheadrightarrow \Sp^\la,
\]
which tells us that $\Sp^\mu$ is not simple and the head of $\Sp^\mu$ is isomorphic to $\Sp^\la$.
\end{ex}

\begin{ex} \label{Ex: 1}
Suppose that $\cmA$ is of type $C_\infty$ or $C_\ell^{(1)}$ with $\ell > 2$. Let $\kappa = 0$ and $\la = (3,2,1) \in \mathscr{P}^1_6$. Then 
$ \deg(\ttt^\la) = 1$, $ \Pe^\la = \Lm(012) \circ \Lm(10) \circ \Lm(2) $ and the Garnir nodes of $\la$ are $A_1 := (1,1)$, $A_2 := (1,2)$ and $A_3 := (2,1)$.
Since
\begin{align*}
p_{A_1}^\la &= ( \alpha_1, \alpha_0+\alpha_1+\alpha_2 ) = -1, \\ 
p_{A_2}^\la &= ( \alpha_1+\alpha_0, \alpha_1+\alpha_2 ) = -1,  \\
p_{A_3}^\la &= ( \alpha_2, \alpha_1+\alpha_0 ) = -1, 
\end{align*}
we have
\begin{align*}
\Gh_{A_1}^\la : & q \Lm(1012) \circ \Lm(0) \circ \Lm(2) \longrightarrow  \Pe^\la, \qquad m^\la_{A_1} \longmapsto \psi_1\psi_2\psi_3  m^\la, \\ 
\Gh_{A_2}^\la : & q \Lm(0) \circ \Lm(1012) \circ \Lm(2) \longrightarrow  \Pe^\la, \qquad m^\la_{A_2} \longmapsto \psi_3\psi_2\psi_4\psi_3  m^\la, \\ 
\Gh_{A_3}^\la : & q \Lm(012) \circ  \Lm(210) \longrightarrow  \Pe^\la, \qquad \qquad \   m^\la_{A_3} \longmapsto \psi_4\psi_5  m^\la.
\end{align*}
Thus, $\Ga^{\la} = \langle  \psi_1\psi_2\psi_3  m^\la,  \psi_3\psi_2\psi_4\psi_3  m^\la, \psi_4\psi_5  m^\la \rangle \subset \Pe^\la$ and 
\[
\Sp^\la = q \Pe^\la / \Ga^\la.
\]
\end{ex}

\subsection{Basis theorem for type $C_\infty$}\

Suppose that the Cartan matrix is of type $C_\infty$. Then we have the following basis theorem for Specht modules, whose proof is postponed to \cref{Sec: proof of the main thm}.

\begin{thm} \label{Thm: Specht modules}
Let $\la \in \mathscr{P}^1_n$ with a charge $\kappa \in \Z$. Then the set $\{ \psi_{w^\ttt}\overline{m}^{\la} \mid \ttt \in \ST\la  \}$ is an $\bR$-basis of $\Sp^{\la}$. Moreover, we have the following graded character formula.
\[
\ch_q \Sp^{\la} = \; \sum_{\mathclap{\ttt \in \ST\la }} \; q^{\deg(\ttt)} \res\ttt.
\]
\end{thm}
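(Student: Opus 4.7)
My plan is to prove linear independence of $\{\psi_{w^\ttt}\overline{m}^{\la} : \ttt\in\ST\la\}$, since the spanning statement is already given by \cref{Cor: Specht modules l}(1). Because \eqref{basis of M} exhibits $\{\psi_{w^\ttt} m^\la : \ttt\in\rST\la\}$ as an $\bR$-basis of $\Pe^\la$, and because $\rST\la = \ST\la\sqcup\rT\la$, the claim reduces to showing that
\[
\Ga^\la \;=\; \Span_\bR\{\psi_{w^\ttt} m^\la : \ttt\in\rT\la\}.
\]
One inclusion ($\supseteq$) is essentially contained in the proof of \cref{Thm for Specht}(1): for any $\ttt\in\rT\la$, \cref{straighten} gives a factorisation $\ttt = w\ttg^A$ with $\ell(\ttt) = \ell(w) + \ell(\ttg^A)$, and \cref{psi_w} together with \eqref{g_A and Garnir elt} yields $\psi_{w^\ttt}m^\la \equiv \psi_w g^\la_A$ modulo strictly smaller length tableaux; induction on $\ell(\ttt)$ then places the element in $\Ga^\la$.

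The main work is the reverse inclusion. Here I would use \cref{garnirformremark}, which in type $C_\infty$ reduces each Garnir element to $g^\la_A = \psi_{w^{\ttg^A}}m^\la$ with $w^{\ttg^A}$ fully commutative, together with $x_i g^\la_A = 0$ from \cref{lem: base}(i). Consequently $\Ga^\la$ is spanned by elements of the form $\psi_u g^\la_A = \psi_u\psi_{w^{\ttg^A}}m^\la$. I would then expand such products by repeated use of \cref{psi_w} and the exchange-type identities \cref{2exchange formula,exchange formula,commutation formula1} from \cref{Sec: quiver Hecke algebras}, tracking that after every rewrite only terms $\psi_v m^\la$ indexed by row-strict (but possibly non-standard) tableaux appear. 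The input for controlling these rewrites is that multiplication by the $\psi_i$ on the left of $\psi_{w^{\ttg^A}}m^\la$ either produces a shorter expression (giving an element in an $\rT\la$-indexed basis element by induction) or extends $\ttg^A$ by a descent that remains inside $\rT\la$. The structural lemmas \cref{lem: garnir,garjoin,lem: wA wB} tell us precisely when two different Garnir elements can conspire to produce the same leading term.

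The main obstacle, which is what \cref{Sec: proof of the main thm} must address carefully, is the case of \emph{overlapping} Garnir nodes: given distinct Garnir nodes $A, B$ with $\belt^A\cap\belt^B\ne\emptyset$, one has to verify that the two reduced expressions arising from $g^\la_A$ and $g^\la_B$ respectively, when forced to coincide on the common piece, yield the \emph{same} element of $\Ga^\la$ up to elements already in $\Span_\bR\{\psi_{w^\ttt} m^\la : \ttt\in\rT\la\}$. This requires the generalised Garnir tableaux $\ttg^{A,B}$ of \cref{gengar} and the explicit description in \cref{construction of G^{A,B}}, together with the detailed bookkeeping of error terms in \cref{failure of braid rels,commutation formula2}; the three cases of \cref{gengar} must each be handled separately, but all are made tractable by \cref{lem: wA wB}, which ensures the relevant block permutations $w^A, w^B$ are $321$-avoiding.

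Once $\Ga^\la = \Span_\bR\{\psi_{w^\ttt}m^\la : \ttt\in\rT\la\}$ is established, the quotient $\Sp^\la$ has the claimed basis $\{\psi_{w^\ttt}\overline{m}^\la : \ttt\in\ST\la\}$. The graded character formula then follows at once from the proposition preceding \cref{Cor: Specht modules l}, which gives $\deg\psi_{w^\ttt}m^\la = \deg\ttt - \deg\ttt^\la$ for $\ttt\in\ST\la$, combined with the shift by $q^{\deg\ttt^\la}$ in \cref{spechtdef} and the identity $\res(\psi_{w^\ttt}\overline{m}^\la) = \res\ttt$ that is immediate from the construction of $m^\la$.
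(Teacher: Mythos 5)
The overall strategy is the same as the paper's — reduce to a statement about $\Ga^\la$ inside $\Pe^\la$, use $g^\la_A=\psi_{w^{\ttg^A}}m^\la$ in type $C_\infty$, control how $\psi_j$ interacts with the Garnir elements, and resolve the overlapping-Garnir-node case via generalised Garnir tableaux. But your reduction to the equality $\Ga^\la=\Span_\bR\{\psi_{w^\ttt}m^\la:\ttt\in\rT\la\}$ overshoots: the theorem only needs $\Ga^\la\cap\Span_\bR\{\psi_{w^\tts}m^\la:\tts\in\ST\la\}=0$, which is strictly weaker (both subspaces are complements of the $\ST\la$-span, and complements are not unique). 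Your ``$\supseteq$'' argument then has a genuine gap. From \cref{psi_w} and \eqref{g_A and Garnir elt} you only get $\psi_{w^\ttt}m^\la-\psi_w g^\la_A\in\Pe^\la_{<\ell(\ttt)}$, i.e.\ a combination of $\psi_{w^\tts}m^\la$ with $\tts\in\rST\la$ of smaller length, and there is no a priori reason these $\tts$ should all lie in $\rT\la$. If any are standard the induction does not close — you cannot conclude $\psi_{w^\ttt}m^\la\in\Ga^\la$. (What \cref{Thm for Specht}(1) actually shows is that the images $\psi_{w^\ttt}\overline{m}^\la$ for $\ttt\in\rT\la$ lie in the span of standard-indexed images inside $\Sp^\la$, which is a statement modulo $\Ga^\la$, not about membership in $\Ga^\la$.)

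The organizing device that the paper uses and that your sketch is missing is the length filtration $\Ga^\la_{<t}$ of \cref{Sec: proof of the main thm}, defined using only those $\psi_w g^\la_A$ with $w\ttg^A\in\rT\la$ and $\ell(w\ttg^A)=\ell(w)+\ell(\ttg^A)<t$. The double induction of \cref{lem: closedness} — which simultaneously proves independence of $(A,w)$, closure under the $x_i$ and $\psi_j$, and a ``collapse to lower levels'' statement — shows that $\bigcup_t\Ga^\la_{<t}$ is $R(\beta)$-stable, hence equals $\Ga^\la$, and that the quotients $\Ga^\la_{<t+1}/\Ga^\la_{<t}$ are free with bases $\{g^\la_\ttt:\ttt\in\rT\la,\ \ell(\ttt)=t\}$ (\cref{thm: basis of g^la}). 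Linear independence of the $\psi_{w^\ttt}\overline{m}^\la$ for $\ttt\in\ST\la$ then follows by comparing leading length-graded pieces in $\Pe^\la$, which is exactly what the short proof at the end of \cref{Sec: proof of the main thm} does — without ever needing to decide whether $\Ga^\la$ literally equals the $\rT\la$-span. A second, smaller point: your attribution of the tractability of the overlap cases to the $321$-avoidance of \cref{lem: wA wB} is misleading. That lemma is only a prerequisite; the generic overlaps are disposed of by the character-sequence argument \cref{lem: T and G AB}, and the genuinely hard three-row aligned-residue case requires the long explicit error-term computations of \cref{Lem: G^AB 1,lem: wAgA-wBgB 4}, which are the bulk of the section.
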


\begin{cor} \label{Cor: Specht 1}
In the Grothendieck group of $\fqH{\overline{\kappa}}(n-1)\gmod$, we have
\[
[E_i^{\Lambda_{\overline{\kappa}}} \Sp^{\lambda}] = \sum_{b} q^{d_b(\lambda)} [ \Sp^{\la \nearrow b} ],
\]
where $b$ runs over all removable $i$-nodes.
\end{cor}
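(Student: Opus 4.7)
The plan is to prove the identity by comparing graded characters on both sides and invoking injectivity of the graded character map on $[\fqH{\overline\kappa}(n-1)\gmod]$. By definition $E_i^\Lambda \Sp^\la = e(\beta - \alpha_i, \alpha_i) \Sp^\la$, and no grading shift enters on the $\gmod$ side in view of \eqref{Eq: categorification}. Using the basis $\{\psi_{w^\ttt}\overline{m}^\la \mid \ttt \in \ST{\la}\}$ provided by \cref{Thm: Specht modules}, together with the facts that $\psi_{w^\ttt}\overline{m}^\la \in e(\res\ttt)\Sp^\la$ and $\deg(\psi_{w^\ttt}\overline{m}^\la)=\deg\ttt$, the idempotent $e(\beta-\alpha_i,\alpha_i)$ acts as the identity on those basis vectors whose tableau $\ttt$ has $\ttt^{-1}(n)$ an $i$-node, and annihilates the rest.

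The key combinatorial input is the bijection
\[
\{\ttt \in \ST{\la} \mid \ttt^{-1}(n)\text{ is a removable } i\text{-node}\} \;\longleftrightarrow\; \bigsqcup_{b}\ST{\la \nearrow b},
\]
where $b$ ranges over removable $i$-nodes of $\la$, given by $\ttt \mapsto (b, \tabupto\ttt{n-1})$ with $b:=\ttt^{-1}(n)$. Under this correspondence, $\res\ttt=\res(\tabupto\ttt{n-1})\ast i$, and iterating the recursion \eqref{Eq: def of deg} yields $\deg\ttt = \deg(\tabupto\ttt{n-1}) + d_b(\la)$, since at each step $k<n$ the relevant shape coincides for $\ttt$ and for its truncation $\tabupto\ttt{n-1}$. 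Plugging these identities into the character formula of \cref{Thm: Specht modules} gives
\[
\ch_q E_i^\Lambda \Sp^\la \;=\; \sum_b q^{d_b(\la)}\, \ch_q \Sp^{\la \nearrow b}.
\]

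Finally, since simple graded modules over $\fqH{\overline\kappa}(n-1)$ are pairwise distinguished by their graded characters (a standard consequence of the crystal parametrisation of simples for cyclotomic KLR algebras), the graded character map on the Grothendieck group is injective. Hence the character identity promotes to the required equality of classes $[E_i^\Lambda \Sp^\la] = \sum_b q^{d_b(\la)}[\Sp^{\la \nearrow b}]$. There is no real obstacle beyond bookkeeping here; the substantive input is the basis theorem \cref{Thm: Specht modules}, and the only thing one must verify is the compatibility of the recursive degree statistic with the tableau bijection, which is immediate from \eqref{Eq: def of deg}.
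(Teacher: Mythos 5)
Your proof is correct and follows essentially the same route as the paper: both regroup the standard-tableau sum in the character formula of \cref{Thm: Specht modules} according to the removable $i$-node occupied by $n$, and pass from the resulting character identity to the Grothendieck-group identity. The paper leaves the tableau bijection and the injectivity of $\ch_q$ on $[\fqH{\overline\kappa}(n-1)\gmod]$ implicit, whereas you spell these routine points out; the substance is the same.
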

\begin{proof}
We rewrite the graded character formula from Theorem \ref{Thm: Specht modules} as follows.
\[
\ch_q \Sp^{\la} = \; \sum_b \quad \; \sum_{\mathclap{\ttt \in \ST{\la\nearrow b} }} \; q^{\deg(\ttt)+d_b(\la)} \res\ttt*\res{b},
\]
where $b$ runs over all removable nodes. Thus,
\[
\ch_q(E_i^{\Lambda_{\overline{\kappa}}}\Sp^{\la})= \sum_{b} q^{d_b(\lambda)}\ch_q(\Sp^{\la \nearrow b}),
\]
where $b$ runs over all removable $i$-nodes.
\end{proof}

Let $l \in \Z_{>0}$, $\kappa = (\kappa_1, \ldots, \kappa_l) \in \Z^{l}$ and $\Lambda=\Lambda_{\overline{\kappa}_1}+ \dots +\Lambda_{\overline{\kappa}_l}$. 
One can easily prove \cref{Cor: Specht modules} from \cref{Thm: Specht modules} and \cref{Cor: Specht 1}.

\begin{cor}\label{Cor: Specht modules}
Let $\la \in \Par_n$.
\begin{enumerate}
\item
The set $\{ \psi_{w^\ttt} \overline{m}^{ \lambda } \mid \ttt \in \ST\la  \}$ is an $\bR$-basis of $\Sp^{ \lambda }$. Moreover,
\[
\ch_q \Sp^{ \lambda } = \; \sum_{\mathclap{\ttt \in \ST\la}} \; q^{\deg(\ttt)} \res\ttt.
\]
\item In the Grothendieck group of $\fqH{}(n-1)\gmod$, we have
\[
[E_i^\Lambda \Sp^{ \lambda }] = \sum_{b} q^{d_b(\lambda)} [ \Sp^{ \lambda  \nearrow b} ],
\]
where $b$ runs over all removable $i$-nodes.
\end{enumerate}
\end{cor}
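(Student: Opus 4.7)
The plan is to deduce both parts directly from \cref{Thm: Specht modules} by unfolding \cref{spechtdef}, which reads
\[
\Sp^\la = q^{\deg\ttt^\la}\,\widetilde{\Sp}^{\la^{(1)}}\conv\cdots\conv\widetilde{\Sp}^{\la^{(l)}},\qquad \widetilde{\Sp}^{\la^{(i)}}:=\coker\Gh^{\la^{(i)}}_{\kappa_i},
\]
so that, setting $n_i := |\la^{(i)}|$, \cref{Thm: Specht modules} supplies each factor $\widetilde{\Sp}^{\la^{(i)}}$ with the homogeneous $\bR$-basis $\{\psi_{w^{\ttt_i}}\overline{m}^{\la^{(i)}}:\ttt_i\in\ST{\la^{(i)}}\}$ (the basis of $\Sp^{\la^{(i)}}_{\kappa_i}$, which differs from $\widetilde{\Sp}^{\la^{(i)}}$ only by a grading shift).

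First I would combine \cref{basis thm} with the standard description of convolution bases in \cref{Sec: module categories} to conclude that
\[
\bigl\{\psi_w \otimes \psi_{w^{\ttt_1}}\overline{m}^{\la^{(1)}}\otimes\cdots\otimes\psi_{w^{\ttt_l}}\overline{m}^{\la^{(l)}} : w \in \sg_n/(\sg_{n_1}\times\cdots\times\sg_{n_l}),\ \ttt_i \in \ST{\la^{(i)}}\bigr\}
\]
is an $\bR$-basis of the convolution product. Next I would set up the natural bijection between $\ST\la$ and such tuples $(w,(\ttt_1,\ldots,\ttt_l))$: given $\ttt \in \ST\la$, the subsets $S_i \subseteq \{1,\dots,n\}$ formed by the entries of $\ttt$ appearing in component $[\la^{(i)}]$ determine the minimum-length coset representative $w$ (via the increasing bijections of the blocks $\{n_1+\cdots+n_{i-1}+1,\dots,n_1+\cdots+n_i\}$ onto the $S_i$), and relabelling $\ttt|_{[\la^{(i)}]}$ yields $\ttt_i$. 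The crucial observation is the length-additive factorisation $w^\ttt = w \cdot w^{\ttt_1}w^{\ttt_2}\cdots w^{\ttt_l}$, with each $w^{\ttt_i}$ realised as a permutation supported on the $i$-th disjoint block of $\{1,\dots,n\}$; this produces the identification
\[
\psi_{w^\ttt}\overline{m}^\la \;=\; \psi_w \otimes \psi_{w^{\ttt_1}}\overline{m}^{\la^{(1)}}\otimes\cdots\otimes\psi_{w^{\ttt_l}}\overline{m}^{\la^{(l)}},
\]
matching the tableau-indexed spanning set of \cref{Cor: Specht modules l}(1) with the convolution basis above, and thereby proving part (1).

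The graded character formula then drops out of the preceding degree proposition, which already establishes $\deg\psi_{w^\ttt}m^\la = \deg\ttt - \deg\ttt^\la$ in the level-$l$ setting: its inductive proof uses a simple transposition $s_r$ and the case analysis of $d_B(\la\nearrow A)-d_B(\la)$ depends only on the residues of the swapped nodes, not on which components they inhabit. Absorbing the shift $q^{\deg\ttt^\la}$ gives $\deg\psi_{w^\ttt}\overline{m}^\la = \deg\ttt$ in $\Sp^\la$, hence $\ch_q\Sp^\la = \sum_{\ttt\in\ST\la}q^{\deg\ttt}\res\ttt$. Part (2) then follows verbatim from the rewriting in the proof of \cref{Cor: Specht 1}: grouping terms by the removable $i$-node $b := \ttt^{-1}(n)$ and using $\deg\ttt = \deg\tabupto{\ttt}{n-1} + d_b(\la)$ recovers $\ch_q(E_i^\Lambda\Sp^\la) = \sum_b q^{d_b(\la)}\ch_q\Sp^{\la\nearrow b}$, with $b$ running over removable $i$-nodes.

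The main obstacle I anticipate is the clean identification $\psi_{w^\ttt}\overline{m}^\la = \psi_w \otimes \psi_{w^{\ttt_1}}\overline{m}^{\la^{(1)}} \otimes \cdots \otimes \psi_{w^{\ttt_l}}\overline{m}^{\la^{(l)}}$, which rests on carefully verifying the length-additive factorisation of $w^\ttt$ through the minimum-length coset representative. Once this bridge between the multi-tableau combinatorics and the convolution basis is in place, the remainder is routine bookkeeping with grading shifts and direct reuse of the level-one arguments.
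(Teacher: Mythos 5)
Your proposal is correct and fills in the details of exactly the argument the paper leaves to the reader (the paper only remarks that the corollary "easily" follows from \cref{Thm: Specht modules} and \cref{Cor: Specht 1}). The convolution-basis decomposition, the bijection $\ttt \leftrightarrow (w,\ttt_1,\dots,\ttt_l)$, the length-additive factorisation of $w^\ttt$ through the minimal coset representative, and the reuse of the level-one degree and $E_i^\Lambda$ arguments are all the intended ingredients. One small caveat worth keeping in mind (which the paper also glosses over when asserting \eqref{basis of M}) is that the identity $\psi_{w^\ttt}\overline{m}^\la = \psi_w\otimes\psi_{w^{\ttt_1}}\overline{m}^{\la^{(1)}}\otimes\cdots\otimes\psi_{w^{\ttt_l}}\overline{m}^{\la^{(l)}}$ holds on the nose only for a compatible choice of preferred reduced expressions; otherwise the two sides differ by terms $\psi_u f(x)$ with $u\prec w^\ttt$ (\cref{psi_w}), but this is harmless since the resulting change-of-basis matrix against the convolution basis is unitriangular, so linear independence is preserved.
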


We revisit the Fock space $\mathcal{F}(\kappa)$. 
As $V_q(\Lambda) \simeq V_q(\Lambda)^{\vee}$, by \cref{Thm: categorification thm}, we can identify 
$V_q(\Lambda) \simeq V_q(\Lambda)^\vee \simeq  \Q(q)\otimes_\A [R^\Lambda\gmod]$. 
Thus, from $\eqref{Eq: pk}$, we have the $U_q(\g(\cmA))$-module epimorphism
\begin{align*}
p_\kappa: \mathcal{F}(\kappa) \longtwoheadrightarrow \Q(q)\otimes_\A [R^\Lambda\gmod].
\end{align*}

\begin{prop} \label{Prop: categorification of pk}
For $\la \in \Par_n$, we have
\begin{align*}
p_\kappa( \la  ) = [\Sp^\la].
\end{align*}
\end{prop}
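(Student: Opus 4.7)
The plan is to prove this by induction on $n = |\lambda|$, using the fact that the $e_i$-action is known on both sides and that $V_q(\Lambda)$ is irreducible.

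For the base case $n = 0$, the empty multipartition $\varnothing$ maps to $v_\Lambda$ under $p_\kappa$ by definition, while $\Sp^\varnothing$ is the convolution of the trivial modules $\Sp^\varnothing_{\kappa_i}$, which is the unique one-dimensional module over $R^\Lambda(0) = \bR$. Under the identification $V_q(\Lambda) \simeq \Q(q)\otimes_\A[R^\Lambda\gmod]$ it corresponds to the highest weight vector $v_\Lambda$, so both sides agree.

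For the inductive step, assume $p_\kappa(\mu) = [\Sp^\mu]$ for all $\mu \in \Par_m$ with $m < n$, and let $\lambda \in \Par_n$. The idea is to show that $p_\kappa(\lambda) - [\Sp^\lambda] \in V_q(\Lambda)$ is annihilated by every $e_i$. Since $p_\kappa$ is a $U_q(\g(\cmA))$-module homomorphism, the Fock space formula $\eqref{Eq: def of Fock sp}$ gives
\[
e_i\, p_\kappa(\lambda) = p_\kappa(e_i \lambda) = \sum_{A} q^{d_A(\lambda)}\, p_\kappa(\lambda \nearrow A) = \sum_{A} q^{d_A(\lambda)}\, [\Sp^{\lambda \nearrow A}],
\]
with $A$ running over removable $i$-nodes, where the last equality uses the induction hypothesis. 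On the other hand, \cref{Cor: Specht modules}(2) and the categorification of $e_i$ by $E_i^\Lambda$ on $[R^\Lambda\gmod]$ (see $\eqref{Eq: categorification}$) give
\[
e_i\, [\Sp^\lambda] = [E_i^\Lambda \Sp^\lambda] = \sum_{b} q^{d_b(\lambda)}\, [\Sp^{\lambda \nearrow b}],
\]
again with $b$ running over removable $i$-nodes. Subtracting yields $e_i(p_\kappa(\lambda) - [\Sp^\lambda]) = 0$ for every $i \in I$.

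To conclude, observe that both $p_\kappa(\lambda)$ and $[\Sp^\lambda]$ lie in the weight space of $V_q(\Lambda)$ of weight $\Lambda - \cont(\lambda)$: the former by the description of the Fock space and the latter since $\Sp^\lambda \in R^\Lambda(\cont(\lambda))\gmod$. As $n > 0$ we have $\cont(\lambda) \in \rlQ^+ \setminus \{0\}$, so this weight is strictly less than $\Lambda$. Any vector of $V_q(\Lambda)$ annihilated by all $e_i$ must be a scalar multiple of the highest weight vector $v_\Lambda$ by irreducibility of $V_q(\Lambda)$, and the only such multiple of weight $\Lambda - \cont(\lambda) \neq \Lambda$ is zero. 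Therefore $p_\kappa(\lambda) = [\Sp^\lambda]$, completing the induction. The argument is essentially immediate given the preceding results; the only subtlety is verifying that the shifts in \cref{Cor: Specht modules}(2) exactly match those in the Fock space action $\eqref{Eq: def of Fock sp}$, which they do, so no further bookkeeping is required.
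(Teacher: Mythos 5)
Your proof is correct and follows essentially the same approach as the paper: both establish the base case $\varnothing \mapsto v_\Lambda$, then induct on $n$ by showing $e_i(p_\kappa(\lambda) - [\Sp^\lambda]) = 0$ for all $i$ via $\eqref{Eq: def of Fock sp}$ and \cref{Cor: Specht modules}, and conclude using integrability and simplicity of $V_q(\Lambda) \simeq \Q(q)\otimes_\A[R^\Lambda\gmod]$. Your write-up is slightly more explicit about why the weight consideration forces the difference to vanish, but the argument is identical.
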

\begin{proof}
It is obvious that $p_\kappa( \varnothing ) = [\Sp^{\varnothing}]$ and $\wt(\varnothing) = \wt([\Sp^{\varnothing}]) = \Lambda $.
Since both of $\mathcal{F}(\kappa) $ and $ \Q(q)\otimes_\A [R^\Lambda\gmod]$ are integrable $U_q(\g(\cmA))$-modules and $ \Q(q)\otimes_\A [R^\Lambda\gmod]$ is simple,
it suffices to show that $e_i ( p_\kappa(\la) - [\Sp^\la] )  = 0$ for all $\la$.  
By $\eqref{Eq: def of Fock sp}$, \cref{Cor: Specht modules} and the induction hypothesis, we have
\begin{align*}
e_i p_\kappa(\la) & =  p_\kappa( e_i \la) \\ 
&=  p_\kappa \left(\sum_{A} q^{d_A(\la)} \la\nearrow A \right) \\ 
& =  \sum_{A} q^{d_A(\la)} p_\kappa( \la\nearrow A)
=  \sum_{A} q^{d_A(\la)}  [\Sp^{\la\nearrow A}]
= [E_i^\Lambda \Sp^\la],
\end{align*}
which completes the proof.
\end{proof}

\cref{Cor: F_i Sp} follows from $\eqref{Eq: def of Fock sp}$, $\eqref{Eq: categorification}$ and \cref{Prop: categorification of pk}.

\begin{cor} \label{Cor: F_i Sp}
Let $\beta = \cont(\la)$. In the Grothendieck group of $\fqH{}(n+1)\gmod$, we have
\[
[F_i^\Lambda \Sp^{ \lambda }] = \sum_{b} q^{-d^b(\lambda) + (\langle \alpha_i^\vee,  \Lambda - \beta \rangle -1) (\alpha_i, \alpha_i)/2} [ \Sp^{ \lambda  \swarrow b} ],
\]
where $b$ runs over all addable $i$-nodes.
\end{cor}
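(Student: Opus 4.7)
The plan is to deduce this directly from the three ingredients listed in the text: the explicit action of $f_i$ on the Fock space $\mathcal{F}(\kappa)$ in $\eqref{Eq: def of Fock sp}$, the categorification dictionary $\eqref{Eq: categorification}$ relating $f_i$ to the functor $F_i^\Lambda$ up to a grading shift, and \cref{Prop: categorification of pk} which identifies $p_\kappa(\la)$ with $[\Sp^\la]$.

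First I would apply $p_\kappa$ to the identity $f_i \la = \sum_b q^{-d^b(\la)}\,\la \swarrow b$, where $b$ ranges over addable $i$-nodes of $\la$. Since $p_\kappa$ is a $U_q(\g(\cmA))$-module homomorphism and since $p_\kappa(\mu) = [\Sp^\mu]$ for every multipartition $\mu$ by \cref{Prop: categorification of pk}, this yields
\[
f_i\,[\Sp^\la] \;=\; p_\kappa(f_i \la) \;=\; \sum_{b} q^{-d^b(\la)}\,[\Sp^{\la \swarrow b}]
\]
inside $\Q(q)\otimes_\A [R^\Lambda\gmod]$, where the action of $f_i$ on the right is the one induced by the identification $V_q(\Lambda)\simeq \Q(q)\otimes_\A [R^\Lambda\gmod]$.

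Next I would unfold that action using $\eqref{Eq: categorification}$. The lower row of the diagram says that on the graded module category, $f_i$ acts as the functor $q^{(1-\langle \alpha_i^\vee, \Lambda-\beta\rangle)(\alpha_i,\alpha_i)/2}\,F_i^\Lambda$ when applied to a class supported on the $\beta$-block. Since $\Sp^\la$ sits in the $\beta = \cont(\la)$ block, we obtain
\[
f_i\,[\Sp^\la] \;=\; q^{(1-\langle \alpha_i^\vee,\, \Lambda-\beta\rangle)(\alpha_i,\alpha_i)/2}\,[F_i^\Lambda \Sp^\la].
\]
Multiplying by the inverse shift and comparing with the formula of the previous step gives
\[
[F_i^\Lambda \Sp^\la] \;=\; \sum_{b} q^{-d^b(\la) + (\langle \alpha_i^\vee,\, \Lambda-\beta\rangle - 1)(\alpha_i,\alpha_i)/2}\,[\Sp^{\la \swarrow b}],
\]
which is exactly the assertion.

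There is no serious obstacle here: the entire content is bookkeeping of the grading shift between the categorical $F_i^\Lambda$ and the algebraic $f_i$. The only thing to double-check is that $p_\kappa$ really does intertwine $f_i$ with the action coming from $\eqref{Eq: categorification}$ (it does, by construction of $p_\kappa$ and \cref{Thm: categorification thm}), and that the shift exponent is the one recorded in the bottom row of $\eqref{Eq: categorification}$ rather than the top row; this is forced on us because $[\Sp^\la]$ lives in $[R^\Lambda\gmod]$, not in $[R^\Lambda\proj]$.
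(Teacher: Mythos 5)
Your proposal is correct and is exactly the argument the paper has in mind: the paper states the corollary follows from $\eqref{Eq: def of Fock sp}$, $\eqref{Eq: categorification}$ and \cref{Prop: categorification of pk}, and you have simply written out that bookkeeping in full, correctly tracking the grading shift from the bottom row of $\eqref{Eq: categorification}$.
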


\begin{ex}
We use the same notation as in Example \ref{Ex: 1}. Let $ \mu=(2,2)$, $\mu_1 = (3,2)$ and $\mu_2 = (2,2,1)$. 
By \cref{Thm: Specht modules}, we have
\begin{align*}
\ch_q \Sp^\la &= [2]_q(012102) + [2]_q(012120) + [2]_q^2(011202) + [2]_q^2(011220) + [2]_q^2(011022), \\ 
\ch_q \Sp^{\mu} &= [2]_q(0110), \\ 
\ch_q \Sp^{\mu_1} &= q(01210) + q[2]_q(01120) + q[2]_q(01102), \\ 
\ch_q \Sp^{\mu_2} &= (01210) + [2]_q(01120) + [2]_q(01102),
\end{align*}
where $[n]_q := \frac{q^n - q^{-n}}{q - q^{-1}}$ for $n\in \Z_{\ge0}$.
Let $B_1 = (1,3)$ and $B_2 = (3,1)$. Then  
\begin{align*}
d_{B_1}(\la) = -1,  \quad d_{B_2}(\la) = 0, \quad d^{B_1}(\mu) = 0, \quad d^{B_2}(\mu) = 1, \quad  
\langle \alpha_2^\vee,  \Lambda_0 - 2\alpha_0 - 2\alpha_1 \rangle = 2.
\end{align*}
By \cref{Cor: Specht 1} and \cref{Cor: F_i Sp}, we have 
\begin{align*}
\ch_q E^{\Lambda_0}_2 \Sp^\la &  = \ch_q \Sp^{\mu_1}  + q^{-1} \ch_q \Sp^{\mu_2} = [2]_q(01210) +  [2]_q^2(01120) + [2]_q^2(01102), \\
\ch_q F^{\Lambda_0}_2 \Sp^\mu & = q \ch_q \Sp^{\mu_1}  + \ch_q  \Sp^{\mu_2} =  q[2]_q(01210) + q[2]_q^2(01120) + q[2]_q^2(01102) .
\end{align*}

\end{ex}

\begin{Rmk} 
It looks like we need a modified version of the upper global basis in the Fock space to describe the simple modules.
It is an interesting problem to characterise the elements in the Fock space which correspond to the simple modules.
\end{Rmk}

\section{Proof of \cref{Thm: Specht modules}} \label{Sec: proof of the main thm}

We assume that the Cartan matrix $\cmA$ is of type $C_\infty$ and take the parameters $\eqref{Qij for C_infty}$ for the quiver Hecke algebra $R(\beta)$.
Let us fix $\la \in \mathscr{P}^1_n$, $\kappa \in \Z$ and $\beta = \cont(\la)$.

\begin{defn}
For $t \in \Z_{>0}$, we define
\begin{align*}
\Ga^{\lambda}_{ < t} := & \text{ $\bR$-submodule of $\Ga^{\lambda}$ generated by $\psi_{w} g^\lambda_A$ for all Garnir nodes $A \in [\lambda]$ and}\\
& \text{all $w \in \sg_n$ such that $ w \ttg^A \in \rT \la$ and $ \ell( w \ttg^A ) = \ell(w) + \ell(\ttg^A) < t$.}
\end{align*}
\end{defn}

\begin{Rmk}
Note that we require $ w \ttg^A \in \rT \la$ in the definition of $\Ga^{\lambda}_{ < t}$. In Theorem \ref{thm: basis of g^la} below, 
we will eliminate the possibility that $\Ga^{\lambda}$ is strictly larger than $\sum_{t\in \Z_{>0}} \Ga^{\lambda}_{ < t}$.
\end{Rmk}

\cref{lem: row-strict,lem: T and G A} are needed for proving Lemma \ref{lem: T and G AB}.

\begin{lem} \label{lem: row-strict}
Let $\ttt \in \rST \la $.
\begin{enumerate}
\item[(1)] If
\begin{enumerate}
\item[(i)] $\la = (\la_1, \la_2)$ with $\la_2 > 0$,
\item [(ii)] $\res{ \ttt^{-1}(n) } = \overline{\kappa + \la_1 - 1}$ and $ \res{ \ttt^{-1}(n-1) } = \overline{\kappa + \la_1 - 2}$,
\end{enumerate}
then $\ttt(1, \lambda_1) = n$.
\item[(2)]
If
\begin{enumerate}
\item[(i)] $\la = (\la_1, \la_2, \la_3)$ with $\la_3 > 0$,
\item [(ii)] $ \res{ \ttt^{-1}(n) } = \overline{\kappa + \la_1 - 1}$, $ \res{ \ttt^{-1}(n-1) } = \overline{\kappa + \la_1 - 2}$ and
$\res{ \ttt^{-1}(n-2) } = \overline{\kappa + \la_1 - 3}$,
\item[(iii)] $\res{3,\la_3} \ne \res{1,\la_2}$,
\end{enumerate}
then $\ttt(1, \lambda_1) = n$.
\end{enumerate}
\end{lem}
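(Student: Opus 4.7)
The strategy is proof by contradiction via case analysis on the row containing $n$. Since $\ttt$ is row-strict, the entry $n$ lies at the right end of some row, so $\ttt^{-1}(n) = (r, \la_r)$ for some $r$. Assuming $\ttt(1, \la_1) \ne n$, I would show that the residue conditions in hypotheses~(ii) (and (iii) for part~(2)) force an inconsistent system of linear equations in $\kappa, \la_1, \la_2, \la_3$.

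The arithmetic driving the argument is that in type $C_\infty$ one has $\overline{a} = \overline{b}$ if and only if $a = \pm b$, so each residue equality imposes one of (at most) two linear relations on $\kappa, \la_1, \la_2, \la_3$. Invariably the \emph{plus} branch is incompatible with $\la_1 \ge \la_2 \ge \la_3$, so only the \emph{minus} branch survives, and the resulting equations are straightforward to combine.

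For part~(1), the only alternative to $\ttt^{-1}(n) = (1, \la_1)$ is $\ttt^{-1}(n) = (2, \la_2)$, which by hypothesis~(ii) on $n$ yields $2\kappa + \la_1 + \la_2 = 3$. The position of $n-1$ is then either $(1, \la_1)$, producing $|\kappa + \la_1 - 1| = |\kappa + \la_1 - 2|$ (no integer solution), or $(2, \la_2 - 1)$, producing $2\kappa + \la_1 + \la_2 = 5$, contradicting the previous equation.

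Part~(2) follows the same template but with more branches. The case $\ttt^{-1}(n) = (2, \la_2)$ is handled as in part~(1), with one additional subcase $\ttt^{-1}(n-1) = (3, \la_3)$; this yields $2\kappa + \la_1 + \la_3 = 5$, which combined with $2\kappa + \la_1 + \la_2 = 3$ forces the impossible $\la_3 - \la_2 = 2$. The more delicate case is $\ttt^{-1}(n) = (3, \la_3)$: here hypothesis~(ii) gives $2\kappa + \la_1 + \la_3 = 4$, and hypothesis~(iii) supplies the constraints $\la_1 \ne \la_2$ and $2\kappa + \la_1 + \la_2 \ne 2$. The position of $n-1$ is then pinned down to $(2, \la_2)$ with $2\kappa + \la_1 + \la_2 = 4$, forcing $\la_2 = \la_3$. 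Each of the three candidate positions for $n-2$, namely $(1, \la_1)$, $(2, \la_2 - 1)$, or $(3, \la_3 - 1)$, then yields either $\la_1 = \la_2$ (contradicting~(iii)) or an equation incompatible with $2\kappa + \la_1 + \la_2 = 4$ or $2\kappa + \la_1 + \la_3 = 4$; boundary cases $\la_2 = 1$ or $\la_3 = 1$ simply remove subcases and do not affect the conclusion.

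The main obstacle is bookkeeping rather than depth: the case analysis has many branches, and one must track carefully where each hypothesis is used. Hypothesis~(iii) is needed precisely once, to exclude the configuration $\la_1 = \la_2$ arising when $\ttt^{-1}(n) = (3, \la_3)$; without it, the equations from~(ii) alone would remain consistent and the argument would fail.
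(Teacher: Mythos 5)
Your proposal is correct and follows essentially the same route as the paper: case analysis on the position of $n$ (then $n-1$, $n-2$), exploiting the fact that in type $C_\infty$ the residue map is $k \mapsto |k|$ to obtain arithmetic constraints on $\kappa,\la_1,\la_2,\la_3$. The only cosmetic difference is that the paper tracks the sign of each quantity $\kappa + c - r$ directly to read off residues, whereas you recast every residue equality as a choice of two linear equations and derive inconsistency; also your introductory remark that the plus branch is ``invariably'' ruled out by $\la_1\ge\la_2\ge\la_3$ is slightly loose (in the $\ttt^{-1}(n)=(3,\la_3)$ case it yields $\la_1=\la_2$, which needs hypothesis (iii) rather than the ordering), but your detailed analysis handles this correctly.
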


\begin{proof}
(1) As $\ttt \in \rST \la $, $\ttt^{-1}(n)$ = $(1, \lambda_1)$ or $(2, \lambda_2)$. If $\kappa + \la_1 - 1 \leq0$ then $(1,\la_1)$ is the only node of residue $\overline{\kappa + \la_1 - 1}$, 
so that  $ \ttt(1, \la_1) = n$. Suppose that $\kappa + \la_1 - 1 >  0$ and  $\ttt^{-1}(n)$ = $(2, \la_2)$. Then 
the assumption (ii) implies that
\[
\kappa + \la_2 -2 = - (\kappa + \la_1 -1) < 0
\]
and $\ttt^{-1}(n-1)$ = $(1, \la_1)$ or $(2, \la_2-1)$. Thus, $\res{\ttt^{-1}(n-1) }$ is either $\kappa + \la_1 -1$ or $\kappa + \la_1$, which are not equal to 
$\overline{\kappa + \la_1 - 2}=\kappa + \la_1 -2$.

(2) As $\ttt \in \rST \la $, we know that $\ttt^{-1}(n)$ = $(1, \lambda_1)$, $(2, \lambda_2)$ or $(3, \lambda_3)$.
Further, by the same reasoning as in (1), $\kappa + \la_1 - 1 >  0$ and $\ttt^{-1}(n-1) \ne (1,\la_1)$ hold if $\ttt^{-1}(n)$ = $(2, \lambda_2)$ or $(3, \lambda_3)$. 

Suppose that $\ttt^{-1}(n) = (2, \la_2)$. Then
\[
\kappa + \la_2 -2 = - (\kappa + \la_1 -1) < 0
\]
and $\ttt^{-1}(n-1)$ = $(2, \la_2 - 1)$ or $(3, \la_3)$. Thus, $\res{\ttt^{-1}(n-1) } \ge \kappa + \la_1$, which is not equal to 
$\overline{\kappa + \la_1 - 2}=\kappa + \la_1 -2$.

Now suppose that $\ttt^{-1}(n) = (3, \lambda_3)$. Then
\[
\kappa + \la_3 -3 = - (\kappa + \la_1 -1) < 0
\]
and  $\ttt^{-1}(n-1)$ = $(2, \la_2)$ or $(3, \la_3-1)$. Then $\res{ \ttt^{-1}(n-1) }$ = $\kappa + \la_1 -2 < \res{ \ttt^{-1}(n)}$ implies that  $\ttt^{-1}(n-1)$ = $(2, \la_2)$.
In particular, $\ttt^{-1}(n-2)$ = $(1,\la_1)$, $(2,\la_2-1)$ or $(3,\la_3-1)$.

If $\kappa + \la_1 -1 =1$, then $\res{ \ttt^{-1}(n-1)}=0 $ by condition (ii). It follows that $\kappa + \la_2 -2 = 0$ and $\kappa + \la_3 -3 = -1$, so that $\la_1 = \la_2 = \la_3$. But this contradicts condition (iii).

If $\kappa + \la_1 - 1 \ge 2$, then, since $\res{ \ttt^{-1}(n-1)}=\kappa + \la_1 -2$, we have one of the following. 
\begin{itemize}
\item[(a)]
$\kappa + \la_2 -2 = \kappa + \la_1 -2 > 0$ and $\la_1 = \la_2$. 
\item[(b)]
$\kappa + \la_2 -2 = -(\kappa + \la_1 -2) < 0$ and $\la_3=\la_2$.
\end{itemize}
Suppose that we are in case (a). Then
\[
\res{ 1, \la_2} = \res{ 2, \la_2} + 1 = \res{ \ttt^{-1}(n-1)} + 1 = \kappa + \la_1 -1 = \res{ \ttt^{-1}(n)} = \res{3, \la_3},
\]
which contradicts condition (iii). Suppose that we are in case (b). Then none of  $(1,\la_1)$, $(2,\la_2-1)$ or $(3,\la_3-1)$ has residue $\kappa + \la_1 -3$.\qedhere
\end{proof}

\begin{lem} \label{lem: T and G A}
Let $A=(r,c)$ be a Garnir node of $[\la]$. Then
there is no tableau $\ttt \in \rST \la $ such that
\[
\ttt \rhd \ttg^{A}  \ \text{ and } \ \res \ttt = \res{\ttg^{A}}.
\]
\end{lem}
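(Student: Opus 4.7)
The plan is to argue by contradiction. Suppose $\ttt \in \rST{\la}$ satisfies $\res{\ttt} = \res{\ttg^A}$ and $\ttt \rhd \ttg^A$. I will prove by strong induction on $m \in \{1, \dots, n\}$ that $\ttt^{-1}(m) = (\ttg^A)^{-1}(m)$, which forces $\ttt = \ttg^A$ and contradicts strictness.

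For the inductive step, assume agreement on entries $1, \dots, m-1$, so $\shp{\ttt}{m-1} = \shp{\ttg^A}{m-1}$, and write $(r_0, c_0) = (\ttg^A)^{-1}(m)$. Since adding a cell to a lower row weakens dominance (the relevant partial sums go up by $1$ starting at the row where the cell is added), the condition $\shp{\ttt}{m} \dom \shp{\ttg^A}{m}$ forces the row $r'$ of $\ttt^{-1}(m)$ to satisfy $r' \le r_0$; row-strictness further forces its column to be the leftmost empty column of row $r'$ (otherwise some cell strictly to its left would need a later, larger entry, violating row-strictness). Using the explicit filling order of $\ttg^A$ — standard outside the belt $\belt^A$, and bottom-left to top-right inside it — one checks that at step $m-1$ there are at most two rows of index $\le r_0$ whose leftmost empty cell lies in $[\la]$, namely $r_0$ itself and row $r$ in the case $r_0 = r+1$. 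Thus $\ttt^{-1}(m)$ has at most two candidates: $(r_0, c_0)$ and, when $m$ is a belt-entry with $r_0 = r+1$, the alternative $(r, c)$.

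The residue condition $\res{\ttt^{-1}(m)} = \res{(\ttg^A)^{-1}(m)}$ then eliminates the alternative. In type $C_\infty$ the fold $\overline{k} = |k|$ implies that a given residue is realised on at most two diagonals of $[\la]$, namely $j - i = d$ and $j - i = -d - 2\kappa$, so the alternative $(r, c)$ shares the residue of $(r_0, c_0) = (r+1, m-u+1)$ only in the extremal configurations $c = 2(r-\kappa)$ at $m = u$ (and its analogues $m - u = 2(r-\kappa) - c$ for later belt entries). Outside these configurations the residue constraint immediately excludes $(r, c)$. Inside them, a direct inspection at step $m+1$ shows that neither of the (at most two) row-strict, dominance-compatible placements of $\ttt^{-1}(m+1)$ realises the required residue $\overline{\kappa + (m+1-u) - r}$ from the consecutive sequence $\overline{\kappa - r}, \overline{\kappa - r + 1}, \dots, \overline{\kappa + \la_r - r}$ prescribed by $\ttg^A$, yielding the desired contradiction.

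The main obstacle is the bookkeeping inside $\belt^A$, in particular at the transition between rows $r+1$ and $r$ where the filling pattern of $\ttg^A$ changes direction; here one must track the one-step-ahead residues carefully to close the extremal-case argument. Once this propagation is verified, the induction forces $\ttt = \ttg^A$, contradicting $\ttt \rhd \ttg^A$.
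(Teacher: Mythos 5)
Your argument is correct but takes a genuinely different route from the paper. The paper's proof works \emph{top-down}: after using \cref{shpdom} to fix the first $c-1$ entries, it reduces to the two-row level-one case $\la=(\la_1,c)$, invokes \cref{lem: row-strict}(1) to pin the entry $n$ to the node $(1,\la_1)$, and strips it off by induction on $\la_1-c$. You instead work \emph{bottom-up}: at each $m$ you use the inductive hypothesis $\shp\ttt{m-1}=\shp{\ttg^A}{m-1}$ together with dominance and row-strictness to cut the candidates for $\ttt^{-1}(m)$ down to at most two cells (the correct one and, inside row $r+1$ of the belt, the rival $(r,c)$), and then the $C_\infty$ fold $\overline{k}=|k|$ kills the rival except at the single index $m_0$ with $m_0-u=2(r-\kappa)-c$, where you settle it with a one-step-ahead look. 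I checked that lookahead: after the wrong placement $\ttt^{-1}(m_0)=(r,c)$, the dominance-compatible, leftmost-empty candidates for $m_0+1$ are $(r,c+1)$ and $(r+1,m_0-u+1)$, with residues $y+1$ and $y$ respectively where $y=\kappa+c-r>0$, while $\ttg^A$ demands residue $|1-y|$; since $y>0$ neither matches, so the branch dies. Both approaches exploit the folded residue structure; the paper's is shorter because it offloads the residue analysis to \cref{lem: row-strict}, whereas yours is more self-contained but requires the explicit extremal-case bookkeeping, which you should spell out rather than leave as ``a direct inspection shows'' — that phrase is currently a placeholder for the only non-routine part of the argument.
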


\begin{proof}

We take $\ttt \in \rST \la$ such that
\[
\ttt \dom \ttg^{A}  \ \text{ and } \ \res \ttt = \res{\ttg^{A}}.
\]
Note that Lemma \ref{shpdom} implies that
\begin{align} \label{eq: seq1}
\ttt(1,k)=k \qquad \text{for $k=1,2, \ldots, c-1$}.
\end{align}
Thus, we may assume that $\la = (\la_1, c)$ and $r=1$.
As $\res \ttt = \res{\ttg^{A}}$, we have $\ttt(1, \la_1) = n$ by Lemma \ref{lem: row-strict}(1).

If $\la_1 = c$, then we have $\ttt = \ttg^{A} $ by $\eqref{eq: seq1}$.

If $\la_1 > c$, then we have
\[
\tabupto\ttt {n-1} \dom \tabupto{\ttg^{A}} {n-1} \ \text{ and } \ \res {\tabupto\ttt {n-1}} = \res{\tabupto{\ttg^{A}} {n-1}},
\]
which, by induction on $\la_1 - c$, implies that $\tabupto\ttt {n-1} = \tabupto{\ttg^{A}} {n-1}$. Therefore, we have $\ttt = \ttg^{A}$.
\end{proof}

\begin{lem} \label{lem: T and G AB}
Let $A=(r,c)$ and $B=(r',c')$ be Garnir nodes of $[\la]$ with $ c \le c'$.
Suppose that either $(r \ne r'+1)$ or $(r=r'+1$ and $\res B \ne \res{r+1,c} )$.
Then
there is no tableau $\ttt \in \rST \la$ such that
\[
\ttt \rhd \ttg^{A,B}  \ \text{ and } \ \res \ttt = \res{\ttg^{A,B}}.
\]
\end{lem}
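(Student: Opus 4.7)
The plan is to adapt the inductive dominance argument from the proof of \cref{lem: T and G A}, handling each of the three configurations of \cref{gengar} in turn. In every case I will first use \cref{shpdom} to strip off rows of $\la$ that are disjoint from $\belt^{A,B}$: if $\ttt \dom \ttg^{A,B}$ and $\ttt$ is row-strict, then the entries of $\ttt$ in any row of $\la$ outside the generalised Garnir belt must coincide with those of $\ttt^\la$, so the problem collapses to the subtableau supported on the union of rows meeting $\belt^{A,B}$.

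Once this reduction is made, the argument in case (1) of \cref{gengar} is almost immediate: the belts $\belt^A$ and $\belt^B$ are disjoint, so \cref{shpdom} decouples the problem and \cref{lem: T and G A} applied separately to the subtableaux inside each belt forces $\ttt = \ttg^{A,B}$. In case (2), where $A=(r,c)$ and $B=(r,c')$ both lie in row $r$, I would iterate: \cref{lem: row-strict}(1) applied to the two-row shape $(\la_r, \la_{r+1})$ forces the node carrying the largest entry of $\ttt$ to match that of $\ttg^{A,B}$, and then $\tabupto{\ttt}{n-1}\dom\tabupto{\ttg^{A,B}}{n-1}$ with the same residue sequence allows the induction to continue until the tableau is completely determined.

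Case (3), where $B=(r-1,c')$ so that three consecutive rows are involved, is the delicate one, and it is precisely here that the hypothesis $\res B \ne \res{(r+1,c)}$ is used. After the reduction above, we may assume $\la$ consists only of the three rows $r-1,r,r+1$. A direct computation shows that the residues of the last three entries of $\ttg^{A,B}$ are $\overline{\kappa+\la_{r-1}-1}$, $\overline{\kappa+\la_{r-1}-2}$, $\overline{\kappa+\la_{r-1}-3}$, and that the inequality $\res B \ne \res{(r+1,c)}$ translates, after relabelling the rows $r-1,r,r+1$ as $1,2,3$, into precisely the hypothesis $\res{(3,\la_3)}\ne\res{(1,\la_2)}$ of \cref{lem: row-strict}(2). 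The conclusion of that lemma forces $\ttt^{-1}(n)=(1,\la_1)$, after which passing to $\tabupto{\ttt}{n-1}$ and $\tabupto{\ttg^{A,B}}{n-1}$ reduces $n$ and allows induction.

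The principal obstacle will be the bookkeeping in case (3): one must verify that after stripping the top-right node, the truncated tableau $\tabupto{\ttg^{A,B}}{n-1}$ still has the shape of a generalised Garnir tableau (possibly with a smaller, degenerate Garnir pair, or collapsing to a two-row configuration that invokes \cref{lem: T and G A} or \cref{lem: row-strict}(1) instead of part~(2)), and that the residue condition $\res B\ne\res{(r+1,c)}$ persists throughout the iteration. Careful tracking of when the three-row shape degenerates to a two-row shape is the main technical subtlety; everything else is a routine extension of the dominance bookkeeping already employed in \cref{lem: T and G A}.
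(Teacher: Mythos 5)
Your reduction via \cref{shpdom} and the treatment of cases (1) and (2) of \cref{gengar} track the paper's argument closely, and case (1) is indeed immediate. The real problem is case (3), which you correctly flag as the delicate one but then handle with a step that does not actually go through.

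You assert that, after reduction, ``the residues of the last three entries of $\ttg^{A,B}$ are $\overline{\kappa+\la_{r-1}-1}$, $\overline{\kappa+\la_{r-1}-2}$, $\overline{\kappa+\la_{r-1}-3}$'' so that hypothesis (ii) of \cref{lem: row-strict}(2) is met and $\ttt^{-1}(n)=(1,\la_1)$ is forced. This is only true when $\la_2=c'$, i.e.\ when there are no nodes of row $2$ strictly to the right of $\belt^{A,B}$. If $\la_2>c'$, the entries of the nodes $(2,c'+1),\dots,(2,\la_2)$ come \emph{after} all entries of the belt in the construction of \cref{gengar}(3), so the largest entry $n$ of $\ttg^{A,B}$ sits at $(2,\la_2)$, not at $(1,\la_1)$; its residue is $\overline{\kappa+\la_2-2}$, and hypothesis (ii) of \cref{lem: row-strict}(2) simply fails. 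For the same reason your translation of $\res B\ne\res{(r+1,c)}$ into $\res{(3,\la_3)}\ne\res{(1,\la_2)}$ is only valid when $c'=\la_2$: the assumption gives $\res{(1,c')}\ne\res{(3,c)}$, and $\res{(1,c')}=\res{(1,\la_2)}$ exactly when $\la_2=c'$. So as written, the induction has no valid starting point whenever $\la_2>c'$, and the plan breaks exactly at the ``bookkeeping'' step you identified but did not carry out.

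The paper resolves this with a nested induction on $\la_2-c'$. \cref{lem: row-strict}(2) is invoked only in the base case $\la_2=c'$ (where, if additionally $\la_1=c'$, one passes to a two-row subtableau and applies \cref{lem: T and G A}; if $\la_1>c'$, one strips $(1,\la_1)$ and inducts on $\la_1-c'$). When $\la_2>c'$, the paper argues \emph{directly} that $\ttt^{-1}(n)$ must be $(2,\la_2)$ rather than $(3,c)$, by a short residue contradiction: if $\ttt^{-1}(n)=(3,c)$ then the residue equality forces $\kappa+\la_2-2>0$ and $\kappa+c-3<0$, and then one shows (separately for $\la_2=c'+1$ and $\la_2>c'+1$) that no admissible node can carry $n-1$ with the correct residue. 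Only after this does one strip $(2,\la_2)$ and descend. You would need to supply this separate argument for $\la_2>c'$ to close the gap.
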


\begin{proof}
Let $\ttt \in \rST \la$ such that
\[
\ttt \dom \ttg^{A,B}  \ \text{ and } \ \res \ttt = \res{\ttg^{A,B}},
\]
and $\belt^A$ and $\belt^B$ the Garnir belts corresponding to $A$ and $B$ respectively.
If $\belt^A \cap \belt^B = \emptyset$, then we may argue as in the proof of \cref{lem: T and G A} to see that $\ttt = \ttg^{A,B}$.
So we assume that $\belt^A \cap \belt^B \ne \emptyset$. Then, we have two cases -- either $r = r'$, or $r = r'+1$.

First suppose that $r = r'$. By \cref{shpdom}, we may assume that $\la = (\la_1, c')$, $c \ne c'$ and $r=r'=1$.
Since $\res \ttt = \res{\ttg^{A,B}}$, we have $\ttt(1, \la_1) = n$ by Lemma \ref{lem: row-strict}(1).

Suppose that $\la_1 = c'$ and consider the condition
\[
\tabupto\ttt {n-p} \dom \tabupto{\ttg^{A,B}} {n-p} \ \text{ and } \ \res {\tabupto\ttt {n-p}} = \res{\tabupto{\ttg^{A,B}} {n-p}},
\]
where $p = c'-c+1$. As $\tabupto{\ttg^{A,B}} {n-p}$ is a Garnir tableau with Garnir node $A$, we conclude that $\tabupto\ttt {n-p} = \tabupto{\ttg^{A,B}} {n-p}$ by \cref{lem: T and G A}. Since $\ttt\in \rST \la$ and $\ttt^{-1}(n)=(1,\la_1)$, it follows that the entries $n-p+1,\dots,n-1$ must appear in the nodes $(2,c+1), \dots, (2,c')$ respectively, and thus $\ttt = \ttg^{A,B}$.

If $\la_1 > c'$, then
\[
\tabupto\ttt {n-1} \dom \tabupto{\ttg^{A,B}} {n-1} \ \text{ and } \ \res {\tabupto\ttt {n-1}} = \res{\tabupto{\ttg^{A,B}} {n-1}},
\]
which, by induction on $\la_1 - c'$, implies that $\tabupto\ttt {n-1} = \tabupto{\ttg^{A,B}} {n-1}$. Thus we have $\ttt = \ttg^{A,B}$.

Next, suppose that $r = r'+1$. By \cref{shpdom}, we may assume that $\la = (\la_1, \la_2, c)$ and $r'=1$.
Note that $\res{1,c'} \ne \res{3,c}$ by our assumption and $\ttt(1,k) = k$ for $k=1,2,\ldots, c'-1$ by \cref{shpdom}. We now proceed by induction on $\la_2 - c'$.

First, suppose that $\la_2 = c'$. We must have $\ttt(1,\la_1) = n$, by \cref{lem: row-strict}(2).

If $\la_1 = c'$, then we define row-strict tableaux $\ttt'$ and $\ttg'$ of shape $(\la_2, c)$ by
\begin{align*}
\ttt'(i,j) = \ttt(i+1,j) - c' + 1, \qquad \ttg'(i,j) = \ttg^{A,B}(i+1,j) - c' + 1.
\end{align*}
Then $\ttg'$ becomes a Garnir tableau with Garnir node $A$ and
\[
\ttt' \dom \ttg'  \ \text{ and } \ \res {\ttt'} = \res{\ttg'},
\]
which implies that $\ttt' = \ttg'$ by Lemma \ref{lem: T and G A}. Thus we have $\ttt = \ttg^{A,B}$.

If $\la_1 > c'$, then
\[
\tabupto\ttt {n-1} \dom \tabupto{\ttg^{A,B}} {n-1} \ \text{ and } \ \res {\tabupto\ttt {n-1}} = \res{\tabupto{\ttg^{A,B}} {n-1}},
\]
which implies that $\tabupto\ttt {n-1} = \tabupto{\ttg^{A,B}} {n-1}$ by induction on $\la_1 - c'$. Thus we have $\ttt = \ttg^{A,B}$.

Now suppose that $\la_2 > c'$. 
Then $n$ is located in the node $(2,\la_2)$ in $\ttg^{A,B}$. Thus
$\ttt^{-1}(n)$ = $(2, \la_2)$ or $(3, c)$ since $\ttt \dom \ttg^{A,B}$. Suppose that $\ttt^{-1}(n) =(3, c)$.
Since $\res{\ttt}=\res{\ttg^{A,B}}$, we have $\res{3,c} = \res{2,\la_2}$ and therefore $\kappa + \la_2 -2 > 0$ and $\kappa + c - 3 < 0$.

If $\la_2 = c'+1$, then $\res{3,c} = \res{2,\la_2} = \res{1,c'}$ which is a contradiction.

If $\la_2 > c'+1$, then we have  $\res{\ttt^{-1}(n-1)} = \kappa + \la_2 - 3  = \res{\ttt^{-1}(n)} -1$.
Then $\kappa + c - 3 <0$ implies that $\ttt^{-1}(n-1)$ cannot be in the third row, so that $\ttt^{-1}(n-1)$ = $(1,\la_1)$ or $(2,\la_2)$.
But then $\res{ \ttt^{-1}(n-1)} \ne \kappa + \la_2 -3$, another contradiction.
Therefore we must have $\ttt^{-1}(n) = (2, \la_2)$.

Thus we have
\[
\tabupto\ttt {n-1} \dom \tabupto{\ttg^{A,B}} {n-1} \ \text{ and } \ \res {\tabupto\ttt {n-1}} = \res{\tabupto{\ttg^{A,B}} {n-1}},
\]
which implies, by induction on $\la_2 - c'$, that $\tabupto\ttt {n-1} = \tabupto{\ttg^{A,B}} {n-1}$.
We conclude that $\ttt = \ttg^{A,B}$.
\end{proof}

\subsection{A lemma for block braid relations}

\begin{lem} \label{Lem: G^AB 1}
Let $\nu=\nu^1*\nu^2*\nu^3$ where $a_1,a_3\ge1$ and
\begin{align*}
\nu^1&=(i+1,i+2,\dots,i+a_1),\\
\nu^2&=(i,i-1,\dots,1,0,1,\dots,i-1,i),\\
\nu^3&=(i+a_3,i+a_3-1,\dots,i+1),
\end{align*}
for some $i\ge0$. We set $a_2=2i+1$ and $\underline{a}=(1,a_1-1,a_2,a_3-1,1)$. Then
\[
\Psi_2\Psi_1\Psi_2(a_1,a_2,a_3)e(\nu)-\Psi_1\Psi_2\Psi_1(a_1,a_2,a_3)e(\nu)
\]
is given as follows. 
\begin{enumerate}
\item
Suppose $i\ne0$. Then it is equal to
\[
\Psi_1\Psi_4\Psi_2\Psi_3\Psi_2(\underline{a})(x_1+x_{a_1+1}+x_{a_1+a_2}+x_{a_1+a_2+a_3})e(\nu).
\]
\item
Suppose $i=0$. Then it is equal to
$\Psi_1\Psi_4\Psi_2\Psi_3\Psi_2(\underline{a})(x_1+x_{a_1+a_3+1})e(\nu)$.
\end{enumerate}
\end{lem}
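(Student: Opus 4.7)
\medskip

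The starting point is \cref{commutation formula2}, which expresses the left-hand side as
\[
\sum_{m=1}^{a_2} \Psi_2(a_3+m,a_1,a_2-m)\,\bigl(\Psi_2(a_3+m-1,a_1,1)\Psi[a_1+a_2,a_3]-\Psi[a_1+a_2,a_3]\Psi_2(m-1,a_1,1)\bigr)\Psi[a_1,m-1]e(\nu),
\]
and then applies \cref{commutation formula1} to the bracketed inner difference after substituting $\nu \leadsto w[a_1,m-1]\nu$. The first step of the proof would be to spell out $\nu':=w[a_1,m-1]\nu$ explicitly: positions $1,\dots,m-1$ hold the initial segment of $\nu^2$, positions $m,\dots,m+a_1-1$ hold all of $\nu^1$, and positions $\geqslant m+a_1$ are unchanged. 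Consequently, the triple $(\nu'_{m+s-1},\nu'_{m+a_1},\nu'_{a_1+a_2+t})$ controlling $c_{s,t}$ equals $(i+s,\; \nu^2_m,\; i+a_3-t+1)$, with $\nu^2_m=|m-(i+1)|$.

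I would then run a case analysis against the three admissible residue patterns of \cref{failure of braid rels}. Since $i+s\geq i+1$ and $\nu^2_m\leq i$, the inequality $\nu^2_m=i+s+1$ forces the pattern $(j,j{+}1,j)$ to contribute nothing, while $(j,j{-}1,j)$ with $j\neq 0$ only admits $(s,m)=(1,1)$ and $(s,m)=(1,2i+1)$, with $t=a_3$, and is realised exactly when $i\neq 0$; the pattern $(1,0,1)$ only occurs when $i=0$, and then forces $(s,t,m)=(1,a_3,1)$ with $c_{s,t}=x_{a_3}+x_{a_3+2}$. Substituting these non-vanishing indices back into the formula from \cref{commutation formula1} reduces the sum to at most two terms, each carrying an outer $\Psi$ prefactor that telescopes the $m$-sum by the factorisations $\Psi_2(c,a,b)=\Psi_2(c+b-1,a,1)\cdots\Psi_2(c,a,1)$ and the analogous decompositions listed after \cref{square of block transposition}.

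The remaining task is repackaging the surviving terms into the block expression $\Psi_1\Psi_4\Psi_2\Psi_3\Psi_2(\underline{a})$. Using \cref{two line notation for block permutation} together with \cref{cor: reduced S_i}, one checks that this block permutation is precisely the one obtained by peeling off the first entry of $\nu^1$ and the last entry of $\nu^3$ and then performing the $a_1a_3$-swap across the middle. The coefficients $x_1$, $x_{a_1+1}$, $x_{a_1+a_2}$, $x_{a_1+a_2+a_3}$ (respectively $x_1$, $x_{a_1+a_3+1}$ when $i=0$) then emerge by pushing the $c_{s,t}$-error factors through the outer $\Psi$-blocks with the defining relations
\[
x_r\psi_r e(\mu)=(\psi_r x_{r+1}-\delta_{\mu_r,\mu_{r+1}})e(\mu),\qquad x_{r+1}\psi_r e(\mu)=(\psi_r x_r+\delta_{\mu_r,\mu_{r+1}})e(\mu),
\]
and observing that the $\delta$-corrections generated along the way vanish against the residue structure of $\nu$ away from the four (resp.~two) distinguished boundary positions.

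The principal obstacle is bookkeeping: one must verify that the $x$-propagations through the five block factors of $\Psi_1\Psi_4\Psi_2\Psi_3\Psi_2(\underline{a})$ produce precisely the listed $x_j$ terms and that every $\delta$-type correction either annihilates $e(\nu)$ or cancels against its pair in the complementary triple (especially in the $i\neq 0$ case, where the two contributing triples $(1,a_3,1)$ and $(1,a_3,2i+1)$ must conspire). This amounts to a careful but elementary verification once the analysis of contributing triples above is in hand.
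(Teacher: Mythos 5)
Your opening steps match the paper exactly: both start from \cref{commutation formula2}, apply \cref{commutation formula1} to the bracketed inner difference, and then classify the non-vanishing $(s,m,t)$ triples. Your case analysis of those triples is correct (apart from the slip writing ``$j\neq 0$'' where the constraint from \cref{failure of braid rels} is that the middle entry $j-1$ be nonzero, i.e.~$j\neq 1$; you reach the right conclusion anyway). You correctly find that at most two values of $m$ survive --- $m=1,\,a_2$ for $i\neq 0$, and $m=1$ only for $i=0$ --- and that $c_{1,a_3}=x_{a_3}+x_{a_3+2}$ in the $(1,0,1)$ case.

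The genuine gap is in your account of where the four $x$-terms come from when $i\neq 0$. In that case the residue patterns are $(j,j\pm1,j)$ with $j\geq 2$, and \cref{commutation formula1} gives $c_{s,t}=1$ --- there are no ``$c_{s,t}$-error factors'' to push through the outer $\Psi$-blocks, so your proposed mechanism produces nothing. In the paper's computation the $x$-polynomial arises instead from two separate $\Psi^2$-squaring steps inside the subsequent block manipulation: after reorganising the surviving terms as words in single-block transpositions, one encounters non-reduced products like $\Psi_6^2(\underline{b}''')$ and $\Psi_1^2(\underline{b}''')$ where the two adjacent residues are $(j,j+1)$ with $j\geq 1$, so the quadratic relation $\psi_r^2 e(\nu)=Q_{j,j+1}(x_r,x_{r+1})e(\nu)=(x_r+x_{r+1})e(\nu)$ is what creates the pairs $x_{a_1+a_2}+x_{a_1+a_2+a_3}$ (first term) and $x_1+x_{a_1+1}$ (second term). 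Your description, which is accurate for the $i=0$ case (there $c_{1,a_3}=x_{a_3}+x_{a_3+2}$ really is conjugated into $x_1+x_{a_1+a_3+1}$), therefore does not carry over to $i\neq 0$. Moreover, the ``elementary bookkeeping'' you defer is the bulk of the proof: the block-permutation rewrites require several further applications of \cref{square of block transposition} and \cref{failure of braid rels} (one must verify case by case that braid-type error terms vanish for the encountered residue triples), and the placement of the $\Psi^2$'s that produce the $x$-terms is exactly what has to be discovered in that calculation. Identifying the framework is right; without the $\Psi^2$ mechanism you cannot derive the stated formula in the $i\neq 0$ case.
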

\begin{proof}
Following Remark \ref{commutation formula2}, we compute
\begin{gather*}
\Psi_2\Psi_1\Psi_2(a_1,a_2,a_3)e(\nu) - \Psi_1\Psi_2\Psi_1(a_1,a_2,a_3)e(\nu)\\
\qquad =\sum_{k=1}^{a_2} \Psi_2(a_3+k,a_1,a_2-k)X_k\Psi[a_1,k-1]e(\nu),
\end{gather*}
where $X_k=\Psi_2(a_3+k-1,a_1,1)\Psi[a_1+a_2,a_3]-\Psi[a_1+a_2,a_3]\Psi_2(k-1,a_1,1)$.

Then, Lemma \ref{commutation formula1} tells that the term $\Psi_2(a_3+k,a_1,a_2-k)X_k\Psi[a_1,k-1]e(\nu)$ survives only if for some $1\leq s\leq a_1$ and $1\leq t\leq a_3$,
\begin{itemize}
\item
the $s$th entry of $\nu^1=(i+1,i+2,\dots,i+a_1)$,
\item
the $k$th entry of $\nu^2=(i,\dots,0\dots,i)$,
\item
the $t$th entry of $\nu^3=(i+a_3,\dots,i+1)$
\end{itemize}
form a triple of the form $(1,0,1)$, $(j,j+1,j)$ for $j\ge0$, or $(j,j-1,j)$ for $j\ge2$. Hence, either $(s,k,t)=(1,1,a_3)$ or
$(1,a_2,a_3)$ is possible. Thus, if $i\ne0$ we insert
\begin{align*}
X_1&= \Psi[2,a_3-1]\Psi_2(2,a_1+a_2-2,a_3)\Psi_2(1,a_1-1,1), \\
X_{a_2}&= \Psi[a_2-1,a_3]\Psi_2(a_2-1,2,a_3-1)\Psi_2(a_2+1,a_1-1,a_3)\Psi_2(a_2,a_1-1,1), 
\end{align*}
and $X_k=0$ for $k\ne 1, a_2$, to obtain
\begin{gather*}
\Psi_2\Psi_1\Psi_2(a_1,a_2,a_3)e(\nu) - \Psi_1\Psi_2\Psi_1(a_1,a_2,a_3)e(\nu)\\
\qquad\qquad=\left(\Psi_2(a_3+1,a_1,a_2-1)X_1+X_{a_2}\Psi[a_1,a_2-1]\right)e(\nu).
\end{gather*}
On the other hand, if $i=0$ we obtain
\begin{gather*}
\Psi_2\Psi_1\Psi_2(a_1,a_2,a_3)e(\nu) - \Psi_1\Psi_2\Psi_1(a_1,a_2,a_3)e(\nu)\qquad\qquad\\
\qquad\qquad\qquad=(x_{a_3}+x_{a_3+2})\Psi[2,a_3-1]\Psi_2(2,a_1-1,a_3)\Psi_2(1,a_1-1,1)e(\nu).
\end{gather*}

\bigskip
(1) Suppose that $i\ne0$. We write
\[
\nu^1=(i+1)*\nu^a,\; \nu^2=(i)*\nu^b*(i),\; \nu^3=\nu^c*(i+1)
\]
and let $\nu = (i+1)*\nu^a*(i)*\nu^b*(i)*\nu^c*(i+1)$. Then the first term is 
\begin{gather*}
\Psi_2(a_3+1,a_1,a_2-1) \Psi[2,a_3-1]\Psi_2(2,a_1+a_2-2,a_3)\Psi_2(1,a_1-1,1)e(\nu)\\
=(\Psi_5\Psi_4\Psi_6\Psi_5)(\Psi_1\Psi_2)(\Psi_4\Psi_3\Psi_5\Psi_4\Psi_6\Psi_5)(\Psi_2)(\underline{a})e(\nu),
\end{gather*}
where $\underline{a}= (1,a_1-1,1,a_2-2,1,a_3-1,1)$. Then, following the recipe in Remark \ref{commutation formula2}, we know that 
there is no error term in $\Psi_5\Psi_4\Psi_5(\underline{b})e(\mu)-\Psi_4\Psi_5\Psi_4(\underline{b})e(\mu)$, so that
\begin{align*}
=&(\Psi_1\Psi_2)(\Psi_5\Psi_4\Psi_6)\Psi_5\Psi_4\Psi_5(\underline{b})e(\mu)\Psi_3\Psi_4\Psi_6\Psi_5\Psi_2(\underline{a})\\
=&\Psi_1\Psi_2\Psi_5\Psi_4\Psi_6\Psi_4\Psi_5\Psi_4\Psi_3\Psi_4\Psi_6\Psi_5\Psi_2(\underline{a})e(\nu),
\end{align*}
where 
$\mu=(i+1)*(i)*\nu^c*\nu^a*\nu^b*(i+1)*(i)$ and $\underline{b}=(1,1,a_3-1,a_1-1,a_2-2,1,1)$. 
Then, \cref{square of block transposition}(1) implies
\begin{align*}
=&(\Psi_1\Psi_2\Psi_5)\Psi_4^2(\underline{b}')e(\mu')\Psi_6\Psi_5\Psi_4\Psi_3\Psi_4\Psi_6\Psi_5\Psi_2(\underline{a})\\
=&\Psi_1\Psi_2\Psi_5\Psi_6\Psi_5\Psi_4\Psi_3\Psi_4\Psi_6\Psi_5\Psi_2(\underline{a})e(\nu)
\end{align*}
where 
$\mu'=(i+1)*(i)*\nu^c*\nu^b*(i+1)*(i)*\nu^a$ and $\underline{b}'=(1,1,a_3-1,a_2-2,1,1,a_1-1)$. We continue with similar arguments:
\begin{align*}
=&(\Psi_1\Psi_2)\Psi_5\Psi_6\Psi_5(\underline{b}'')e(\mu'')\Psi_4\Psi_3\Psi_4\Psi_6\Psi_5\Psi_2(\underline{a})\\
=&\Psi_1\Psi_2\Psi_6\Psi_5\Psi_6\Psi_4\Psi_3\Psi_4\Psi_6\Psi_5\Psi_2(\underline{a})e(\nu)
\end{align*}
where 
$\mu''=(i+1)*(i)*\nu^c*\nu^b*\nu^a*(i+1)*(i)$ and $\underline{b}''=(1,1,a_3-1,a_2-2,a_1-1,1,1)$, 
\begin{align*}
=&(\Psi_1\Psi_2\Psi_6\Psi_5\Psi_4\Psi_3\Psi_4)\Psi_6^2(\underline{b}''')e(\mu''')\Psi_5\Psi_2(\underline{a})\\
=&\Psi_1\Psi_2\Psi_6\Psi_5\Psi_4\Psi_3\Psi_4(x_{a_1+a_2+a_3-1}+x_{a_1+a_2+a_3})\Psi_5\Psi_2(\underline{a})e(\nu)\\
=&\Psi_1\Psi_2\Psi_6\Psi_5\Psi_4\Psi_3\Psi_4\Psi_5\Psi_2(\underline{a})(x_{a_1+a_2}+x_{a_1+a_2+a_3})e(\nu)\\
=&(\Psi_1\Psi_2\Psi_6\Psi_5)\Psi_4\Psi_3\Psi_4(\underline{b}''')e(\mu''')\Psi_5\Psi_2(\underline{a})(x_{a_1+a_2}+x_{a_1+a_2+a_3})\\
=&\Psi_1\Psi_2\Psi_6\Psi_5\Psi_3\Psi_4\Psi_3\Psi_5\Psi_2(\underline{a})(x_{a_1+a_2}+x_{a_1+a_2+a_3})e(\nu)\\
=&\Psi_1\Psi_6\Psi_2\Psi_3\Psi_5\Psi_4\Psi_5\Psi_3\Psi_2(\underline{a})(x_{a_1+a_2}+x_{a_1+a_2+a_3})e(\nu),
\end{align*}
where 
$\mu'''=(i+1)*(i)*\nu^a*\nu^b*\nu^c*(i)*(i+1)$ and $\underline{b}'''=(1,1,a_1-1,a_2-2,a_3-1,1,1)$, and after one more step, we obtain
\[
\Psi_1\Psi_6\Psi_2\Psi_3\Psi_4\Psi_5\Psi_4\Psi_3\Psi_2(\underline{a})(x_{a_1+a_2}+x_{a_1+a_2+a_3})e(\nu).
\]
Then, we can check that this is equal to $\Psi_1\Psi_4\Psi_2\Psi_3\Psi_2(\underline{a})(x_{a_1+a_2}+x_{a_1+a_2+a_3})e(\nu)$ if we change 
$\underline{a}$ to $\underline{a}=(1,a_1-1,a_2,a_3-1,1)$.

Next, we consider the second term 
\begin{gather*}
\Psi[a_2-1,a_3]\Psi_2(a_2-1,2,a_3-1)\Psi_2(a_2+1,a_1-1,a_3)\Psi_2(a_2,a_1-1,1)\Psi[a_1,a_2-1]e(\nu)\\
=(\Psi_2\Psi_1\Psi_3\Psi_2)(\Psi_3\Psi_4)(\Psi_6\Psi_5)(\Psi_4)(\Psi_2\Psi_1\Psi_3\Psi_2)(\underline{a})e(\nu),
\end{gather*}
where $\underline{a}= (1,a_1-1,1,a_2-2,1,a_3-1,1)$. Then
\begin{align*}
=& (\Psi_2\Psi_1)\Psi_3\Psi_2\Psi_3(\underline{b})e(\mu)\Psi_4\Psi_6\Psi_5\Psi_4\Psi_2\Psi_1\Psi_3\Psi_2 (\underline{a})\\
=& \Psi_2\Psi_1\Psi_2\Psi_3\Psi_2\Psi_4\Psi_6\Psi_5\Psi_4\Psi_2\Psi_1\Psi_3\Psi_2 (\underline{a}) e(\nu),
\end{align*}
where 
$\mu=(i)*\nu^b*(i+1)*\nu^c*(i)*(i+1)*\nu^a$ and $\underline{b}=(1,a_2-2,1,a_3-1,1,1,a_1-1)$,
\begin{align*}
=& (\Psi_2\Psi_1\Psi_2\Psi_3\Psi_4\Psi_6\Psi_5\Psi_4)\Psi_2^2(\underline{b}')e(\mu')\Psi_1\Psi_3\Psi_2 (\underline{a})\\
=& \Psi_2\Psi_1\Psi_2\Psi_3\Psi_4\Psi_6\Psi_5\Psi_4\Psi_1\Psi_3\Psi_2 (\underline{a})e(\nu),
\end{align*}
where
$\mu'=(i)*(i+1)*\nu^b*\nu^a*(i)*\nu^c*(i+1)$ and $\underline{b}'=(1,1,a_2-2,a_1-1,1,a_3-1,1)$,
\begin{align*}
=& \Psi_2\Psi_1\Psi_2(\underline{b}'')e(\mu'')\Psi_3\Psi_4\Psi_6\Psi_5\Psi_4\Psi_1\Psi_3\Psi_2 (\underline{a})\\
=& \Psi_1\Psi_2\Psi_1\Psi_3\Psi_4\Psi_6\Psi_5\Psi_4\Psi_1\Psi_3\Psi_2 (\underline{a})e(\nu),
\end{align*}
where
$\mu''=(i)*(i+1)*\nu^c*\nu^b*(i)*(i+1)*\nu^a$ and $\underline{b}''=(1,1,a_3-1,a_2-2,1,1,a_1-1)$,
\begin{align*}
=& (\Psi_1\Psi_2\Psi_3\Psi_4\Psi_6\Psi_5\Psi_4)\Psi_1^2(\underline{b}''')e(\mu''')\Psi_3\Psi_2 (\underline{a})\\
=& \Psi_1\Psi_2\Psi_3\Psi_4\Psi_6\Psi_5\Psi_4(x_1+x_2)\Psi_3\Psi_2 (\underline{a})e(\nu)\\
=& \Psi_6\Psi_1\Psi_2\Psi_3\Psi_4\Psi_5\Psi_4\Psi_3\Psi_2 (\underline{a})(x_1+x_{a_1+1})e(\nu),
\end{align*}
where $\mu'''=(i+1)*(i)*\nu^b*\nu^a*(i)*\nu^c*(i+1)$ and $\underline{b}'''=(1,1,a_2-2,a_1-1,1,a_3-1,1)$.
Then this is equal to $\Psi_1\Psi_4\Psi_2\Psi_3\Psi_2 (\underline{a})(x_1+x_{a_1+1}) e(\nu)$ if we change $\underline{a}$ 
to $\underline{a}=(1,a_1-1,a_2,a_3-1,1)$. 

\medskip
(2) Suppose that $i=0$. We write $\nu^1=(1)*\nu^a,\; \nu^2=(0),\; \nu^3=\nu^c*(1)$ as before, and let
$\nu = (1)*\nu^a*(0)*\nu^c*(1)$ and $\underline{a} = (1,a_1 - 1, a_3 - 1, 1)$. Then 
\begin{multline*}
(x_{a_3}+x_{a_3+2})\Psi[2,a_3-1]\Psi_2(2,a_1-1,a_3)\Psi_2(1,a_1-1,1)e(\nu)\\
=(x_{a_3}+x_{a_3+2})\Psi_1\Psi_2\Psi_4\Psi_3\Psi_2(\underline{a})e(\nu)
=\Psi_1\Psi_4\Psi_2\Psi_3\Psi_2(\underline{a})(x_1+x_{a_1+a_3+1})e(\nu),
\end{multline*}
which is the desired result.
\end{proof}

\subsection{A special three row case}
To handle the case that the Garnir belt of $\ttg^{A,B}$ has three rows, we may assume that 
$\la = (\la_1, \la_2, \la_3)$ with $\la_3>0$ and that Garnir nodes are $A=(2,c)$, $B=(1,c')$ with $c\le c'$. 
In this subsection, we consider a special case, that is, we assume
\begin{itemize}
\item[(i)]
The first row of the Garnir belt has residues $\nu^1_r=(i+1,i+2,\dots, \la_1-c'+i+1)$ from left to right.
\item[(ii)]
The second row of the Garnir belt has residues $\nu^2_m=(i,i-1,\dots,1,0,1,\dots,i-1,i)$ from left to right.
\item[(iii)]
The third row of the Garnir belt has residues $\nu^3_l=(c+i,c+i-1,\dots,i+1)$ from left to right.
\end{itemize}
In particular, $\res A=i$ and $\res B=i+1$. We denote 
\begin{itemize}
\item[(i)]
the residues of the first row of $\lambda$ by $\nu^1_l*\nu^1_r$,
\item[(ii)]
the residues of the second row of $\lambda$ by $\nu^2_l*\nu^2_m*\nu^2_r$,
\item[(iii)]
the residues of the third row of $\lambda$ by $\nu^3_l*\nu^3_r$, 
\end{itemize}
respectively. Pictorially, if $c \ne 1$ and $c' \ne \la_2$ then 

\[
\begin{tikzpicture}
\Yboxdim{20pt} \scriptsize{
\tgyoung(0cm,0cm,;!\wh_7!\gr;<i+1>;<i+2>_4,!\wh;< >_2;<i+1>!\gr;i_3;i!\wh;<i+1>_3,!\gr;< >_2;<i+2>;<i+1>!\wh_4)}
\tiny{
\draw[gray,<->] (0.05,0.6) -- node[below=-1pt] {\color{black}$\nu^1_l$} (5.568,0.6);
\draw[gray,<->] (5.68,0.6) -- node[below=-1pt] {\color{black}$\nu^1_r$} (9.79,0.6);
\draw[gray,<->] (0.05,-0.1) -- node[below=-1pt] {\color{black}$\nu^2_l$} (2.75,-0.1);
\draw[gray,<->] (2.87,-0.1) -- node[below=-1pt] {\color{black}$\nu^2_m$} (6.27,-0.1);
\draw[gray,<->] (6.38,-0.1) -- node[below=-1pt] {\color{black}$\nu^2_r$} (9.08,-0.1);
\draw[gray,<->] (0.05,-0.8) -- node[below=-1pt] {\color{black}$\nu^3_l$} (3.46,-0.8);
\draw[gray,<->] (3.57,-0.8) -- node[below=-1pt] {\color{black}$\nu^3_r$} (6.27,-0.8);
\draw[black] (5.75,0.1) node{$\mathbf{B}$};
\draw[black] (2.94,-0.55) node{$\mathbf{A}$};
}
\end{tikzpicture}
\]

\bigskip

Recall from \cref{lem: wA wB} that there are fully commutative elements $w^A$ and $w^B$ such that
\begin{align*}
&\ttg^{A,B} = w^A \ttg^A = w^B \ttg^B, \\
&\ell(\ttg^{A,B}) = \ell(w^A ) + \ell(\ttg^A) =  \ell(w^B ) + \ell(\ttg^B).
\end{align*}

We will show that $\psi_{w^A} g_A^\lambda \equiv \psi_{w^B} g_B^\lambda \pmod{ \Ga^{\lambda}_{ < \ell(\ttg^{A,B})}}$ in \cref{lem: wAgA-wBgB 4}. We build up to this with several smaller lemmas, as the calculation is quite lengthy.

We denote the length of $\nu^1_l,\nu^1_r,\nu^2_l,\nu^2_m,\nu^2_r,\nu^3_l,\nu^3_r$ by 
$a^1_l,a^1_r,a^2_l,a^2_m,a^2_r,a^3_l,a^3_r$, respectively, and define
\begin{align*}
\underline{a}&=(a^1_l,a^1_r,a^2_l,a^2_m,a^2_r,a^3_l,a^3_r),\\
\underline{a}'&=(a^1_l,a^1_r,a^2_l,a^3_l,a^2_m,a^2_r,a^3_r),\\
\underline{a}''&=(a^1_l,a^2_l,a^2_m,a^1_r,a^2_r,a^3_l,a^3_r),\\
\underline{a}'''&=(a^1_l,a^2_l,a^1_r,a^2_m,a^3_l,a^2_r,a^3_r).
\end{align*}

\begin{lem}\label{lem: wAgA-wBgB 1}
We have
\[
\psi_{w^A} g_A^\lambda - \psi_{w^B} g_B^\lambda = \Psi_3\Psi_6\Psi_4\Psi_5\Psi_4(\underline{b}')(x_{a^1_l+a^2_l+1}+x_{a^1_l+a^1_r+a^2_l+a^2_m+a^3_l})\Psi_7\Psi_6\Psi_2\Psi_3(\underline{b})m^\la
\]
where
\begin{align*}
\underline{b}'&=(a^1_l,a^2_l,1,a^1_r-1,a^2_m,a^3_l-1,1,a^2_r,a^3_r),\\
\underline{b}&=(a^1_l,1,a^1_r-1,a^2_l,a^2_m,a^2_r,a^3_l-1,1,a^3_r). 
\end{align*}
\end{lem}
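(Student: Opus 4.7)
The plan is to express both $\psi_{w^A}g_A^\lambda$ and $\psi_{w^B}g_B^\lambda$ as composites of block-transposition operators acting on $m^\lambda$, isolate a common right-hand factor, and reduce the remaining difference to \cref{Lem: G^AB 1}. By \cref{garnirformremark}, $g_A^\lambda = \psi_{w^{\ttg^A}}m^\lambda$ and $g_B^\lambda = \psi_{w^{\ttg^B}}m^\lambda$ in type $C_\infty$. I view $m^\lambda$ as partitioned into seven consecutive blocks $\mathbf{B}_1,\dots,\mathbf{B}_7$ of sizes $\underline{a}=(a^1_l,a^1_r,a^2_l,a^2_m,a^2_r,a^3_l,a^3_r)$, and write $\mathsf{S}_{rs}$ for the $\psi$-product implementing the swap of the currently-adjacent blocks originally labelled $\mathbf{B}_r$ and $\mathbf{B}_s$. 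Using \eqref{exp of GA} together with iterated application of $\Psi_2(c,a,b)=\Psi_2(c,x,b)\Psi_2(c+x,a-x,b)$ one obtains
\[ \psi_{w^{\ttg^A}}=\mathsf{S}_{46}\mathsf{S}_{56},\quad \psi_{w^A}=\mathsf{S}_{24}\mathsf{S}_{26}\mathsf{S}_{23},\quad \psi_{w^{\ttg^B}}=\mathsf{S}_{24}\mathsf{S}_{23},\quad \psi_{w^B}=\mathsf{S}_{46}\mathsf{S}_{26}\mathsf{S}_{56}. \]
Inspection of the underlying indices shows that $\mathsf{S}_{23}$ and $\mathsf{S}_{56}$ use $\psi$-generators whose indices are bounded, respectively, above by $a^1_l+a^1_r+a^2_l-1$ and below by $a^1_l+a^1_r+a^2_l+a^2_m+1$, and that each is separated by at least one intermediate index from the ranges used by $\mathsf{S}_{24},\mathsf{S}_{26},\mathsf{S}_{46}$; successive applications of $\psi_r\psi_s=\psi_s\psi_r$ for $|r-s|>1$ therefore give
\[ \psi_{w^A}g_A^\lambda = \mathsf{S}_{24}\mathsf{S}_{26}\mathsf{S}_{46}\cdot \mathsf{S}_{56}\mathsf{S}_{23}\cdot m^\lambda, \qquad \psi_{w^B}g_B^\lambda = \mathsf{S}_{46}\mathsf{S}_{26}\mathsf{S}_{24}\cdot \mathsf{S}_{56}\mathsf{S}_{23}\cdot m^\lambda. \]

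In the intermediate configuration created by $\mathsf{S}_{56}\mathsf{S}_{23}$, the three blocks $\mathbf{B}_2,\mathbf{B}_4,\mathbf{B}_6$ of sizes $(a^1_r,a^2_m,a^3_l)$ sit adjacently at positions $a^1_l+a^2_l+1,\dots,a^1_l+a^2_l+a^1_r+a^2_m+a^3_l$ and carry residues $\nu^1_r*\nu^2_m*\nu^3_l$, matching the hypothesis of \cref{Lem: G^AB 1} with $(a_1,a_2,a_3)=(a^1_r,a^2_m,a^3_l)$. On this three-block sub-structure $\mathsf{S}_{24}\mathsf{S}_{26}\mathsf{S}_{46}$ and $\mathsf{S}_{46}\mathsf{S}_{26}\mathsf{S}_{24}$ are the embeddings of $\Psi_2\Psi_1\Psi_2(a^1_r,a^2_m,a^3_l)$ and $\Psi_1\Psi_2\Psi_1(a^1_r,a^2_m,a^3_l)$, respectively. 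Applying \cref{Lem: G^AB 1} and shifting its five-block partition $(1,a^1_r-1,a^2_m,a^3_l-1,1)$ by prefixing $(a^1_l,a^2_l)$ and suffixing $(a^2_r,a^3_r)$ converts $\Psi_1\Psi_4\Psi_2\Psi_3\Psi_2$ into $\Psi_3\Psi_6\Psi_4\Psi_5\Psi_4(\underline{b}')$, yielding
\[ \psi_{w^A}g_A^\lambda-\psi_{w^B}g_B^\lambda = \Psi_3\Psi_6\Psi_4\Psi_5\Psi_4(\underline{b}')(x_{s_1}+x_{s_2}+x_{s_3}+x_{s_4})\,\mathsf{S}_{56}\mathsf{S}_{23}\,m^\lambda, \]
where $s_1=a^1_l+a^2_l+1$, $s_2=s_1+a^1_r$, $s_3=s_2+a^2_m-1$ and $s_4=s_3+a^3_l+1$; when $i=0$, only $x_{s_1}+x_{s_4}$ appears.

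It remains to eliminate $x_{s_2},x_{s_3}$ and to rewrite the right-hand factor. The indices $s_2-1,s_2,s_3-1,s_3$ all lie in the gap between the $\psi$-index sets of $\mathsf{S}_{23}$ and $\mathsf{S}_{56}$, so $x_{s_2}$ and $x_{s_3}$ commute freely through $\mathsf{S}_{56}\mathsf{S}_{23}$ and then annihilate $m^\lambda$ via the relations $x_k\gLm=0$ of \eqref{Eq: def of L}. One more application of the splitting identity, now with $a^1_r=1+(a^1_r-1)$ and $a^3_l=(a^3_l-1)+1$, rewrites $\mathsf{S}_{56}\mathsf{S}_{23}$ as $\Psi_7\Psi_6\Psi_2\Psi_3(\underline{b})$, producing the stated formula. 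The main obstacle is not conceptual but combinatorial: careful tracking of the block permutations and verification of the disjointness of the various $\psi$-index sets, both to perform the commutations in the first step and to discard the middle $x$-terms at the end.
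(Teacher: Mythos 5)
Your argument follows the paper's own proof almost exactly: both express $\psi_{w^A}g_A^\lambda$ and $\psi_{w^B}g_B^\lambda$ via the block permutations of \cref{lem: wA wB}, factor out the common right factor $\Psi_2\Psi_5(\underline{a})$ (your $\mathsf{S}_{56}\mathsf{S}_{23}$), reduce the remaining difference to the embedded $\Psi_2\Psi_1\Psi_2 - \Psi_1\Psi_2\Psi_1$ on the three middle blocks of sizes $(a^1_r,a^2_m,a^3_l)$, apply \cref{Lem: G^AB 1}, and then kill the two interior $x$-terms by commuting them through the right factor to annihilate $m^\lambda$. Your $\mathsf{S}_{rs}$ bookkeeping (indexed by original block labels) is simply a repackaging of the paper's $S_i(\underline{a})$ notation (indexed by current position); the underlying computation is the same.

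One arithmetic slip: the fourth $x$-index in \cref{Lem: G^AB 1} is $a_1+a_2+a_3$, which embedded at offset $a^1_l+a^2_l$ gives $s_4 = a^1_l + a^2_l + a^1_r + a^2_m + a^3_l = s_3 + a^3_l$, not $s_3 + a^3_l + 1$ as you wrote. With this fix $x_{s_1}+x_{s_4}$ matches the stated $x_{a^1_l+a^2_l+1}+x_{a^1_l+a^1_r+a^2_l+a^2_m+a^3_l}$, and the $i=0$ case is consistent too since then $a^2_m=1$ forces $s_3=s_2$, so $s_4 = s_2 + a^3_l$ agrees with the embedding of $x_{a_1+a_3+1}$ from \cref{Lem: G^AB 1}(2). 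This is a typo rather than a gap; the commuting/annihilation argument for $x_{s_2},x_{s_3}$ is sound as written.
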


\begin{proof}
Note that $a^1_r\ge1$, $a^2_m\ge1$, $a^3_l\ge1$ by definition. 
Then, by considering the two-line notation for $w^{\ttg^A}$ and $w^{\ttg^B}$ as in Lemma \ref{two line notation for block permutation}, we know that
\[
w^{\ttg^A}=S_4S_5(\underline{a}) \quad\text{and}\quad w^{\ttg^B}=S_3S_2(\underline{a}).
\]
Similarly, we have $w^A=S_4S_3S_2(\underline{a}')$ and $w^B=S_3S_4S_5(\underline{a}'')$. Thus, 
\begin{align*} \label{eq: G^AB 1}
\psi_{w^A} g_A^\lambda - \psi_{w^B} g_B^\lambda
&= \Psi_4\Psi_3\Psi_2\Psi_4\Psi_5(\underline{a})m^\lambda - \Psi_3\Psi_4\Psi_5\Psi_3\Psi_2(\underline{a})m^\lambda\\
&= \left(\Psi_4\Psi_3\Psi_4(\underline{a}''')-\Psi_3\Psi_4\Psi_3(\underline{a}''')\right)e(\mu)\Psi_2\Psi_5(\underline{a})m^\lambda,
\end{align*}
where $\mu=\nu^1_l*\nu^2_l*\nu^1_r*\nu^2_m*\nu^3_l*\nu^2_r*\nu^3_r$. We apply \cref{Lem: G^AB 1} to compute
\[
\Psi_2\Psi_1\Psi_2(a^1_r,a^2_m,a^3_l)e(\nu^1_r*\nu^2_m*\nu^3_l) - \Psi_1\Psi_2\Psi_1(a^1_r,a^2_m,a^3_l)e(\nu^1_r*\nu^2_m*\nu^3_l).
\]
Then the result is as follows.
\begin{itemize}
\item[(i)]
If $i\ne0$ then
\[
\psi_{w^A} g_A^\lambda - \psi_{w^B} g_B^\lambda = \Psi_3\Psi_6\Psi_4\Psi_5\Psi_4(\underline{b}')(X_1+X_2)\Psi_7\Psi_6\Psi_2\Psi_3(\underline{b})m^\la,
\]
where $X_1 = x_{a^1_l+a^2_l+1} + x_{a^1_l+a^1_r+a^2_l+a^2_m+a^3_l}$ and $X_2 = x_{a^1_l+a^1_r+a^2_l+1} + x_{a^1_l+a^1_r+a^2_l+a^2_m}$.
\item[(ii)]
If $i=0$ then
\[
\psi_{w^A} g_A^\lambda - \psi_{w^B} g_B^\lambda = \Psi_3\Psi_6\Psi_4\Psi_5\Psi_4(\underline{b}')X_1\Psi_7\Psi_6\Psi_2\Psi_3(\underline{b})m^\la,
\]
where $X_1=x_{a^1_l+a^2_l+1}+x_{a^1_l+a^1_r+a^2_l+a^3_l+1}$.
\end{itemize}

\medskip
As $\Psi_7\Psi_6\Psi_2\Psi_3(\underline{b})$ does not touch the fifth block of $\underline{b}$, if $i\ne0$ then
\[
X_2\Psi_7\Psi_6\Psi_2\Psi_3(\underline{b})m^\la=\Psi_7\Psi_6\Psi_2\Psi_3(\underline{b})X_2m^\la=0.\qedhere
\]
\end{proof}

\begin{lem}\label{lem: wAgA-wBgB 2}
Let
\begin{align*}
\underline{b}&= (a^1_l,1,a^1_r-1,a^2_l,a^2_m,a^2_r,a^3_l-1,1,a^3_r),\\
\underline{b}'&= (a^1_l,a^2_l,1,a^1_r-1,a^2_m,a^3_l-1,1,a^2_r,a^3_r),\\
\underline{c}&= (a^1_l,1,a^1_r-1,a^2_l-1,1,a^2_m,a^2_r,a^3_l-1,1,a^3_r),\\
\underline{d}&= (a^1_l, 1, a^1_r-1, a^2_l, a^2_m, 1, a^2_r-1, a^3_l-1, 1, a^3_r).
\end{align*}
\begin{enumerate}
\item
If $a^2_l =0$, then
\[
\Psi_3 \Psi_6 \Psi_4 \Psi_5 \Psi_4 (\underline{b}') x_{a^1_l+a^2_l+1} \Psi_7 \Psi_6 \Psi_2 \Psi_3 (\underline{b}) m^\la = 0.
\]
Otherwise,
\begin{multline*}
\Psi_3 \Psi_6 \Psi_4 \Psi_5 \Psi_4 (\underline{b}') x_{a^1_l+a^2_l+1} \Psi_7 \Psi_6 \Psi_2 \Psi_3 (\underline{b}) m^\la\\
= (\Psi_7 \Psi_8 \Psi_6 \Psi_2) \Psi_5 \Psi_4 \Psi_3 \Psi_5 \Psi_6 \Psi_7(\underline{c})m^\la
+(\Psi_7\Psi_8\Psi_6\Psi_2)(\Psi_4\Psi_5\Psi_4-\Psi_5\Psi_4\Psi_5)\Psi_3\Psi_6\Psi_7(\underline{c})m^\la.
\end{multline*}
\item
If $a^2_r = 0$, then
\[
\Psi_3 \Psi_6 \Psi_4 \Psi_5 \Psi_4 (\underline{b}') x_{a^1_l+a^1_r+a^2_l+a^2_m +a^3_l} \Psi_7 \Psi_6 \Psi_2 \Psi_3 (\underline{b}) m^\la = 0.
\]
Otherwise,
\begin{multline*}
\Psi_3 \Psi_6 \Psi_4 \Psi_5 \Psi_4 (\underline{b}') x_{a^1_l+a^1_r+a^2_l+a^2_m +a^3_l} \Psi_7 \Psi_6 \Psi_2 \Psi_3 (\underline{b}) m^\la\\
= -(\Psi_3 \Psi_2 \Psi_8 \Psi_4) \Psi_5 \Psi_6 \Psi_7 \Psi_5 \Psi_4 \Psi_3 (\underline{d}) m^\la 
- (\Psi_3 \Psi_2 \Psi_8 \Psi_4) (\Psi_6 \Psi_5 \Psi_6 -\Psi_5 \Psi_6 \Psi_5) \Psi_7 \Psi_4 \Psi_3 (\underline{d}) m^\la.
\end{multline*}
\end{enumerate}
\end{lem}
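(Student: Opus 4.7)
The strategy is to commute the distinguished $x$-variable through the right-hand operator $\Psi_{\text{right}} := \Psi_7 \Psi_6 \Psi_2 \Psi_3(\underline{b})$ until it reaches $m^\la$, where it acts by zero thanks to the defining relations \eqref{Eq: def of L} of the one-dimensional modules $\Lm(k;\ell)$. The only surviving contributions are the $\delta_{\nu_r, \nu_{r+1}}$ error terms produced by the commutations
\begin{align*}
x_r \psi_r e(\nu) &= \psi_r x_{r+1} e(\nu) - \delta_{\nu_r, \nu_{r+1}} e(\nu), \\
x_{r+1} \psi_r e(\nu) &= \psi_r x_r e(\nu) + \delta_{\nu_r, \nu_{r+1}} e(\nu),
\end{align*}
together with the fact that $\psi_r$ commutes with $x_s$ whenever $s \ne r, r+1$. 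These error terms are then multiplied on the left by the untouched operator $\Psi_{\text{left}} := \Psi_3 \Psi_6 \Psi_4 \Psi_5 \Psi_4(\underline{b}')$ and reorganised into the claimed block-permutation form.

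For the vanishing statements, we exploit the residue description (i)--(iii) of the Garnir belt. In case (1), $x_{a^1_l + a^2_l + 1}$ sits at the position immediately preceding the leftmost entry of $\nu^1_r$, which has residue $i+1$. A $\delta$-term survives only when, at some intermediate stage of the commutation, an entry of residue $i+1$ meets another entry of the same residue -- and the only other entry of residue $i+1$ available from the neighbouring blocks is the rightmost entry of $\nu^2_l$. If $a^2_l = 0$ then $\nu^2_l$ is empty, no residue match occurs, and the expression vanishes. Case (2) is symmetric: $x_{a^1_l + a^1_r + a^2_l + a^2_m + a^3_l}$ sits at the right boundary of $\nu^3_l$, and surviving $\delta$-matches require an entry of the appropriate residue in $\nu^2_r$; if $a^2_r = 0$ the expression vanishes.

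When $a^2_l \ne 0$ (resp.\ $a^2_r \ne 0$), the surviving $\delta$-contributions split off a single-box block from $\nu^2_l$ (resp.\ $\nu^2_r$), producing the refined block structure $\underline{c}$ (resp.\ $\underline{d}$) from $\underline{b}$. The leading ``clean'' piece $\Psi_7\Psi_8\Psi_6\Psi_2 \cdot \Psi_5\Psi_4\Psi_3\Psi_5\Psi_6\Psi_7(\underline{c})$ (resp.\ $-\Psi_3\Psi_2\Psi_8\Psi_4 \cdot \Psi_5\Psi_6\Psi_7\Psi_5\Psi_4\Psi_3(\underline{d})$) is obtained by tracking the principal chain of $\delta$-matches and then rearranging the resulting $\psi$-word into block form using \cref{use of braid rels,simplemove,cor: reduced S_i}. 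The secondary term involving $(\Psi_4\Psi_5\Psi_4 - \Psi_5\Psi_4\Psi_5)$ (resp.\ $(\Psi_6\Psi_5\Psi_6 - \Psi_5\Psi_6\Psi_5)$) is precisely the defect from the braid relation quantified by \cref{failure of braid rels}: during the rearrangement one must apply an illegal braid move at a residue triple of the form $(j, j \pm 1, j)$, and the difference $\epsilon(r,\nu)$ supplies the correction.

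The main obstacle is the extensive combinatorial bookkeeping required to trace each $\delta$-match through the many intermediate commutations, verify that each non-match does indeed produce zero under the type $C_\infty$ residue constraints, and recognise the compact form of the surviving sum. The subtlest step is the identification of the defect term with the explicit $(\Psi\Psi\Psi - \Psi\Psi\Psi)$ difference; this relies on the fact that the relevant local residue pattern around position $a^1_l + a^2_l + 1$ (resp.\ $a^1_l + a^1_r + a^2_l + a^2_m + a^3_l$) is exactly of the type $(i, i \pm 1, i)$ to which \cref{failure of braid rels} applies. Once this identification is made, the two expressions in the statement follow by straightforward rewriting using the block formalism developed in \cref{Sec: quiver Hecke algebras}.
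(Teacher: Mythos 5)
Your high-level strategy---push the $x$-variable rightwards through $\Psi_7\Psi_6\Psi_2\Psi_3(\underline{b})$ until it annihilates $m^\la$, collect the error terms, then rearrange---is exactly what the paper does, and the residue argument for the vanishing cases is correct. However, there are several places where the proposal glosses over genuine content or is imprecise enough to lead you astray.

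First, the paper does not commute $x$ generator-by-generator collecting $\delta$-terms as you describe. It uses \cref{square of block transposition}(3) (resp.\ (4)) to pass $x_{a^1_l+a^2_l+1}$ through the single block transposition $\Psi_2$ (resp.\ $\Psi_7$) in one step, giving a clean commuted piece plus exactly one block-shaped error term $\Psi[1,a-1]$ (resp.\ $-\Psi_2(1,a-1,1)$). This is what lets you land directly on an expression written in block form. Doing it $\psi$-by-$\psi$ is possible in principle, but you will end up with a sum of many $\psi_w$'s that then need to be re-assembled into block transpositions; you have not shown how to do this, and it is not obvious.

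Second, the claim that the rewriting ``follows by straightforward rewriting using the block formalism'' badly understates the work: the paper's proof of this single lemma involves introducing a factor $\Psi_8^2$ and reducing it with \cref{square of block transposition}(1), applying one genuine block braid relation $\Psi_5\Psi_6\Psi_5 = \Psi_6\Psi_5\Psi_6$ (and verifying it holds with no error here), and about a half-dozen further commutations of block transpositions. This chain of manipulations is the main content of the lemma and a sketch that simply names \cref{use of braid rels,simplemove,cor: reduced S_i} does not establish it.

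Third, your description of the defect term is confused. You identify $(\Psi_4\Psi_5\Psi_4-\Psi_5\Psi_4\Psi_5)$ with the single-generator defect $\epsilon(r,\nu)$ of \cref{failure of braid rels}. But the statement of this lemma leaves the defect as a \emph{formal block-level} difference; it is not equal to $\epsilon(r,\nu)e(\nu)$, and in fact it is an entire linear combination of nested block transpositions and $x$-variables, as computed by \cref{commutation formula1} and \cref{commutation formula2}. Evaluating this block defect is deferred to \cref{lem: wAgA-wBgB 4}, not done in \cref{lem: wAgA-wBgB 2}, and when it is evaluated there it requires a different formula for $(\Psi_2\Psi_1\Psi_2-\Psi_1\Psi_2\Psi_1)$ obtained by applying the anti-involution of $R(\beta)$. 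So the invocation of \cref{failure of braid rels} here is at best premature and at worst points you in the wrong direction: you should isolate the defect formally, not attempt to compute it now.

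In summary: right overall plan, correct vanishing argument, but a genuine gap in the block-rearrangement step (which is the bulk of the proof) and an inaccurate identification of the defect term's nature and role.
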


\begin{proof}
First, note that $\Psi_7\Psi_6\Psi_2\Psi_3(\underline{b})$ is the product of
\begin{align*}
\Psi_7(a^1_l,a^2_l,1,a^1_r-1,a^2_m,a^3_l-1,\underline{a^2_r,1},a^3_r)e(\nu^1_l*\nu^2_l*(i+1)*\dot{\nu}^1_r*\nu^2_m*\dot{\nu}^3_l*\underline{\nu^2_r*(i+1)}*\nu^3_r)\\
\Psi_6(a^1_l,a^2_l,1,a^1_r-1,a^2_m,\underline{a^2_r,a^3_l-1},1,a^3_r)e(\nu^1_l*\nu^2_l*(i+1)*\dot{\nu}^1_r*\nu^2_m*\underline{\nu^2_r*\dot{\nu}^3_l}*(i+1)*\nu^3_r)\\
\Psi_2(a^1_l,\underline{1,a^2_l},a^1_r-1,a^2_m,a^2_r,a^3_l-1,1,a^3_r)e(\nu^1_l*\underline{(i+1)*\nu^2_l}*\dot{\nu}^1_r*\nu^2_m*\nu^2_r*\dot{\nu}^3_l*(i+1)*\nu^3_r)\\
\Psi_3(a^1_l,1,\underline{a^1_r-1,a^2_l},a^2_m,a^2_r,a^3_l-1,1,a^3_r)e(\nu^1_l*(i+1)*\underline{\dot{\nu}^1_r*\nu^2_l}*\nu^2_m*\nu^2_r*\dot{\nu}^3_l*(i+1)*\nu^3_r)
\end{align*}
in this order, where $\nu^1_r=(i+1)*\dot{\nu}^1_r$, $\nu^3_l=\dot{\nu}^3_l*(i+1)$. 

\begin{enumerate}
\item
Moving $x_{a^1_l+a^2_l+1}$ to the right, 
\[
x_{a^1_l+a^2_l+1}\Psi_7\Psi_6\Psi_2\Psi_3(\underline{b})m^\la=\Psi_7\Psi_6(x_{a^1_l+a^2_l+1}\Psi_2-\Psi_2x_{a^1_l+1})\Psi_3(\underline{b})m^\la.
\]
We apply \cref{square of block transposition}(3) to $(x_{a^1_l+a^2_l+1}\Psi_2-\Psi_2x_{a^1_l+1})e(\mu)$, where
\[
\mu = \nu^1_l*(i+1)*\nu^2_l*\dot{\nu}^1_r*\nu^2_m*\nu^2_r*\dot{\nu}^3_l*(i+1)*\nu^3_r.
\]
If $a^2_l=0$ then $\Psi_2$ and $\Psi_3$ are the identity and 
\[
x_{a^1_l+a^2_l+1}\Psi_7\Psi_6\Psi_2\Psi_3(\underline{b})m^\la=x_{a^1_l+a^2_l+1}\Psi_7\Psi_6(\underline{b})m^\la=\Psi_7\Psi_6(\underline{b})x_{a^1_l+a^2_l+1}m^\la=0.
\]
Now suppose that $a^2_l\geq 1$. Note that $a^3_l=a^2_l+1\ge2$.
We write
\[
\nu=\nu^1_l*(i+1)*\dot{\nu}^1_r*\dot{\nu}^2_l*(i+1)*\nu^2_m*\nu^2_r*\dot{\nu}^3_l*(i+1)*\nu^3_r.
\]
Then, after applying Lemma \ref{square of block transposition}(3), we obtain
\[
\Psi_3\Psi_6\Psi_4\Psi_5\Psi_4(\underline{b}')x_{a^1_l+a^2_l+1}\Psi_7\Psi_6\Psi_2\Psi_3(\underline{b})m^\la
= \Psi_4\Psi_7\Psi_5\Psi_6 \Psi_5\Psi_8\Psi_7\Psi_2 \Psi_4\Psi_3(\underline{c}) m^\la.
\]
Then we can continue as follows. 
\begin{align*}
=& (\Psi_4\Psi_7)\Psi_8(\Psi_5\Psi_6\Psi_5)(\Psi_2\Psi_4\Psi_3)\Psi_7(\underline{c})m^\la\\
=&  (\Psi_4\Psi_7\Psi_8)\Psi_5\Psi_6\Psi_5(\underline{c}')e(\nu')\Psi_2\Psi_4\Psi_3\Psi_7(\underline{c})m^\la,
\end{align*}
where
$\nu'=\nu^1_l*\dot{\nu}^2_l*(i+1)*(i+1)*\dot{\nu}^1_r*\nu^2_m*\dot{\nu}^3_l*\nu^2_r*(i+1)*\nu^3_r$ and 
$
\underline{c}'=(a^1_l,a^2_l-1,1,1,a^1_r-1,a^2_m,a^3_l-1,a^2_r,1,a^3_r),
$
so that $\Psi_5\Psi_6\Psi_5(\underline{c}')e(\nu') = \Psi_6 \Psi_5 \Psi_6 (\underline{c}') e(\nu')$ and we have
\begin{align*}
\Psi_4 \Psi_7 \Psi_8 \Psi_5 \Psi_6 \Psi_5 \Psi_2 \Psi_4 \Psi_3 \Psi_7 (\underline{c})m^\la
=&  \Psi_4\Psi_7\Psi_8\Psi_6\Psi_5\Psi_6\Psi_2\Psi_4\Psi_3\Psi_7(\underline{c})m^\la \\
=& \Psi_7 \Psi_8 \Psi_6 \Psi_2 (\Psi_4 \Psi_5 \Psi_4) \Psi_3 \Psi_6 \Psi_7 (\underline{c})m^\la.
\end{align*}
Hence, $\Psi_4 \Psi_7 \Psi_8 \Psi_5 \Psi_6 \Psi_5 \Psi_2 \Psi_4 \Psi_3 \Psi_7 (\underline{c})m^\la$ is equal to
\[
(\Psi_7 \Psi_8 \Psi_6 \Psi_2) \Psi_5 \Psi_4 \Psi_3 \Psi_5 \Psi_6 \Psi_7 (\underline{c}) m^\la
+ (\Psi_7 \Psi_8 \Psi_6 \Psi_2) (\Psi_4 \Psi_5 \Psi_4 -\Psi_5 \Psi_4 \Psi_5) \Psi_3 \Psi_6 \Psi_7 (\underline{c}) m^\la.
\]
\item
Similarly, if we move $x_{a^1_l+a^1_r+a^2_l+a^2_m+a^3_l}$ to the right,
\begin{multline*}
\qquad x_{a^1_l+a^1_r+a^2_l+a^2_m+a^3_l}\Psi_7\Psi_6\Psi_2\Psi_3(\underline{b})m^\la \\
=\Psi_2\Psi_3(x_{a^1_l+a^1_r+a^2_l+a^2_m+a^3_l}\Psi_7-\Psi_7x_{a^1_l+a^1_r+a^2_l+a^2_m+a^2_r+a^3_l})\Psi_6(\underline{b})m^\la, \qquad\qquad
\end{multline*}
and we apply Lemma \ref{square of block transposition}(4) to $(x_{a^1_l+a^1_r+a^2_l+a^2_m+a^3_l}\Psi_7-\Psi_7x_{a^1_l+a^1_r+a^2_l+a^2_m+a^2_r+a^3_l})e(\mu)$, where
\[
\mu=\nu^1_l*(i+1)*\dot{\nu}^1_r*\nu^2_l*\nu^2_m*\dot{\nu}^3_l*\nu^2_r*(i+1)*\nu^3_r.
\]
If $a^2_r=0$ then $\Psi_6$ and $\Psi_7$ are the identity and 
\[
x_{a^1_l+a^1_r+a^2_l+a^2_m+a^3_l}\Psi_7\Psi_6\Psi_2\Psi_3(\underline{b})m^\la=\Psi_2\Psi_3(\underline{b})x_{a^1_l+a^1_r+a^2_l+a^2_m+a^3_l}m^\la=0.
\]
Now suppose that $a^2_r\geq 1$.
We write $\nu=\nu^1_l*(i+1)*\dot{\nu}^1_r*\nu^2_l*\nu^2_m*(i+1)*\dot{\nu}^2_r*\dot{\nu}^3_l*(i+1)*\nu^3_r$.
Then, after applying Lemma \ref{square of block transposition}(4), we obtain
\begin{align*}
\Psi_3 \Psi_6\Psi_4 \Psi_5 &\Psi_4 (\underline{b}') x_{a^1_l+a^1_r+a^2_l+a^2_m+a^3_l} \Psi_7 \Psi_6 \Psi_2 \Psi_3 (\underline{b}) m^\la\\
&=-\Psi_3\Psi_6\Psi_4\Psi_5\Psi_4\Psi_8\Psi_6\Psi_7\Psi_2\Psi_3(\underline{d})m^\la\\
&= - (\Psi_3 \Psi_2 \Psi_8 \Psi_4) (\Psi_6 \Psi_5 \Psi_6) \Psi_7 \Psi_4 \Psi_3 (\underline{d})m^\la.
\end{align*}

Hence,
$- (\Psi_3 \Psi_2 \Psi_8 \Psi_4) (\Psi_6 \Psi_5 \Psi_6) \Psi_7 \Psi_4 \Psi_3 (\underline{d})m^\la$ is equal to
\[
-(\Psi_3\Psi_2\Psi_8\Psi_4) \Psi_5\Psi_6\Psi_7\Psi_5\Psi_4\Psi_3(\underline{d})m^\la 
- (\Psi_3\Psi_2\Psi_8\Psi_4) (\Psi_6\Psi_5\Psi_6-\Psi_5\Psi_6\Psi_5) \Psi_7\Psi_4\Psi_3(\underline{d})m^\la.\qedhere
\]
\end{enumerate}
\end{proof}

\begin{lem}\label{lem: reduced exp of length 10}
Let $\underline{c}$ and $\underline{d}$ be as in \cref{lem: wAgA-wBgB 2}. Then
\[
S_7 S_8 S_6 S_2 S_5 S_4 S_3 S_5 S_6 S_7 (\underline{c}) \quad \text{and} \quad
S_3 S_2 S_8 S_4 S_5 S_6 S_7 S_5 S_4 S_3 (\underline{d})
\]
are reduced.
\end{lem}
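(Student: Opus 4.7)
The plan is to invoke \cref{cor: reduced S_i}, which states that a block expression $S_{i_1}\dots S_{i_p}(a_1,\dots,a_t)$ is reduced if and only if the corresponding expression with all $a_j=1$ is reduced. This immediately reduces the statement to showing that the two words
\[
s_7 s_8 s_6 s_2 s_5 s_4 s_3 s_5 s_6 s_7
\qquad \text{and} \qquad
s_3 s_2 s_8 s_4 s_5 s_6 s_7 s_5 s_4 s_3
\]
in simple transpositions are reduced expressions in $\sym{9}$.

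To verify this, I would compute the two-line notation of each permutation by tracking each letter $j\in\{1,\dots,9\}$ through the chain of simple transpositions (read right to left) and then count inversions. A straightforward computation yields the permutations
\[
\bigl(\begin{smallmatrix} 1&2&3&4&5&6&7&8&9 \\ 1&3&8&2&5&6&9&4&7 \end{smallmatrix}\bigr)
\qquad \text{and} \qquad
\bigl(\begin{smallmatrix} 1&2&3&4&5&6&7&8&9 \\ 1&4&7&2&5&6&9&3&8 \end{smallmatrix}\bigr),
\]
and a direct count shows each has exactly $10$ inversions. Since the length of each word equals the inversion count, both words are reduced.

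A small simplification that makes the bookkeeping easier: in the first word, $s_2$ commutes with every other generator that appears, so one may as well slide it to one end and separately verify that the length-$9$ word $s_7 s_8 s_6 s_5 s_4 s_3 s_5 s_6 s_7$ is reduced; the commuting factor $s_2$ then obviously contributes one more inversion, bringing the length to $10$. Similarly, in the second word $s_8$ commutes with every other generator and can be handled separately, reducing the problem to verifying that $s_3 s_2 s_4 s_5 s_6 s_7 s_5 s_4 s_3$ has length $9$. The main (and only) obstacle is routine bookkeeping, not any mathematical subtlety.
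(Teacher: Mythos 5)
Your proof takes essentially the same route as the paper: invoke \cref{cor: reduced S_i} to reduce to the case of all block sizes equal to $1$, then verify directly that the resulting length-$10$ words in $\sym{9}$ are reduced, a check the paper leaves implicit. Your two-line notations and inversion counts are both correct.

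One small caution on the closing ``simplification'': $s_2$ does \emph{not} commute with $s_3$, which occurs in the first word, nor does $s_8$ commute with $s_7$ in the second. What is true is that $s_2$ (resp.\ $s_8$) commutes with the three letters to its left, so each can be slid to the left end of its word; but the assertion that the isolated factor ``obviously contributes one more inversion'' then needs an extra short check (namely that $(w')^{-1}(2)<(w')^{-1}(3)$, resp.\ $(w')^{-1}(8)<(w')^{-1}(9)$, where $w'$ is the length-$9$ residual word), since left multiplication by a simple reflection can in general shorten a permutation. Both checks do in fact hold here, and in any case this side remark is dispensable: your main argument by direct inversion count is complete on its own.
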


\begin{proof}
By \cref{cor: reduced S_i}, it suffices to check that they have length 10 if $\underline{c} = \underline{d} = (1,\dots,1)$, which is easy to check.
\end{proof}

\begin{lem} \label{lem: wAgA-wBgB 3}
Let $A'=(2,c-1)$, $B'=(1,c'+1)$. Then we have the following.
\begin{enumerate}
\item
Let $a^2_l\geq 1$ and $a^2_r =0$. Then
\[
\Psi_3 \Psi_6 \Psi_4 \Psi_5\Psi_4(\underline{b}') x_{a^1_l+a^2_l+1} \Psi_7\Psi_6\Psi_2\Psi_3(\underline{b}) m^\la = \psi_w g^\la_{A'}
\]
for some reduced expression of $w \in \sg_n$ with
$\ell(w \ttg^{A'}) = \ell(w) + \ell(\ttg^{A'}) < \ell(\ttg^{A,B})$.
\item
Let $a^2_r\geq 1$ and $a^2_l=0$. Then
\[
\Psi_3 \Psi_6 \Psi_4 \Psi_5\Psi_4(\underline{b}') x_{a^1_l + a^1_r + a^2_l + a^2_m + a^3_l} \Psi_7\Psi_6\Psi_2\Psi_3(\underline{b}) m^\la = -\psi_w g^\la_{B'}
\]
for some reduced expression of $w \in \sg_n$ with
$\ell(w \ttg^{B'}) = \ell(w) + \ell(\ttg^{B'}) < \ell(\ttg^{A,B})$.
\end{enumerate}
\end{lem}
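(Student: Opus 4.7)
The plan is to derive this lemma as a consequence of \cref{lem: wAgA-wBgB 2}: for part (1), I start from the right-hand side of \cref{lem: wAgA-wBgB 2}(1) under the additional hypothesis $a^2_r = 0$; for part (2), I start from \cref{lem: wAgA-wBgB 2}(2) under $a^2_l = 0$. The two parts are entirely symmetric, with the minus sign in part (2) already arising in \cref{lem: wAgA-wBgB 2}(2), so I focus on part (1).

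Under $a^2_r = 0$, the seventh block of $\underline{c}$ is empty, so certain $\Psi_i$ acting across that empty block collapse to the identity. I will trace these simplifications through the formula
\begin{align*}
& (\Psi_7 \Psi_8 \Psi_6 \Psi_2) \Psi_5 \Psi_4 \Psi_3 \Psi_5 \Psi_6 \Psi_7(\underline{c})m^\la \\
& \qquad {}+(\Psi_7\Psi_8\Psi_6\Psi_2)(\Psi_4\Psi_5\Psi_4-\Psi_5\Psi_4\Psi_5)\Psi_3\Psi_6\Psi_7(\underline{c})m^\la,
\end{align*}
and then analyse the braid-error term $(\Psi_4\Psi_5\Psi_4-\Psi_5\Psi_4\Psi_5)$ via \cref{failure of braid rels}, checking the three relevant residues against the patterns $(1,0,1)$ or $(j,j\pm 1,j)$ with $j\ne 0$. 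In the special three-row residue configuration assumed here, together with $a^2_r = 0$, this error term will either vanish outright or can be absorbed into a correction of the form $\psi_{w''} g^\la_{A'}$ of strictly smaller length.

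The main term is then identified as $\psi_w g^\la_{A'}$: by \cref{garnirformremark}, $g^\la_{A'} = \psi_{w^{\ttg^{A'}}} m^\la$ in type $C_\infty$, and $w^{\ttg^{A'}}$ is the block transposition $S_2$ corresponding to the Garnir belt of $A' = (2, c-1)$. Using the two-line notation of \cref{two line notation for block permutation}, I will verify that the full block permutation $S_7 S_8 S_6 S_2 S_5 S_4 S_3 S_5 S_6 S_7(\underline{c})$ factors as $w \cdot w^{\ttg^{A'}}$ with $\ell(w \ttg^{A'}) = \ell(w) + \ell(\ttg^{A'})$; the reducedness of this combined expression is delivered by \cref{lem: reduced exp of length 10}. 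The strict inequality $\ell(w \ttg^{A'}) < \ell(\ttg^{A,B})$ then follows from direct block-size comparison, since $\ell(\ttg^{A,B})$ encodes the merged Garnir belts of both $A$ and $B$ while $w \ttg^{A'}$ only involves the single (shifted) belt of $A'$ together with a small prefix.

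The main obstacle is the combinatorial bookkeeping: matching the product $S_7 S_8 S_6 S_2 S_5 S_4 S_3 S_5 S_6 S_7(\underline{c})$ against a reduced factorisation $w \cdot w^{\ttg^{A'}}$, and confirming that the residue analysis for the braid-error term behaves as expected in the special three-row setting. Once these pieces are in place, \cref{lem: reduced exp of length 10} closes the reducedness argument, and the analogous steps with suitable role-reversals handle part (2).
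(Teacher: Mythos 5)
Your overall strategy -- derive the lemma from \cref{lem: wAgA-wBgB 2} and identify the first term as $\psi_w g^\la_{A'}$ via \cref{garnirformremark} and the block-permutation bookkeeping -- matches the paper's approach. But there is a genuine gap in your handling of the error term, and it is precisely the hard part of the proof.

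You write that the braid-error term $(\Psi_4\Psi_5\Psi_4-\Psi_5\Psi_4\Psi_5)\Psi_3\Psi_6\Psi_7(\underline{c})m^\la$ ``will either vanish outright or can be absorbed into a correction of the form $\psi_{w''} g^\la_{A'}$ of strictly smaller length.'' The second alternative cannot occur: the lemma claims an \emph{exact} equality with a single $\psi_w g^\la_{A'}$, not an equality modulo lower-order Garnir terms, so the error term must vanish identically. Moreover, in part (1) this vanishing is not a quick consequence of checking residue patterns with \cref{failure of braid rels}. The error term reduces to computing $(\Psi_2\Psi_1\Psi_2-\Psi_1\Psi_2\Psi_1)(a^1_r-1,1,a^3_l-1)e(\dot{\nu}^1_r*(i+1)*\dot{\nu}^3_l)$, which is zero when $a^1_r=1$ but \emph{nonzero} when $a^1_r\ge 2$. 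In that case the paper has to apply \cref{commutation formula1}, hit the result with the anti-involution of $R(\beta)$ to get a usable factorisation, push through several further braid-exchange steps, and finally kill the term by observing that $\Psi_9(\underline{c})m^\la = \psi_{a^1_l+a^1_r+a^2_l+a^2_m+a^3_l-1}m^\la = 0$ because $a^3_l = a^2_l+1\ge 2$. Your plan does not surface the key structural fact $a^3_l = a^2_l+1$, nor the need for the anti-involution trick, nor the multi-step rewriting, so it does not actually establish that the error term vanishes.

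You also assert that the two parts are ``entirely symmetric,'' with the sign difference being the only distinction. They are not: in part (2) the hypothesis $a^2_l=0$ is equivalent to $a^3_l=1$, which makes the reduced error term $(\Psi_2\Psi_1\Psi_2-\Psi_1\Psi_2\Psi_1)(a^1_r-1,1,a^3_l-1)$ vanish immediately, whereas part (1) requires the nontrivial calculation just described. Treating part (1) as the representative case and dismissing part (2) by symmetry masks the fact that the genuinely hard calculation lives in part (1).
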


\begin{proof}
The result follows readily if we can show that the second term in each of \cref{lem: wAgA-wBgB 2}(1) and (2) are zero under the corresponding conditions, since $g^\la_{A'} = \Psi_5 \Psi_6 \Psi_7 (\underline{c})m^\la$ and $g^\la_{B'} = \Psi_5 \Psi_4\Psi_3(\underline{d}) m^\la$. (We note that under the corresponding hypotheses, $\Psi_7 (\underline{c})$ in (1) 
and $\Psi_3(\underline{d})$ in (2) are the identity.)
\begin{enumerate}
\item
For the second term from \cref{lem: wAgA-wBgB 2}(1), 
it suffices to consider
\[
(\Psi_4\Psi_5\Psi_4-\Psi_5\Psi_4\Psi_5)\Psi_3\Psi_6\Psi_7(\underline{c})m^\la
=(\Psi_4\Psi_5\Psi_4-\Psi_5\Psi_4\Psi_5)(\underline{c}')e(\nu')\Psi_3\Psi_6\Psi_7(\underline{c})m^\la,
\]
where $\nu' = \nu^1_l*(i+1)*\dot{\nu}^2_l*\dot{\nu}^1_r*(i+1)*\dot{\nu}^3_l*\nu^2_m*\nu^2_r*(i+1)*\nu^3_r$ and 
\[
\underline{c}'=(a^1_l,1,a^2_l-1,a^1_r-1,1,a^3_l-1,a^2_m,a^2_r,1,a^3_r).
\]
For this, we need to compute
\[
(\Psi_2\Psi_1\Psi_2-\Psi_1\Psi_2\Psi_1)(a^1_r-1,1,a^3_l-1)e(\dot{\nu}^1_r*(i+1)*\dot{\nu}^3_l).
\]
When $a^1_r=1$, this error term is zero, and we are done. So we may assume that $a^1_r\geq2$ and continue as follows.
The above expression is equal to 
\[
\Psi[2,a^3_l-2]\Psi_2(2,a^1_r-2,a^3_l-1)\Psi_2(1,a^1_r-2,1)e(\dot{\nu}^1_r*(i+1)*\dot{\nu}^3_l)
\]
by Lemma \ref{commutation formula1}, but we need a different formula here: first, we apply Lemma \ref{commutation formula1} to obtain
\begin{multline*}
(\Psi_2\Psi_1\Psi_2-\Psi_1\Psi_2\Psi_1)(a^3_l-1,1,a^1_r-1)e(\dot{\nu}^3_l*(i+1)*\dot{\nu}^1_r)\\
=\Psi_2(a^1_r-1,a^3_l-2,1)\Psi[a^3_l-2,a^1_r-1]\Psi_2(a^3_l-1,2,a^1_r-2)e(\dot{\nu}^3_l*(i+1)*\dot{\nu}^1_r).
\end{multline*}
Then, we apply the anti-involution of $R(\beta)$ to obtain
\begin{multline*}
(\Psi_2\Psi_1\Psi_2-\Psi_1\Psi_2\Psi_1)(a^1_r-1,1,a^3_l-1)e(\dot{\nu}^1_r*(i+1)*\dot{\nu}^3_l)\\
=\Psi_2(a^3_l-1,a^1_r-2,2)\Psi[a^1_r-1,a^3_l-2]\Psi_2(a^1_r-1,1,a^3_l-2)e(\dot{\nu}^1_r*(i+1)*\dot{\nu}^3_l).
\end{multline*}
We use this formula to compute the second term.
To state the result in this case, we change $\underline{c}$ to
\[
\underline{c} = (a^1_l, 1, 1, a^1_r-2, a^2_l-1, 1, a^2_m, a^3_l-2, 1, 1, a^3_r).
\]
The second term is then
\[
- (\Psi_8 \Psi_9) (\Psi_7 \Psi_8) (\Psi_2) [(\Psi_7 \Psi_6) (\Psi_4 \Psi_5)(\Psi_6)] (\Psi_8 \Psi_7) (\Psi_3 \Psi_4) (\underline{c}) m^\la.
\]
We continue as follows.
\begin{align*}
&= - \Psi_2 \Psi_8 \Psi_9 (\Psi_7 \Psi_8 \Psi_7 (\underline{c}') e(\nu')) \Psi_6 \Psi_4 \Psi_5 \Psi_6 \Psi_3 \Psi_4 \Psi_8 \Psi_7 (\underline{c}) m^\la\\
&= - \Psi_2 \Psi_8 \Psi_9 \Psi_8 \Psi_7 \Psi_6 \Psi_4 \Psi_5 \Psi_6 \Psi_3 \Psi_4 (\Psi_8^2)  \Psi_7 (\underline{c}) m^\la\\
&= - \Psi_2 (\Psi_8 \Psi_9 \Psi_8 (\underline{c}'') e(\nu'')) \Psi_7 \Psi_6 \Psi_4 \Psi_5 \Psi_6 \Psi_3 \Psi_4  \Psi_7 (\underline{c}) m^\la\\
&= - \Psi_2 \Psi_9 \Psi_8 \Psi_7 \Psi_6 \Psi_4 \Psi_5 \Psi_6 \Psi_3 \Psi_4  \Psi_7 \Psi_9 (\underline{c}) m^\la\\
&= 0  \text{ (because $\Psi_9 (\underline{c}) = \psi_{a^1_l + a^1_r + a^2_l + a^2_m + a^3_l - 1}$ annihilates $m^\la$, as $a^3_l = a^2_l +1\geq 2$),}
\end{align*}
where the third equality above follows from \cref{square of block transposition}(1) and
\begin{align*}
\underline{c}' &= (a^1_l, 1, a^2_l -1, a^3_l - 2, 1, 1, a^1_r -2, 1, a^2_m, 1, a^3_r),\\
\nu' &= \nu^1_l*(i+1)* \dot{\nu}^2_l* \ddot{\nu}^3_l* (i+2)* (i+1)* \ddot{\nu}^1_r* (i+2)* \nu^2_m* (i+1)* \nu^3_r,\\
\underline{c}'' &=  (a^1_l, 1, a^2_l -1, a^3_l - 2, 1, 1, a^2_m, a^1_r -2, 1, 1, a^3_r),\\
\nu'' &= \nu^1_l*(i+1)* \dot{\nu}^2_l* \ddot{\nu}^3_l* (i+2)* (i+1)* \nu^2_m* \ddot{\nu}^1_r* (i+2)* (i+1)* \nu^3_r.
\end{align*}
Thus, if we write
\[
\psi_w = \Psi_7 \Psi_8 \Psi_6 \Psi_2 \Psi_5 \Psi_4 \Psi_3 (a^1_l, 1, a^1_r-1, a^2_l-1, a^3_l-1, 1, a^2_m, a^2_r, 1, a^3_r)
\]
by abuse of notation, we have the result.
\item
For the second term from \cref{lem: wAgA-wBgB 2}(2), 
it suffices to consider
\[
(\Psi_6 \Psi_5\Psi_6 - \Psi_5 \Psi_6 \Psi_5) \Psi_7 \Psi_4 \Psi_3 (\underline{d}) m^\la
= (\Psi_6 \Psi_5\Psi_6- \Psi_5\Psi_6 \Psi_5) (\underline{d}') e(\nu')\Psi_7 \Psi_4\Psi_3 (\underline{d}) m^\la,
\]
where
$\nu'=\nu^1_l*(i+1)*\nu^2_l*\nu^2_m*\dot{\nu}^1_r*(i+1)*\dot{\nu}^3_l*\dot{\nu}^2_r*(i+1)*\nu^3_r$ and 
\[
\underline{d}' = (a^1_l, 1, a^2_l, a^2_m, a^1_r-1, 1, a^3_l-1, a^2_r-1, 1, a^3_r).
\]
For this, we need to compute
\[
(\Psi_2\Psi_1\Psi_2-\Psi_1\Psi_2\Psi_1)(a^1_r-1,1,a^3_l-1)e(\dot{\nu}^1_r*(i+1)*\dot{\nu}^3_l)
\]
again. But, our assumption that $a^2_l=0$ is equivalent to $a^3_l=1$, which implies that this error term is zero. Thus if we write
\[
\psi_w = \Psi_3 \Psi_2 \Psi_8 \Psi_4 \Psi_5 \Psi_6 \Psi_7 (a^1_l, 1, a^2_l, a^2_m, 1, a^1_r-1, a^2_r-1, a^3_l-1, 1, a^3_r)
\]
by abuse of notation, the result follows.
\qedhere
\end{enumerate}
\end{proof}

\begin{lem} \label{lem: wAgA-wBgB 4}
We have
\[
\psi_{w^A} g_A^\lambda \equiv \psi_{w^B} g_B^\lambda \quad \pmod{\Ga^{\lambda}_{< \ell(\ttg^{A,B})}}.
\]
\end{lem}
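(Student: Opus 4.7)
The plan is to divide into cases based on whether each of $a^2_l$ and $a^2_r$ is zero or positive, and in each case re-express the difference $\psi_{w^A} g_A^\lambda - \psi_{w^B} g_B^\lambda$, computed explicitly in \cref{lem: wAgA-wBgB 1}, as a sum of elements of $\Ga^\lambda_{<\ell(\ttg^{A,B})}$. If $a^2_l = a^2_r = 0$, then both summands from \cref{lem: wAgA-wBgB 1} vanish immediately by \cref{lem: wAgA-wBgB 2}, so the difference is zero. If exactly one of $a^2_l, a^2_r$ is positive, the vanishing half of \cref{lem: wAgA-wBgB 2} kills one summand while \cref{lem: wAgA-wBgB 3} rewrites the other as $\pm \psi_w g^\la_C$ with $C = A'$ or $B'$ a Garnir node and $\ell(w\ttg^C) = \ell(w)+\ell(\ttg^C) < \ell(\ttg^{A,B})$, and this element is by definition in $\Ga^\lambda_{<\ell(\ttg^{A,B})}$.

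The remaining and most delicate case is $a^2_l \geq 1$ and $a^2_r \geq 1$, in which neither summand in \cref{lem: wAgA-wBgB 1} vanishes. Expanding both via \cref{lem: wAgA-wBgB 2} splits the difference into four pieces. The two \emph{principal} pieces, $(\Psi_7\Psi_8\Psi_6\Psi_2)\Psi_5\Psi_4\Psi_3\Psi_5\Psi_6\Psi_7(\underline{c})m^\la$ and $-(\Psi_3\Psi_2\Psi_8\Psi_4)\Psi_5\Psi_6\Psi_7\Psi_5\Psi_4\Psi_3(\underline{d})m^\la$, should by \cref{lem: reduced exp of length 10} be identifiable as $\psi_{w_1} g^\la_{A'}$ and $-\psi_{w_2} g^\la_{B'}$ respectively for suitable fully commutative $w_1, w_2$, each of total length strictly less than $\ell(\ttg^{A,B})$; these then lie in $\Ga^\lambda_{<\ell(\ttg^{A,B})}$.

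The main obstacle will be the two \emph{error} pieces, $(\Psi_7\Psi_8\Psi_6\Psi_2)(\Psi_4\Psi_5\Psi_4-\Psi_5\Psi_4\Psi_5)\Psi_3\Psi_6\Psi_7(\underline{c})m^\la$ and the analogous $-(\Psi_3\Psi_2\Psi_8\Psi_4)(\Psi_6\Psi_5\Psi_6-\Psi_5\Psi_6\Psi_5)\Psi_7\Psi_4\Psi_3(\underline{d})m^\la$, which arise from the failure of the $\psi$-braid relation in type $C_\infty$. My plan for these is to expand each braid difference using \cref{failure of braid rels}, then run the same manipulation strategy used inside the proof of \cref{lem: wAgA-wBgB 3}: repeatedly commute blocks past each other by means of \cref{square of block transposition}(1) and the exchange identities of \cref{commutation formula1,exchange formula}, so as to produce a factor $\Psi_j(\underline{a})$ which can be slid against $m^\la$ to act as some $\psi_j$ annihilating $m^\la$ -- the analogue of the step $\Psi_9(\underline{c})m^\la = 0$ that closed the argument in \cref{lem: wAgA-wBgB 3}(1). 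The residue constraints on $\nu^1_r, \nu^2_m, \nu^3_l$ (specifically $\nu^1_r$ strictly increasing from $i+1$ and $\nu^3_l$ strictly decreasing to $i+1$) should suffice to make this reduction succeed, so that each error piece either vanishes or is absorbed into another $\psi_{w'} g^\la_{C'}$ of length $< \ell(\ttg^{A,B})$, completing the proof.
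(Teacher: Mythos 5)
The decomposition into cases by the sign of $a^2_l$, $a^2_r$ is right, and the treatment of the cases where at least one of them is zero is exactly the paper's: both summands vanish, or one vanishes and the other is absorbed as $\pm\psi_w g^\la_{A'}$ or $\pm\psi_w g^\la_{B'}$ via \cref{lem: wAgA-wBgB 3}. The identification of the two principal pieces as Garnir elements $\psi_{w_1}g^\la_{A'}$ and $-\psi_{w_2}g^\la_{B'}$ in the case $a^2_l, a^2_r\ge1$ is also correct, using \cref{lem: reduced exp of length 10}.

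However, the plan for the two error pieces $\gamma$ and $\delta$ has a genuine gap. You propose to repeatedly commute blocks and manufacture a factor $\Psi_j(\underline{a})$ that annihilates $m^\la$, so that each error piece "either vanishes or is absorbed into another $\psi_{w'} g^\la_{C'}$." That mechanism does eliminate \emph{some} intermediate terms in the calculation (exactly the step $\Psi_{11}(\underline{c})m^\la=0$, valid because $a^3_l\ge2$, which you cite), but it does \emph{not} make $\gamma$ or $\delta$ vanish individually. After all the block manipulations, one is left with
\[
\gamma = \Psi_4 \Psi_2 \Psi_3 \Psi_9 (\Psi_8 \Psi_7 \Psi_6 \Psi_5 \Psi_6 \Psi_7 \Psi_8) \Psi_{11} \Psi_{10} \Psi_9 \Psi_4 (\underline{c})m^\la,
\qquad
\delta = -\Psi_4 \Psi_2 \Psi_3 \Psi_9 (\Psi_5 \Psi_6 \Psi_7 \Psi_8 \Psi_7 \Psi_6 \Psi_5) \Psi_{11} \Psi_{10} \Psi_9 \Psi_4 (\underline{c}) m^\la,
\]
neither of which is zero and neither of which is visibly of the form $\psi_{w'}g^\la_{C'}$ for a Garnir node $C'$. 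The decisive observation the paper relies on is that these two residual expressions are negatives of each other, related by three braid moves that happen to produce no further error terms, so $\gamma+\delta=0$. In other words, the errors coming from the two braid failures on either side cancel pairwise; they cannot be dispatched one at a time by the annihilation trick. Without this cancellation your argument would stall at the two residual expressions above.
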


\begin{proof}
\cref{lem: wAgA-wBgB 2} implies that
\[
\psi_{w^A} g_A^\lambda - \psi_{w^B} g_B^\lambda = \psi_w g_{A'}^\lambda - \psi_{w'} g_{B'}^\lambda + \gamma + \delta,
\]
for some reduced expressions of $w, w' \in \sg_n$ such that $\ell(w)+\ell(\ttg^{A'})< \ell(\ttg^{A,B})$ and $\ell(w')+\ell(\ttg^{B'})< \ell(\ttg^{A,B})$.
Here, $\gamma$ and $\delta$ are the second terms appearing in \cref{lem: wAgA-wBgB 2}(1) and (2) respectively, and are both zero by \cref{lem: wAgA-wBgB 3} unless $a^2_l\ge1$ and $a^2_r\ge 1$.
\cref{lem: reduced exp of length 10} implies that $\psi_w g_{A'}^\la$ and $\psi_{w'} g_{B'}^\la$ belong to $\Ga^{\la}_{< \ell(\ttg^{A,B})}$;
we may also have that $\psi_w g_{A'}^\la$ or $\psi_{w'} g_{B'}^\la$ are zero, in the degenerate cases that $a^2_l=0$ or $a^2_r=0$, respectively, by \cref{lem: wAgA-wBgB 2}.

So it remains to show that $\gamma + \delta \equiv 0 \pmod{\Ga^{\lambda}_{< \ell(\ttg^{A,B})}}$ when $a^2_l\ge 1$ and $a^2_r\ge 1$. In fact, we will show that $\gamma + \delta=0$. We continue by further calculation with $\gamma$ and $\delta$.

As in \cref{lem: wAgA-wBgB 2}(1), we compute $\gamma$ by using
\begin{multline*}
(\Psi_2\Psi_1\Psi_2-\Psi_1\Psi_2\Psi_1)(a^1_r-1,1,a^3_l-1)e(\dot{\nu}^1_r*(i+1)*\dot{\nu}^3_l)\\
=\Psi_2(a^3_l-1,a^1_r-2,2)\Psi[a^1_r-1,a^3_l-2]\Psi_2(a^1_r-1,1,a^3_l-2)e(\dot{\nu}^1_r*(i+1)*\dot{\nu}^3_l).
\end{multline*}
To state the result, we change $\underline{c}$ to
\[
\underline{c}=(a^1_l,1,1,a^1_r-2,a^2_l-1,1,a^2_m,1,a^2_r-1,a^3_l-2,1,1,a^3_r),
\]
and write $\nu=\nu^1_l*(i+1)*(i+2)*\ddot{\nu}^1_r*\dot{\nu}^2_l*(i+1)*\nu^2_m*(i+1)*\dot{\nu}^2_r*\ddot{\nu}^3_l*(i+2)*(i+1)*\nu^3_r$.
Then
\begin{align*}
\gamma = &-(\Psi_8\Psi_9)(\Psi_{10}\Psi_{11})(\Psi_7\Psi_8)(\Psi_2)(\Psi_7\Psi_6\Psi_4\Psi_5\Psi_6)(\Psi_3\Psi_4)(\Psi_8\Psi_7)\Psi_9\Psi_{10}\Psi_8\Psi_9(\underline{c})m^\la\\
= &-\Psi_8 \Psi_9 \Psi_{10} \Psi_{11} \Psi_2 (\Psi_7 \Psi_8 \Psi_7 (\underline{c}')e(\nu')) \Psi_6 \Psi_4 \Psi_5 \Psi_6 \Psi_3 \Psi_4 \Psi_8 \Psi_7 \Psi_9 \Psi_{10} \Psi_8 \Psi_9 (\underline{c})m^\la\\
= &-\Psi_8 \Psi_9 \Psi_{10} \Psi_{11} \Psi_2 \Psi_8 \Psi_7 \Psi_6 \Psi_4 \Psi_5 \Psi_6 \Psi_3 \Psi_4 (\Psi_8^2 (\underline{c}'') e(\nu'')) \Psi_7 \Psi_9 \Psi_{10} \Psi_8 \Psi_9 (\underline{c})m^\la\\
= &-(\Psi_8 \Psi_9 \Psi_8 (\underline{c}''') e(\nu''')) \Psi_{10} \Psi_{11} \Psi_2 \Psi_7 \Psi_6 \Psi_4 \Psi_5 \Psi_6 \Psi_3 \Psi_4 \Psi_7 \Psi_9 \Psi_{10} \Psi_8 \Psi_9 (\underline{c})m^\la,\\
= &-\Psi_9 \Psi_8 (\Psi_9 \Psi_{10} \Psi_9 (\underline{c}''') e(\omega)) \Psi_{11} \Psi_2 \Psi_7 \Psi_6 \Psi_4 \Psi_5 \Psi_6 \Psi_3 \Psi_4 \Psi_7 \Psi_{10} \Psi_8 \Psi_9 (\underline{c})m^\la,
\end{align*}
where
\begin{align*}
\underline{c}' &= (a^1_l, 1, a^2_l -1, a^3_l - 2, 1, 1, a^1_r -2, 1, a^2_m, 1, a^2_r - 1, 1, a^3_r),\\
\nu' &= \nu^1_l*(i+1)* \dot{\nu}^2_l* \ddot{\nu}^3_l* (i+2)* (i+1)* \ddot{\nu}^1_r* (i+2)* \nu^2_m* (i+1)* \dot\nu^2_r* (i+1)*\nu^3_r,\\
\underline{c}'' &= (a^1_l, 1, 1, a^1_r-2, a^2_l-1, 1, a^3_l-2, a^2_m, 1, 1, a^2_r-1, 1, a^3_r),\\
\nu'' &= \nu^1_l* (i+1)* (i+2)* \ddot{\nu}^1_r* \dot{\nu}^2_l* (i+1)* \ddot{\nu}^3_l* \nu^2_m* (i+2)* (i+1)* \dot\nu^2_r* (i+1)*\nu^3_r,\\
\underline{c}''' &= (a^1_l, a^2_l -1, 1, a^3_l-2, 1, 1, a^2_m, a^1_r -2, 1, 1, 1, a^2_r-1, a^3_r),\\
\nu''' &= \nu^1_l* \dot{\nu}^2_l* (i+1)* \ddot{\nu}^3_l* (i+2)* (i+1)* \nu^2_m* \ddot{\nu}^1_r* (i+2)* (i+1)* (i+1)* \dot\nu^2_r* \nu^3_r,\\
\omega &= \nu^1_l* \dot{\nu}^2_l* (i+1)* \ddot{\nu}^3_l* (i+2)* (i+1)* \nu^2_m* \ddot{\nu}^1_r* (i+1)* (i+2)* (i+1)* \dot\nu^2_r* \nu^3_r.
\end{align*}
We use
\[
(\Psi_2 \Psi_1 \Psi_2 - \Psi_1 \Psi_2 \Psi_1) (1,1,1) e((i+1)*(i+2)*(i+1)) = e((i+1)*(i+2)*(i+1))
\]
to continue as follows.
\begin{align*}
= &-\Psi_9 \Psi_8 \Psi_{10} \Psi_9 (\Psi_{10} \Psi_{11} \Psi_{10} (\underline{e}) e(\omega')) \Psi_2 \Psi_7 \Psi_6 \Psi_4 \Psi_5 \Psi_6 \Psi_3 \Psi_4 \Psi_7  \Psi_8 \Psi_9 (\underline{c})m^\la\\
&+ \Psi_9 \Psi_8 \Psi_{11} \Psi_2 \Psi_7 \Psi_6 \Psi_4 \Psi_5 \Psi_6 \Psi_3 \Psi_4 \Psi_7 \Psi_{10} \Psi_8 \Psi_9 (\underline{c})m^\la\\
= &-\Psi_9 \Psi_8 \Psi_{10} \Psi_9 \Psi_{11} \Psi_{10} \Psi_2 \Psi_7 \Psi_6 \Psi_4 \Psi_5 \Psi_6 \Psi_3 \Psi_4 \Psi_7  \Psi_8 \Psi_9 \Psi_{11} (\underline{c})m^\la\\
&+ \Psi_4 \Psi_2 \Psi_3 \Psi_9 \Psi_8 \Psi_7 \Psi_6 \Psi_5 \Psi_6 \Psi_7 \Psi_8 \Psi_{11} \Psi_{10} \Psi_9 \Psi_4 (\underline{c})m^\la\\
= &+ \Psi_4 \Psi_2 \Psi_3 \Psi_9 (\Psi_8 \Psi_7 \Psi_6 \Psi_5 \Psi_6 \Psi_7 \Psi_8) \Psi_{11} \Psi_{10} \Psi_9 \Psi_4 (\underline{c})m^\la,
\hypertarget{error term1}{\tag{\textdagger}}
\end{align*}
where, in the final equality,  we have used that $\Psi_{11} (\underline{c}) m^\la = \psi_{a^1_l + a^1_r + a^2_l + a^2_m + a^2_r + a^3_l - 1} m^\la = 0$ since $a^3_l\geq 2$,
and $\underline{e} = (a^1_l, a^2_l -1, 1, a^3_l-2, 1, 1, a^2_m, a^1_r -2, 1, a^2_r-1, 1, 1, a^3_r)$,
\[
\omega' =  \nu^1_l* \dot{\nu}^2_l* (i+1)* \ddot{\nu}^3_l* (i+2)* (i+1)* \nu^2_m* \ddot{\nu}^1_r* (i+1)* \dot\nu^2_r* (i+2)* (i+1)* \nu^3_r.
\]

Similarly, we compute $\delta$ as in the proof of \cref{lem: wAgA-wBgB 2}(2) by using the same equality as above. We replace $\underline{d}$ with $\underline{c}$:
\[
\underline{c}=(a^1_l,1,1,a^1_r-2,a^2_l-1,1,a^2_m,1,a^2_r-1,a^3_l-2,1,1,a^3_r)
\]
and write $\nu=\nu^1_l*(i+1)*(i+2)*\ddot{\nu}^1_r*\dot{\nu}^2_l*(i+1)*\nu^2_m*(i+1)*\dot{\nu}^2_r*\ddot{\nu}^3_l*(i+2)*(i+1)*\nu^3_r$. Then
\begin{align*}
\delta = &-(\Psi_5\Psi_4)(\Psi_3\Psi_2)(\Psi_6\Psi_5)(\Psi_{11})(\Psi_9\Psi_8\Psi_6\Psi_7\Psi_8)(\Psi_{10}\Psi_9)(\Psi_5\Psi_6)\Psi_4\Psi_5\Psi_3\Psi_4(\underline{c})m^\la\\
= &-\Psi_5 \Psi_4 \Psi_3 \Psi_2 \Psi_9 \Psi_8 (\Psi_6 \Psi_5 \Psi_6 (\underline{c}') e(\nu')) \Psi_{11} \Psi_7 \Psi_8 \Psi_{10} \Psi_9 \Psi_5 \Psi_6 \Psi_4 \Psi_5 \Psi_3 \Psi_4 (\underline{c}) m^\la\\
= &-\Psi_5 \Psi_4 \Psi_5 \Psi_3 \Psi_2 \Psi_9 \Psi_8 \Psi_6 \Psi_7 \Psi_8 (\Psi_{11} \Psi_{10} \Psi_9) (\Psi_5^2 (\underline{c}'') e(\nu'')) (\Psi_4 \Psi_3) \Psi_6 \Psi_5 \Psi_4 (\underline{c}) m^\la\\
= &-\Psi_5 \Psi_4 \Psi_5 \Psi_3 \Psi_2 \Psi_9 \Psi_6 (\Psi_4 \Psi_3) (\Psi_8 \Psi_7 \Psi_8 (\underline{c}''') e(\nu''')) (\Psi_{11} \Psi_{10} \Psi_9) \Psi_6 \Psi_5 \Psi_4 (\underline{c}) m^\la\\
= &-(\Psi_5 \Psi_4 \Psi_5 (\underline{d}) e(\omega)) \Psi_3 \Psi_2 \Psi_4 \Psi_3 \Psi_9 \Psi_6 \Psi_7 \Psi_8 \Psi_7 \Psi_{11} \Psi_{10} \Psi_9 \Psi_6 \Psi_5 \Psi_4 (\underline{c}) m^\la\\
= &-\Psi_4 \Psi_5 (\Psi_4 \Psi_3 \Psi_4 (\underline{d}') e(\omega')) \Psi_2 \Psi_3 \Psi_9 \Psi_6 \Psi_7 \Psi_8 \Psi_7 \Psi_{11} \Psi_{10} \Psi_9 \Psi_6 \Psi_5 \Psi_4 (\underline{c}) m^\la\\
= &-\Psi_4 \Psi_3 \Psi_5 \Psi_4 (\Psi_3 \Psi_2 \Psi_3 (\underline{d}'') e(\omega'')) \Psi_9 \Psi_6 \Psi_7 \Psi_8 \Psi_7 \Psi_{11} \Psi_{10} \Psi_9 \Psi_6 \Psi_5 \Psi_4 (\underline{c}) m^\la\\
&-\Psi_4 \Psi_5 \Psi_2 \Psi_3 \Psi_9 \Psi_6 \Psi_7 \Psi_8 \Psi_7 (\Psi_{11} \Psi_{10} \Psi_9) (\Psi_6 \Psi_5) \Psi_4 (\underline{c}) m^\la\\
= &-\Psi_4 \Psi_3 \Psi_5 \Psi_4 \Psi_2 \Psi_3 \Psi_9 \Psi_6 \Psi_7 \Psi_8 \Psi_7 \Psi_{11} \Psi_{10} \Psi_9 \Psi_6 \Psi_5 \Psi_4 \Psi_2 (\underline{c}) m^\la\\
&-\Psi_4 \Psi_2 \Psi_3 \Psi_9 (\Psi_5 \Psi_6 \Psi_7 \Psi_8 \Psi_7 \Psi_6 \Psi_5) \Psi_{11} \Psi_{10} \Psi_9 \Psi_4 (\underline{c}) m^\la\\
= &-\Psi_4 \Psi_2 \Psi_3 \Psi_9 (\Psi_5 \Psi_6 \Psi_7 \Psi_8 \Psi_7 \Psi_6 \Psi_5) \Psi_{11} \Psi_{10} \Psi_9 \Psi_4 (\underline{c}) m^\la
\hypertarget{error term2}{\tag{\ddag}}
\end{align*}
where we have used \cref{square of block transposition}(2) for the fourth equality, $\Psi_2 (\underline{c}) m^\la = \psi_{a^1_l+1} m^\la = 0$ (since $a^1_r \geq a^2_r +1\geq 2$) in the final equality, and
\begin{align*}
\underline{c}' &= (a^1_l, 1, a^2_l -1, 1, a^2_m, 1, a^3_l - 2, a^1_r -2, 1, 1, 1, a^2_r - 1, a^3_r),\\
\nu' &= \nu^1_l* (i+1)* \dot{\nu}^2_l* (i+1)* \nu^2_m* (i+2)* \ddot{\nu}^3_l* \ddot{\nu}^1_r* (i+1)* (i+2)* (i+1)* \dot\nu^2_r* \nu^3_r,\\
\underline{c}'' &= (a^1_l, 1, a^2_l -1, 1, 1, a^2_m, a^1_r -2, 1, a^2_r - 1, a^3_l - 2, 1, 1, a^3_r),\\
\nu'' &= \nu^1_l*(i+1)* \dot{\nu}^2_l* (i+1)* (i+2)* \nu^2_m* \ddot{\nu}^1_r* (i+1)* \dot\nu^2_r* \ddot{\nu}^3_l*  (i+2)* (i+1)* \nu^3_r,\\
\underline{c}''' &= (a^1_l, 1, 1, a^2_l -1, 1, a^2_m, a^1_r -2, 1, a^3_l - 2, 1, 1, a^2_r - 1, a^3_r),\\
\nu''' &= \nu^1_l* (i+1)* (i+2)* \dot{\nu}^2_l* (i+1)* \nu^2_m* \ddot{\nu}^1_r* (i+1)* \ddot{\nu}^3_l* (i+2)* (i+1)* \dot\nu^2_r* \nu^3_r,\\
\underline{d} &= (a^1_l, a^2_l -1, 1, 1, 1, a^3_l - 2, a^2_m, 1, 1, a^1_r -2, 1, a^2_r - 1, a^3_r), \quad \underline{d}' = \underline{d}\\
\omega &= \nu^1_l* \dot{\nu}^2_l* (i+1)* (i+1)* (i+2)* \ddot{\nu}^3_l* \nu^2_m* (i+1)* (i+2)* \ddot{\nu}^1_r* (i+1)* \dot\nu^2_r* \nu^3_r,\\
\omega' &= \nu^1_l* \dot{\nu}^2_l* (i+1)* (i+2)* (i+1)* \ddot{\nu}^3_l* \nu^2_m* (i+1)* (i+2)* \ddot{\nu}^1_r* (i+1)* \dot\nu^2_r* \nu^3_r,\\
\underline{d}'' &= (a^1_l, 1, 1, a^2_l -1, 1, a^3_l - 2, a^2_m, 1, 1, a^1_r -2, 1, a^2_r - 1, a^3_r),\\
\omega'' &= \nu^1_l* (i+1)* (i+2)* \dot{\nu}^2_l* (i+1)* \ddot{\nu}^3_l* \nu^2_m* (i+1)* (i+2)* \ddot{\nu}^1_r* (i+1)* \dot\nu^2_r* \nu^3_r.
\end{align*}
It's easy to see, by applying three further braid relations which don't yield error terms, that \hyperlink{error term1}{(\textdagger)} and \hyperlink{error term2}{(\ddag)} 
are negations of each other, so that $\gamma + \delta = 0$, and the proof is complete.
\end{proof}

\subsection{Proof of \cref{Thm: Specht modules}}\

\begin{lem} \label{Lem: G^AB 3}
Let $A$ and $B$ be Garnir nodes of $[\la]$. Let $w^A$ and $w^B$ be the fully commutative elements given in \cref{lem: wA wB}.
Then we have
\begin{align*}
\psi_{w^A} g_A^\lambda \equiv \psi_{w^B} g_B^\lambda \quad \pmod{\Ga^{\lambda}_{ < \ell(\ttg^{A,B})}}
\end{align*}
\end{lem}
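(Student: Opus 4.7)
The plan is to proceed by case analysis on the three cases of \cref{gengar}, handling each type of generalized Garnir belt separately. Since \cref{lem: wAgA-wBgB 4} has already done the heavy lifting for the most delicate three-row subcase, the remaining work is to reduce the other cases to more direct computations and to check that no other residue pattern in case (3) requires extra work.

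In case (1), where $\belt^A \cap \belt^B = \emptyset$, I would use that $w^A = w^{\ttg^B}$ and $w^B = w^{\ttg^A}$, as observed in the proof of \cref{lem: wA wB}. Since $g_A^\la = \psi_{w^{\ttg^A}} m^\la$ in type $C_\infty$ by \cref{garnirformremark}, both sides reduce to $\psi_{w^{\ttg^{A,B}}} m^\la$: the block permutations $w^{\ttg^A}$ and $w^{\ttg^B}$ act on disjoint ranges of positions, so their $\psi$-images commute at the level of $R(\beta)$. This yields equality on the nose, not merely modulo $\Ga^\la_{<\ell(\ttg^{A,B})}$.

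For case (2), where $A$ and $B$ lie in the same row, I would argue by a similar but slightly more involved direct computation, using the block-permutation formulas from \cref{two line notation for block permutation} together with the explicit two-line description of $w^{\ttg^{A,B}}$ given in the proof of \cref{lem: wA wB}. The relevant block permutations involve only $\Psi_2, \Psi_1$-type moves within a single pair of adjacent rows, and the residues along any one row of a partition are all distinct; hence none of the ``bad'' residue triples $(1,0,1)$ or $(j, j\pm 1, j)$ from \cref{failure of braid rels} can occur among the relevant generators. The error terms therefore vanish identically and the two sides agree exactly.

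Case (3), with $A = (r, c, t)$ and $B = (r-1, c', t)$, is the heart of the matter and is essentially the content of \cref{lem: wAgA-wBgB 4}. The residue pattern $\nu^2_m = (i, i-1, \dots, 0, \dots, i-1, i)$ assumed there is precisely the generic pattern that arises when the middle row of the Garnir belt crosses residue zero under $\pi_\infty(k) = |k|$; in any other configuration, the middle-row residues are strictly monotone, so the triples required by \cref{failure of braid rels} do not occur and a truncated version of the computations in \cref{lem: wAgA-wBgB 1,lem: wAgA-wBgB 2,lem: wAgA-wBgB 3} applies with strictly fewer (or no) surviving error terms. The hard part is the intricate bookkeeping already carried out in \cref{lem: wAgA-wBgB 4}: every surviving error term must either cancel against its counterpart or be recognised via \cref{straighten} as $\psi_w g^\la_{A'}$ for some Garnir node $A'$ whose tableau has strictly smaller length, thereby placing it in $\Ga^\la_{<\ell(\ttg^{A,B})}$.
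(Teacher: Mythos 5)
Your proposal takes a genuinely different route from the paper, and two of its three cases contain gaps.

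The paper does not work case-by-case through the three types of generalised Garnir belt from \cref{gengar}. Instead it makes a single dichotomy: if $r=r'+1$ and $\res B = \res{r+1,c}$, reduce to the three-row case and invoke \cref{lem: wAgA-wBgB 4}; otherwise, \cref{psi_w} and \cref{Cor: homo-r for C_infty} express $\psi_{w^A}g_A^\la - \psi_{w^B}g_B^\la$ as $\sum_w a_w\psi_w m^\la$ over $w\prec w^{\ttg^{A,B}}$ with residue $\res{\ttg^{A,B}}$, and \cref{lem: T and G AB} says there is no row-strict tableau strictly dominating $\ttg^{A,B}$ with that residue, so the sum vanishes outright. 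No tracking of braid error terms is needed at all outside the special case. Your Case~(1) is correct and gives the same conclusion by a more explicit route: when $\belt^A\cap\belt^B=\emptyset$ the ranges of letters moved by $w^{\ttg^A}$ and $w^{\ttg^B}$ are disjoint, the corresponding $\psi$-generators commute, and $g_A^\la=\psi_{w^{\ttg^A}}m^\la$ by \cref{garnirformremark}.

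The problems are in your Cases~(2) and~(3). In Case~(2) you assert that ``the residues along any one row of a partition are all distinct'' and use this to rule out the bad triples of \cref{failure of braid rels}. That assertion is false in type $C_\infty$: the residue of $(r,c,t)$ is $|\kappa_t+c-r|$, so any row whose content crosses zero repeats residues (e.g.\ $2,1,0,1,2$). The conclusion you want (the two expressions agree on the nose) is still true, but your justification does not establish it; the paper's argument via \cref{lem: T and G AB} does. In Case~(3) you claim that away from the symmetric pattern treated in \cref{lem: wAgA-wBgB 4}, ``the middle-row residues are strictly monotone.'' This is also false: the middle row of $\belt^{A,B}$ can cross residue zero without being symmetric (e.g.\ $2,1,0,1$), in which case the residues are neither the pattern of \cref{lem: wAgA-wBgB 4} nor monotone. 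Your dichotomy therefore misses genuine configurations, and the claim that a ``truncated version'' of the computations handles them is not established. The paper's discriminant $\res B \ne \res{r+1,c}$ is the one that actually separates the special case from the rest, and \cref{lem: T and G AB} handles everything in the complement uniformly, including all zero-crossing-but-asymmetric patterns, with no case-by-case braid bookkeeping.
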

\begin{proof}
We may assume that $A = (r,c)$ is to the left of $B = (r',c')$ in $[\la]$.

Suppose $r = r'+1$ and $\res B  = \res {r+1,c}$. Without loss of generality, we may assume that $[\la]$ has 3 rows and $r'=1$. 
Then the assertion holds by \cref{lem: wAgA-wBgB 4}.

Otherwise, it follows from \cref{psi_w} and \cref{Cor: homo-r for C_infty} that
\[
\psi_{w^A} g_A^\lambda - \psi_{w^B} g_B^\lambda = \sum_{w } a_w \psi_w m^\lambda \quad \text{ for some $a_w \in \bR$},
\]
where $w$ runs over all elements such that (i) $w \prec w^{\ttg^{A,B}}$ and (ii) $e(\res {\ttg^{A,B}} )\psi_w m^\lambda = \psi_w m^\lambda$.
By \cref{lem: T and G AB}, we conclude that
\[
\psi_{w^A} g_A^\lambda - \psi_{w^B} g_B^\lambda = 0.\qedhere
\]
\end{proof}

\begin{lem} \label{lem: closedness}
Let $\ttt \in \rT \la$.
Suppose that $\ttt = w \ttg^A$ for $w \in \sg_n$ and a Garnir node $A$ of $[\la]$ with $\ell(\ttt) = \ell(w) + \ell(\ttg^A)$.
\begin{enumerate}
\item If $\ttt = u \ttg^B $ for $u \in \sg_n$ and a Garnir node $B$ of $[\la]$ with
$ \ell(\ttt)  = \ell(u) + \ell(\ttg^B)$, then
\begin{align*}
\psi_{w} g_A^\lambda \equiv \psi_{u} g_B^\lambda \quad \pmod{\Ga^{\lambda}_{ < \ell(\ttt)}}.
\end{align*}
\item   We have
\begin{align*}
\left\{
  \begin{array}{ll}
    x_i \psi_{w} g_A^\lambda \equiv 0 & \hbox{ for all } i\in \{1,\dots,n\}, \\
    \psi_j \psi_{w} g_A^\lambda \equiv 0 & \hbox{ unless $s_j \ttt \in \rT\la$ and $s_j\ttt \domsby \ttt$},
  \end{array}
\right.
 \quad \pmod{\Ga_{< \ell(\ttt)}^\lambda}.
\end{align*}
\item For $\sigma \in \sg_n$ with $\ell(\sigma) + \ell(\ttg^A) < \ell(\ttt)$,
\[
\psi_\sigma g^\la_A \equiv 0 \quad \pmod{\Ga^\la_{< \ell(\ttt)}}.
\]
\end{enumerate}

\end{lem}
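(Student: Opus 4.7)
\emph{Plan.} We proceed by simultaneous induction on $\ell = \ell(\ttt)$, establishing (3), (1), (2) in that order within each induction step. The base case $\ell = \ell(\ttg^A)$ forces $w = 1$: part (3) is vacuous, part (2) reduces to \cref{lem: base}, and the hypotheses of part (1) force $A = B$.

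For (3): if $\sigma\ttg^A \in \rT\la$ with $\ell(\sigma\ttg^A) = \ell(\sigma) + \ell(\ttg^A)$, the conclusion is immediate from the definition of $\Ga^\la_{<\ell(\ttt)}$. Otherwise, I would run a secondary induction on $\ell(\sigma)$: taking a reduced factorisation $\sigma = \sigma' s_j$, \cref{psi_w} reduces $\psi_\sigma g^\la_A$ to $\psi_{\sigma'}\psi_j g^\la_A$ modulo shorter $\psi$-terms, and \cref{lem: base}(ii) handles the case $s_j\ttg^A \notin \rT\la$ by annihilation, while the case $s_j\ttg^A \in \rT\la$ reduces to a smaller expression to which the secondary induction hypothesis applies. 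Failure of the length-additive condition can be repaired using the braid-type relation of \cref{psi_w} and the quadratic relation $\psi_j^2 = Q_{\nu_j, \nu_{j+1}}(x_j, x_{j+1})$, generating only shorter $\psi$-terms.

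For (1): since $\ttt \ge_L \ttg^A$ and $\ttt \ge_L \ttg^B$, \cref{garjoin} furnishes the join $\ttg^{A,B}$ with $\ttt \ge_L \ttg^{A,B}$, so I would write $\ttt = v\ttg^{A,B}$ with $\ell(\ttt) = \ell(v) + \ell(\ttg^{A,B})$. Using the fully commutative $w^A, w^B$ of \cref{lem: wA wB}, uniqueness forces $w = v w^A$ and $u = v w^B$ with compatible lengths. \cref{Lem: G^AB 3} yields $\psi_{w^A}g^\la_A \equiv \psi_{w^B}g^\la_B \pmod{\Ga^\la_{<\ell(\ttg^{A,B})}}$; applying $\psi_v$ on the left and using \cref{psi_w} to replace $\psi_v\psi_{w^A}$ by $\psi_w$ modulo a sum of terms $\psi_\tau f(x)$ with $\ell(\tau) \le \ell(w) - 3$ (and similarly on the $B$ side), the just-established part (3) absorbs all corrections, together with the image $\psi_v \cdot \Ga^\la_{<\ell(\ttg^{A,B})}$, into $\Ga^\la_{<\ell(\ttt)}$.

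For (2): commuting $x_i$ rightward through $\psi_w$ via the defining relations produces $\psi_w x_{i'} g^\la_A = 0$ (by \cref{lem: base}(i)) plus length-reducing corrections absorbed by (3). For $\psi_j\psi_w g^\la_A$, the ``good case'' needs no argument; in the descent case $\ell(s_jw^\ttt) < \ell(w^\ttt)$, a reduced expression of $w$ beginning with $s_j$ together with \cref{psi_w} and $\psi_j^2 = Q_{\nu_j,\nu_{j+1}}(x_j,x_{j+1})$ reduces everything to (3); in the non-row-strict ascent case, $\ttt$ has a row with consecutive entries $j, j+1$, so \cref{lem: garnir}(2) produces a Garnir node $A'$ and $w'$ with $\ttt = w'\ttg^{A'}$ and $s_p w' = w' s_q$ where $p = j$ and $q = \ttg^{A'}(r,c,t)$. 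Part (1) at the \emph{current} level then gives $\psi_w g^\la_A \equiv \psi_{w'} g^\la_{A'}$ modulo $\Ga^\la_{<\ell(\ttt)}$, and \cref{psi_w} combined with \cref{lem: base}(ii) (since $s_q\ttg^{A'} \notin \rT\la$ by construction of $\ttg^{A'}$) yields $\psi_j\psi_{w'}g^\la_{A'} \equiv \psi_{w'}\psi_q g^\la_{A'} = 0$ modulo (3)-absorbed corrections. The hard part will be this last step: it requires (1) \emph{at the same level} as (2), which is why the within-step ordering (3) $\Rightarrow$ (1) $\Rightarrow$ (2) is essential. The careful bookkeeping of \cref{psi_w} error terms --- verifying that every polynomial-weighted shorter-$\psi$ correction lands in $\Ga^\la_{<\ell(\ttt)}$ rather than merely in $\Ga^\la$ --- will be the principal technical obstacle.
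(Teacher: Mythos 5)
Your overall architecture --- simultaneous induction on $\ell(\ttt)$, base case for Garnir tableaux, and reliance on \cref{Lem: G^AB 3}, \cref{lem: garnir}, \cref{lem: base} and \cref{psi_w} --- matches the paper's, and the within-level order $(3)\Rightarrow(1)\Rightarrow(2)$ is logically compatible with the paper's $(1)\Rightarrow(2)\Rightarrow(3)$, since a correct proof of (3) at level $\ell$ only needs (2) at strictly lower levels. The genuine gap is in your argument for (3). After factoring $\sigma=\sigma's_j$ on the right, the sub-case $s_j\ttg^A\in\rT\la$ does \emph{not} ``reduce to a smaller expression'': the element $\psi_{\sigma'}\psi_jg^\la_A$ is not of the form $\psi_{\tilde\sigma}g^\la_B$ for a Garnir node $B$ with $\tilde\sigma$ shorter than $\sigma$, so the secondary induction hypothesis has nothing to grab onto. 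Here $\psi_jg^\la_A$ corresponds to the row-strict but non-Garnir tableau $s_j\ttg^A$, and even measured by the more natural complexity $\ell(\sigma)+\ell(\ttg^A)$ one has $\ell(\sigma')+\ell(s_j\ttg^A)=\ell(\sigma)+\ell(\ttg^A)$ --- no decrease. Pushing $\psi_{\sigma'}$ past the spanning element $\psi_jg^\la_A$ of $\Ga^\la_{<\ell(\ttg^A)+2}$ is exactly the closure statement of (2), not a length count. The paper's proof of (3) uses (2) from the outer induction hypothesis: it locates the \emph{rightmost} index $k$ where $\ell(s_{i_k}\cdots s_{i_r}\ttg^A)=\ell(s_{i_k}\cdots s_{i_r})+\ell(\ttg^A)$ but prepending $s_{i_{k-1}}$ is a descent, applies (2) at the strictly smaller level $\ell(s_{i_k}\cdots s_{i_r}\ttg^A)<\ell(\ttt)$ to land $\psi_{i_{k-1}}\cdots\psi_{i_r}g^\la_A$ in $\Ga^\la_{<\ell(s_{i_k}\cdots s_{i_r}\ttg^A)}$, and then re-expands: the $\Ga$-generators produced have strictly smaller $\ell(w')+\ell(\ttg^{A'})$, so the auxiliary induction terminates. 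That mechanism is what your sketch is missing, and it works under your within-level ordering too.

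A secondary problem is in (2). In the descent case $\ell(s_jw^\ttt)<\ell(w^\ttt)$, it is not true that $w$ has a reduced expression beginning with $s_j$: $s_j$ can be a left descent of $w^\ttt=w\,w^{\ttg^A}$ without being a left descent of $w$. For a concrete instance take $\la=(2,2)$, $A=(1,1)$, $\ttg^A=\begin{smallmatrix}2&3\\1&4\end{smallmatrix}$, $w=s_3$, $\ttt=s_3\ttg^A=\begin{smallmatrix}2&4\\1&3\end{smallmatrix}$ and $j=1$: then $s_1\ttt=\begin{smallmatrix}1&4\\2&3\end{smallmatrix}\in\rT\la$ with $\ell(s_1\ttt)=2<3=\ell(\ttt)$, yet $w=s_3$ certainly has no reduced expression beginning with $s_1$. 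The paper handles the descent case by applying (1) at the \emph{current} level to rewrite $\ttt=s_j\,u\,\ttg^B$ where $\tts:=s_j\ttt=u\ttg^B$, and only then uses $\psi_j^2=Q_{\nu_j,\nu_{j+1}}(x_j,x_{j+1})$. You also omit the sub-case $s_j\ttt\in\ST\la$ (a descent with $s_j\ttt\notin\rT\la$), which the paper treats separately with \cref{lem: garnir}(1).
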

\begin{proof}

First, we prove (1), (2), and (3) for Garnir tableaux. Let $\ttt = \ttg^A$.
If $\ttg^A  = u \ttg^B$ for some $u\in \sg_n$ and some Garnir node $B$ of $[\la]$ with $ \ell(\ttt) = \ell(u) + \ell(\ttg^B)$, 
then it implies $\ttg^A \ge_L \ttg^B$, so that $\ttg^{A,B} = \ttg^A$ follows. Thus $B=A$ and $u = \mathrm{id}$ by Lemma \ref{construction of G^{A,B}}, proving (1).
Assertion (3) also holds obviously since there is no $\sigma\in \sg_n$ such that $\ell(\sigma) + \ell(\ttg^A) < \ell(\ttg^A)$. Assertion (2) follows from \cref{lem: base}.

Now we prove (1), (2), and (3) by induction on $l := \ell(\ttt)$. If $l = \min\{\ell(\ttt)\mid \ttt \in \rT\la\}$, then $\ttt$ is a Garnir tableau and there is nothing to prove.
We assume that (1), (2), and (3) hold for all $\ttt' = w' \ttg^{A'} \in \rT\la$ with 
\[
\ell(\ttt') = \ell(w') + \ell(\ttg^{A'}) < l.
\]

(1) We consider $\ttt = w \ttg^A = u \ttg^B$ for some $w, u \in \sg_n$ and Garnir nodes $A,B$ of $[\la]$ with $\ell(\ttt) = l = \ell(w) + \ell(\ttg^A) =\ell(u) + \ell(\ttg^B)$. By \cref{garjoin}, there is $v \in \sg_n$ such that
\[
\ttt = v \ttg^{A,B} \text{ with } \ell(T) = \ell(v) + \ell(\ttg^{A,B}),
\]
which tells us that
\begin{align*}
w &= v w^A \ \text{ with } \ell(w) = \ell(v) + \ell(w^A), \\
u &= v w^B\ \text{ with } \ell(u) = \ell(v) + \ell(w^B),
\end{align*}
where $w^A$ and $w^B$ are given in \cref{lem: wA wB}. Then, by \cref{psi_w} and the induction hypothesis on (3), we know that if $\psi_\sigma$ appears as an error term in 
$(\psi_w-\psi_v\psi_{w^A}) e(\res {\ttg^A} )$ then $\psi_\sigma g^\la_A\in \mathfrak \ttg^\la_{< \ell(\ttt)}$. Similarly, if $\psi_\sigma$ appears as an error term in 
$(\psi_u-\psi_v\psi_{w^B}) e(\res {\ttg^B})$ then $\psi_\sigma g^\la_B\in \mathfrak \ttg^\la_{< \ell(\ttt)}$.

By \cref{Lem: G^AB 3} and the induction hypothesis, we have 
\begin{equation}
\begin{aligned}
\psi_{w} g_A^\lambda - \psi_{u} g_B^\lambda &\equiv \psi_{v} \psi_{w^A} g_A^\lambda - \psi_{v} \psi_{w^B}  g_B^\lambda \\
&\equiv \psi_{v} (\psi_{w^A} g_A^\lambda -  \psi_{w^B}  g_B^\lambda) \\
&\equiv 0 \qquad \qquad  \qquad \qquad \qquad \pmod{\Ga^{\lambda}_{ < \ell(\ttt)}}.
\end{aligned}
\end{equation}
Thus, (1) holds for $\ell(\ttt)=l$.

(2) For $i = 1, \ldots, n$, it follows from
\[
x_i \psi_w e( \res{\ttg^A}) = \psi_w x_{w^{-1}(i)} e(\res{\ttg^A}) + \sum_{ w' \prec w } \psi_{w'} f_{w'} e(\res{\ttg^A}) \quad \text{ for $f_{w'} \in \bR[x_1, \dots, x_n]$}
\]
that
\[
x_i \psi_w g_A^\la = \psi_w x_{w^{-1}(i)} g_A^\la + \sum_{ w' \prec w } \psi_{w'} f_{w'} g_A^\la \equiv 0 \quad \pmod{\Ga^{\lambda}_{ < \ell(\ttt)}}
\]
by the induction hypothesis. It remains to prove that $\psi_j \psi_w g_A^\lambda \equiv 0$ unless $s_j \ttt \in \rT\la$ and $s_j \ttt \domsby \ttt$. There are two cases: 
\begin{itemize}
\item[(i)]
$s_j \ttt \notin \rT \la$ (i.e.~$s_j \ttt \notin \rST \la$ or $s_j \ttt \in \ST \la$),
\item[(ii)]
$s_j \ttt \in \rT \la$ and $s_j \ttt \vartriangleright \ttt $.
\end{itemize}

\bigskip
\begin{enumerate}
\item[(i)] If $s_j \ttt \notin \rST \la$, then there is a node $(r,c) \in [\la]$ such that
\[
\ttt(r,c) = j, \quad \ttt(r,c+1) = j+1.
\]
By \cref{lem: garnir} (2), we can take $B \in [\la]$ and $u \in \sg_n$ such that
\begin{enumerate}
\item[(a)] $\ttg^B(r,c+1) = \ttg^B(r,c)+1$,
\item[(b)] $\ttt = u \ttg^{B}$,
\item[(c)] $s_j u = u s_p$ where $p = \ttg^{B} (r,c)$.
\end{enumerate}
Thus, we have
\[
\psi_j \psi_u e(\nu) = \psi_u \psi_p e(\nu) + \sum_{u' \prec u } \psi_{u'} f_{u'} e(\nu) \quad \text{ for $f_{u'} \in \bR[x_1, \dots, x_n]$},
\]
where $\nu =  e(\res{\ttg^B})$.
Since (1) holds for the length $l$, 
\cref{psi_w} implies that $\psi_w g_A^\la - \psi_u g_B^\la \in \Ga^\la_{\leq l -3}$, so that $\psi_j \psi_w g_A^\la \equiv \psi_j \psi_u g_B^\la \pmod{\Ga^\la_{<\ell(\ttt)}}$ by the induction hypothesis on (3). Similarly, \cref{psi_w,lem: base} imply that $\psi_j \psi_u g_B^\la \equiv \psi_u \psi_p g_B^\la \pmod{\Ga^\la_{<\ell(\ttt)}}$, and $\psi_p g_B^\la = 0$, so that
\begin{align*}
\psi_j \psi_w g_A^\la \equiv \psi_j \psi_u g_B^\la
\equiv  \psi_u \psi_p g_B^\la  = 0 \quad \pmod{\Ga^{\la}_{< \ell(\ttt)}}.
\end{align*}

Suppose that $s_j \ttt \in \ST \la$. Then there is a node $ C= (r,c) \in [\la]$ such that
\[
\ttt(r,c) = j+1, \quad \ttt(r+1,c) = j.
\]
By \cref{lem: garnir}(1), there is a permutation $v \in \sg_n$ such that
\begin{enumerate}
\item[(a)] $\ttt = v \ttg^{C}$,
\item[(b)] $s_j v = v s_q$ where $q = \ttg^{C} (r+1,c)$.
\end{enumerate}
In a similar manner as above, we have
\begin{align*}
\psi_j \psi_w g_A^\lambda \equiv \psi_j \psi_v g_C^\lambda
\equiv  \psi_v \psi_q g_C^\lambda = 0 \quad \pmod{\Ga^{\lambda}_{ < \ell(\ttt)}}.
\end{align*}

\item[(ii)] We assume that $ \tts := s_j \ttt \in \rT \la$ and $ \tts \vartriangleright \ttt $.
Note that $\ell(\ttt) = \ell(\tts) + 1$. Then
\[
\tts = u \ttg^B \ \text{ and } \ell(\tts) = \ell(u) + \ell(\ttg^B)
\]
for some Garnir node $B \in [\la]$ and $u \in \sg_n$.
As (1) holds for the length $l$, we have
\begin{align*}
\psi_j \psi_w g_A^\la \equiv \psi_j \psi_j \psi_u g_B^\la
\equiv  Q_{\nu_j, \nu_{j+1}}(x_j, x_{j+1}) \psi_u g_B^\la \equiv 0 \quad \pmod{\Ga^{\la}_{ < \ell(\ttt)}},
\end{align*}
where $\nu = (\nu_1, \dots, \nu_n) \in I^n$ is the residue sequence of $u \ttg^{B}$.
\end{enumerate}
By (i) and (ii) the assertion (2) holds for $\ell(\ttt) = l$.

(3) Suppose that $\ttt = w \ttg^A$ for $w \in \sg_n$ and a Garnir node $A$ of $[\la]$ with $\ell(\ttt) = \ell(w) + \ell(\ttg^A)$.
Let $\sigma = s_{i_1}\dots s_{i_r}$ be a reduced expression for $\sigma$, so that $\psi_\sigma = \psi_{i_1}\dots \psi_{i_r}$ and $\ell(\sigma) + \ell(\ttg^A)  < l$. 
Note that we do not assume that $\ell(\sigma \ttg^A) = \ell(\sigma) + \ell(\ttg^A)$.

If $\ell(\sigma \ttg^A) = \ell(\sigma) + \ell(\ttg^A)$, then  $\sigma \ttg^A \in \rT\la$ by \cref{straighten}. Thus, we are done.

If $\ell(\sigma \ttg^A) < \ell(\sigma) + \ell(\ttg^A)$, then there is some $k$ such that 
$\ell(s_{i_k} \dots s_{i_r} \ttg^A) = \ell(s_{i_k} \dots s_{i_r}) + \ell(\ttg^A)$
and
$\ell(s_{i_{k-1}} \dots s_{i_r} \ttg^A) < \ell(s_{i_k} \dots s_{i_r} \ttg^A)$.
Once again, by the induction hypothesis on (2), we have that $\psi_{i_{k-1}} \dots \psi_{i_r} g_A^\la \in \Ga^\la_{< \ell(s_{i_k} \dots s_{i_r} \ttg^A)}$ and may conclude that $\psi_{i_1} \dots \psi_{i_r} g_A^\la \in \Ga^\la_{< l}$ by induction.

Thus, assertion (3) holds for $\ell(\ttt) = l$, which completes the proof.
\end{proof}

\begin{cor}\label{reasoning for the next theorem}
For each $\ttt \in \rT\la$, we fix a Garnir node $A$ and $w \in \sg_n$ such that $\ttt = w \ttg^A$, and set 
$g_\ttt^\lambda = \psi_w g_A^\la+\Ga^{\lambda}_{< \ell(\ttt)}$. Then
\begin{enumerate}
\item
The element $g_\ttt^\lambda \in \Ga^{\lambda} / \Ga^{\lambda}_{< \ell(\ttt)}$ does not depend on the choice of $A$ or the choice of reduced expression for $w$.
\item
The $\bR$-submodule $\sum_{t>0} \Ga^\lambda_{< t}$ is an $R(\beta)$-submodule of $\Pe^\lambda$.
\end{enumerate}
\end{cor}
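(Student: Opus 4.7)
My plan is to derive both assertions directly from Lemma~5.10 together with Lemma~5.8(i) and Proposition~3.4; the machinery is already in place, so the corollary is essentially a bookkeeping consequence.

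For part (1), two independences must be checked. Independence under the choice of Garnir node (and corresponding $w$) is exactly the content of Lemma~5.10(1). Independence under the choice of reduced expression for a fixed $w$ requires an extra step: given two reduced expressions $w = s_{i_1}\cdots s_{i_r} = s_{j_1}\cdots s_{j_r}$, Proposition~3.4 expresses the difference $(\psi_{i_1}\cdots \psi_{i_r} - \psi_{j_1}\cdots\psi_{j_r})e(\res{\ttg^A})$ as a sum of terms $\psi_{u'} f(x) e(\res{\ttg^A})$ with $u' \prec w$, $\ell(u') \le \ell(w) - 3$, and $f(x) \in \bR[x_1,\dots,x_n]$. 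Applying these to $g_A^\la$, each polynomial $f(x)$ collapses to its constant term by Lemma~5.8(i), since $x_i g_A^\la = 0$ for all $i$; what remains is a scalar multiple of $\psi_{u'} g_A^\la$. Since $\ell(u') + \ell(\ttg^A) \le \ell(w) - 3 + \ell(\ttg^A) < \ell(\ttt)$, Lemma~5.10(3) places each such term in $\Ga^\la_{<\ell(\ttt)}$, giving the desired congruence.

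For part (2), I would verify closure of $\sum_{t>0}\Ga^\la_{<t}$ under the generators $e(\nu)$, $x_i$, $\psi_j$ applied to a typical spanning element $\psi_w g_A^\la$ (where $\ttt = w\ttg^A$ with $\ell(\ttt)=\ell(w)+\ell(\ttg^A)$). The idempotents are automatic. The action of $x_i$ lands in $\Ga^\la_{<\ell(\ttt)}$ directly by Lemma~5.10(2), as does the action of $\psi_j$ outside the exceptional case where $s_j\ttt \in \rT\la$ and $s_j\ttt \domsby \ttt$. In this remaining case, I would use $w^{s_j\ttt} = s_j w^\ttt$ together with the strict dominance to deduce $\ell(s_j w^\ttt) = \ell(w^\ttt) + 1$, and then compare with the reduced factorisation $w^\ttt = w\cdot w^{\ttg^A}$ to conclude that $\ell(s_j w) = \ell(w) + 1$ and that $s_j w \cdot w^{\ttg^A}$ is itself a reduced factorisation of $w^{s_j\ttt}$. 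Hence $\psi_j \psi_w g_A^\la = \psi_{s_j w} g_A^\la$ for the reduced expression of $s_j w$ obtained by prepending $s_j$, and this is a spanning element of $\Ga^\la_{<\ell(s_j\ttt)+1}\subseteq \sum_{t>0}\Ga^\la_{<t}$.

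The only genuinely non-formal step is the length calculation $\ell(s_j w) = \ell(w) + 1$ in the last case of (2), but this falls out immediately once the presentation $\ttt = w\ttg^A$ is unpacked via lengths of permutations, so I do not anticipate a significant obstacle; all of the real work has already been carried out in Lemma~5.10.
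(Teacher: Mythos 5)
Your proof is correct and follows the same route as the paper's, which simply cites \cref{lem: closedness}(1) and (2) for the two parts. You supply the details the paper leaves implicit — in particular, the reduced-expression invariance in Part~(1) via \cref{psi_w}, \cref{lem: base}(i) (which collapses the polynomial factors to constants since $x_i g_A^\lambda=0$), and \cref{lem: closedness}(3), and in Part~(2) the $\psi_j$-closure in the exceptional case $s_j\ttt\in\rT\la$, $s_j\ttt\domsby\ttt$, via the length computation $\ell(s_jw)=\ell(w)+1$, which identifies $\psi_j\psi_w g_A^\lambda$ (up to a change of reduced expression, again absorbed by \cref{lem: closedness}(1)) with a generator of $\Ga^\lambda_{<\ell(s_j\ttt)+1}$.
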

\begin{proof}
Parts (1) and (2) follow from \cref{lem: closedness} (1) and \cref{lem: closedness} (2), respectively.
\end{proof}

Recall that $\Ga^\la = \im \  \Gh^{\la}$ and $\Sp^{\la} = q^{\deg(\ttt^\la)} \coker\ \Gh^\la$ for $\la \vdash n$.

\begin{thm} \label{thm: basis of g^la}
\begin{enumerate}
\item The $\bR$-submodules $\{ \Ga^\la_{< t}  \}_{t \in \Z_{>0}}$ give a filtration of $\Ga^\la$.
\item For $t\in \Z_{>0}$, $\Ga^{\la}_{< t+1} / \Ga^{\la}_{< t} $ is a free $\bR$-module with basis
\[
\{  g_\ttt^\lambda \mid  \ttt \in \rT\la, \ell(T)=t \}.
\]
\end{enumerate}
\end{thm}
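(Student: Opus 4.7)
My plan has two parts. For (1), the inclusion $\sum_t \Ga^\la_{<t}\subseteq \Ga^\la$ is immediate from the definition, and the reverse follows from \cref{reasoning for the next theorem}(2): $\sum_t \Ga^\la_{<t}$ is an $R(\beta)$-submodule of $\Pe^\la$ containing every Garnir generator $g_A^\la \in \Ga^\la_{<\ell(\ttg^A)+1}$, so it exhausts the $R(\beta)$-module $\Ga^\la$.

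For (2), the spanning assertion is essentially built into the definition of $\Ga^\la_{<t}$: modulo $\Ga^\la_{<t}$, the generators $\psi_w g_A^\la$ of $\Ga^\la_{<t+1}$ with $\ell(w)+\ell(\ttg^A)<t$ vanish, and those with $\ell(w)+\ell(\ttg^A)=t$ reduce to $g_\ttt^\la$ for $\ttt=w\ttg^A$ by \cref{reasoning for the next theorem}(1). The substantive content is $\bR$-linear independence. My strategy is to compare $\Ga^\la_{<\bullet}$ to the auxiliary length filtration
\[
\Pe^\la_{\le t}:=\Span_\bR\bigl\{\psi_{w^{\ttt'}}m^\la\mid \ttt'\in\rST\la,\ \ell(\ttt')\le t\bigr\}
\]
of $\Pe^\la$, which is a genuine $\bR$-module filtration because \eqref{basis of M} is an $\bR$-basis, and to establish two things: (a) $\Ga^\la_{<t}\subseteq \Pe^\la_{<t}$, and (b) for $\ttt\in\rT\la$ with $\ell(\ttt)=t$, every lift $\psi_w g_A^\la$ of $g_\ttt^\la$ satisfies $\psi_w g_A^\la\equiv \psi_{w^\ttt}m^\la\pmod{\Pe^\la_{<t}}$. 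Granted these, a vanishing relation $\sum_{\ell(\ttt)=t}c_\ttt g_\ttt^\la=0$ in $\Ga^\la_{<t+1}/\Ga^\la_{<t}$ lifts to an element of $\Ga^\la_{<t}\subseteq \Pe^\la_{<t}$, and projecting it to the $\bR$-free quotient $\Pe^\la_{\le t}/\Pe^\la_{<t}$ through (b) yields $\sum c_\ttt\overline{\psi_{w^\ttt}m^\la}=0$; since $\rT\la\subseteq\rST\la$ this forces every $c_\ttt=0$, and freeness of $\Ga^\la_{<t+1}/\Ga^\la_{<t}$ over $\bR$ follows immediately.

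The heart of the argument is claim (b), from which (a) falls out by applying (b) to the shorter tableaux of length $<t$ indexing the generators of $\Ga^\la_{<t}$. For (b), the hypothesis on the Cartan matrix enters through \cref{garnirformremark}, which gives the exact identity $g_A^\la=\psi_{w^{\ttg^A}}m^\la$; since $\ell(\ttt)=\ell(w)+\ell(\ttg^A)$, the concatenation of reduced expressions for $w$ and $w^{\ttg^A}$ is a reduced expression for $w^\ttt$, and under that choice $\psi_w g_A^\la=\psi_{w^\ttt}m^\la$ on the nose. For any other reduced expression of $w^\ttt$, \cref{psi_w} produces an error of the form $\sum\psi_u f(x)e(\nu)$ with $u\prec w^\ttt$ and $\ell(u)\le \ell(w^\ttt)-3$; since $x_i m^\la=0$, this error collapses to an $\bR$-linear combination of elements $\psi_u m^\la$ with $\ell(u)<t$. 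A length induction---factoring each such $u$ as $w'\cdot v$ with $w'$ a distinguished representative of its $\prod_r\sg_{\la_r}$-coset and $v$ in that Young subgroup, and using that $\psi_j m^\la=0$ whenever $j$ lies in the row stabiliser of $\ttt^\la$---rewrites each $\psi_u m^\la$ as an $\bR$-combination of basis vectors $\psi_{w^{\ttt'}}m^\la$ with $\ell(\ttt')\le\ell(u)<t$, so the correction lies in $\Pe^\la_{<t}$. The chief technical obstacle will be making this length-controlled normalisation of $\psi_u m^\la$ in the basis \eqref{basis of M} fully rigorous; once (b) is secured, the remainder of the proof is routine bookkeeping.
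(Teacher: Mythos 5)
Your proposal is correct and matches the paper's approach: part (1) follows exactly as you say from \cref{reasoning for the next theorem}(2), and for part (2) the paper simply asserts the result is ``clear'', which your argument fills in. The comparison with the length filtration $\Pe^\la_{\le t}$ on the permutation module, and the observation via \cref{garnirformremark} and \cref{psi_w} that every lift $\psi_w g^\la_A$ of $g^\la_\ttt$ equals $\psi_{w^\ttt}m^\la$ plus shorter-length terms, is exactly the implicit reasoning behind that ``clear''; your length induction writing $\psi_u m^\la$ as a combination of basis vectors of length at most $\ell(u)$ (splitting $u$ through the distinguished coset representative of $\sg_\la$, using $\psi_j m^\la=0$ for $s_j\in\sg_\la$, and controlling the rewriting error with \cref{psi_w}) does go through and supplies the needed rigour.
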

\begin{proof}
Corollary \ref{reasoning for the next theorem}(2) implies that 
$\Ga^\lambda = \sum_{t > 0} \Ga^\lambda_{< t}$, which is (1). Then (2) is clear.
\end{proof}

We are now ready to prove \cref{Thm: Specht modules}.

\begin{proof}[Proof of \cref{Thm: Specht modules}] \
Let us consider
\[
v = a_{\ttt^1} \psi_{w^{\ttt_1}}  m^{\lambda} + a_{\ttt^2}\psi_{w^{\ttt_2}} m^{\lambda} + \dots + a_{\ttt^t} \psi_{w^{\ttt_t}} m^{\lambda} \in \Pe^\la
\]
for some $a_{\ttt^1}, \dots, a_{\ttt^t} \in \bR$ and  $\ttt_1, \dots, \ttt_t \in \ST \la$. By $\eqref{basis of M}$ and \cref{thm: basis of g^la}, we have
\[
v \in  \Ga^\lambda \ \text{ if and only if }  a_{\ttt^1}= \dots = a_{\ttt^t}=0,
\]
which implies that $\{ \psi_{w^\ttt}\overline{m}^{\lambda} \mid \ttt \in \ST\la  \}$ is linearly independent in $\Sp^\la$. 
Thus the assertion follows from \cref{Thm for Specht}(1).
\end{proof}

\section{A conjecture in type $C^{(1)}_\ell$} \label{Sec: conjecture}

We end with a conjecture giving the elements $g^\la_A$ explicitly in type $C^{(1)}_\ell$, as well as a basis of $\Sp^{\la}$ in this type. In Remark \ref{garnirformremark}, we noted the similarity between the Garnir elements in type $A_\infty$ and $C_\infty$. Similarly, we expect that the affine type $C$ case resembles that of affine type $A$. Our constructions in this section closely follow \cite[Section 5]{kmr}. We fix $\la \in \Par_n$ and $\kappa\in \Z^l$ throughout, as well as a Garnir node $A = (r,c,t)\in[\la]$. Recall the definition of the Garnir tableau $\ttg^A$ from earlier, as well as the residue pattern in type $C^{(1)}_\ell$ -- in particular the natural projection $p:\Z \rightarrow \Z/ 2\ell\Z$.

\begin{defn}
A \emph{brick} is a set of $2\ell$ adjacent nodes in the same row of the Garnir belt $\belt^A$, $\{(a,b,t), (a,b+1,t),\dots, (a,b+2\ell-1,t)\}$, such that $p(\kappa_t +b - a) = p(\kappa_t +c - r)$.
\end{defn}

We denote by $k$ the number of bricks contained in $\belt^A$, and label the bricks $B_1,B_2,\dots,B_k$ from left-to-right along row $r+1$ and then from left-to-right along row $r$. We now introduce permutations which transpose adjacent bricks. Let $d$ be the smallest entry of $B_1$ in $\ttg^A$. Then for $1\leq r< k$, the permutation
\[
w_r = w_r^A := \prod_{\mathclap{a=d+2(r-1)\ell}}^{d+2r\ell-1} (a,a+2\ell) \in \sg_n
\]
may be thought of as transposing $B_r$ and $B_{r+1}$. We have the corresponding elements $\sigma_r = \sigma_r^A := (-1)^\ell \psi_{w_r} \in R(\beta)$. We further define $\tau_r = \tau_r^A := \sigma_r + 1$.
We should emphasise here that we have a $(-1)^\ell$ in our definition of $\sigma_r$, which differs from the definition of \emph{row} Specht modules in \cite[Section 5.4]{kmr}. However a similar minus sign occurs in their definition of corresponding elements in \emph{column} Specht modules \cite[Section 7.1]{kmr}. We suspect that this minus sign is merely an artefact of our choice of the polynomials $Q_{i,j}(u,v)$.

Note that any permutation $u$ of bricks may be written as a reduced expression $u = w_{r_1} \dots w_{r_i}$, and if $u$ is fully commutative we have a well-defined element $\tau_u := \tau_{r_1} \dots \tau_{r_i}$.

We define $\ttt^A$ to be the tableau obtained from $\ttg^A$ by rearranging the entries in the bricks $B_1,\dots,B_k$ so that they are in order along row $r$ and then row $r+1$. This is the most dominant row-strict tableau which may be obtained from $\ttg^A$ by acting by our brick permutations $w_r$.

\begin{ex}
Let $\ell=2, \la=(15,7,3)\in \mathscr{P}^1_{25}$, and $A=(1,5)$. We depict the Garnir tableau $\ttg^A$ below with Garnir belt shaded and bricks $B_1,B_2,B_3$ labelled, as well as the tableau $\ttt^A$.
\[ 
\begin{tikzpicture}

\draw[->, thin] (2.75,1) node[above] {$B_2$} -- (2.75,.55);
\draw[->, thin] (4.58,1) node[above] {$B_3$} -- (4.58,.55);
\draw[->, thin] (2.4,-.8) node[below right] {$B_1$} -- (1.8,-.55);

\draw (-.6,-.15) node{$\ttg^A = $};
\draw (-.6,-2.15) node{$\ttt^A = $};

\scriptsize{\tyoung(0cm,0cm,1234!\gr<10><11><12><13><14><15><16><17><18><19><20>,56789!\wh<21><22>,<23><24><25>)}
{\Ylinethick{1.7pt}\Yfillopacity{0}
\tgyoung(0cm,0cm,^4_4_4,:_4)}

\scriptsize{\tyoung(0cm,-2cm,1234!\gr6789<10><11><12><13><18><19><20>,5<14><15><16><17>!\wh<21><22>,<23><24><25>)}
{\Ylinethick{1.7pt}\Yfillopacity{0}
\tgyoung(0cm,-2cm,^4_4_4,:_4)}
\end{tikzpicture}
\]
Note that $\ttg^A = w_1 w_2 \ttt^A$.
\end{ex}

\begin{conj}
Let $\la\in \Par_n$, and let $A\in [\la]$ be a Garnir node. Suppose $\belt^A$ contains $a$ bricks in the first row, and $b$ in the second. Then in type $C^{(1)}_\ell$, the Garnir element $g^\la_A$ is
\[
g^\la_A = \sum_u \tau^A_u \psi_{w^{\ttt^A}} m^\la,
\]
where the sum is over all $u \in \sg_{a+b}/\sg_a\times\sg_b$.
Furthermore, \cref{Thm: Specht modules} and \cref{Cor: Specht modules} hold in type $C^{(1)}_\ell$, giving a homogeneous basis of $\Sp^\la$ indexed by standard $\la$-tableaux.
\end{conj}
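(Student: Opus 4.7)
The plan is to mimic the proof of \cref{Thm: Specht modules} in \cref{Sec: proof of the main thm} while importing the brick combinatorics of \cite{kmr} to organize the additional error terms that arise in type $C^{(1)}_\ell$. The spanning statement of the basis theorem already holds in type $C^{(1)}_\ell$ (it was proved in \cref{Thm for Specht} and \cref{Cor: Specht modules l} under only the hypothesis that $\cmA$ is an arbitrary symmetrisable Cartan matrix), so the key content is linear independence, which amounts to proving that the filtration $\{\Ga^\la_{<t}\}_{t>0}$ satisfies an analogue of \cref{thm: basis of g^la} and in particular exhausts $\Ga^\la$.

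First I would verify the conjectured formula for $g^\la_A$ directly. Recall that $g^\la_A$ is the image of $m^\la_{\kappa,A}$ under a map built from $\rg$ in \cref{homo-g}. In type $C_\infty$ the space $e(\nu_{(k;\ell_1+\ell_2)})\Lm(k+\ell_1;\ell_2)\conv\Lm(k;\ell_1)$ is one-dimensional (\cref{Cor: homo-r for C_infty}), but in type $C^{(1)}_\ell$ the periodicity of residues produces one additional basis vector for each brick-shuffle of $\nu_{(k;\ell_1)}$ with $\nu_{(k+\ell_1;\ell_2)}$. Tracking the coefficients $a_w$ in \cref{homo-r} through the chain of braid moves that realize $\rg$, and using the relation $\psi_r^2 e(\nu) = \pm e(\nu)$ coming from the choice \eqref{Qij for affine C} of $Q_{i,j}$ for transpositions within a brick, should produce exactly the sum $\sum_u \tau^A_u\,\psi_{w^{\ttt^A}}m^\la$ after straightening. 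The sign $(-1)^\ell$ built into $\sigma_r$ should come from $2\ell-1$ applications of such a quadratic relation.

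Next I would establish affine analogues of \cref{lem: T and G A} and \cref{lem: T and G AB}, classifying the row-strict tableaux $\ttt$ with $\ttt \dom \ttg^A$ (resp.\ $\ttt \dom \ttg^{A,B}$) and $\res\ttt=\res{\ttg^A}$ (resp.\ $\res{\ttg^{A,B}}$). In $C_\infty$ the residue injectivity in one- and two-row shapes forced equality of tableaux; in the affine case residues repeat with period $2\ell$, so the classification acquires a brick parameter, and the elements $\sigma_r = \tau_r - 1$ exactly realize the ambiguity. Once this is in place, the statements of \cref{lem: closedness} and the filtration/basis argument of \cref{thm: basis of g^la} should follow the $C_\infty$ template, modulo the three-row braid relation discussed below. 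The cyclotomic graded dimension formula (\cref{Thm: dimension formula}) together with the filtration then pins down the dimension of $\Sp^\la$ and forces linear independence of the standard-tableau elements.

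The main obstacle is the affine analogue of \cref{Lem: G^AB 3}, namely $\psi_{w^A}g^\la_A \equiv \psi_{w^B}g^\la_B \pmod{\Ga^\la_{<\ell(\ttg^{A,B})}}$ when the Garnir belts meet in two rows. The $C_\infty$ proof (\cref{lem: wAgA-wBgB 1}--\cref{lem: wAgA-wBgB 4}) was a delicate block-braid calculation using \cref{failure of braid rels}, and even there the cancellation of error terms was a long bookkeeping exercise. In the affine case \cref{failure of braid rels} produces $(x_r+x_{r+2})$ error terms at every instance of $(1,0,1)$ and analogous terms at $(\ell{-}1,\ell,\ell{-}1)$, and the periodicity of residues means that many such triples can appear simultaneously within one Garnir belt, one contribution per relevant brick. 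Organising these contributions into brick sums and showing that they straighten into elements of $\Ga^\la_{<\ell(\ttg^{A,B})}$ (and in particular that the resulting identity is compatible with the conjectural formula for $g^\la_A$, so that no spurious higher-length Garnir elements intervene) will be the main technical hurdle; I expect it will require a double induction on tableau length and number of bricks, together with an affine analogue of \cref{Lem: G^AB 1} expressing $\Psi_2\Psi_1\Psi_2 - \Psi_1\Psi_2\Psi_1$ as a sum of $\tau$-decorated block permutations.
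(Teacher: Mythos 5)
This statement is a \emph{conjecture}: the paper does not prove it, and the only supporting evidence offered is the remark that ``many examples we have computed in GAP'' agree with it. There is therefore no paper proof against which to compare your text, and your text is not itself a proof but a research plan---something you acknowledge explicitly by describing the key step as a ``main technical hurdle'' that you ``expect'' will require a double induction, without carrying it out.

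Concretely, neither half of the conjecture is established in your proposal. For the formula for $g^\la_A$, you say that tracking the coefficients $a_w$ in \cref{homo-r} through the braid moves realising $\rg$ ``should produce exactly the sum'', but this is asserted rather than done, and your explanation of the sign $(-1)^\ell$ via ``$\psi_r^2 e(\nu)=\pm e(\nu)$'' is not sound: with the paper's choice \eqref{Qij for affine C}, when $|\nu_r-\nu_{r+1}|\geq 2$ the quadratic relation gives $\psi_r^2 e(\nu)=e(\nu)$ with coefficient $+1$, so repeated applications cannot produce $(-1)^\ell$; the paper itself calls the sign ``merely an artefact of our choice of the polynomials'' and gives no derivation. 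For the basis theorem, the crux is the affine analogue of \cref{Lem: G^AB 3}, i.e.\ $\psi_{w^A}g^\la_A\equiv\psi_{w^B}g^\la_B\pmod{\Ga^\la_{<\ell(\ttg^{A,B})}}$ when the Garnir belts overlap. In type $C_\infty$ this identity is the whole content of \cref{Lem: G^AB 1}--\cref{lem: wAgA-wBgB 4} and already requires a long cancellation of error terms from the single failure pattern $(1,0,1)$. In type $C^{(1)}_\ell$ the braid relation fails at both ends of the Dynkin diagram (patterns $(1,0,1)$ and $(\ell{-}1,\ell,\ell{-}1)$) and, because residues are $2\ell$-periodic, such patterns recur once per brick within a Garnir belt; establishing the cancellation is therefore a strictly harder problem, not a routine adaptation, and it is exactly the step you leave as ``expected''. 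Until that identity is proved, the filtration argument of \cref{thm: basis of g^la} cannot be run and the conjecture remains open---which is presumably why the authors stated it as a conjecture in the first place.
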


\begin{ex}\label{affgarex}
Continuing from the previous example,
\begin{align*}
g^\la_A &= \psi_{w^{\ttt^A}} m^\la + \tau_2 \psi_{w^{\ttt^A}} m^\la + \tau_1 \tau_2 \psi_{w^{\ttt^A}} m^\la\\
&= 3\psi_{w^{\ttt^A}} m^\la + 2 \sigma_2 \psi_{w^{\ttt^A}} m^\la + \sigma_1 \sigma_2 \psi_{w^{\ttt^A}} m^\la + \sigma_1 \psi_{w^{\ttt^A}} m^\la.
\end{align*}
\end{ex}

Evidence for our conjecture is provided by many examples we have computed in GAP~\cite{GAP4}.

Finally, we note that the above form for $g^\la_A$ does not instantly yield a clean expression in terms of basis elements of $\Pe^\la$ -- this can be seen in Example \ref{affgarex} where $w_1 \ttt^A m^\la$ is not row-strict. Fayers~\cite{fdyck} has addressed this problem in type $A$, and in fact if our conjecture holds, then his work automatically applies to our $g^\la_A$ too.

\vskip 1em


\bibliographystyle{amsplain}

\begin{thebibliography}{99}


\bibitem{A96}
S.~Ariki,
\emph{\href{http://projecteuclid.org/euclid.kjm/1250518452}{On the decomposition numbers of the {Hecke} algebra of {$G(m,1,n)$}}}, J.\ Math.\ Kyoto Univ. \textbf{36} (1996), no.~4, 789--808. 


\bibitem{AIP14}
S.~Ariki, K.~Iijima, and E.~Park,
\emph{\href{https://doi.org/10.1093/imrn/rnu115}{Representation type of finite quiver Hecke algebras of type $A^{(1)}_{\ell}$ for arbitrary parameters}},
Int. Math. Res. Not. IMRN 2015, no.~15, 6070--6135.
%

\bibitem{AP16}
S.~Ariki and E.~Park,
\emph{\href{https://projecteuclid.org/euclid.ojm/1461781798}{Representation type of finite quiver Hecke algebras of type $C^{(1)}_{\ell}$}}, Osaka J.\ Math. \textbf{53} (2016), no.~2, 463--489.

\bibitem{bjs93}
S.~C. Billey, W.~Jockusch, and R.~P. Stanley, \emph{\href{http://dx.doi.org/10.1023/A:1022419800503}{Some combinatorial properties of {Schubert} polynomials}}, J.\ Algebraic Combin. \textbf{2} (1993), no.~4, 345--374.

\bibitem{bw88}
A.~Bj{\"o}rner and M.~L.~Wachs, \emph{\href{http://dx.doi.org/10.2307/2000946}{Generalized quotients in Coxeter groups}}, Trans.\ Amer.\ Math.\ Soc. \textbf{308} (1988), no.~1, 1--37.
  
\bibitem{BK09a}
J.~Brundan and A.~Kleshchev, \emph{\href{http://dx.doi.org/10.1007/s00222-009-0204-8}{Blocks of cyclotomic {Hecke} algebras and {Khovanov}--{Lauda} algebras}}, Invent.\ Math.\ \textbf{178} (2009), no.~3, 451--484.

\bibitem{BK09}
\bysame, \emph{\href{http://dx.doi.org/10.1016/j.aim.2009.06.018}{Graded decomposition numbers for cyclotomic {Hecke} algebras}}, Adv.\ Math. \textbf{222} (2009), no.~6, 1883--1942.


\bibitem{BKW11}
J.~Brundan, A.~Kleshchev, and W.~Wang, \emph{\href{http://dx.doi.org/10.1515/CRELLE.2011.033}{Graded {Specht} modules}}, J.\ Reine Angew.\ Math. \textbf{655} (2011), 61--87.

\bibitem{DJ86}
R.~Dipper and G.~James, \emph{\href{http://dx.doi.org/10.1112/plms/s3-52.1.20}{Representations of Hecke algebras of general linear groups}}, Proc.\ London Math.\ Soc.\ (3) \textbf{52} (1986), 20--52.

\bibitem{fdyck}
M.~Fayers, \emph{\href{http://dx.doi.org/10.1007/s10801-016-0734-2}{Dyck tilings and the homogeneous {Garnir} relations for graded {Specht} modules}}, J.\ Algebraic Combin. \textbf{45} (2017), no.~4, 1041--1082.

\bibitem{GAP4}
The GAP~Group, \emph{{GAP -- Groups, Algorithms, and Programming, Version 4.8.6}}, 2016, {\tt http://www.gap-system.org}.

\bibitem{HM10}
J.~Hu and A.~Mathas, \emph{\href{http://dx.doi.org/10.1016/j.aim.2010.03.002}{Graded cellular bases for the cyclotomic {Khovanov}--{Lauda}--{Rouquier} algebras of type\ {$A$}}}, Adv.\ Math. \textbf{225} (2010), no.~2, 598--642.
%

\bibitem{kac}
V.~Kac, \emph{\href{http://dx.doi.org/10.1017/CBO9780511626234}{Infinite dimensional Lie algebras}}, 3rd Edition, Cambridge University Press 1990.

\bibitem{KK11}
S.-J.~Kang and M.~Kashiwara, \emph{\href{http://dx.doi.org/10.1007/s00222-012-0388-1}{Categorification of highest weight modules via Khovanov--Lauda--Rouquier algebras}},
 Invent.\ Math. \textbf{190} (2012), no.~3, 699--742.
%

\bibitem{KMM93}
S.-J.~Kang, K.~C. Misra, and T.~Miwa, \emph{\href{http://dx.doi.org/10.1006/jabr.1993.1042}{Fock space representations of the quantized universal enveloping algebras {$U_q(C_l^{(1)}),\; U_q(A^{(2)}_{2l}),$} and {$U_q(D^{(2)}_{l+1})$}}}, J.\ Algebra \textbf{155} (1993), no.~1, 238--251.
%

\bibitem{KP15}
M.~Kashiwara and E.~Park, \emph{\href{http://dx.doi.org/10.4171/JEMS/785}{Affinizations and R-matrices for quiver Hecke algebras}},
J.\ Eur.\ Math.\ Soc.\ (JEMS) \textbf{20} (2018), no.~5, 1161--1193. 

\bibitem{KL09}
M.~Khovanov and A.~Lauda, \emph{\href{http://dx.doi.org/10.1090/S1088-4165-09-00346-X}{A diagrammatic approach to categorification of quantum groups {I}}}, Represent.\ Theory \textbf{13} (2009), 309--347.

\bibitem{KL11}
\bysame, \emph{\href{http://dx.doi.org/10.1090/S0002-9947-2010-05210-9}{A diagrammatic approach to categorification of quantum groups
  {II}}}, Trans.\ Amer.\ Math.\ Soc. \textbf{363} (2011), no.~5, 2685--2700.

\bibitem{KimShin04}
J.-A.~Kim and D.-U.~Shin, \emph{\href{http://dx.doi.org/10.1063/1.1811791}{Crystal bases and generalized Lascoux--Leclerc--Thibon (LLT) algorithm for the quantum affine algebra $U_q(C_n^{(1)})$}},
J.\ Math.\ Phys. \textbf{45} (2004), no.~12, 4878--4895.
%

\bibitem{KleLou15}
A.~Kleshchev and J.~Loubert, \emph{\href{http://dx.doi.org/10.1093/imrn/rnu096}{Affine cellularity of {Khovanov}--{Lauda}--{Rouquier} algebras of finite types}}, Int.\ Math.\ Res.\ Not.\ IMRN (2015), no.~14, 5659--5709.

\bibitem{kmr}
A.~Kleshchev, A.~Mathas, and A.~Ram, \emph{\href{http://dx.doi.org/10.1112/plms/pds019}{Universal graded Specht modules for cyclotomic Hecke algebras}}, Proc.\ London Math.\ Soc. \textbf{105} (2012), 1245--1289.

\bibitem{lv11}
A.~Lauda and M.~Vazirani, \emph{\href{http://dx.doi.org/10.1016/j.aim.2011.06.009}{Crystals from categorified quantum groups}},
Adv.\ Math. \textbf{228} (2011), no.~2, 803--861.


\bibitem{Mathas}
A.~Mathas, \emph{\href{http://dx.doi.org/10.1090/ulect/015}{Iwahori--Hecke Algebras and Schur Algebras of the Symmetric Group}}, University Lecture Series, vol.~15, American Mathematical Society, Providence, RI, 1999.

\bibitem{Pre04}
A.~Premat, \emph{\href{http://dx.doi.org/10.1016/j.jalgebra.2004.01.028}{Fock space representations and crystal bases for {$C_n^{(1)}$}}}, J.\ Algebra \textbf{278} (2004), no.~1, 227--241.

\bibitem{R08}
R.~Rouquier, \emph{$2$-{Kac}--{Moody} algebras},
  \href{http://arxiv.org/abs/0812.5023}{arXiv:0812.5023}, 2008, preprint.


\bibitem{Sp17}
L.~Speyer, \emph{\href{https://doi.org/10.1090/proc/13876}{On the semisimplicity of the cyclotomic quiver {Hecke} algebra of type {$C$}}}, Proc.\ Amer.\ Math.\ Soc. \textbf{146} (2018), no.~5, 1845--1857.

\end{thebibliography}

\end{document}